\theoremstyle{definition}
\newtheorem{definition}{Definition}[section]
\theoremstyle{plain}
\newtheorem{theorem}[definition]{Theorem}
\newtheorem{proposition}[definition]{Proposition}
\newtheorem{lemma}[definition]{Lemma}
\newtheorem{corollary}[definition]{Corollary}
\theoremstyle{remark}
\newtheorem{remark}[definition]{Remark}
\numberwithin{equation}{section}
\DeclareMathOperator{\Pol}{\mathsf{Pol}}%polymorphisms
\DeclareMathOperator{\Inv}{\mathsf{Inv}}%invariants_relations
\DeclareMathOperator{\Alg}{Alg}%algebraic_geometry
\DeclareMathOperator{\Clo}{Clo}%clone_terms
\DeclareMathOperator{\POL}{Pol}%clone_polynomials
\DeclareMathOperator{\Con}{Con}%congruences_algebra
\DeclareMathOperator{\Hom}{Hom}%homomorphisms_algebra
\DeclareMathOperator{\Id}{Id}%ideals
\newcommand{\variety}[1]{\mathfrak{#1}}%font for varieties
\newcommand{\Bcl}[1]{\mathsf{#1}}% identifiers for Boolean clones
\newcommand{\cloneL}{\mathcal{L}}%
\newcommand{\aP}{\mathbf{aP}}%
\newcommand{\AP}{\mathbf{AP}}%
\newcommand{\SL}{\mathbf{SL}}%
\newcommand{\Ltwo}{\mathbf{L}_2}%
\newcommand{\TN}{\mathbf{TN}}%
\newcommand{\aipii}{\mathbf{a}_{\infty}\pi_\infty}%
\newcommand{\Aipii}{\mathbf{A}_{\infty}\pi_\infty}%
\newcommand{\fipii}{f_\pi^\infty}%
\DeclareMathOperator{\id}{id}%identity
\DeclareMathOperator{\minority}{mnr}%ternary minority
\DeclareMathOperator{\majority}{maj}%ternary majority
\DeclareMathOperator{\ps}{ps}%Zhuk-ps
\DeclareMathOperator{\rght}{right}%Zhuk-right
\DeclareMathOperator{\lft}{left}%Zhuk-left
\DeclareMathOperator{\plus}{plus}%Zhuk-plus
\newcommand*{\pluszero}{\plus_0}%Zhuk-plus0
\newcommand{\eni}[2][n]{e\arii{#1}_{#2}}%n-ary projections
\newcommand{\cna}[2][n]{c\arii{#1}_{#2}}%n-ary constants
\newcommand{\ca}[1][a]{c_{#1}}%unary constants
\newcommand{\dup}{\mathsf{dup}^3}%
\newcommand{\negate}[1]{\overline{#1}}% negation on {0,1}
\newcommand{\ab}[1]{{\mathbf{#1}}}%algebras
\newcommand{\type}[2]{\mathtt{typ}(#1,#2)}%type_TCT_quotient
\newcommand{\Type}[1]{\mathtt{typ}(#1)}%TCT-type_minimal_algebras
\newcommand{\Cg}[2][\ab{A}]{\mleft\langle#2\mright\rangle_{\Con{#1}}}%
\newcommand{\Sg}[2][\ab{A}]{\mleft\langle#2\mright\rangle_{#1}}%
\newcommand{\algop}[2]{({#1}; {#2})}
\newcommand*{\algeq}{\sim_{\text{alg}}}%having_the_same_universal_algebraic_geometry
\newcommand{\interval}[2]{\mathtt{I}[#1,#2]}%intervals_lattices
\newcommand{\deltarelation}[1][A]{\Delta_{#1}^{(4)}}%la relazione delta
\newcommand{\deltathree}[1][A]{\Delta_{#1}^{(3)}}%la relazione delta
\newcommand{\clonelat}{\mathfrak{L}}%
\newcommand{\bottom}[1]{0_{#1}}%bottom_elelemnt-con A
\newcommand{\uno}[1]{1_{#1}}%top_ellement con A
\let\numberset=\mathbb%
\newcommand{\N}{\numberset{N}}
\newcommand{\Z}{\numberset{Z}}
\newcommand{\finset}[1]{[{#1}]}%set_numbers_1,2,...,n
\renewcommand{\vec}[1]{{\boldsymbol{#1}}}%vectors
\let\tp=\vec% for TCT-type numbers
\newcommand{\ari}[1]{_{#1}}%arity_low
\newcommand{\arii}[1]{^{[#1]}}%arity_top
\newcommand{\relset}[1]{\mathcal{#1}}%set_relations
\newcommand{\funset}[1]{\mathcal{#1}}%set_functions
\newcommand{\potenza}[1]{\mathcal{P}(#1)} %power_set
\newcommand{\restrict}[1]{\rvert_{#1}}%restriction_functions
\newcommand{\defeq}{\mathrel{\mathop:}=}
\newcommand{\eqdef}{\mathrel{\mathopen={\mathclose:}}}
\let\subs=\subseteq
\renewcommand{\set}[1]{\mleft\{#1\mright\}}%
\DeclareMathOperator{\GF}{\mathsf{GF}}%Galois_field
\DeclareMathOperator{\rad}{rad}%Jacobson radical
\DeclareMathOperator{\hw}{wt}
\newcommand{\w}[1]{\hw(#1)}%number_of_ones_in_a_0-1_vector
\DeclareMathOperator{\arity}{ar}%arity of a function
\newcommand{\crd}[1]{\mleft\lvert#1\mright\rvert}% cardinality
\newcommand{\pair}[1]{\bigl(\begin{smallmatrix}#1\end{smallmatrix}\bigr)}
\providecommand{\tablename}{Tab.}%
\newcolumntype{C}[1]{>{\centering}p{#1}}% declare a new column type with fixed width and centered parbox
\title{On when the union of two algebraic sets is algebraic}
\author{Erhard Aichinger}
\address[Erhard Aichinger]{
Institut f\"ur Algebra,
Johannes Kepler Universit\"at Linz, Altenberger Str.~69, 4040 Linz, Austria}
\email{erhard@algebra.uni-linz.ac.at}
\author{Mike Behrisch}
\address[Mike Behrisch]{
Institut f\"ur Diskrete Mathematik und Geometrie,
Technische Universit\"at Wien, Wiedner Hauptstr.~8--10, 1040 Wien, Austria}
\address{Institut f\"ur Algebra,
Johannes Kepler Universit\"at Linz, Altenberger Str.~69, 4040 Linz, Austria}
\email{behrisch@logic.at}
\author{Bernardo Rossi}
\address[Bernardo Rossi]{
Institut f\"ur Algebra,
Johannes Kepler Universit\"at Linz, Altenberger Str.~69, 4040 Linz, Austria}
\email{bernardo.rossi@jku.at}
\subjclass[2020]{08A40, (08A62, 08B05, 08B10, 20F70)}
\keywords{
universal algebraic geometry, algebraic set, equational domain,
equational additivity, equationally additive clone}
\thanks{Supported by the Austrian Science Fund (FWF):~P33878.}
\begin{document}
\renewcommand{\figurename}{Fig.}%

\date{\today}

\begin{abstract}
  In universal algebraic geometry, an algebra is called an \emph{equational
  domain} if the union of two algebraic sets is algebraic.
  We characterize equational domains, with respect to polynomial equations,
  inside congruence permutable varieties, and with respect to
  term equations, among all algebras of size two and all algebras of
  size three with a cyclic automorphism.
  Furthermore, for each size at least three, we prove that, modulo term
  equivalence, there is a continuum of equational domains of
  that size.
\end{abstract}
\maketitle

\section{Introduction} \label{sec:intro}

A basic fact in classical algebraic geometry is that the union of
two algebraic sets is again algebraic. In universal algebraic
geometry, which studies the algebraic sets over
an arbitrary algebraic structure~\cite{Plo98,BauMyaRem99,DanMyaRem12},
this is no longer true in general. In~\cite{DanMyaRem10},
algebras with the property
that the union of two algebraic sets is algebraic have
been called \emph{equational domains};
for example, a commutative ring with unity
is an equational domain if and only it is an integral domain.
In such algebras, the non-empty algebraic sets coincide with the
non-empty closed sets of a topology, which is called \emph{Zariski
topology}~\cite{DanMyaRem10} as in classical algebraic geometry.

In this paper, we seek to characterize equational domains.
A first observation is that every equational domain is finitely subdirectly
irreducible. For a more detailed study, we need to specify
whether the equations defining algebraic sets involve \emph{term functions}
or \emph{polynomial functions}; the difference lies in whether
constants from the algebra are allowed. In~\cite{DanMyaRem10},
an algebraic set is defined as
the solution set of a system of \emph{term equations};
we follow this viewpoint and treat polynomial equations
by passing from an algebra to its expansion with all constant operations.
From the fundamental result~\cite[Theorem~2.5]{DanMyaRem10}, we see
that if an algebra is an equational domain, then so is its expansion
with all constant operations; in other words, if it is an equational domain
with respect to term functions, then it is an equational domain with
respect to polynomial functions. The converse is not true, as witnessed, e.g.,
by the alternating group~$A_5$ (cf.~\cite[Proposition~2.19,
Claim~2.22(4), Corollary~2.31]{DanMyaRem10}).
For algebras in congruence permutable varieties, we obtain a structural
description of those algebras that are equational domains with respect
to polynomial equations.
Our description uses a generalization of the ideal product in rings to
universal algebra, the binary commutator~\cite{Smith, McKMcnTay88, FreMck87}.
The equational domains inside congruence permutable varieties,
with respect to polynomials,
are then those algebras~$\ab{A}$ with at least two elements that
satisfy $[\alpha, \beta] > 0_A$ for all congruences  $\alpha, \beta > 0_A$
(Theorem~\ref{teor:equ_additive_cofinite_non_abelianity});
for a finite algebra this is equivalent
to saying that the algebra is subdirectly irreducible with non-Abelian
monolith. In each of these finite algebras,
every subset of~$A^n$ is algebraic, hence they all have the same
collection of algebraic sets, in other words,
they are \emph{algebraically equivalent} (cf.~\cite{Pin17a}).

One can view being an equational domain as a property of the clone
of term functions of an algebra. Following~\cite{Pin17a} we say that a
clone~$\funset{C}$ on~$A$
is \emph{equationally additive} if $\algop{A}{\funset{C}}$ is an equational domain.
When~$\funset{C}$ does not contain all constant operations,
we do not have a complete description of equationally additive
clones, even when they contain a Ma\v{l}cev operation.
One difficulty in finding a structural description is explained by
the fact that every finite algebra is weakly
isomorphic to an algebra that is polynomially equivalent to
the quotient of an equational domain
modulo its monolith (Theorem~\ref{teor:Erhardex13b}).
However, we obtain a complete description of two-element equational domains:
A two-element algebra is an equational domain if and only if
it generates a congruence distributive variety;
in Section~\ref{section:equatio_add_boolean_clones}
the order filter of equationally additive clones on a two-element
set is described in detail
(cf.\ Theorem~\ref{thm:char-Boolean-eqn-additive}).
Part of this description carries over to all
\emph{E-minimal} algebras; these are finite algebras in which every
idempotent unary polynomial function is either bijective or constant, which
is the case, e.g., for all finite $p$-groups.
Again, an E-minimal algebra is an equational domain if and only if
it generates a congruence distributive variety
(Theorem~\ref{teor:the_TCT_type_of_E_minimal_algebras}).
A similar description can be obtained for all clones on a three-element
set that are contained
in the clone of self-dual operations (cf.~\cite{Zhu15}): such a
clone~$\funset{C}$ is
equationally additive if and only if $\algop{A}{\funset{C}}$
generates a congruence distributive variety
(Theorem~\ref{teor:self-dual_operations_char}).

Finally, we investigate the number of equationally additive clones
on a finite set. Modulo algebraic equivalence, this number is
finite~\cite[Theorem~3]{Pin17a},
but as we will see, there can be infinitely many equationally additive
clones on a finite set that all induce the same algebraic sets.
We prove that  on each finite set with at least three elements,
there is a continuum of equationally additive clones
(Theorem~\ref{teor:countinously_many_eq_additive_clones_on_the_4_element_set}),
and we determine for which finite Abelian groups
the number of equationally additive clones above the clone of polynomial
functions is infinite (Theorem~\ref{thm:char-eqn-add-poly-expansions-ab-grp}).

\section{Notation and Preliminaries}\label{sec:preliminaries}
We write~$\N$ for the set of positive integers and, for $n\in \N$, let
$\finset{n}\defeq \{1,\dots, n\}$.
For a set~$A$, the $i$-th component of $\vec{a}\in A^n$ is denoted
by~$a_i$ and~$\vec{a}(i)$, and $\potenza{A}$ is the power set of~$A$.
A \emph{relation} on~$A$ is an element of $\bigcup_{n\in\N}
\potenza{A^n}$, and
an \emph{operation} on~$A$ is an element of $\bigcup_{n\in\N}A^{A^n}$.
For $n\in\N$ and $i\in\finset{n}$, the $i$-th $n$-ary \emph{projection}
is the operation~$\eni{i}\colon A^n\to A$ that is given by
$\eni{i}(x_1,\dotsc,x_n)\defeq x_i$ for all $x_1,\dotsc,x_n\in A$; we
also abbreviate $\id_A\defeq\eni[1]{1}$.
For $a\in A$ the $n$-ary \emph{constant} with value~$a$ is the
operation $\cna{a}\colon A^n\to A$ given for all $x_1,\dotsc,x_n\in A$
by $\cna{a}(x_1,\dotsc,x_n)\defeq a$. We sometimes
write~$\cna[1]{a}$ as~$\ca$.
For a set~$B$ and a function $f\colon B\to A$ we denote the image
of~$f$ by $f[B]$.
For a partial order~$\leq$ on~$A$, we write $a<b$ if
$a\leq b$ and $a\neq b$. Moreover, we write $a\prec b$ if $a<b$
and there is no $x\in A$ with $a<x<b$.
For basic notions from universal algebra and lattice theory we refer
to~\cite{BurSan81,McKMcnTay88}.
In particular we will use \emph{lattices} as defined
in~\cite[Chapter~1, p.~16]{McKMcnTay88}.
For a lattice~$\ab{L}$ and $a,b\in L$ such that $a\leq b$, we define
$\interval{a}{b}$ to be the set
$\{l\in L\mid a\leq l \text{ and } l\leq b\}$.
If $a\prec b$, we say that $\interval{a}{b}$ is a
\emph{prime quotient} of~$\ab{L}$.
A \emph{clone} on a set~$A$ is a set of operations on~$A$ that is
closed under composition and contains all the
projections (cf.~\cite[1.1.2, 1.1.3]{PosKal79},
\cite[Section~6.1]{Bod21},
and~\cite[Definition~4.1]{McKMcnTay88}).
A clone on~$A$ is \emph{constantive} if it includes all
operations~$\ca$ where $a\in A$.
For the definition of an \emph{algebra}~$\ab{A}$ on~$A$ we refer
to~\cite[Definition~1.1]{McKMcnTay88}.

We will fix some notation.
For a set of relations~$\relset{R}$ on~$A$, $\Pol\relset{R}$ is the
clone of \emph{polymorphisms} of~$\relset{R}$, and for a set of
functions~$\funset{F}$ on~$A$, $\Inv\funset{F}$ is the relational clone
of \emph{invariant relations} of~$\funset{F}$
(cf.~\cite[Section~E2]{PosKal79}).
For an algebra~$\ab{A}$, $\Clo\ab{A}$ is the clone of \emph{term
operations} of~$\ab{A}$ (cf.~\cite[Definition~4.2]{McKMcnTay88}), while
$\POL\ab{A}$ is the clone of \emph{polynomial operations} of~$\ab{A}$
(cf.~\cite[Definition~4.4]{McKMcnTay88}).
An element~$e$ of $\POL\ari{1}\ab{A}$ is called \emph{idempotent}
if it satisfies $e\circ e=e$.
We write $\Con\ab{A}$ for the congruence lattice of~$\ab{A}$,
and we denote its bottom element by~$\bottom{A}$ and
its top element by~$\uno{A}$.
For an algebra~$\ab{B}$ of the same signature as~$\ab{A}$,
we denote the set of all homomorphisms from~$\ab{B}$ to~$\ab{A}$
by $\Hom(\ab{B},\ab{A})$.
We say that~$\ab{A}$ is \emph{finitely subdirectly irreducible}
if for all $\alpha, \beta\in \Con\ab{A}\setminus\{\bottom{A}\}$
we have $\alpha\cap \beta\neq \bottom{A}$.
Clearly, a finite algebra is
subdirectly irreducible (cf.~\cite[Definition~4.39]{McKMcnTay88}) if
and only if it is finitely subdirectly irreducible and has at least
two elements.
For a subset~$X$
of~$A^2$ we denote by $\Cg{X}$ the congruence generated by~$X$
in~$\ab{A}$ (cf.~\cite[Definition~1.19]{McKMcnTay88}).
For $n\in\N$ and for a subset~$X$ of~$A^n$ we denote by
$\Sg[{\ab{A}^n}]{X}$ the subalgebra of~$\ab{A}^n$ generated by~$X$
(cf.~\cite[Definition~1.8]{McKMcnTay88}).
For a group~$\ab{G}$ and $g\in G$, $\braket{g}$ is the subgroup
of~$\ab{G}$ generated by~$\{g\}$.
For $n\in\N$, $\vec{a},\vec{b}\in A^n$ and $\alpha\in \Con\ab{A}$, we
write $\vec{a}\equiv_\alpha \vec{b}$ if
$(\vec{a}(i),\vec{b}(i))\in\alpha$
for all $i\in\finset{n}$; for $n=1$ and $(a,b)\in \alpha$ we will sometimes
just write $a\mathrel{\alpha}b$.
Given a clone~$\funset{C}$ on~$A$, the symbol~$\funset{C}\arii{n}$
denotes the set of all $n$-ary functions in~$\funset{C}$.
For a congruence~$\theta$ of the algebra $\algop{A}{\funset{C}}$ and a
function $f\in \funset{C}\arii{n}$,
$f_\theta$ is the function from $(A/\theta)^n$ to~$A/\theta$
defined by
$f_\theta(a_1/\theta, \dots, a_n/\theta)= f(a_1,\dots, a_n)/\theta$
for all $a_1,\dots, a_n\in A$
(cf.~\cite[Definition~1.15]{McKMcnTay88}).
We observe that for an algebra~$\ab{A}$ and for $\theta\in \Con\ab{A}$
we have
\begin{equation}\label{eq:the_clone_of_actions_of_function_on_a_quotient_algebra}
\POL(\ab{A}/ \theta)=\{f_{\theta}\mid f\in \POL\ab{A}\}.
\end{equation}

A clone~$\funset{C}$ on a set~$A$ is called a \emph{Ma\v{l}cev clone}
if there exists $d\in\funset{C}\arii{3}$ such that for all $a,b\in A$
the equality $d(a,b,b)=d(b,b,a)=a$ holds.
An algebra~$\ab{A}$
is called a \emph{Ma\v{l}cev algebra} if $\Clo\ab{A}$ is a Ma\v{l}cev
clone.
Moreover, we say that~$\ab{A}$ has a \emph{Ma\v{l}cev
polynomial} if $\POL\ab{A}$ is a Ma\v{l}cev clone.
Note that for a group~$\ab{G}$ the term function~$t$,
defined by $t(x_1,x_2,x_3)=x_1-x_2+x_3$ for all $x_1,x_2,x_3\in G$,
is a Ma\v{l}cev term.
We will often use the following basic facts on Ma\v{l}cev algebras.
\begin{lemma}[{cf.~\cite[Theorem~4.70(iii)]{McKMcnTay88}}]\label{lemma:generating_congruences_in_malcev_with_polynomials}
Let~$\ab{A}$ be an algebra with a Ma\v{l}cev polynomial,
let $k\in\N$ and let $a_1,\dots,a_k,b_1,\dots,b_k\in A$. Then
\[
\Cg{\{(a_1,b_1),\dots ,(a_k, b_k)\}}=\{(p(a_1,\dots,
a_k),p(b_1,\dots, b_k))\mid p\in \POL\ari{k}\ab{A}\}.
\]
\end{lemma}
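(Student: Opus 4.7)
The plan is to denote the right-hand side of the claimed equality by~$R$ and verify the two inclusions separately. The easy direction is $R\subs\Cg{\{(a_1,b_1),\dots,(a_k,b_k)\}}$: since polynomial operations of~$\ab{A}$ are built from basic operations and fixed constants, every congruence of~$\ab{A}$ is closed under applying a $k$-ary polynomial coordinate-wise. Consequently, any congruence~$\theta$ containing the pairs $(a_i,b_i)$ must also contain $(p(a_1,\dots,a_k),p(b_1,\dots,b_k))$ for every $p\in\POL\ari{k}\ab{A}$, which specialises to $\theta=\Cg{\{(a_1,b_1),\dots,(a_k,b_k)\}}$.

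For the reverse inclusion I would show that~$R$ is itself a congruence of~$\ab{A}$ containing the pairs $(a_i,b_i)$. Reflexivity is witnessed by $k$-ary constant polynomials; the pair $(a_i,b_i)$ is witnessed by the projection $\eni{i}$. Compatibility with an $n$-ary basic operation~$f$ is immediate: given $(p_j(a_1,\dots,a_k),p_j(b_1,\dots,b_k))\in R$ for $j\in\finset{n}$, the composition $f(p_1,\dots,p_n)$ lies again in~$\POL\ari{k}\ab{A}$ and produces the required element of~$R$.

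The only step requiring the Mal'cev hypothesis, and thus the main (albeit mild) obstacle, is the verification that~$R$ is symmetric and transitive. Let $d\in\POL\ari{3}\ab{A}$ be a Mal'cev polynomial. For symmetry, given $(p(\vec{a}),p(\vec{b}))\in R$, I would consider $q(\vec{x})\defeq d(p(\vec{a}),p(\vec{x}),p(\vec{b}))$, which is legitimately an element of~$\POL\ari{k}\ab{A}$ because $p(\vec{a}),p(\vec{b})\in A$ are admissible constants; the Mal'cev identities then give $q(\vec{a})=p(\vec{b})$ and $q(\vec{b})=p(\vec{a})$. For transitivity, given $(p(\vec{a}),p(\vec{b})),(q(\vec{a}),q(\vec{b}))\in R$ with $p(\vec{b})=q(\vec{a})$, the polynomial $r(\vec{x})\defeq d(p(\vec{x}),q(\vec{a}),q(\vec{x}))$ satisfies $r(\vec{a})=p(\vec{a})$ and $r(\vec{b})=q(\vec{b})$, delivering the required element of~$R$ and completing the proof.
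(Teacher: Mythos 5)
Your proof is correct. The paper does not supply its own argument for this lemma---it is cited from \cite[Theorem~4.70(iii)]{McKMcnTay88}---but your argument is the standard one: show the right-hand side is a congruence containing the generating pairs, and use constants together with the Mal'cev polynomial~$d$ to obtain reflexivity, symmetry, and transitivity. One small remark: the paper separately records (Lemma~\ref{lem:Malcev-weak-con-is-con}, i.e., \cite[Lemma~5.22]{HobbMcK}) that every reflexive subuniverse of $\ab{A}\times\ab{A}$ is already a congruence when~$\ab{A}$ has a Mal'cev polynomial; invoking that would let you skip the explicit symmetry and transitivity computations, since reflexivity and compatibility with the basic operations already make~$R$ a reflexive subuniverse of the square. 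Your direct verification via $q(\vec{x})=d(p(\vec{a}),p(\vec{x}),p(\vec{b}))$ and $r(\vec{x})=d(p(\vec{x}),q(\vec{a}),q(\vec{x}))$ is nevertheless perfectly valid and makes the role of the Mal'cev operation transparent.
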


Later we shall also use the following observation.
\begin{lemma}[{cf.~\cite[Lemma~5.22]{HobbMcK}}]\label{lem:Malcev-weak-con-is-con}
Every reflexive subuniverse of the square of
an algebra~$\ab{A}$ with a Ma\v{l}cev polynomial is a
congruence of~$\ab{A}$.
\end{lemma}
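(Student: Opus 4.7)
The plan is to verify that any reflexive subuniverse~$R$ of $\ab{A}^2$ satisfies the remaining congruence axioms---symmetry, transitivity, and compatibility with the basic operations of~$\ab{A}$. Compatibility is immediate: by definition, a subuniverse of $\ab{A}^2$ is closed under componentwise application of the basic (and hence of all term) operations of~$\ab{A}$.

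The key preliminary observation is that reflexivity upgrades closure under term operations to closure under polynomial operations. Indeed, if $p(x_1,\dotsc,x_n) = t(x_1,\dotsc,x_n,a_1,\dotsc,a_k)$ for some term~$t$ of~$\ab{A}$ and constants $a_1,\dotsc,a_k \in A$, and if $(u_i,v_i) \in R$ for each $i \in \finset{n}$, then applying~$t$ componentwise to these pairs together with the diagonal pairs $(a_j,a_j)$ (which belong to~$R$ by reflexivity) yields $\bigl(p(u_1,\dotsc,u_n),p(v_1,\dotsc,v_n)\bigr) \in R$. In particular, the Mal'cev polynomial~$d$ of~$\ab{A}$ preserves~$R$ componentwise.

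Using the Mal'cev identities $d(x,y,y) = x = d(y,y,x)$, symmetry and transitivity then drop out mechanically. For symmetry, given $(a,b) \in R$, I would apply~$d$ to the triple $(b,b), (a,b), (a,a) \in R$ to obtain $\bigl(d(b,a,a), d(b,b,a)\bigr) = (b,a) \in R$. For transitivity, given $(a,b),(b,c) \in R$, applying~$d$ to $(a,b),(b,b),(b,c) \in R$ produces $\bigl(d(a,b,b),d(b,b,c)\bigr) = (a,c) \in R$. The only genuine subtlety is the transition from term-operation closure to polynomial-operation closure in the second paragraph; once that is in place, everything else follows by direct computation with the Mal'cev identities, yielding that~$R$ is a congruence of~$\ab{A}$.
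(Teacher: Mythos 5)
The paper cites this lemma from Hobby and McKenzie without reproducing a proof, so there is no in-paper argument to compare against. Your proposal is correct and is the standard argument: you correctly isolate the key point that reflexivity of~$R$ (the diagonal pairs $(a_j,a_j)$ lie in~$R$) promotes closure under term operations to closure under polynomial operations, so that the Mal'cev polynomial~$d$ preserves~$R$; the applications of~$d$ to $(b,b),(a,b),(a,a)$ and to $(a,b),(b,b),(b,c)$ then yield symmetry and transitivity directly from the Mal'cev identities.
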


We will use the notions of \emph{centralizing relation} and
\emph{commutator}
as defined in \cite[Section~4.13]{McKMcnTay88}.
To aid the reader we give the definitions explicitly.
Following~\cite{Aic06}, for an algebra~$\ab{A}$, $m,n\in \N$
and $\alpha, \beta,\eta \in\Con\ab{A}$, we say that
$C(m,n,\alpha,\beta,\eta)$ holds if for all $p\in\POL\ari{m+n}\ab{A}$,
for all $\vec{a},\vec{b}\in A^m$, $\vec{u},\vec{v}\in A^n$ with
$\vec{a}\equiv_\alpha \vec{b}$, $\vec{u}\equiv_\beta \vec{v}$ and
$p(\vec{a},\vec{u})\mathrel{\eta} p(\vec{a},\vec{v})$ we have
$p(\vec{b},\vec{u})\mathrel{\eta} p(\vec{b},\vec{v})$.
We say that \emph{$\alpha$ centralizes~$\beta$ modulo~$\eta$}, and
write $C(\alpha,\beta;\eta)$, if $C(1,k,\alpha,\beta,\eta)$ is
satisfied for all $k\in\N$.
Note that this definition of the centralizing relation is proved to be
equivalent to~\cite[Definition~4.148]{McKMcnTay88}
in~\cite[Proposition~2.1]{Aic06}.
Following~\cite[Definition~4.150]{McKMcnTay88}, for
$\alpha,\beta\in\Con\ab{A}$ we
define their \emph{commutator}, denoted by $[\alpha,\beta]$, to be the
smallest congruence~$\eta$ of~$\ab{A}$ for which
$C(\alpha,\beta;\eta)$.
The fact that there is such a smallest congruence is a consequence
of~\cite[Lemma~4.149]{McKMcnTay88}.
Given an algebra~$\ab{A}$ and $\theta\in\Con\ab{A}$,
we say that~$\theta$ is \emph{Abelian}~\cite{FreMck87} if
$[\theta, \theta]=\bottom{A}$, and we say that~$\ab{A}$ is
\emph{Abelian} if
$[\uno{A}, \uno{A}]=\bottom{A}$. Groups are Abelian if and only
if they
are commutative.

An algebra~$\ab{V}$ that has a group reduct is called an
\emph{expanded group}.
If the group reduct is $\ab{G}=\algop{V}{+,-,0}$, we
will say that~$\ab{V}$ is an \emph{expansion} of~$\ab{G}$.
A subset~$I$ of~$V$ is an \emph{ideal} if it is
a normal subgroup of~$\ab{G}$ and for all $n\in\N$,
for each $n$-ary basic operation~$f$ of~$\ab{V}$,
for all $\vec{i}\in I^n$ and for all $\vec{v}\in V^n$ we have
$f(\vec{v}+\vec{i})-f(\vec{v})\in I$.
We denote the lattice of ideals of an expanded group~$\ab{V}$
by~$\Id\ab{V}$.
We remark that the function $\psi\colon \Id\ab{V}\to \Con\ab{V}$
defined by $\psi(I)=\{(a_1,a_2)\mid a_1-a_2\in I\}$ for all
$I\in\Id\ab{V}$,
induces a lattice isomorphism between $\Con\ab{V}$ and $\Id\ab{V}$.
On $\Id\ab{V}$ we define a binary operation, the \emph{ideal
commutator} (cf.~\cite{Sco97}), as follows:
For $A,B\in \Id\ab{V}$ we let $[A,B]$ be the ideal generated by
\[
\{p(a,b)\mid a\in A, b\in B, p\in \POL\ari{2}\ab{V} \text{ and }
\forall v\in V\colon p(v,0)=p(0,v)=0\}.
\]
The lattice $\Id\ab{V}$ expanded with the ideal commutator is
isomorphic, via the isomorphism~$\psi$, to $\Con\ab{V}$
expanded with the commutator
operation defined for congruences
above.
A proof can be found in~\cite[Section~2]{AicMay07} and
in~\cite[Section~4]{AicMud09}.
Thus, for two ideals $M,N$ of an expanded group~$\ab{V}$, their ideal
commutator $[M,N]$ is the ideal $\psi^{-1}([\psi(M),\psi(N)])$.

In~\cite{HobbMcK}, Hobby and McKenzie developed a structure theory
for finite algebras called \emph{tame congruence theory} (TCT).
The central notions of this theory are that of \emph{minimal set}
(cf.~\cite[Definition~2.5]{HobbMcK}),
and that of \emph{minimal algebra}
(cf.~\cite[Definition~2.14]{HobbMcK}).
Each minimal algebra has one of five types (cf.~\cite[Definition~4.10,
Corollary~4.11]{HobbMcK}).
To denote the five TCT-types we will use bold numbers:
$\tp{1},\tp{2},\tp{3},\tp{4},\tp{5}$.
Tame congruence theory associates to each prime quotient of $\Con\ab{A}$
a set of minimal algebras that have the same type.
The type of a prime quotient is then defined as the type of these
minimal algebras
(cf.~\cite[Definition~5.1]{HobbMcK}).
We will denote the type of a prime quotient $\interval{\alpha}{\beta}$
by $\type{\alpha}{\beta}$.

\section{Algebraic consequences of equational additivity}
Let~$A$ be a set and let~$\funset{C}$ be a clone on~$A$.
Following~\cite{Pin15a}, for $n\in \N$ and for $X\subseteq A^n$
we say that~$X$ is \emph{algebraic with respect to~$\funset{C}$},
or that~$X$ is $\funset{C}$-algebraic,
if there exist an index set~$I$ and two families
$(p_i)_{i\in I}$, $(q_i)_{i\in I}$
of operations in~$\funset{C}\arii{n}$ such that
$X=\{\vec{x}\in A^n\mid \forall i\in I\colon
p_i(\vec{x})=q_i(\vec{x})\}$.
We define $\Alg\ari{n}\funset{C}$ to be the
collection of all the subsets of~$A^n$
that are algebraic with respect to~$\funset{C}$,
and we define the \emph{algebraic geometry} of~$\funset{C}$ by
$\Alg\funset{C}\defeq\bigcup_{n\in\N}\Alg\ari{n}\funset{C}$.
For an algebra~$\ab{A}$ we set $\Alg\ab{A}\defeq\Alg\Clo(\ab{A})$
(cf.~\cite{BauMyaRem99, Pin17c}).
We provide a lemma that will be useful to assess whether a set~$X$
is algebraic with respect to a clone~$\funset{C}$.
\begin{lemma}\label{lemma:equivalent_definition_algebraic_set}
Let~$A$ be a set, let~$\funset{C}$ be a clone on~$A$, let $n\in\N$
and let $X\subseteq A^n$. Then $X\in \Alg\ari{n}\funset{C}$
if and only if for all $\vec{a}\in A^n\setminus X$
there exist $f_\vec{a},g_\vec{a}\in\funset{C}\arii{n}$
such that $f_\vec{a}(\vec{a})\neq g_\vec{a}(\vec{a})$ and
$f_\vec{a}(\vec{x})=g_\vec{a}(\vec{x})$ for all $\vec{x}\in X$.
\end{lemma}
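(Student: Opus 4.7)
The plan is to prove both directions by directly manipulating the defining equations of an algebraic set.

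For the forward implication, I would assume $X\in\Alg\ari{n}\funset{C}$, so that there exist an index set~$I$ and families $(p_i)_{i\in I}$, $(q_i)_{i\in I}$ of $n$-ary operations in~$\funset{C}$ with
$X = \{\vec{x}\in A^n \mid \forall i\in I\colon p_i(\vec{x}) = q_i(\vec{x})\}$. Given any $\vec{a}\in A^n\setminus X$, membership failure means there is some $i_\vec{a}\in I$ with $p_{i_\vec{a}}(\vec{a})\neq q_{i_\vec{a}}(\vec{a})$. Setting $f_\vec{a}\defeq p_{i_\vec{a}}$ and $g_\vec{a}\defeq q_{i_\vec{a}}$ then gives $f_\vec{a}(\vec{a})\neq g_\vec{a}(\vec{a})$, while $f_\vec{a}(\vec{x})=g_\vec{a}(\vec{x})$ for every $\vec{x}\in X$ follows from the defining property of~$X$.

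For the converse, I would take the index set $I\defeq A^n\setminus X$ and, for each $\vec{a}\in I$, use the given pair $(f_\vec{a},g_\vec{a})\in\funset{C}\arii{n}\times\funset{C}\arii{n}$. I then define
\[
Y\defeq\{\vec{x}\in A^n \mid \forall \vec{a}\in I\colon f_\vec{a}(\vec{x}) = g_\vec{a}(\vec{x})\},
\]
which lies in~$\Alg\ari{n}\funset{C}$ by construction, and check $X=Y$. The inclusion $X\subseteq Y$ is immediate from the hypothesis $f_\vec{a}(\vec{x})=g_\vec{a}(\vec{x})$ for all $\vec{x}\in X$ and all $\vec{a}\in I$. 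For $Y\subseteq X$, I argue contrapositively: if $\vec{x}\in A^n\setminus X$, then $\vec{x}\in I$ and the pair $(f_\vec{x},g_\vec{x})$ satisfies $f_\vec{x}(\vec{x})\neq g_\vec{x}(\vec{x})$, so $\vec{x}\notin Y$.

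There is no substantial obstacle; the only point requiring a small sanity check is the degenerate case $X=A^n$, in which $I=\emptyset$ and both the condition of the lemma and the defining family are vacuous, yielding $Y=A^n=X$. The argument uses nothing beyond Pinus' definition of $\funset{C}$-algebraic set, and in particular does not need $I$ to be finite or~$A$ to have any extra structure.
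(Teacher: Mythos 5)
Your proof is correct and follows essentially the same route as the paper's: both directions proceed by directly reading off the defining family of equations, and the converse is established by indexing equations by the complement $A^n\setminus X$. Your write-up is slightly more explicit in verifying $X=Y$ and in noting the degenerate case, but the underlying argument is identical.
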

\begin{proof}
Let us assume that $X\in\Alg\ari{n}\funset{C}$. Then there exists an
index set~$I$ and
$\{p_i\mid i\in I\},\{q_i\mid i\in I\}\subseteq\funset{C}\arii{n}$
such that
$X=\{\vec{a}\in A^n\mid \forall i\in I \colon
                                      p_i(\vec{a})=q_i(\vec{a})\}$.
Let $\vec{a}\in A^n\setminus X$. Clearly,
there exists $i\in I$ such that $p_i(\vec{a})\neq q_i(\vec{a})$. Thus,
it suffices to set $f_\vec{a}=p_i$ and $g_\vec{a}=q_i$.

Let us assume that for all $\vec{a}\in A^n\setminus X$ there exist
$f_\vec{a},g_\vec{a}\in \funset{C}\arii{n}$ such that
$f_\vec{a}(\vec{a})\neq g_\vec{a}(\vec{a})$ and
$f_\vec{a}(\vec{x})=g_\vec{a}(\vec{x})$ for all $\vec{x}\in X$.
Then we can obtain~$X$ in the form
$X=\{\vec{x}\in A^n\mid \forall \vec{a}\in A^n\setminus X\colon
f_\vec{a}(\vec{x})=g_\vec{a}(\vec{x})\}$.
\end{proof}
We report the definition of equationally additive clone as given
in~\cite{Pin17a}.
\begin{definition}[Equationally additive]
A clone~$\funset{C}$ on a set~$A$ is called
\emph{equationally additive} if for all $n\in\N$ and for all $A,B\in
\Alg\ari{n}\funset{C}$ we have $A\cup B\in \Alg\ari{n}\funset{C}$.
An algebra~$\ab{A}$ is an \emph{equational
domain}~\cite[Definition~1]{DanMyaRem10} if
$\Clo\ab{A}$ is equationally additive.
\end{definition}
To each set~$A$ we associate the following quaternary relation
\[\deltarelation\defeq\{(x_1,x_2,x_3, x_4)\in A^4\mid
                        x_1=x_2 \text{ or } x_3=x_4\}.\]
We observe that $\deltarelation=\pi_4(A)$ as
defined in~\cite[Lemma~1.3.1]{PosKal79}.
Next, we shall state a theorem by Daniyarova, Myasnikov and
Remeslennikov that characterizes equationally additive clones in
terms of their quaternary algebraic sets.
\begin{lemma}[{cf.~\cite[proof of
Theorem~2.5]{DanMyaRem10}}]\label{lemma:how_does_deltarelation_really_works}
Let~$\funset{C}$ be a clone on a set~$A$ and $n\in\N$.
Suppose that~$\deltarelation$ and $B,C\subs A^n$ are algebraic
over~$\funset{C}$, expressed as
\begin{alignat*}{2}
\deltarelation
&=\{\vec{a}\in A^4\mid\forall i\in I\colon&
p_i(\vec{a})&=q_i(\vec{a})\}\\
B&=\{\vec{a}\in A^n\mid \forall j\in J \colon&
f_j(\vec{a})&=g_j(\vec{a})\}\\
C&=\{\vec{a}\in A^n\mid\forall k\in K\colon&
h_k(\vec{a})&=t_k(\vec{a})\}
\end{alignat*}
for some index sets $I, J, K$ and operations
$\{p_i\mid i\in I\}, \{q_i\mid i\in I\}\subseteq  \funset{C}\arii{4}$
$\{f_j\mid j\in J\}, \{g_j\mid j\in J\}, \{h_k\mid k\in K\},
 \{t_k\mid k\in K\}\subseteq \funset{C}\arii{n}$.
Then we have
\begin{align*}
B\cup C=\{&\vec{a}\in A^n\mid \forall (i,j,k)\in I\times J\times K
\colon \\
&p_i(f_j(\vec{a}),g_j(\vec{a}),h_k(\vec{a}),t_k(\vec{a}))=q_i(
f_j(\vec{a}),g_j(\vec{a}),h_k(\vec{a}),t_k(\vec{a}))\}.
\end{align*}
\end{lemma}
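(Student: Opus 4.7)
The plan is to reduce the statement to a Boolean identity about quantifiers and then substitute into the given algebraic description of $\deltarelation$. There is essentially no algebraic content beyond closure of clones under composition; the whole force of the lemma sits in a disjunctive-normal-form manipulation.

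First I would record the following propositional equivalence: for any families $(P_j)_{j\in J}$ and $(Q_k)_{k\in K}$,
\[
\Bigl(\bigwedge_{j\in J}P_j\Bigr)\vee\Bigl(\bigwedge_{k\in K}Q_k\Bigr)
\;\Longleftrightarrow\;
\bigwedge_{(j,k)\in J\times K}\bigl(P_j\vee Q_k\bigr).
\]
The forward direction is immediate. The converse I would prove by contraposition: if neither conjunction on the left holds, pick $j_0\in J$ and $k_0\in K$ with $\neg P_{j_0}$ and $\neg Q_{k_0}$, violating $P_{j_0}\vee Q_{k_0}$. Applying this with $P_j$ the equation $f_j(\vec{a})=g_j(\vec{a})$ and $Q_k$ the equation $h_k(\vec{a})=t_k(\vec{a})$ yields, for every $\vec{a}\in A^n$,
\[
\vec{a}\in B\cup C \;\Longleftrightarrow\; \forall (j,k)\in J\times K\colon f_j(\vec{a})=g_j(\vec{a})\ \text{or}\ h_k(\vec{a})=t_k(\vec{a}).
\]

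By the very definition of $\deltarelation$, the condition on the right is in turn equivalent to requiring that the quadruple $\bigl(f_j(\vec{a}),g_j(\vec{a}),h_k(\vec{a}),t_k(\vec{a})\bigr)$ lies in $\deltarelation$ for every pair $(j,k)\in J\times K$. Invoking the hypothesised algebraic description of $\deltarelation$, this membership is equivalent to $p_i\bigl(f_j(\vec{a}),g_j(\vec{a}),h_k(\vec{a}),t_k(\vec{a})\bigr)=q_i\bigl(f_j(\vec{a}),g_j(\vec{a}),h_k(\vec{a}),t_k(\vec{a})\bigr)$ for every $i\in I$. Letting $(i,j,k)$ range over the product $I\times J\times K$ produces exactly the stated system of equations. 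Since $\funset{C}$ is closed under composition, each of the $n$-ary operations $p_i(f_j,g_j,h_k,t_k)$ and $q_i(f_j,g_j,h_k,t_k)$ lies in $\funset{C}\arii{n}$, so the description is genuinely algebraic.

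There is no substantial obstacle: the combinatorial heart of the argument is the disjunctive-normal-form identity above, which is precisely the purpose $\deltarelation$ serves in the theory of equational additivity, and the rest is a routine composition of operations. The only minor point worth being careful about is that the identity is invoked in both its universally quantified and its existential (contrapositive) direction, which is what matches the pointwise nature of algebraic membership.
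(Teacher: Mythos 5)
Your proof is correct. The paper itself does not prove this lemma but merely cites the proof of Theorem~2.5 in the original source, and your argument is precisely the standard one that source uses: distribute the disjunction over the two conjunctions to get a conjunction of pointwise disjunctions over $J\times K$, recognize each disjunct as membership of the quadruple $\bigl(f_j(\vec{a}),g_j(\vec{a}),h_k(\vec{a}),t_k(\vec{a})\bigr)$ in $\deltarelation$, and then expand that membership via the given system over $I$; closure of $\funset{C}$ under composition makes the resulting description algebraic.
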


\begin{theorem}[{\cite[Theorem~2.5]{DanMyaRem10}}]\label{teor:eq_additive_iff_delta_four_algebraic}
A clone~$\funset{C}$ on a set~$A$ is equationally additive if and
only if
$\deltarelation\in \Alg\ari{4}\funset{C}$.
\end{theorem}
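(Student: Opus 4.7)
The statement is a biconditional, and my plan is to handle the two directions separately, with the reverse direction being essentially a direct appeal to Lemma~\ref{lemma:how_does_deltarelation_really_works}. The overall intuition is that $\deltarelation$ is the ``smallest nontrivial'' union of two algebraic sets on any set $A$, so requiring it to be algebraic is precisely what is needed for the union property to propagate.

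For the forward implication, I would assume that $\funset{C}$ is equationally additive and exhibit $\deltarelation$ as the union of two trivially algebraic subsets of $A^4$. Namely, set
\[
  B\defeq\set{\vec{x}\in A^4\mid x_1=x_2}
  \quad\text{and}\quad
  C\defeq\set{\vec{x}\in A^4\mid x_3=x_4},
\]
so that $\deltarelation=B\cup C$ by definition. Since the projections $\eni[4]{i}$ belong to every clone on $A$, each of $B$ and $C$ is cut out by a single polynomial equation in operations of~$\funset{C}$, namely $\eni[4]{1}=\eni[4]{2}$ and $\eni[4]{3}=\eni[4]{4}$, respectively; hence $B,C\in\Alg\ari{4}\funset{C}$. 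Equational additivity then forces $\deltarelation=B\cup C\in\Alg\ari{4}\funset{C}$.

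For the reverse implication, assume $\deltarelation\in\Alg\ari{4}\funset{C}$, fix $n\in\N$, and let $B,C\in\Alg\ari{n}\funset{C}$. I would pick algebraic representations of $\deltarelation$, $B$, and $C$ (with operations from $\funset{C}\arii{4}$ and $\funset{C}\arii{n}$ respectively) and feed them into Lemma~\ref{lemma:how_does_deltarelation_really_works}. The conclusion of that lemma writes $B\cup C$ as the solution set of a system of equations whose sides are compositions of the form $p_i(f_j,g_j,h_k,t_k)$ and $q_i(f_j,g_j,h_k,t_k)$. Since clones are closed under composition, these compositions lie in $\funset{C}\arii{n}$, and therefore $B\cup C\in\Alg\ari{n}\funset{C}$. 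As $n$, $B$, and $C$ were arbitrary, this proves equational additivity.

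The conceptually delicate step—checking that the set-theoretic union $B\cup C$ really does coincide with the equation system built from $\deltarelation$, $B$, $C$—has already been absorbed into Lemma~\ref{lemma:how_does_deltarelation_really_works}, so in the theorem proof itself there is no substantive obstacle: everything reduces to (i) spotting the right decomposition of $\deltarelation$ and (ii) quoting the lemma in the correct direction. If I had to prove the theorem without the lemma in hand, the only nonroutine task would be verifying the union formula in the lemma, which hinges on the characteristic property of~$\deltarelation$ that $p_i(u,v,w,z)=q_i(u,v,w,z)$ whenever $u=v$ or $w=z$.
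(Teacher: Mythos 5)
Your proof is correct and takes essentially the same route as the paper: the forward direction uses the identical decomposition $\deltarelation = \{\vec{a}\mid a_1=a_2\}\cup\{\vec{a}\mid a_3=a_4\}$ with projection equations, and the reverse direction is a direct appeal to Lemma~\ref{lemma:how_does_deltarelation_really_works} plus closure of clones under composition. You merely spell out a few details the paper leaves implicit.
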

\begin{proof}
If $\deltarelation \in \Alg\ari{4}\funset{C}$, then
Lemma~\ref{lemma:how_does_deltarelation_really_works} yields that the
union of two $\funset{C}$-algebraic sets is always a
$\funset{C}$-algebraic set. If~$\funset{C}$ is equationally additive,
then $\deltarelation\in \Alg\ari{4}\funset{C}$ since it is the
union of
two algebraic sets, namely
\[\deltarelation =\{\vec{a}\in A^4 \mid
a_1=a_2\}\cup \{\vec{a}\in A^4 \mid a_3=a_4\}.\qedhere\]
\end{proof}

\begin{corollary}\label{cor:equational_add_and_inclusion}
Let~$\funset{C}$ and~$\funset{D}$ be clones on a set~$A$ such that
$\funset{C}\subseteq\funset{D}$.
If~$\funset{C}$ is equationally additive, then so is~$\funset{D}$.
\end{corollary}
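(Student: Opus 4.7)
The plan is to reduce the claim to Theorem~\ref{teor:eq_additive_iff_delta_four_algebraic}, which reformulates equational additivity as the single condition that the quaternary relation $\deltarelation$ belongs to $\Alg\ari{4}\funset{C}$. This lets us avoid arguing directly about arbitrary pairs of algebraic sets and their unions, and turns the statement into a routine monotonicity observation about $\Alg\ari{n}$ with respect to inclusion of clones.

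The key auxiliary fact is that $\Alg\ari{n}$ is monotone in its clone argument: if $\funset{C} \subseteq \funset{D}$ and $X \in \Alg\ari{n}\funset{C}$, then $X \in \Alg\ari{n}\funset{D}$. This is immediate from the definition, since any representation of $X$ as the simultaneous solution set of equations $p_i(\vec x) = q_i(\vec x)$ with $p_i, q_i \in \funset{C}\arii{n}$ is a fortiori such a representation with $p_i, q_i \in \funset{D}\arii{n}$, the defining families $(p_i)_{i \in I}, (q_i)_{i \in I}$ being unchanged.

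With this in hand, the argument is one short step. Assume $\funset{C}$ is equationally additive. By Theorem~\ref{teor:eq_additive_iff_delta_four_algebraic}, $\deltarelation \in \Alg\ari{4}\funset{C}$. By the monotonicity observation above applied with $n = 4$, we conclude $\deltarelation \in \Alg\ari{4}\funset{D}$. Invoking Theorem~\ref{teor:eq_additive_iff_delta_four_algebraic} in the other direction yields that $\funset{D}$ is equationally additive.

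There is essentially no obstacle here; the only point worth being careful about is that the set $A$ must be the same for both clones (so that $\deltarelation$ is the same quaternary relation in both settings), which is part of the hypothesis. One could alternatively give a direct proof by fixing algebraic sets $B, C \in \Alg\ari{n}\funset{D}$ and trying to write $B \cup C$ as a $\funset{D}$-algebraic set, but this would essentially recapitulate the content of Lemma~\ref{lemma:how_does_deltarelation_really_works} without gain.
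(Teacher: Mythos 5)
Your proof is correct and is exactly the intended argument: the paper states the corollary without a written proof immediately after Theorem~\ref{teor:eq_additive_iff_delta_four_algebraic}, precisely because the monotonicity of $\Alg\ari{4}$ in the clone argument makes the conclusion immediate from that theorem.
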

Hence if~$\ab{A}$ is an equational domain, then not only
is~$\Clo\ab{A}$ equationally additive, but also its extension
$\POL\ab{A}$.

An algebra~$\ab{A}$ is called \emph{essentially at most unary}
if the clone $\Clo\ab{A}$ is generated by its unary part.
We shall now prove that non-trivial equational domains must contain a
function with at least two essential arguments.
\begin{theorem}\label{thm:ess-unary-algs-not-eq-add}
Let~$\ab{A}$ be an essentially at most unary algebra with at least two
elements. Then $\POL\ab{A}$ is not equationally additive.
\end{theorem}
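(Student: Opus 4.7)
The plan is to invoke Theorem~\ref{teor:eq_additive_iff_delta_four_algebraic} and reduce the claim to showing that $\deltarelation\notin\Alg\ari{4}\POL\ab{A}$. The first step is to transfer essential at-most-unarity from $\Clo\ab{A}$ to $\POL\ab{A}$: every polynomial operation arises from a term operation by substituting constants for some arguments, so if every term operation is essentially at most unary, then every $n$-ary polynomial $p\in\POL\ari{n}\ab{A}$ is either constant or of the form $(x_1,\dots,x_n)\mapsto h(x_i)$ for some index $i\in\finset{n}$ and some unary polynomial $h\in\POL\ari{1}\ab{A}$.

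Next, I would argue by contradiction. Fix $a,b\in A$ with $a\neq b$ and consider $(a,b,a,b)\notin\deltarelation$. If $\deltarelation$ were algebraic with respect to $\POL\ab{A}$, then Lemma~\ref{lemma:equivalent_definition_algebraic_set} would supply quaternary polynomials $f,g\in\POL\ari{4}\ab{A}$ satisfying $f(a,b,a,b)\neq g(a,b,a,b)$ while $f\restrict{\deltarelation}=g\restrict{\deltarelation}$. By the previous step, each of $f,g$ is either constant or of the form $\hat f(x_i)$, respectively $\hat g(x_j)$, for suitable $i,j\in\finset{4}$ and unary $\hat f,\hat g\in\POL\ari{1}\ab{A}$.

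The core of the argument is the observation that $\deltarelation$ contains every tuple of shape $(u,u,v,w)$ as well as every tuple of shape $(v,w,u,u)$, which affords enough flexibility to force $f=g$ pointwise. If $i=j$, one lets $x_i$ range over~$A$ while fixing the opposite pair of coordinates to be equal, deducing $\hat f=\hat g$ on all of~$A$. If $i\neq j$ and $\{i,j\}$ is not contained in $\{1,2\}$ or in $\{3,4\}$, one makes the pair whose coordinates must be equal disjoint from $\{i,j\}$, then varies $x_i$ and $x_j$ independently inside $\deltarelation$, forcing $\hat f$ and $\hat g$ to be constant with the same value. If $\{i,j\}\subseteq\{1,2\}$ (or symmetrically $\subseteq\{3,4\}$), the tuples with $x_3=x_4$ still allow $x_i$ and $x_j$ to vary independently inside $\deltarelation$, with the same conclusion. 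The mixed cases in which one of $f,g$ is constant are handled analogously: a constant $f$ forces $\hat g$ to be constant with the same value via a coordinate that is free on $\deltarelation$. In every case $f=g$ on $A^4$, contradicting $f(a,b,a,b)\neq g(a,b,a,b)$.

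The step requiring the most care is the sub-case where the two essential indices lie in the same block of $\deltarelation$; there one has to remember to use tuples from the \emph{opposite} block (equality in the other coordinate pair) to vary the two coordinates independently. Once this observation is isolated, the remainder of the argument is a routine case distinction.
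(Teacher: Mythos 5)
Your proof is correct and takes essentially the same approach as the paper's: reduce to Theorem~\ref{teor:eq_additive_iff_delta_four_algebraic}, observe that every polynomial of an essentially at most unary algebra depends on at most one coordinate, and then exploit the fact that for any two coordinates $i,j\in\finset{4}$ and any $a,b\in A$ there is a tuple of $\deltarelation$ with $x_i=a$, $x_j=b$. The paper phrases the contradiction as ``every equation in the defining system is already satisfied on all of $A^4$,'' while you distill to a single separating pair $f,g$ via Lemma~\ref{lemma:equivalent_definition_algebraic_set}; this is a cosmetic difference. One small inaccuracy in your case analysis: when $i$ and $j$ lie in different blocks (say $i\in\set{1,2}$, $j\in\set{3,4}$), the pair of coordinates that you force to be equal can never be made \emph{disjoint} from $\set{i,j}$, contrary to what you claim; but this does not matter, since setting the block-partner of $x_i$ equal to $x_i$ still lets $x_i$ and $x_j$ range freely over $A$ within $\deltarelation$, which is all the argument needs.
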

\begin{proof}
The algebra~$\ab{A}$ being essentially at most unary means that its
clone $\Clo\ab{A}$ is generated by its unary part; hence $\POL\ab{A}$
is generated by~$\funset{F}$ defined as the union of
$\Clo\ari{1}\ab{A}$ and all unary constants. That is, for every
$n\in\N$ and $g\in\POL\ari{n}\ab{A}$ there is some $i\in\finset{n}$
and
some $f\in\funset{F}$ such that $g(x_1,\dotsc,x_n)=f(x_i)$ for all
$x_1,\dotsc,x_n\in A$. In order to obtain a contradiction, let us
assume that $\POL\ab{A}$ is equationally additive, which means
that~$\deltarelation$ is the solution set of some system of equations
over $\POL\ari{4}\ab{A}$ (cf.\
Theorem~\ref{teor:eq_additive_iff_delta_four_algebraic}).
Each of the equations is of the form $f(x_i)=g(x_j)$
for some $f,g\in\funset{F}$ and $i,j\in\finset{4}$, and it must
be satisfied by all tuples in~$\deltarelation$.
Let us now consider any particular such equation.\par
As a first case we assume that in this equation $i\neq j$.
For any $a,b\in A$ we can find a tuple
$\vec{x}\in\deltarelation$ such that $x_i=a$ and $x_j=b$. For instance,
if $(i,j)=(1,4)$ we may choose $(a,a,a,b)$, if
$(i,j)=(1,2)$ we may choose $(a,b,b,b)$, etc.
Since $f(x_i)=g(x_j)$ is satisfied by the constructed
$\vec{x}\in\deltarelation$, we obtain $f(a)=g(b)$ for all $a,b\in A$.
This implies that~$f$ and~$g$ are constant with the same value; but
then the equation $f(x_i)=g(x_j)$ is satisfied by all $\vec{x}\in A^4$.
\par
Let us now investigate the case where $i=j$, that is, the considered
equation is of the form $f(x_i)=g(x_i)$ with $i\in\finset{4}$.
Again, for any $a\in A$ we can choose
$\vec{x}=(a,a,a,a)\in\deltarelation$ to show that
$f(a)=g(a)$ holds for all $a\in A$. Thus $f=g$ and the equation
$f(x_i)=g(x_i)$ is again satisfied by all tuples in~$A^4$.\par
As a consequence, all the
equations that were assumed to define~$\deltarelation$ are actually
satisfied by any quadruple in~$A^4$. This, however, means that their
solution set is~$A^4$, which properly contains~$\deltarelation$, due to
$\crd{A}\geq 2$. This contradiction shows that $\POL\ab{A}$ cannot be
equationally additive.
\end{proof}

\begin{lemma}\label{lemma:images_of_the_polynomials_defining_delta_are_alpha_related}
Let~$\funset{C}$ be a clone on~$A$, let $\{p_i\mid
i\in I\},\{q_i\mid i\in I\}\subseteq  \funset{C}\arii{4}$ such that
$\deltarelation=\{\vec{a}\in A^4\mid\forall i\in I\colon
p_i(\vec{a})=q_i(\vec{a}) \}$, and let $\ab{A}=\algop{A}{\funset{C}}$.
Then for all $\alpha\in \Con\ab{A}$, for all $(a_1,a_2)\in \alpha$,
for all $x,y\in A$ and for all $i\in I$, we have
$p_i(a_1,a_2,x,y)\mathrel{\alpha}q_i(a_1,a_2,x,y)$ and
$p_i(x, y, a_1, a_2)\mathrel{\alpha} q_i(x, y, a_1, a_2)$.
\end{lemma}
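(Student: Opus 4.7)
The plan is to exploit two basic observations: first, every quadruple of the form $(a,a,x,y)$ and of the form $(x,y,a,a)$ lies in $\deltarelation$, so the defining equations force the equalities $p_i(a,a,x,y)=q_i(a,a,x,y)$ and $p_i(x,y,a,a)=q_i(x,y,a,a)$ for all $i\in I$; second, since $p_i,q_i\in\funset{C}\arii{4}$ are basic operations of $\ab{A}=\algop{A}{\funset{C}}$, each of them preserves every congruence $\alpha\in\Con\ab{A}$.

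For the first statement I would fix $\alpha\in\Con\ab{A}$, $(a_1,a_2)\in\alpha$, $x,y\in A$ and $i\in I$. Applying $p_i$ to the two input tuples $(a_1,a_1,x,y)$ and $(a_1,a_2,x,y)$, whose coordinates are $\alpha$-related pairwise (the first, third and fourth by reflexivity, the second by hypothesis), preservation of $\alpha$ yields $p_i(a_1,a_1,x,y)\mathrel{\alpha}p_i(a_1,a_2,x,y)$; the same argument with $q_i$ gives $q_i(a_1,a_1,x,y)\mathrel{\alpha}q_i(a_1,a_2,x,y)$. Combining these two $\alpha$-relations with the middle equality $p_i(a_1,a_1,x,y)=q_i(a_1,a_1,x,y)$ (which holds because $(a_1,a_1,x,y)\in\deltarelation$) and using symmetry and transitivity of $\alpha$ delivers the desired $p_i(a_1,a_2,x,y)\mathrel{\alpha}q_i(a_1,a_2,x,y)$. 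The second statement follows by the completely symmetric argument, starting from $(x,y,a_1,a_1)\in\deltarelation$ and perturbing only the fourth coordinate along the $\alpha$-pair $(a_1,a_2)$.

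I do not expect any genuine obstacle here: the lemma is essentially an immediate combination of the defining equations of $\deltarelation$ with the compatibility of the operations in $\funset{C}$ with congruences, together with the reflexivity of $\alpha$ on the coordinates that are left unchanged. The only point to be careful about is that the middle equality is obtained by plugging into the system at the point $(a_1,a_1,x,y)$ (or $(x,y,a_1,a_1)$), not at the point $(a_1,a_2,x,y)$ itself, which need not lie in $\deltarelation$.
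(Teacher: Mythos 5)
Your argument is correct and is essentially identical to the paper's proof: both perturb a single coordinate from $a_1$ to $a_2$ on each side of the middle equality $p_i(a_1,a_1,x,y)=q_i(a_1,a_1,x,y)$ (and its analogue), using that the operations of $\funset{C}$ preserve $\alpha$ and that the relevant quadruple lies in $\deltarelation$. The paper just writes the resulting chain in one line; no difference in substance.
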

\begin{proof}
Let $\alpha\in\Con\ab{A}$, let $(a_1,a_2)\in\alpha$, let $x,y\in A$
and let $i\in I$. We have
\[p_i(a_1,a_2,x,y)\mathrel{\alpha}
p_i(a_1,a_1,x,y)=q_i(a_1,a_1,x,y)\mathrel{\alpha} q_i(a_1,a_2,x,y)\]
and
\[p_i(x,y,a_1,a_2)\mathrel{\alpha} p_i(x,y,a_1,a_1)
 =q_i(x,y,a_1,a_1)\mathrel{\alpha} q_i(x,y,a_1,a_2).\qedhere\]
\end{proof}

The next result tells that every equational domain is finitely
subdirectly irreducible.
\begin{proposition}\label{prop:nec_condi_fin_sub_irre}
For any set~$A$ and every equationally additive clone~$\funset{C}$
on~$A$ the algebra $\ab{A}=\algop{A}{\funset{C}}$ is finitely subdirectly
irreducible.
\end{proposition}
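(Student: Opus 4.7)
The plan is to argue by contradiction, combining Theorem~\ref{teor:eq_additive_iff_delta_four_algebraic} with Lemma~\ref{lemma:images_of_the_polynomials_defining_delta_are_alpha_related}. Suppose, towards a contradiction, that there are congruences $\alpha,\beta\in\Con\ab{A}\setminus\{\bottom{A}\}$ with $\alpha\cap\beta=\bottom{A}$. Pick witnesses $(a,b)\in\alpha$ with $a\neq b$ and $(c,d)\in\beta$ with $c\neq d$.

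Since $\funset{C}$ is equationally additive, Theorem~\ref{teor:eq_additive_iff_delta_four_algebraic} gives an index set~$I$ and families $\{p_i\mid i\in I\},\{q_i\mid i\in I\}\subseteq\funset{C}\arii{4}$ with
\[
\deltarelation=\{\vec{x}\in A^4\mid\forall i\in I\colon p_i(\vec{x})=q_i(\vec{x})\}.
\]
Now observe that $(a,b,c,d)\notin\deltarelation$ because $a\neq b$ and $c\neq d$. Hence there exists some $i\in I$ such that $p_i(a,b,c,d)\neq q_i(a,b,c,d)$.

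The final step is to apply Lemma~\ref{lemma:images_of_the_polynomials_defining_delta_are_alpha_related} twice to the same~$i$: using $(a,b)\in\alpha$ in the first two coordinates yields $p_i(a,b,c,d)\mathrel{\alpha}q_i(a,b,c,d)$, while using $(c,d)\in\beta$ in the last two coordinates yields $p_i(a,b,c,d)\mathrel{\beta}q_i(a,b,c,d)$. Therefore the pair $\bigl(p_i(a,b,c,d),q_i(a,b,c,d)\bigr)$ lies in $\alpha\cap\beta=\bottom{A}$, forcing $p_i(a,b,c,d)=q_i(a,b,c,d)$, a contradiction.

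There is no real obstacle here: both Theorem~\ref{teor:eq_additive_iff_delta_four_algebraic} and Lemma~\ref{lemma:images_of_the_polynomials_defining_delta_are_alpha_related} are tailored precisely to this purpose, and $\deltarelation$ was designed exactly so that a tuple with non-equal first pair and non-equal second pair is excluded. The mild subtlety is noting that the argument is vacuous when~$\ab{A}$ has at most one element; for $\crd{A}\geq 2$ the chosen witnesses $(a,b)$ and $(c,d)$ exist whenever the congruences are non-trivial, and the proof goes through without any further case distinction.
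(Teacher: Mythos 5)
Your proof is correct and follows essentially the same approach as the paper: both extract a representation of $\deltarelation$ from Theorem~\ref{teor:eq_additive_iff_delta_four_algebraic}, pick an index~$i$ where $p_i$ and $q_i$ differ on a tuple built from generators of~$\alpha$ and~$\beta$, and apply Lemma~\ref{lemma:images_of_the_polynomials_defining_delta_are_alpha_related} twice to land the separating pair in $\alpha\cap\beta$. The only cosmetic difference is that you phrase it as a proof by contradiction while the paper gives the direct version (exhibiting a non-identity pair in $\alpha\cap\beta$); the witnesses and the lemma application are identical.
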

\begin{proof}
If~$\funset{C}$ is equationally additive, then there exist an
index set~$I$
and functions $p_i,q_i\in\funset{C}\arii{4}$ for $i\in I$, such that
$\deltarelation=\{\vec{x}\in A^4\mid \forall i\in I\colon
p_i(\vec{x})=q_i(\vec{x})\}$.
Let $\alpha,\beta\in\Con\ab{A}\setminus \{\bottom{A}\}$. We show
that $(\alpha\cap \beta)\setminus \bottom{A}\neq \emptyset$.
Since $\alpha\neq \bottom{A}$ and $\beta\neq \bottom{A}$, there exist
$(a_1,a_2)\in \alpha\setminus \bottom{A}$ and $(b_1,b_2)\in
\beta\setminus \bottom{A}$. Let $i\in I$ be such that
$p_i(a_1,a_2,b_1,b_2)\neq q_i(a_1,a_2,b_1,b_2)$. Since
$(a_1,a_2)\in\alpha$,
Lemma~\ref{lemma:images_of_the_polynomials_defining_delta_are_alpha_related}
yields that
$p_i(a_1,a_2,b_1,b_2)\mathrel{\alpha}q_i(a_1,a_2,b_1,b_2)$;
likewise $(b_1,b_2)\in\beta$ implies
$p_i(a_1,a_2,b_1,b_2)\mathrel{\beta}q_i(a_1,a_2,b_1,b_2)$.
Thus, $(p_i(a_1,a_2,b_1,b_2),q_i(a_1,a_2,b_1,b_2))$ belongs to
$(\alpha\cap \beta)\setminus \bottom{A}$.
\end{proof}

We say that an algebra~$\ab{A}$ has a \emph{weak difference
term}
if there exists $d\in\Clo\ari{3}\ab{A}$ such that for all
$\theta\in\Con\ab{A}$ and all $(a,b)\in \theta$ we have
$d(a,b,b)\mathrel{[\theta, \theta]} a\mathrel{[\theta,\theta]}
d(b,b,a)$.
A \emph{weak difference polynomial} is defined analogously using
$\POL\ab{A}$.
Note that a Ma\v{l}cev polynomial is also a weak difference
polynomial.

\begin{proposition}\label{prop:nec_condi_weak_diff_implies_non_Abelian}
Let~$\ab{A}$ be an algebra with a weak difference polynomial.
If\/~$\POL\ab{A}$ is equationally additive,
then for all $\alpha\in \Con\ab{A}\setminus\{\bottom{A}\}$ we have
$[\alpha, \alpha]>\bottom{A}$.
\end{proposition}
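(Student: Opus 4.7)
The plan is to argue by contradiction. Assume some $\alpha \in \Con\ab{A} \setminus \{\bottom{A}\}$ satisfies $[\alpha, \alpha] = \bottom{A}$; by definition this is the centralization condition $C(\alpha, \alpha; \bottom{A})$, the only structural input from commutator theory I will use. Choose $(a_1, a_2) \in \alpha$ with $a_1 \ne a_2$. By Theorem~\ref{teor:eq_additive_iff_delta_four_algebraic} the relation $\deltarelation$ is an intersection of solution sets of polynomial equations $p_i(\vec{x}) = q_i(\vec{x})$ with $p_i, q_i \in \POL\ari{4}\ab{A}$, and since $(a_1, a_2, a_1, a_2) \notin \deltarelation$ there is an index $i \in I$ with $s := p_i(a_1, a_2, a_1, a_2) \ne q_i(a_1, a_2, a_1, a_2) =: t$. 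Lemma~\ref{lemma:images_of_the_polynomials_defining_delta_are_alpha_related} yields $(s, t) \in \alpha$, and the value $c := p_i(a_1, a_1, a_1, a_2) = q_i(a_1, a_1, a_1, a_2)$ (the second equality because $(a_1, a_1, a_1, a_2) \in \deltarelation$) lies in the same $\alpha$-block as $s$ and $t$ by $\alpha$-compatibility of $p_i$.

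The key construction is the binary polynomial
\[
\Phi(u, z) := d\bigl(p_i(a_1, u, a_1, z),\ q_i(a_1, u, a_1, z),\ c\bigr),
\]
where $d \in \POL\ari{3}\ab{A}$ is a weak difference polynomial of $\ab{A}$. Because $[\alpha, \alpha] = \bottom{A}$, the defining identities of $d$ sharpen to the exact equalities $d(a, b, b) = a = d(b, b, a)$ for every pair $(a, b) \in \alpha$. For each $(u, z) \in \{(a_1, a_1), (a_1, a_2), (a_2, a_1)\}$ the tuple $(a_1, u, a_1, z)$ lies in $\deltarelation$, so $p_i$ and $q_i$ agree there; combining this with the weak-difference identities on the $\alpha$-block of $c$ gives $\Phi(a_1, a_1) = \Phi(a_1, a_2) = \Phi(a_2, a_1) = c$, whereas the one remaining value is $\Phi(a_2, a_2) = d(s, t, c)$.

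A first application of the term condition to $\Phi$ is then immediate: from $\Phi(a_1, a_1) = \Phi(a_1, a_2)$ together with $a_1 \equiv_\alpha a_2$, the relation $C(\alpha, \alpha; \bottom{A})$ transports the equality to $\Phi(a_2, a_1) = \Phi(a_2, a_2)$, which reads $d(s, t, c) = c$. To finish I plan to apply the term condition a second time, now to the polynomial $d$ itself: combining the freshly derived $d(s, t, c) = c$ with $d(s, s, c) = c$ (another instance of the weak-difference identity, valid because $(s, c) \in \alpha$) gives $d(s, t, c) = d(s, s, c)$; since $(c, t) \in \alpha$ and $(s, t) \equiv_\alpha (s, s)$, centralization applied with the third argument of $d$ as the outer variable yields $d(s, t, t) = d(s, s, t)$, which the weak-difference identities simplify to $s = t$, contradicting $s \ne t$.

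The main subtlety will be this double use of the term condition: first on the tailored polynomial $\Phi$ to extract the equation $d(s, t, c) = c$ from the failure $(a_1, a_2, a_1, a_2) \notin \deltarelation$, and then on $d$ itself to convert that equation into $s = t$. The remaining work is careful bookkeeping that every invocation of the weak-difference identity happens on genuinely $\alpha$-related arguments; in particular, no further structural fact about Abelian congruences (such as the affine-module structure of their blocks) needs to be invoked.
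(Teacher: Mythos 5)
Your proof is correct, but it takes a genuinely different route from the paper's. The paper proves the implication directly: it forms $f(x_1,x_2)=d\bigl(p_i(a,b,a,b),\,p_i(a,x_1,x_2,b),\,q_i(a,x_1,x_2,b)\bigr)$, so the parameter $p_i(a,b,a,b)$ sits in the \emph{first} slot of~$d$, and the variable pair $(x_1,x_2)$ is fed into positions $2$ and $3$, placing the forbidden quadruple $(a,b,a,b)$ at the \emph{mixed} point $(x_1,x_2)=(b,a)$. Then the three ``good'' evaluations of~$f$ are $[\alpha,\alpha]$-related to $p_i(a,b,a,b)$ while $f(b,a)$ is $[\alpha,\alpha]$-related to $q_i(a,b,a,b)$, so a single application of $C(\alpha,\alpha;[\alpha,\alpha])$ gives $p_i\equiv_{[\alpha,\alpha]}q_i$ at the forbidden tuple, hence $[\alpha,\alpha]>\bottom{A}$, with no contradiction argument and no assumption on $[\alpha,\alpha]$. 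You instead put the constant~$c$ in the \emph{third} slot and land the forbidden tuple on the diagonal $(a_2,a_2)$, so your first term-condition step only yields $d(s,t,c)=c=d(s,s,c)$, where~$c$ is not a cancellation partner of~$s$ or~$t$; this is what forces your second centralization step --- applied to~$d$ itself, regarded with its third argument as the outer variable --- to trade~$c$ for~$t$ before the weak-difference identities can deliver $s=t$. Both arguments are sound and rest on the same two ingredients (the lemma that $(s,t)\in\alpha$ and a carefully chosen binary polynomial plugged into~$d$ together with the term condition). The paper's version is shorter because its variable assignment isolates $q_i$ directly; your version is a correct and only slightly longer alternative, whose one cost is the second centralization step and the detour through contradiction.
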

\begin{proof}
If~$\POL\ab{A}$ is equationally additive, then there exist an
index set~$I$
and functions $p_i,q_i\in\POL\ari{4}\ab{A}$ for $i\in I$, such that
\[\deltarelation=\{\vec{x}\in A^4\mid \forall i\in I\colon
p_i(\vec{x})=q_i(\vec{x})\}.\]
Let $\alpha\in\Con\ab{A}\setminus \{\bottom{A}\}$ and let
$(a,b)\in\alpha\setminus \bottom{A}$. As
$(a,b,a,b)\notin\deltarelation$, there exists $i\in I$ such that
$p_i(a,b,a,b)\neq q_i(a,b,a,b)$. Let us define the polynomial
operation~$f$ for all $x_1,x_2\in A$ by
\[f(x_1,x_2)\defeq d(p_i(a,b,a,b),p_i(a,x_1,x_2,b),q_i(a,x_1,x_2,b)).\]
By using the definition of weak difference polynomial and noting that,
due to
Lemma~\ref{lemma:images_of_the_polynomials_defining_delta_are_alpha_related},
we have $p_i(a,b,a,b)\mathrel{\alpha}q_i(a,b,a,b)$, we can verify that
\begin{alignat*}{2}
f(a,a)&=d(p_i(a,b,a,b),p_i(a,a,a,b),q_i(a,a,a,b))&&\mathrel{[\alpha,\alpha]}p_i(a,b,a,b),\\
f(a,b)&=d(p_i(a,b,a,b),p_i(a,a,b,b),q_i(a,a,b,b))&&\mathrel{[\alpha,\alpha]}p_i(a,b,a,b),\\
f(b,a)&=d(p_i(a,b,a,b),p_i(a,b,a,b),q_i(a,b,a,b))&&\mathrel{[\alpha,\alpha]}q_i(a,b,a,b),\\
f(b,b)&=d(p_i(a,b,a,b),p_i(a,b,b,b),q_i(a,b,b,b))&&\mathrel{[\alpha,\alpha]}p_i(a,b,a,b).
\end{alignat*}
Therefore, we have that $f(a,a)\mathrel{[\alpha, \alpha]} f(a,b)$.
Thus, applying the definition of commutator to~$f$ yields that
\[q_i(a,b,a,b)\mathrel{[\alpha,\alpha]}f(b,a)\mathrel{[\alpha,\alpha]}f(b,b)\mathrel{[\alpha,\alpha]}p_i(a,b,a,b).\]
Since $q_i(a,b,a,b)\neq p_i(a,b,a,b)$ we deduce that $[\alpha,
\alpha]>\bottom{A}$.
\end{proof}
We will use the notion of \emph{Taylor operation} on a set~$A$
as defined, e.g., in~\cite[Definition~6.6.1]{Bod21}.
We say that~$\ab{A}$ has a \emph{Taylor term}
(cf.~\cite[Definition~6.6.2]{Bod21}) if $\Clo\ab{A}$ contains a Taylor
operation, and that~$\ab{A}$ has a \emph{Taylor polynomial} if
$\POL\ab{A}$ contains a Taylor operation.
\begin{corollary}
Let~$\ab{A}$ be a finite, at least two-element algebra with an
idempotent Taylor polynomial. If\/~$\POL\ab{A}$ is equationally additive,
then~$\ab{A}$ is subdirectly irreducible and its monolith is
non-Abelian.
\end{corollary}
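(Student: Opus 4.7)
The plan is to assemble the corollary from Propositions~\ref{prop:nec_condi_fin_sub_irre} and~\ref{prop:nec_condi_weak_diff_implies_non_Abelian}, bridged by tame congruence theory. First, applying Proposition~\ref{prop:nec_condi_fin_sub_irre} with $\funset{C}=\POL\ab{A}$ shows that $\algop{A}{\POL\ab{A}}$ is finitely subdirectly irreducible; since polynomial operations preserve every congruence of $\ab{A}$, one has $\Con\ab{A}=\Con\algop{A}{\POL\ab{A}}$, so $\ab{A}$ itself is finitely subdirectly irreducible. Because $\ab{A}$ is finite with $\crd{A}\geq 2$, this upgrades to subdirect irreducibility, yielding a monolith $\mu$.

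For the non-Abelianity of $\mu$, I would argue by contradiction, assuming $[\mu,\mu]=\bottom{A}$. By tame congruence theory the type $\type{\bottom{A}}{\mu}$ is then Abelian, i.e., it equals $\tp{1}$ or $\tp{2}$. The hypothesis of an idempotent Taylor polynomial excludes type~$\tp{1}$: on a type-$\tp{1}$ minimal algebra every polynomial operation is essentially unary, so composing the Taylor polynomial with the idempotent unary polynomial~$e$ of image a $(\bottom{A},\mu)$-minimal set~$U$ would yield an idempotent essentially unary Taylor operation on the induced minimal algebra — but an idempotent essentially unary operation is a projection, which cannot satisfy nontrivial Taylor identities. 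Hence $\type{\bottom{A}}{\mu}=\tp{2}$, and the structure theory of~\cite{HobbMcK} provides a trace on which the induced algebra is polynomially equivalent to a nontrivial module; in particular it is Abelian and carries a Maltsev (hence weak difference) polynomial.

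The concluding step is to descend equational additivity from $\POL\ab{A}$ to the polynomial clone of the induced algebra on the trace, so that Proposition~\ref{prop:nec_condi_weak_diff_implies_non_Abelian} applied to its top congruence produces the desired contradiction. Since $\deltarelation$ is algebraic in~$\ab{A}$ by Theorem~\ref{teor:eq_additive_iff_delta_four_algebraic}, and $U^4$ is algebraic (cut out by the equations $e(x_i)=x_i$ for $i\in\finset{4}$), so is their intersection $\deltarelation\cap U^4=\deltarelation[U]$. The principal obstacle is the verification that this algebraicity translates into an algebraic description in terms of polynomials of the induced algebra on the trace — a routine but delicate TCT manipulation involving restriction of the defining polynomials along the idempotent~$e$. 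Once this descent is secured, the Abelian module is forced to be a nontrivial equational domain with weak difference polynomial, contradicting Proposition~\ref{prop:nec_condi_weak_diff_implies_non_Abelian}.
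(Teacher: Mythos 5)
Your route differs substantially from the paper's. The paper argues \emph{directly}: since $\algop{A}{\POL\ab{A}}$ has an idempotent Taylor operation, the variety it generates satisfies a nontrivial idempotent Ma\v{l}cev condition, hence by \cite[Theorem~9.6]{HobbMcK} omits type~$\tp{1}$, hence by \cite[Theorem~7.12]{HobbMcK} has a weak difference term; so $\ab{A}$ itself has a weak difference \emph{polynomial}, and Proposition~\ref{prop:nec_condi_weak_diff_implies_non_Abelian} applied to $\ab{A}$ finishes the proof. No contradiction and no localization to a minimal set or trace are needed. You instead argue by contradiction, analyse $\type{\bottom{A}}{\mu}$, rule out type~$\tp{1}$ by localizing the Taylor polynomial, and then attempt to apply Proposition~\ref{prop:nec_condi_weak_diff_implies_non_Abelian} to the induced algebra on a type-$\tp{2}$ trace.

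The sticking point is exactly the one you flag as ``routine but delicate,'' and it is not routine: it is the core of your argument and is left unproven. Knowing that $\deltarelation\cap U^4$ is algebraic over $\POL\ab{A}$ does \emph{not} give that $\deltarelation[N]$ is algebraic over $\POL(\ab{A}|_N)$ for the trace $N$. Composing the separating polynomials $p_i,q_i$ with the idempotent $e$ loses the inequality: one only gets $e(p_i(\vec{a}))=e(q_i(\vec{a}))\Rightarrow$ nothing. To make the descent work one must, for each $\vec{a}\in N^4\setminus\deltarelation[N]$, use that $(p_i(\vec{a}),q_i(\vec{a}))\in\mu\setminus\bottom{A}$ (Lemma~\ref{lemma:images_of_the_polynomials_defining_delta_are_alpha_related}), then invoke the TCT fact that a unary polynomial with range inside $U$ separates such a pair, compose with a polynomial isomorphism between traces to land in $N$, and verify $\mu$-stability to ensure the resulting map sends $N^4$ into $N$. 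Until this chain is written out, the contradiction does not go through, so the proof as written is incomplete. A further minor point: Proposition~\ref{prop:nec_condi_weak_diff_implies_non_Abelian} requires the trace algebra to be non-trivial (at least two elements) and to carry a weak difference polynomial, both of which you should explicitly extract from the type-$\tp{2}$ structure theory rather than gesture at; the paper's direct route sidesteps all of this.
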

\begin{proof}
Let $\ab{A}'=\algop{A}{\POL\ab{A}}$. Since~$A$ is finite and has at least
two elements and since $\Con\ab{A}=\Con\ab{A}'$,
Proposition~\ref{prop:nec_condi_fin_sub_irre} yields that~$\ab{A}$
and~$\ab{A}'$ are subdirectly irreducible. Let~$\mu$ be the
monolithic congruence of~$\ab{A}$ and~$\ab{A}'$.
Since $\ab{A}'$ has an idempotent Taylor operation,
it generates a variety satisfying a non-trivial
idempotent Ma\v{l}cev condition.
Hence that variety satisfies condition~(2) of~\cite[Theorem~9.6]{HobbMcK}, and
thus, by the latter theorem, the variety omits type~$\tp{1}$. Therefore, by
\cite[Theorem~7.12]{HobbMcK}, $\ab{A}'$ has a weak difference term.
Consequently, $\ab{A}$ has a weak difference polynomial; and therefore
Proposition~\ref{prop:nec_condi_weak_diff_implies_non_Abelian}
yields that~$\mu$ is non-Abelian.
\end{proof}

Hence, using Corollary~\ref{cor:equational_add_and_inclusion}, it
follows that all finite non-trivial equational domains having a Taylor
polynomial are subdirectly irreducible with a non-Abelian monolith.
\par

Since a Ma\v{l}cev operation is a Taylor operation, we obtain the
following.
\begin{corollary}\label{cor:nec_condi_malcev_polynomial_and_finite_yields_sub_irreducible_and_type_three}
Let~$\ab{A}$ be a finite algebra with at least two elements and a
Ma\v{l}cev polynomial. If\/~$\POL\ab{A}$ is equationally additive,
then~$\ab{A}$ is subdirectly irreducible and its monolith is
non-Abelian.
\end{corollary}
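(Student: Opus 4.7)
The plan is to deduce this corollary directly from the preceding one, so the main task reduces to verifying that a Ma\v{l}cev polynomial is an instance of an idempotent Taylor polynomial. Once this is established, all hypotheses of the previous corollary are met and the conclusion transfers verbatim.

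First I would recall the definition: a Ma\v{l}cev operation $d\in\POL\ari{3}\ab{A}$ satisfies $d(x,y,y)=d(y,y,x)=x$ for all $x,y\in A$. In particular, setting $x=y$ gives $d(x,x,x)=x$, so $d$ is idempotent. To see that $d$ is a Taylor operation, one has to exhibit, for each argument position $i\in\finset{3}$, an identity satisfied by $d$ in which the $i$-th variable on the two sides is different. For $i=1$ and $i=3$ the identity $d(x,y,y)=d(y,y,x)$ already works, since positions $1$ and $3$ contain different variables on the two sides. For $i=2$ the identity $d(x,x,y)=d(y,y,y)$ works: both sides evaluate to $y$ by the Ma\v{l}cev equations, and position $2$ carries $x$ on the left and $y$ on the right. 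Hence $d$ is an idempotent Taylor operation.

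Having established that $d\in\POL\ab{A}$ is an idempotent Taylor polynomial, I would simply invoke the preceding corollary with this $d$ to conclude that~$\ab{A}$ is subdirectly irreducible and that its monolith is non-Abelian.

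There is really no obstacle here; the only content is the identification of Ma\v{l}cev operations as a special case of idempotent Taylor operations, which is a standard observation. The proof can therefore be kept to a single short paragraph consisting of this identification followed by an application of the previous corollary.
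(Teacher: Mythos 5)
Your proof is correct and follows exactly the same route as the paper: the paper likewise deduces this corollary from the preceding one by noting "Since a Ma\v{l}cev operation is a Taylor operation," though it states this as a known fact without spelling out the Taylor identities. Your explicit verification (idempotence from $d(x,x,x)=x$, the identity $d(x,y,y)\approx d(y,y,x)$ handling positions $1$ and $3$, and $d(x,x,y)\approx d(y,y,y)$ handling position $2$) is a correct and welcome expansion of that assertion.
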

We now focus on those clones on a finite set~$A$ with the property
that~$\deltarelation$ is the solution set of a single equation of
the form $f\approx a$ with $a\in A$.
An example is given by the clone of polynomial functions of a ring
with no zero divisors, where $f(x_1,x_2,x_3,x_4)=(x_1-x_2)(x_3-x_4)$
and $a=0$.
\begin{lemma}\label{lemma:sudoku_problem}
Let~$A$ be a finite set with $\crd{A}\geq 2$, let $0\in A$,
let $f\colon A^4\to A$ be
such that $\deltarelation=\{\vec{x}\in A^4\mid f(\vec{x})=0\}$,
and let $\ab{A}=\algop{A}{f}$.
Then there exists $p\in \POL\ari{1}\ab{A}$ and there exists $i\in
f[A^4]\setminus\{0\}$ such that $p(0)=0$ and $p(x)=i$ for all $x\in
A\setminus \{0\}$.
\end{lemma}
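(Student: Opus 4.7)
I plan to combine an iteration argument on a convenient unary polynomial with induction on~$\crd A$. Let $q(x)\defeq f(0,x,0,x)\in\POL\ari{1}\ab A$; since $(0,x,0,x)\in\deltarelation$ iff $x=0$, $q$ satisfies $q(0)=0$ and $q(x)\neq 0$ for every $x\in A\setminus\set{0}$. Write
\[M\defeq\set{r\in\POL\ari{1}\ab A\mid r^{-1}(\set{0})=\set{0}}\]
for the submonoid of $(\POL\ari{1}\ab A,\circ)$ containing~$q$; the lemma amounts to producing an element of~$M$ whose image is $\set{0,i}$ for some $i\in f[A^4]\setminus\set{0}$.

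Because $A$ is finite, some iterate $e\defeq q^{(N)}$ of~$q$ is idempotent in~$M$. Setting $B\defeq e[A]$, we have $0\in B$, $e$ retracts~$A$ onto~$B$, and each element of $B\setminus\set{0}$ equals $q(z)=f(0,z,0,z)$ for some $z\in A\setminus\set{0}$, so $B\setminus\set{0}\subseteq f[A^4]\setminus\set{0}$. The key construction is the polynomial $\bar f\defeq e\circ f\in\POL\ari{4}\ab A$: since $e^{-1}(\set{0})=\set{0}$, we get $\bar f^{-1}(\set{0})=\deltarelation$, and the image of~$\bar f$ lies in~$B$. The restriction $\bar f\restrict{B^4}\colon B^4\to B$ is therefore an operation on~$B$ with zero set $\deltarelation[B]$, so $(B;\bar f\restrict{B^4})$ inherits the hypothesis of the lemma; moreover, any unary polynomial of $(B;\bar f\restrict{B^4})$ lifts canonically to a unary polynomial of~$\ab A$ by substituting~$\bar f$ for $\bar f\restrict{B^4}$ in its defining term, since all involved constants lie in $B\subseteq A$.

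I then induct on~$\crd A$. The base case $\crd A=2$ is immediate: $B=A=\set{0,a}$ and $q$ itself has image $\set{0,q(a)}$ with $q(a)\in f[A^4]\setminus\set{0}$. For the inductive step, if $\crd B<\crd A$, the inductive hypothesis applied to $(B;\bar f\restrict{B^4})$ yields a unary polynomial $q^*$ with $q^*(0)=0$ and $q^*[B\setminus\set{0}]=\set{i}$ for some $i\in\bar f[B^4]\setminus\set{0}\subseteq B\setminus\set{0}$. Denoting by~$\tilde q^*\in\POL\ari{1}\ab A$ the lift of~$q^*$ and setting $p\defeq\tilde q^*\circ e$, I obtain $p(0)=\tilde q^*(0)=0$; and for $x\in A\setminus\set{0}$, $e(x)\in B\setminus\set{0}$, whence $p(x)=q^*(e(x))=i\in f[A^4]\setminus\set{0}$, as required.

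The main obstacle is the residual case $\crd B=\crd A$, in which $e=\id_A$ forces~$q$ to be a permutation of~$A$ fixing~$0$, so iterating~$q$ alone cannot shrink its image. My plan to surmount this is to exhibit a \emph{different}, non-bijective element of~$M$ by scanning other natural slices of~$f$, for instance $f(x,0,0,x)$, $f(0,x,x,0)$, $f(0,c,0,x)$ for $c\in A\setminus\set{0}$, or composite polynomials of the form $f(0,q^{(a)}(x),0,q^{(b)}(x))$. The argument would then derive, from $f^{-1}(\set{0})=\deltarelation$ and $\crd A\geq 3$, that such polynomials cannot all simultaneously be permutations of~$A$ (ruling out a quasigroup-like configuration on $A\setminus\set{0}$). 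Once a non-permutation member of~$M$ is produced, the above iteration/induction argument reruns with it in place of~$q$. Making this non-permutation extraction uniform for all admissible~$f$ is the essential technical difficulty I foresee.
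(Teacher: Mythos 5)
Your framework — iterating a unary polynomial $q$ with $q^{-1}(\set{0})=\set{0}$ until it becomes idempotent, restricting to the image, and inducting on $\crd{A}$ — matches the paper's inductive skeleton, and your base case and the branch where $q$ fails to be a permutation are sound. (A minor variation: you iterate the diagonal $q(x)=f(0,x,0,x)$, while the paper iterates a specific member of the family $p_a(x)\defeq f(0,x,0,a)$ with $a\neq 0$ once one of them fails to be a permutation; both work equally well for the descent to a smaller set.)

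The genuine gap is exactly the residual case you flagged, and the strategy you sketch for it does not close the argument. Your plan is to locate a non-bijective member of~$M$ among ``natural slices'' such as $f(0,c,0,x)$, $f(x,0,0,x)$, etc., and, should they all turn out to be permutations, to derive a contradiction (``ruling out a quasigroup-like configuration on $A\setminus\set{0}$''). That contradiction does not exist: nothing in the hypotheses prevents $f$ from restricting on $\set{0}\times(A\setminus\set{0})\times\set{0}\times(A\setminus\set{0})$ to a Latin square on $A\setminus\set{0}$ with a bijective diagonal (already possible for $\crd{A}=4$, e.g.\ the cyclic-group table), in which case all of $q$, all $p_a$ with $a\neq 0$, and all slices $f(0,c,0,x)$ are simultaneously permutations of~$A$. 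So there is no absurdity to reach, and the non-permutation polynomial you need does not appear among those slices.

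What the paper does instead in this case is a diagonal composition trick that your proposal misses: with $m\defeq\crd{A}!$ and any $i\in f[A^4]\setminus\set{0}$, one sets $p(x)\defeq p_x^m(i)$, that is,
\[
p(x)=\underbrace{f\bigl(0,\,f(0,\dotsc f(0,i,0,x)\dotsc,0,x),\,0,\,x\bigr)}_{m\text{ nested applications of }f}.
\]
Here the \emph{index} of the iterated polynomial varies with the argument $x$: for $x\neq 0$ the map $p_x$ is a permutation, so $p_x^m=\id_A$ and $p(x)=i$, while $p_0\equiv 0$ (because $(0,y,0,0)\in\deltarelation$) gives $p(0)=0$. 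This directly produces the desired $p$ with no further induction. Your ``composite polynomials $f(0,q^{(a)}(x),0,q^{(b)}(x))$'' keep the two $x$-slots synchronized through iterates of a single permutation and do not capture the asymmetry of fixing the fourth argument at $x$ while iterating in the second; they will in general again be permutations. Without the paper's Case-1 construction (or an equivalent substitute) your proof does not go through.
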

\begin{proof}
We proceed by induction on $\crd{A}\geq 2$.

\textbf{Base step}: $\crd{A}=2$, $A=\set{0,i}$ with $i\neq 0$:
The unary
polynomial~$p$, defined by $x\mapsto f(0,x,0,x)$ for all $x\in A$,
satisfies all the desired properties.
In fact $(0,0,0,0)\in\deltarelation$, hence $p(0)=f(0,0,0,0)=0$, and
$(0,i,0,i)\notin\deltarelation$, hence $p(i)=i$.

\textbf{Induction step}: For each element $a\in A$, let us define
a unary polynomial $p_a\in \POL\ari{1}\ab{A}$ by $p_a(x)=f(0,x,0,a)$
for
all $x\in A$.
Note that, if $a\in A\setminus\set{0}$, then $p_a$ preserves
$A\setminus\set{0}$ as a subuniverse since
$(0,x,0,a)\notin\deltarelation$ for all
$x\in A\setminus\set{0}$. Moreover, we have $p_a(0)=0$.
We split the induction step into two cases.

\textbf{Case 1}: \textit{For all $a\in A\setminus\{0\}$ the
                         function~$p_a$ induces a permutation on~$A$.}
Set $m\defeq\crd{A}!$ and consider any $a\in A\setminus\set{0}$.
The order of~$p_a$ in the full symmetric group on~$A$
divides~$m$, hence $p_a^m(x)=x$ for all $x\in A$.
Since $(0,x,0,0)\in\deltarelation$
the $m$-th iterated power
of~$p_0$ is still the constant zero function of arity one.
Therefore, for all $x\in A$, given $a\neq 0$, we have $p_a^m(x) = x$,
while $p_a^m(x)=0$ if $a=0$.
We now pick an arbitrary element $i\in f[A^4]\setminus\set{0}$ (this
is possible since there is some $a\in A\setminus\set{0}$, for
which~$p_a$ is a permutation) and define $p(x)\defeq p_x^m(i)$ for all
$x\in A$. Clearly, if $x\neq 0$, then $p(x)=p_x^m(i)=i$, and
$p(0)=p_0^m(i)=0$.
Moreover, $p\in\POL\ari{1}\ab{A}$ because it is constructed as an
iterated substitution of~$f$ within itself wherein some positions have
been filled by constant values.

\textbf{Case~2}: \textit{There is $a\in A\setminus\{0\}$ where the
function~$p_a$ is not a permutation of~$A$.}
Let $m\in\N$ be such that $e\defeq p_a^m\in\POL\ari{1}\ab{A}$ is
idempotent, i.e., $e\circ e=e$. Let $B\defeq e[A]$ be its image,
which contains~$0$ since $p_a(0)=0$.
Since~$p_a$ preserves~$\set{0}$
and $A\setminus\set{0}$, so does~$e$, and hence we have
\begin{equation}\label{eq:when_is_p_prime_equal_zero}
\forall x\in A\colon\quad e(x)=0\iff x=0.
\end{equation}
Moreover, we have $B=e[A]\subseteq p_a[A]\subsetneq A$ since~$p_a$
is not surjective; hence
the algebra $\ab{B}=\algop{B}{(e\circ f)\restrict{B}}$ is defined on a
set with smaller cardinality than~$A$.
Given $n\in\N$, a straightforward induction on the polynomial terms
describing $\POL\ari{n}\ab{A}$ shows that
\begin{equation}\label{eq:POLB_0subseteqPOLA_restrict}
\forall n\in\N \, \forall g\in \POL\ari{n}\ab{B}\, \exists \hat{g}\in
\POL\ari{n}\ab{A}\, \forall \vec{b}\in B^n\colon\quad
g(\vec{b})=\hat{g}(\vec{b}).
\end{equation}
Because of~\eqref{eq:when_is_p_prime_equal_zero},
for all $\vec{x}\in A^4$ we have
\[(e\circ f)(\vec{x})=0\iff f(\vec{x})=0\iff
\vec{x}\in\deltarelation,\]
which implies that
\[\deltarelation[B] = B^4\cap\deltarelation
=\Set{\vec{b}\in B^4| \vec{b}\in\deltarelation}
=\Set{\vec{b}\in B^4| (e\circ f)\restrict{B}(\vec{b})=0}.\]
This means that the induction hypothesis can be applied to~$\ab{B}$,
as $\crd{B}<\crd{A}$.
Thus, there exists
$q\in \POL\ari{1}\ab{B}$ and there is
$i\in B\setminus\set{0}\subseteq
p_a[A]\setminus\set{0}\subseteq f[A^4]\setminus\{0\}$ such that
$q(0)=0$ and $q(b)=i$ for all
$b\in B\setminus\set{0}$. Moreover,
\eqref{eq:POLB_0subseteqPOLA_restrict} yields that there
exists $\hat{q}\in\POL\ari{1}\ab{A}$ such that $q(b)=\hat{q}(b)$
for all $b\in B$.
Let us define $p\defeq \hat{q}\circ e\in \POL\ari{1}\ab{A}$.
Then~\eqref{eq:when_is_p_prime_equal_zero} yields
$p(0)=q(e(0))=q(0)=0$. Moreover, for all $a\in A\setminus\{0\}$ we
have by~\eqref{eq:when_is_p_prime_equal_zero} that $e(a)\in
B\setminus\set{0}$, and therefore $p(a)=\hat{q}(e(a))=q(e(a))=i$.
This concludes the proof.
\end{proof}

\begin{proposition}\label{prop:the_TCT_type_of_monolith_when_delta_is_f_equal_constant}
Let~$A$ be a finite set with at least two elements, let~$\funset{C}$
be
a clone on~$A$, let $f\in\funset{C}\arii{4}$, let $0\in A$ be
such that
$\deltarelation=\{\vec{a}\in A^4\mid f(\vec{a})=0\}$, and let
$\ab{A}=\algop{A}{\funset{C}}$.
Then~$\ab{A}$ is subdirectly irreducible, there exists $i\in
f[A^4]\setminus\{0\}$ such that $\mu=\Cg{\{(0,i)\}}$ is
the monolith of~$\ab{A}$,
and $\type{\bottom{A}}{\mu}=\tp{3}$.
\end{proposition}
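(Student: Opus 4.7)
The plan is to use Lemma~\ref{lemma:sudoku_problem} as the engine, combined with the equational-additivity machinery already in place. Since $\deltarelation$ is algebraic through the single equation $f\approx 0$, Theorem~\ref{teor:eq_additive_iff_delta_four_algebraic} yields that $\funset{C}$ is equationally additive, so Proposition~\ref{prop:nec_condi_fin_sub_irre} gives finite subdirect irreducibility of $\ab{A}$; since $A$ is finite with $\crd{A}\geq 2$, this upgrades to subdirect irreducibility. Let $\mu$ denote the monolith.

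Next, Lemma~\ref{lemma:sudoku_problem} produces $p\in\POL\ari{1}\ab{A}$ and $i\in f[A^4]\setminus\set{0}$ with $p(0)=0$ and $p(x)=i$ for every $x\in A\setminus\set{0}$. I claim $\mu=\Cg{\{(0,i)\}}$. For an arbitrary non-trivial $\alpha\in\Con\ab{A}$, pick $(a,b)\in\alpha$ with $a\neq b$. Then $(a,b,a,b)\notin\deltarelation$ gives $f(a,b,a,b)\neq 0$, while $(a,a,a,b)\in\deltarelation$ gives $f(a,a,a,b)=0$, so $\alpha$-compatibility of $f$ forces $0=f(a,a,a,b)\mathrel{\alpha}f(a,b,a,b)\neq 0$, whence $(0,f(a,b,a,b))\in\alpha$. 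Applying $p$ to this pair yields $(0,i)\in\alpha$. Thus $\Cg{\{(0,i)\}}\subseteq\alpha$ for every non-trivial $\alpha$, and since $\Cg{\{(0,i)\}}$ is itself non-trivial, it equals $\mu$.

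For the type, I plan to show that $U\defeq\set{0,i}$ is a $(\bottom{A},\mu)$-minimal set whose induced algebra $\ab{A}\restrict{U}$ is polynomially equivalent to the two-element Boolean algebra. The polynomial $p$ is idempotent, has image~$U$, and satisfies $p(0)=0\neq i=p(i)$, so it does not collapse $\mu$ (note $(0,i)\in\mu$); any polynomial with singleton image is constant and hence collapses~$\mu$, so $U$ is $(\bottom{A},\mu)$-minimal, and as $0\mathrel{\mu}i$, $U$ is a single $(\bottom{A},\mu)$-trace. To exhibit a Boolean structure on~$U$, I would use
\begin{align*}
g(x,y)&\defeq p(f(0,x,0,y)),\\
r(x)&\defeq p(f(x,i,0,i)).
\end{align*}
Evaluating $g$ on $U^2$ against $\deltarelation$ yields $g(0,0)=g(0,i)=g(i,0)=0$ and $g(i,i)=i$, so $g\restrict{U^2}$ is the meet with $0<i$. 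Similarly, $(0,i,0,i)\notin\deltarelation$ forces $r(0)=i$, while $(i,i,0,i)\in\deltarelation$ gives $r(i)=0$, so $r\restrict{U}$ is complementation. Since meet and complementation generate the full clone on a two-element set, $\ab{A}\restrict{U}$ is polynomially equivalent to the two-element Boolean algebra, giving $\type{\bottom{A}}{\mu}=\tp{3}$.

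The main obstacle is locating a non-monotonic polynomial on~$U$; the crux is realizing that the mixed substitution $(x,i,0,i)$ lies in $\deltarelation$ precisely when $x=i$, which turns $p\circ f$ into a negation rather than yet another meet. Everything else is either a direct substitution check against the definition of $\deltarelation$ or a standard appeal to the primality of the two-element Boolean algebra together with the TCT characterization of type~$\tp{3}$.
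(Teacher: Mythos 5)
Your proposal is correct and follows essentially the same strategy as the paper: Lemma~\ref{lemma:sudoku_problem} produces the idempotent unary polynomial $p$ with image $\set{0,i}$, the monolith is pinned down by composing $p$ with $f$ applied to a carefully chosen pair of tuples from $\deltarelation$ and its complement, and type~$\tp{3}$ is then established by exhibiting a meet and a complement on the trace $\set{0,i}$. The only difference is in the concrete witnesses: your $g(x,y)=p(f(0,x,0,y))$ and $r(x)=p(f(x,i,0,i))$ are (correct and) somewhat simpler than the operations $m$ and $c$ used in the paper, and the opening appeal to Theorem~\ref{teor:eq_additive_iff_delta_four_algebraic} and Proposition~\ref{prop:nec_condi_fin_sub_irre} is redundant since your second paragraph already proves subdirect irreducibility directly.
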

\begin{proof}
Lemma~\ref{lemma:sudoku_problem} yields that there exists $i\in
f[A^4]\setminus\{0\}$ and there exists $p\in\POL\ari{1}\ab{A}$
such that $p(0)=0$ and $p(a)=i$ for all $a\in A\setminus\{0\}$.
Take any $\theta\in\Con\ab{A}\setminus\{\bottom{A}\}$
and $(a,b)\in\theta\setminus\{\bottom{A}\}$. We show that
$(0,i)\in\theta$.
Let $h\colon A^4\to A$ be defined by $h(\vec{x})=p(f(\vec{x}))$
for all
$\vec{x}\in A^4$. Clearly, $h\in \POL\ab{A}$. Moreover,
$(0,i,a,b)\equiv_\theta (0,i,a,a)$. Thus, we have
$0=h(0,i,a,a)\mathrel{\theta} h(0,i,a,b)=i$, and therefore $(0,i)\in
\theta$. Hence~$\ab{A}$ is subdirectly irreducible and the monolith is
$\Cg{\{(0,i)\}}$. Since~$p$ is idempotent and has image
$\{0,i\}$, the set $\{0,i\}$ is minimal in the sense of tame
congruence theory (cf.~\cite[Definition~2.5]{HobbMcK}).

Next, we define $c\colon A\to A$ by letting $c(x)=p(f(x,i,x,i))$ for
all $x\in A$, and we introduce $m\colon A^2\to A$ by
$m(x_1,x_2)=p(f(p(f(x_1, i,x_1,i)), i, x_2, 0))$ for all
$x_1,x_2\in A$. Clearly, $c\in\POL\ari{1}\ab{A}$ and
$m\in\POL\ari{2}\ab{A}$. Moreover, we have
\begin{align*}
(0,i,0,i),(0,i,i,0)&\notin \deltarelation,\\
(i,i,i,i),(i,i,i,0),(0,i,0,0),(i,i,0,0)&\in \deltarelation.
\end{align*}
Therefore, we have
\begin{align*}
f(i,i,i,i)&=f(i,i,i,0)=f(0,i,0,0)=f(i,i,0,0)=0,\\
f(0,i,0,i)&\neq 0,\\
f(0,i,i,0)&\neq 0.
\end{align*}
Thus, we have
\begin{align*}
p(f (i,i,i,i))&=p(f (i,i,i,0))=p(f(0,i,0,0))=p(f(i,i,0,0))=0,\\
p(f(0,i,0,i) )&=p(f(0,i,i,0) )=i.
\end{align*}
Hence $c(0)=p(f(0,i,0,i))=i$ and $c(i)=p(f(i,i,i,i))=0$. Consequently,
$c\in\POL(\ab{A}\restrict{\{0,i\}})$, and~$c$ acts as a complement on
$\{0,i\}$. Moreover, $m$ satisfies
\begin{alignat*}{3}
m(0,0)&=p(f(p(f(0,i,0,i)),i,0,0))&&=p(f(i,i,0,0))&&=0,\\
m(i,0)&=p(f(p(f(i,i,i,i)),i,0,0))&&=p(f(0,i,0,0))&&=0,\\
m(0,i)&=p(f(p(f(0,i,0,i)),i,i,0))&&=p(f(i,i,i,0))&&=0,\\
m(i,i)&=p(f(p(f(i,i,i,i)),i,i,0))&&=p(f(0,i,i,0))&&=i.
\end{alignat*}
Consequently, $m\in\POL(\ab{A}\restrict{\{0,i\}})$ and it acts as
a meet on $\{0,i\}$.
Hence $\ab{A}\restrict{\{0,i\}}$ is polynomially
equivalent to a two-element Boolean algebra, and therefore
$\type{\bottom{A}}{\mu}=\tp{3}$.
\end{proof}

For the subsequent three results, the following notation to extend an
algebra $\ab{A}=\algop{A}{\funset{F}}$ by a single operation
$f\colon A^k\to A$, $k\in\N$, comes handy. We define $\ab{A}+f$ as an
abbreviation of the algebra ${\algop{A}{\funset{F}\cup \{f\}}}$.
\begin{lemma}\label{lemma:Erhardex13a}
Let~$\ab{A}$ be a finite algebra, let $a,b\in A$ with $a\neq b$,
and let $\alpha=\Cg{\{(a,b)\}}$. Then there exists
$f\colon A^4\to A$ such that
$(\ab{A}+f)+\ca$ is a subdirectly irreducible equational domain
with monolith~$\alpha$,
$\type{\bottom{A}}{\alpha}=\tp{3}$ in $(\ab{A}+f)+\ca$,
and $((\ab{A}+f)+\ca)/\alpha=(\ab{A}/\alpha +\cna[4]{a/\alpha})+\ca[a/\alpha]$.
\end{lemma}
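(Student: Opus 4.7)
The plan is to let $f\colon A^4\to A$ be the indicator-style function taking value~$a$ on~$\deltarelation$ and value~$b$ off~$\deltarelation$, and then to read off every required property of $\ab{A}'\defeq(\ab{A}+f)+\ca$ from Theorem~\ref{teor:eq_additive_iff_delta_four_algebraic} and Proposition~\ref{prop:the_TCT_type_of_monolith_when_delta_is_f_equal_constant}. Equational additivity is immediate: since $(x_1,x_1,x_1,x_1)\in\deltarelation$ we have $f(x_1,x_1,x_1,x_1)=a$, so $\deltarelation$ is the solution set of $f(x_1,x_2,x_3,x_4)=\ca(x_1)$, and Theorem~\ref{teor:eq_additive_iff_delta_four_algebraic} yields that~$\ab{A}'$ is an equational domain.

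Next, Proposition~\ref{prop:the_TCT_type_of_monolith_when_delta_is_f_equal_constant}, applied with $0\defeq a$ and $\funset{C}\defeq\Clo\ab{A}'$, gives that~$\ab{A}'$ is subdirectly irreducible, that its monolith is $\mu=\Cg[{\ab{A}'}]{\{(a,i)\}}$ for some $i\in f[A^4]\setminus\{a\}$, and that $\type{\bottom{A}}{\mu}=\tp{3}$. Because $\crd{A}\geq 2$ forces $\deltarelation\subsetneq A^4$, the image $f[A^4]$ is exactly $\{a,b\}$, so necessarily $i=b$, hence $\mu=\Cg[{\ab{A}'}]{\{(a,b)\}}$.

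The key step is then to reconcile~$\mu$ with the hypothesized congruence $\alpha=\Cg{\{(a,b)\}}$ of~$\ab{A}$. I would verify that $\alpha\in\Con\ab{A}'$: the basic operations of~$\ab{A}$ respect~$\alpha$ by construction, $\ca$ is trivially compatible, and~$f$ is compatible with~$\alpha$ because all its outputs lie in the single $\alpha$-block $\{a,b\}\subseteq a/\alpha$. Since $(a,b)\in\alpha$, this forces $\mu\subseteq\alpha$; the reverse inclusion $\alpha\subseteq\mu$ follows from $\Clo\ab{A}\subseteq\Clo\ab{A}'$. Therefore $\mu=\alpha$, and the subdirect irreducibility and type-$\tp{3}$ statements now speak about~$\alpha$ itself. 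Finally, because $f_\alpha$ collapses to the constant map with value $a/\alpha$ we have $f_\alpha=\cna[4]{a/\alpha}$, while $(\ca)_\alpha=\ca[a/\alpha]$, yielding $\ab{A}'/\alpha=(\ab{A}/\alpha+\cna[4]{a/\alpha})+\ca[a/\alpha]$ as desired.

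The main obstacle is precisely this reconciliation: Proposition~\ref{prop:the_TCT_type_of_monolith_when_delta_is_f_equal_constant} computes the monolith \emph{inside the extended algebra}~$\ab{A}'$, and without the careful restriction of the range of~$f$ to the $\alpha$-block $\{a,b\}$ the freshly added operation could introduce spurious identifications, making $\Cg[{\ab{A}'}]{\{(a,b)\}}$ strictly larger than~$\alpha$ and spoiling both the monolith statement and the quotient formula.
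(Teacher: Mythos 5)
Your proposal is correct and follows essentially the same approach as the paper's proof: the same choice of $f$, the same application of Proposition~\ref{prop:the_TCT_type_of_monolith_when_delta_is_f_equal_constant}, the same verification that $\alpha\in\Con((\ab{A}+f)+\ca)$ using $f[A^4]\subseteq a/\alpha$, and the same two-sided inclusion argument (minimality of the monolith for $\mu\subseteq\alpha$, and $\Con((\ab{A}+f)+\ca)\subseteq\Con\ab{A}$ for $\alpha\subseteq\mu$) to conclude $\mu=\alpha$.
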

\begin{proof}
Let $f\colon A^4 \to A$ be defined for all
$\vec{x}\in A^4$ by $f(\vec{x})=a$ if $\vec{x}\in \deltarelation$ and
$f(\vec{x})=b$ otherwise.
Proposition~\ref{prop:the_TCT_type_of_monolith_when_delta_is_f_equal_constant}
yields that $\Clo((\ab{A}+f)+\ca)$ is equationally additive,
$(\ab{A}+f)+\ca$ is subdirectly irreducible
with monolith $\nu\defeq\Cg[(\ab{A}+f)+\ca]{\{(a,b)\}}$, and
$\type{\bottom{A}}{\nu}=\tp{3}$.
Since the image of~$f$ is a subset of~$a/\alpha$, the
equivalence relation~$\alpha$ is preserved by~$f$ and~$\ca$; thus we
have $\alpha\in \Con((\ab{A}+f)+\ca)$.
As $\nu\in\Con((\ab{A}+f)+\ca)\subs\Con\ab{A}$ and $(a,b)\in\nu$, we
have $\alpha\subseteq \nu$, and since~$\nu$ is the
monolithic congruence of $(\ab{A}+f)+\ca$, we infer that $\nu=\alpha$.
The final equality of the lemma follows from
$f_\alpha= \cna[4]{a/\alpha}$.
\end{proof}

We say that an algebra~$\ab{A}$ is \emph{weakly isomorphic}
to an algebra~$\ab{C}$ if there exists an algebra~$\ab{B}$ with the
same universe as~$\ab{A}$ such that
$\Clo\ab{A}=\Clo\ab{B}$ and $\ab{B}\cong \ab{C}$.

\begin{theorem}\label{teor:Erhardex13b}
Let~$\ab{A}$ be a finite algebra with at least two elements.
Then there exists a subdirectly irreducible
finite equational domain~$\ab{B}$ with monolith~$\mu_\ab{B}$
and an algebra~$\ab{C}$ such that
$\type{\bottom{B}}{\mu_\ab{B}}=\tp{3}$,
$\ab{A}$ is weakly isomorphic to~$\ab{C}$, and~$\ab{C}$
is polynomially equivalent to~$\ab{B}/\mu_\ab{B}$.
\end{theorem}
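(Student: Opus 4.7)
The plan is to apply Lemma~\ref{lemma:Erhardex13a} to a one-element enlargement $\ab{A}^*$ of $\ab{A}$ which has (a copy of) $\ab{A}$ as its quotient modulo a principal congruence; the monolith of the equational domain produced by the lemma will then be exactly this principal congruence, so the quotient will carry an algebra isomorphic to $\ab{A}$ (up to adding constants).

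Fix any $a_0\in A$ and a fresh symbol $\star\notin A$, and set $A^*\defeq A\cup\{\star\}$. Let $\pi\colon A^*\to A$ be the retraction sending $\star$ to $a_0$ and fixing every element of $A$. For each basic operation $g$ of $\ab{A}$ of arity $k$ define $g^*\colon (A^*)^k\to A^*$ by $g^*(x_1,\dots,x_k)\defeq g(\pi(x_1),\dots,\pi(x_k))$, and let $\ab{A}^*$ be the algebra on $A^*$ with basic operations the $g^*$'s. Since $\pi(a_0)=\pi(\star)$, we have $g^*(\dots,a_0,\dots)=g^*(\dots,\star,\dots)$ for every $g$, so a direct verification shows
\[\alpha\defeq\Cg[\ab{A}^*]{\{(a_0,\star)\}}=\{(x,x)\mid x\in A^*\}\cup\{(a_0,\star),(\star,a_0)\},\]
and the map $[x]_\alpha\mapsto \pi(x)$ is an isomorphism $\ab{A}^*/\alpha\to\ab{A}$.

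Now apply Lemma~\ref{lemma:Erhardex13a} to the finite algebra $\ab{A}^*$ with the pair $(a,b)\defeq(a_0,\star)$; this yields $f\colon (A^*)^4\to A^*$ such that $\ab{B}\defeq(\ab{A}^*+f)+\ca[a_0]$ is a finite subdirectly irreducible equational domain whose monolith $\mu_\ab{B}$ equals $\alpha$, with $\type{\bottom{B}}{\mu_\ab{B}}=\tp{3}$ and $\ab{B}/\mu_\ab{B}=(\ab{A}^*/\alpha+\cna[4]{a_0/\alpha})+\ca[a_0/\alpha]$. Put $\ab{C}\defeq\ab{A}^*/\alpha$. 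The two extra operations $\cna[4]{a_0/\alpha}$ and $\ca[a_0/\alpha]$ of $\ab{B}/\mu_\ab{B}$ are constants, hence already lie in $\POL\ab{C}$; consequently $\POL(\ab{B}/\mu_\ab{B})=\POL\ab{C}$, i.e., $\ab{C}$ is polynomially equivalent to $\ab{B}/\mu_\ab{B}$. Since $\ab{C}\cong\ab{A}$, the algebra $\ab{A}$ is weakly isomorphic to $\ab{C}$, with $\ab{A}$ itself serving as the witness.

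The main obstacle is controlling the shape of the principal congruence $\alpha$: one has to rule out that closing $\{(a_0,\star)\}$ under the term operations of $\ab{A}^*$ produces any additional identifications, for otherwise the quotient $\ab{A}^*/\alpha$ would be strictly smaller than~$\ab{A}$. The retraction-based definition of $g^*$ is exactly what guarantees this, since it makes every $g^*$ insensitive to any exchange of the arguments $a_0$ and $\star$.
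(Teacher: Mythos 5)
Your proof is correct, and it takes a genuinely different route from the paper. The paper first passes to a term-equivalent algebra~$\ab{D}$ with a binary operation symbol and then invokes the Je\v{z}ek--Kepka theorem~\cite[Theorem~3.1]{JezKep02} as a black box, which furnishes a finite subdirectly irreducible algebra~$\ab{E}$ with $\ab{D}\cong\ab{E}/\mu_\ab{E}$; it then applies Lemma~\ref{lemma:Erhardex13a} to~$\ab{E}$ and a generator pair of its monolith. You instead construct such an $\ab{E}$ (your~$\ab{A}^*$) by hand: adjoin a single fresh element~$\star$ and redefine every basic operation to factor through the retraction~$\pi$ onto~$A$, so that $\Cg[\ab{A}^*]{\{(a_0,\star)\}}$ is exactly the relation collapsing~$\star$ onto~$a_0$ and nothing else, with $\ab{A}^*/\alpha\cong\ab{A}$ as algebras of the same signature. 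Your verification that this principal congruence is as small as claimed is the crux and you handle it correctly: the retraction-based definition of the~$g^*$ guarantees that $\{(x,x)\mid x\in A^*\}\cup\{(a_0,\star),(\star,a_0)\}$ is already a congruence. What your approach buys is elementariness (no appeal to an external existence theorem), an explicit and small witness ($\lvert B\rvert=\lvert A\rvert+1$), and, as a bonus, the strictly stronger conclusion that~$\ab{A}$ is actually \emph{isomorphic} (not merely weakly isomorphic) to~$\ab{C}$. What the paper's route buys is that it reuses a well-known embedding result rather than building a bespoke construction, and it parallels the companion result Theorem~\ref{teor:Erhardex13bcongruence_modular}, where the algebra~$\ab{B}$ is required to lie in the variety generated by~$\ab{A}$ and your one-point extension would not suffice.
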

\begin{proof}
Let~$\ab{D}$ be an algebra on the same universe as~$\ab{A}$ with at
least one at least binary functional symbol in its type, such that
$\Clo\ab{A}=\Clo\ab{D}$. For example, we may take
$\ab{D}=\ab{A}+\eni[2]{1}$, adding the binary projection onto the
first argument to~$\ab{A}$.
Then~\cite[Theorem~3.1]{JezKep02} yields that there exists a finite
subdirectly irreducible algebra~$\ab{E}$ with monolith~$\mu_\ab{E}$
such that $\ab{D}\cong \ab{E}/\mu_\ab{E}\eqdef\ab{C}$, i.e., $\ab{A}$
is weakly isomorphic to~$\ab{C}$.

Let $a,b\in E$ such that $\mu_\ab{E}=\Cg[\ab{E}]{\{(a,b)\}}$.
Then Lemma~\ref{lemma:Erhardex13a} states that there exists
$f\colon E^4\to E$ such that $\ab{B}\defeq (\ab{E}+f)+\ca$ is a finite
subdirectly irreducible equational domain with monolith
$\mu_{\ab{B}}=\mu_\ab{E}$,
$\type{\bottom{E}}{\mu_\ab{B}}=\tp{3}$ in~$\ab{B}$, and
$\ab{B}/\mu_{\ab{B}}
 =((\ab{E}+f)+\ca)/\mu_\ab{E}
 =(\ab{E}/\mu_\ab{E}+\cna[4]{a/\mu_\ab{E}})+\ca[a/\mu_\ab{E}]$.

Then~$\ab{A}$ is weakly isomorphic to~$\ab{C}$,
and~$\ab{C}$ is polynomially equivalent to the quotient
$\ab{B}/\mu_\ab{B}$.
\end{proof}
Theorem~\ref{teor:Erhardex13b} can be improved if we assume
that~$\ab{A}$ generates a congruence modular variety.
For the basic properties of modular lattices and
congruence modular varieties
we refer the reader to~\cite[Section~2.3]{McKMcnTay88}.
\begin{theorem}\label{teor:Erhardex13bcongruence_modular}
Let~$\ab{A}$ be a finite at least two-element algebra
in a congruence modular variety
and let $a\in A$.
Then there exist an algebra~$\ab{B}$ with universe~$B$
in the variety generated by~$\ab{A}$,
$b\in B$, and $f\colon B^4\to B$ such that
$(\ab{B}+f)+\ca[b]$ is a subdirectly irreducible equational domain with
monolith~$\alpha$,
$\type{\bottom{B}}{\alpha}=\tp{3}$ in $(\ab{B}+f)+\ca[b]$, and
$((\ab{B}+f)+\ca[b])/\alpha\cong(\ab{A}+\cna[4]{a})+\ca$.
\end{theorem}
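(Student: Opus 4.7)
The plan is to reuse Lemma~\ref{lemma:Erhardex13a}, but to obtain the required base algebra inside $\mathcal{V}\defeq\mathrm{Var}(\ab{A})$ rather than by the Je\v{z}ek--Kepka construction invoked in Theorem~\ref{teor:Erhardex13b}. Concretely, one needs a finite $\ab{B}\in\mathcal{V}$ together with distinct elements $\hat{a},\hat{b}\in B$ such that the principal congruence $\alpha\defeq\Cg[\ab{B}]{\{(\hat{a},\hat{b})\}}$ satisfies $\ab{B}/\alpha\cong\ab{A}$ via an isomorphism sending $\hat{a}/\alpha$ to~$a$.

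To achieve this I would take $\ab{B}$ to be the coproduct in~$\mathcal{V}$ of $\ab{A}$ with the free $\mathcal{V}$-algebra on one fresh generator~$y$. Finiteness of $\ab{A}$ implies local finiteness of~$\mathcal{V}$, and since $\ab{B}$ is generated by $\iota(A)\cup\{y\}$ (where $\iota\colon\ab{A}\to\ab{B}$ is the canonical insertion) it is a finite member of~$\mathcal{V}$. The universal property of the coproduct yields a retraction $\psi\colon\ab{B}\to\ab{A}$ extending $\mathrm{id}_\ab{A}$ along $\iota$ and mapping $y\mapsto a$; in particular $\iota$ is injective. Setting $\hat{a}\defeq\iota(a)$ and $\hat{b}\defeq y$, every element of $\ab{B}/\alpha$ is represented by some $\iota(x)/\alpha$ because $y$ and $\iota(a)$ are identified modulo~$\alpha$; thus the descended map $\tilde\iota\colon\ab{A}\to\ab{B}/\alpha$ is surjective. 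As $\alpha\subseteq\ker\psi$, $\psi$ factors through a surjection $\bar\psi\colon\ab{B}/\alpha\to\ab{A}$ with $\bar\psi\circ\tilde\iota=\mathrm{id}_\ab{A}$, and finiteness of~$A$ forces both maps to be bijections, producing the desired isomorphism.

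Feeding $\ab{B}$ and the pair $(\hat{a},\hat{b})$ into Lemma~\ref{lemma:Erhardex13a} then delivers $f\colon B^4\to B$ for which $(\ab{B}+f)+\ca[\hat{a}]$ is a subdirectly irreducible equational domain with monolith~$\alpha$, $\type{\bottom{B}}{\alpha}=\tp{3}$ in $(\ab{B}+f)+\ca[\hat{a}]$, and quotient $(\ab{B}/\alpha+\cna[4]{\hat{a}/\alpha})+\ca[\hat{a}/\alpha]$. Transporting along $\bar\psi$ identifies this quotient with $(\ab{A}+\cna[4]{a})+\ca$, completing the proof.

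The main obstacle is the first step: the Je\v{z}ek--Kepka algebra is available for any finite input but need not lie in $\mathrm{Var}(\ab{A})$, so a substitute must be produced by hand. The coproduct construction sidesteps this difficulty by relying only on universality and on local finiteness of $\mathrm{Var}(\ab{A})$; an alternative, more explicit construction—e.g., a carefully chosen quotient of $\ab{A}^n$—would invoke congruence modularity more visibly through Day or Gumm terms in order to verify that the relevant kernel is indeed principal. Once Step~1 is settled, the application of Lemma~\ref{lemma:Erhardex13a} is essentially formal.
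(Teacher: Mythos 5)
Your proof is correct, and it takes a genuinely different route from the paper's. The paper sets $\ab{B}\defeq\ab{A}\times\ab{S}$ for a simple quotient $\ab{S}$ of $\ab{A}$ with $\crd{S}\geq 2$, picks $\vec{a}_1=(a,s_1)$, $\vec{a}_2=(a,s_2)$, and shows that $\alpha\defeq\Cg[\ab{B}]{\{(\vec{a}_1,\vec{a}_2)\}}$ equals $\ker\Pi_1$. The hypothesis of congruence modularity enters essentially there: the intervals $\interval{\bottom{B}}{\ker\Pi_1}$ and $\interval{\ker\Pi_2}{\uno{B}}$ are transposes, hence (by modularity) isomorphic, and since $\ker\Pi_2$ is a coatom, $\ker\Pi_1$ must be an atom, which forces $\alpha=\ker\Pi_1$. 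Your argument replaces this by taking $\ab{B}$ to be the coproduct of $\ab{A}$ with the one-generated free algebra in $\mathrm{Var}(\ab{A})$ and exploiting the retraction $\psi\colon\ab{B}\to\ab{A}$ with $\psi(y)=a$: then $\alpha=\Cg[\ab{B}]{\{(\iota(a),y)\}}\subs\ker\psi$, the induced map $\tilde\iota\colon\ab{A}\to\ab{B}/\alpha$ is onto because $y/\alpha=\iota(a)/\alpha$ and $\iota(A)/\alpha$ is a subuniverse containing all generators, and $\bar\psi\circ\tilde\iota=\id_{\ab{A}}$ forces $\tilde\iota$ to be an isomorphism. This is a clean categorical substitute for the lattice-theoretic step.

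What is striking is that your argument makes \emph{no} use of congruence modularity; local finiteness of $\mathrm{Var}(\ab{A})$ (automatic for finite $\ab{A}$) suffices. So your route in fact establishes the conclusion for every finite $\ab{A}$ with $\crd{A}\geq 2$, i.e., it removes a hypothesis from the theorem as stated. The only detail you leave implicit is that $\hat{a}=\iota(a)$ and $\hat{b}=y$ are distinct, which is required to feed the pair into Lemma~\ref{lemma:Erhardex13a}; this is easy to supply: since $\crd{A}\geq 2$, pick $a'\in A$ with $a'\neq a$ and use the universal property to build a homomorphism $\ab{B}\to\ab{A}$ restricting to $\id_{\ab{A}}$ on $\iota(A)$ and sending $y\mapsto a'$, which separates $\iota(a)$ from $y$. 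Also, the phrase ``finiteness of $A$ forces both maps to be bijections'' can be tightened: $\bar\psi\circ\tilde\iota=\id_{\ab{A}}$ already gives injectivity of $\tilde\iota$, and you have shown surjectivity directly, so finiteness is not needed for this step. With these two small points filled in, the proposal is complete and its final application of Lemma~\ref{lemma:Erhardex13a}, including the identification of $(\ab{B}/\alpha+\cna[4]{\hat{a}/\alpha})+\ca[\hat{a}/\alpha]$ with $(\ab{A}+\cna[4]{a})+\ca$ via $\bar\psi$ (which sends $\hat{a}/\alpha\mapsto a$), is exactly as in the paper.
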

\begin{proof}
Let $\ab{B}\defeq\ab{A}\times \ab{S}$
where~$\ab{S}$ is a simple quotient of~$\ab{A}$ with at least two
elements.
Then~$\ab{B}$ belongs to the variety generated by~$\ab{A}$,
and thus $\Con\ab{B}$ is a modular lattice.
Since~$\ab{S}$ is a simple quotient of~$\ab{A}$ and $\crd{S}\geq 2$,
there are $s_1, s_2\in S$ such
that $s_1\neq s_2$;
we define $\vec{a}_1=(a, s_1)$ and $\vec{a}_2=(a, s_2)$, and we set
$\alpha=\Cg[\ab{B}]{\{(\vec{a}_1, \vec{a}_2)\}}$.
Let~$\Pi_1$ be the canonical homomorphism from~$\ab{B}$ onto~$\ab{A}$,
and let~$\Pi_2$ be the canonical homomorphism of~$\ab{B}$ onto~$\ab{S}$.
Since $(\vec{a}_1, \vec{a}_2)\in\ker \Pi_1$, we have
$\ker\Pi_1\supseteq \alpha$.
Moreover, since $\ab{B}/\ker \Pi_2\cong \ab{S}$ and~$\ab{S}$ is simple
with more than one element, $\ker\Pi_2$ is a co-atom in $\Con\ab{B}$.
Since $\interval{\bottom{B}}{\ker\Pi_1}$ and $\interval{\ker\Pi_2}{\uno{B}}$
are transposes, and thus projective, and $\Con\ab{B}$ is modular, we
infer that these intervals are
isomorphic~\cite[Corollary~2.28]{McKMcnTay88}.
Therefore~$\ker \Pi_1$ is an atom of $\Con\ab{B}$,
whence we conclude that $\alpha=\ker \Pi_1$;
accordingly, we have $\ab{A}\cong \ab{B}/\ker\Pi_1=\ab{B}/\alpha$.
\par

Next, Lemma~\ref{lemma:Erhardex13a} yields that
there exists $f\colon (A\times S)^4\to A\times S$ such that
$(\ab{B}+f)+\ca[\vec{a}_1]$ is a subdirectly irreducible
equational domain with monolith~$\alpha$,
$\type{\bottom{B}}{\alpha}=\tp{3}$ in $(\ab{B}+f)+\ca[\vec{a}_1]$, and
\[((\ab{B}+f)+\ca[\vec{a}_1])/\alpha=(\ab{B}/\alpha
+\cna[4]{\vec{a}_1/\alpha})+\ca[\vec{a}_1/\alpha]
\cong (\ab{A}+\cna[4]{a})+\ca.\qedhere\]
\end{proof}

\section{Characterization of equationally additive constantive Ma\v{l}cev clones}\label{section:the_main_result}
In this section we provide a characterization of equationally
additive constantive Ma\v{l}cev clones in terms of properties of
the term condition commutator
(cf.~Theorem~\ref{teor:equ_additive_cofinite_non_abelianity}).
We start by stating a few well-known properties of the commutator for
algebras with a Ma\v{l}cev polynomial.
\begin{lemma}[{cf.~\cite[Propositions~2.3 and~2.4]{Aic06}}]\label{lemma:commutator-properties-Malcev-algebras}
Let~$\ab{A}$ be an algebra with a Ma\v{l}cev polynomial and
let $\alpha, \beta, \alpha', \beta'\in\Con\ab{A}$ satisfy
$\alpha\leq \alpha'$ and $\beta\leq \beta'$. Then we have
\begin{enumerate}[\upshape(a)]
\item\label{item:char-commutator-in-Malcev-algs}
  $C(1,1,\alpha,\beta,\eta) \iff C(\alpha,\beta;\eta) \iff
   [\alpha,\beta]\leq\eta$,
   \par\noindent
  in particular, the commutator is completely determined by the binary
  polynomials of~$\ab{A}$;
\item\label{item:commutator-below-meet}
  $[\alpha,\beta]\leq \alpha\wedge \beta$;
\item\label{item:commutator-monotone}
  $[\alpha,\beta]\leq [\alpha',\beta]\leq [\alpha',\beta']$.
\end{enumerate}
\end{lemma}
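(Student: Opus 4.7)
The lemma consists of three items. Items~(b) and~(c) are in fact general facts about the term-condition commutator that do not require the Mal'cev hypothesis at all; only item~(a) genuinely needs it. My plan is therefore to dispose of~(c) and~(b) first using only the definitions of~$C$ and of the commutator as a least congruence, and then to tackle~(a) using the Mal'cev reduction to binary polynomials.

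For~(c), I would observe that the universal quantifiers in $C(1,k,\alpha,\beta,\eta)$ range over pairs in~$\alpha$ and over $k$-tuples whose coordinates are $\beta$-related; enlarging $\alpha$ to $\alpha'$ or $\beta$ to $\beta'$ yields a \emph{stronger} condition, so $C(\alpha',\beta';\eta)\Rightarrow C(\alpha',\beta;\eta)\Rightarrow C(\alpha,\beta;\eta)$. Specializing $\eta$ to $[\alpha',\beta]$ and $[\alpha',\beta']$ respectively gives the two monotonicity steps. For~(b), it suffices to establish $C(\alpha,\beta;\alpha\wedge\beta)$; given $(a,b)\in\alpha$, $\vec u\equiv_\beta\vec v$, and $p\in\POL\ari{1+k}\ab{A}$ with $p(a,\vec u)\equiv_{\alpha\wedge\beta}p(a,\vec v)$, the relation $p(b,\vec u)\equiv_\beta p(b,\vec v)$ holds automatically from $\vec u\equiv_\beta\vec v$, while the chain $p(b,\vec u)\equiv_\alpha p(a,\vec u)\equiv_\alpha p(a,\vec v)\equiv_\alpha p(b,\vec v)$ combines the $\alpha$-part of the hypothesis in the middle step (as $\alpha\wedge\beta\leq\alpha$) with the fact that $(a,b)\in\alpha$ in the outer steps. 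Transitivity then yields $p(b,\vec u)\equiv_{\alpha\wedge\beta}p(b,\vec v)$, hence $[\alpha,\beta]\leq\alpha\wedge\beta$.

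For~(a), the middle equivalence $C(\alpha,\beta;\eta)\Leftrightarrow[\alpha,\beta]\leq\eta$ holds by the very definition of the commutator as the least~$\eta$ with $C(\alpha,\beta;\eta)$. The direction $C(\alpha,\beta;\eta)\Rightarrow C(1,1,\alpha,\beta,\eta)$ is immediate as the $k=1$ instance. The main obstacle, and the only place where the Mal'cev hypothesis enters, is the converse $C(1,1,\alpha,\beta,\eta)\Rightarrow C(1,k,\alpha,\beta,\eta)$ for arbitrary~$k$. My plan here, following~\cite[Proposition~2.3]{Aic06}, is to invoke Lemma~\ref{lemma:generating_congruences_in_malcev_with_polynomials}: in a Mal'cev algebra every element of $\Cg{\{(u_i,v_i)\mid i\in\finset{k}\}}\leq\beta$ has the form $(r(\vec u),r(\vec v))$ for some $r\in\POL\ari{k}\ab{A}$. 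Combined with the Mal'cev identities $d(x,y,y)=x=d(y,y,x)$, this allows one to encode the $(1+k)$-ary term-condition instance for a given~$p$ as a single binary term-condition instance for some $P\in\POL\ari{2}\ab{A}$ applied to a pair in~$\beta$, in such a way that substituting $a$ and $b$ into the first argument of~$P$ recovers, at the four corners, the values $p(a,\vec u),p(a,\vec v),p(b,\vec u),p(b,\vec v)$; the hypothesis $C(1,1,\alpha,\beta,\eta)$ applied to~$P$ then yields the desired $\eta$-relation. The construction of this binary witness~$P$ is the technical heart of the argument; the concluding remark in the lemma, that the commutator is completely determined by the binary polynomials of~$\ab{A}$, is then just a restatement of the equivalence $C(1,1,\alpha,\beta,\eta)\Leftrightarrow[\alpha,\beta]\leq\eta$.
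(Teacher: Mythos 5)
Your treatment of (b) and (c) is correct and self-contained; you spell out the monotonicity of the centralizing relation and the meet bound directly from the definitions, and you are right that neither item needs the Mal'cev hypothesis. The paper simply declares (b) and (c) to be "obvious consequences of the definition," so your account is strictly more explicit. For (a) the paper only cites Propositions~2.3 and~2.4 of Aichinger's paper, so the comparison there is necessarily with the route you sketch rather than with any written proof.

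There is, however, a genuine gap in your treatment of (a). You assert that the equivalence $C(\alpha,\beta;\eta)\iff[\alpha,\beta]\leq\eta$ "holds by the very definition of the commutator as the least $\eta$ with $C(\alpha,\beta;\eta)$." Only the forward direction is definitional: $C(\alpha,\beta;\eta)\Rightarrow[\alpha,\beta]\leq\eta$ is immediate from minimality. The converse $[\alpha,\beta]\leq\eta\Rightarrow C(\alpha,\beta;\eta)$ asserts that the set $\set{\eta\in\Con\ab{A}\mid C(\alpha,\beta;\eta)}$ is an up-set in $\Con\ab{A}$, and that is false for arbitrary algebras: given $C(\alpha,\beta;\eta)$ and $\eta\leq\eta'$, the hypothesis $p(a,\vec{u})\mathrel{\eta'}p(a,\vec{v})$ does not imply $p(a,\vec{u})\mathrel{\eta}p(a,\vec{v})$, so $C(\alpha,\beta;\eta)$ cannot be invoked to conclude anything modulo $\eta'$. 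Establishing this direction is exactly the content of Aichinger's Proposition~2.4 and it genuinely uses the Mal'cev hypothesis. One way to fill it: by the $d$-affinity of polynomials modulo $[\alpha,\beta]$ (Lemma~\ref{lemma:d_affine}), applied to the $\alpha$- then $\beta$-related tuples $(b,\vec{u})$, $(a,\vec{u})$, $(a,\vec{v})$, one obtains $d\bigl(p(b,\vec{u}),p(a,\vec{u}),p(a,\vec{v})\bigr)\equiv_{[\alpha,\beta]}p(b,\vec{v})$; then $p(a,\vec{u})\mathrel{\eta}p(a,\vec{v})$, the identity $d(x,y,y)=x$, and $[\alpha,\beta]\leq\eta$ give $p(b,\vec{u})\mathrel{\eta}p(b,\vec{v})$. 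In particular, the Mal'cev hypothesis enters twice in (a), not only in the passage from $C(1,1,\alpha,\beta,\eta)$ to $C(1,k,\alpha,\beta,\eta)$ that you correctly identify, and your argument is incomplete on the second count.
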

\begin{proof}
The first equivalence of
statement~\eqref{item:char-commutator-in-Malcev-algs} is shown
in~\cite[Proposition~2.3]{Aic06}, the second one
in~\cite[Proposition~2.4]{Aic06}.
Statements~\eqref{item:commutator-below-meet}
and~\eqref{item:commutator-monotone} are obvious consequences
of the definition of $[\alpha,\beta]$ that hold for every
algebra~$\ab{A}$.
\end{proof}

\begin{lemma}[{cf.~\cite[Proposition~2.6]{Aic06}}]\label{lemma:d_affine}
Let $k\in\N$, let~$\ab{A}$ be an algebra with a Ma\v{l}cev
polynomial~$d$, let $\alpha,\beta\in\Con\ab{A}$, and let
$p\in\POL\ari{k}\ab{A}$. For all $\vec{u},\vec{v},\vec{w}\in A^k$ such
that $\vec{u}\equiv_\alpha\vec{v}\equiv_\beta\vec{w}$, we have
\[
d(p(\vec{u}),p(\vec{v}),p(\vec{w}))
\equiv_{[\alpha,\beta]}
p(d(u_1,v_1,w_1),d(u_2,v_2,w_2),\dotsc,d(u_k,v_k,w_k)).
\]
\end{lemma}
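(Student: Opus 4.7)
The plan is to derive a general "commutator identity" from the term condition characterization of $[\alpha,\beta]$, and then apply it to a carefully chosen polynomial so that the Ma\v{l}cev identities of $d$ make the two sides of the desired congruence match up literally.

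First I would prove the following auxiliary identity: for every polynomial $T\in\POL\ari{2k}\ab{A}$ and every $\vec{a},\vec{b},\vec{c},\vec{d}\in A^k$ with $\vec{a}\equiv_\alpha\vec{b}$ and $\vec{c}\equiv_\beta\vec{d}$,
\[
T(\vec{b},\vec{c}) \equiv_{[\alpha,\beta]} d\bigl(T(\vec{b},\vec{d}),T(\vec{a},\vec{d}),T(\vec{a},\vec{c})\bigr).
\]
For this, consider the polynomial
\[
\tilde s(\vec{x},\vec{y}) \defeq d\bigl(T(\vec{x},\vec{y}),T(\vec{a},\vec{y}),T(\vec{a},\vec{c})\bigr).
\]
Since $d$ is Ma\v{l}cev, $\tilde s(\vec{a},\vec{y})=T(\vec{a},\vec{c})$ for every $\vec{y}\in A^k$, so $\tilde s(\vec{a},\vec{c})=\tilde s(\vec{a},\vec{d})$ trivially holds modulo $[\alpha,\beta]$. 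The term condition $C(\alpha,\beta;[\alpha,\beta])$ holds by definition of $[\alpha,\beta]$ and, via the standard one-coordinate-at-a-time argument, extends to polynomials with $k$ variables in each slot (this passage is also clarified by Lemma~\ref{lemma:commutator-properties-Malcev-algebras}(a)); applying it to $\tilde s$ yields $\tilde s(\vec{b},\vec{c})\equiv_{[\alpha,\beta]}\tilde s(\vec{b},\vec{d})$. The left-hand side simplifies to $T(\vec{b},\vec{c})$ via $d(a,b,b)=a$, giving the asserted identity.

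Then I would apply this identity to the specific polynomial
\[
T(\vec{x},\vec{y}) \defeq p\bigl(d(x_1,v_1,y_1),\dotsc,d(x_k,v_k,y_k)\bigr),
\]
with the choice $\vec{a}=\vec{v}$, $\vec{b}=\vec{u}$, $\vec{c}=\vec{w}$, $\vec{d}=\vec{v}$; the hypotheses supply the required $\vec{v}\equiv_\alpha\vec{u}$ and $\vec{w}\equiv_\beta\vec{v}$. Coordinatewise use of the Ma\v{l}cev identities shows $T(\vec{v},\vec{y})=p(\vec{y})$ and $T(\vec{x},\vec{v})=p(\vec{x})$, so the four corners evaluate to $T(\vec{a},\vec{c})=p(\vec{w})$, $T(\vec{a},\vec{d})=p(\vec{v})$, $T(\vec{b},\vec{d})=p(\vec{u})$, and $T(\vec{b},\vec{c})=p\bigl(d(u_1,v_1,w_1),\dotsc,d(u_k,v_k,w_k)\bigr)$. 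Substituting into the auxiliary identity produces precisely the congruence asserted by the lemma.

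The main obstacle is spotting the polynomial~$T$: placing the Ma\v{l}cev base point~$\vec{v}$ inside each slot $d(x_i,v_i,y_i)$ makes both axes $\vec{x}=\vec{v}$ and $\vec{y}=\vec{v}$ collapse~$T$ back to~$p$, while simultaneously arranging the four corners so that the right-hand side of the auxiliary identity reconstructs exactly $d\bigl(p(\vec{u}),p(\vec{v}),p(\vec{w})\bigr)$. Once this template is in place, the whole argument reduces to repeated use of $d(a,b,b)=a$ and $d(b,b,a)=a$ combined with the term condition.
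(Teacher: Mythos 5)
Your proof is correct and follows the standard term-condition argument, which is essentially the approach taken in the cited source \cite[Proposition~2.6]{Aic06} (the present paper does not reprove the lemma). The choice of $T(\vec{x},\vec{y}) = p(d(x_1,v_1,y_1),\dotsc,d(x_k,v_k,y_k))$ together with the auxiliary identity obtained via $\tilde s$ does exactly what is needed, and the one step you invoke without spelling out — passing from $C(1,n,\alpha,\beta,[\alpha,\beta])$ to $C(k,k,\alpha,\beta,[\alpha,\beta])$ by fixing one $\alpha$-coordinate at a time — is a routine induction that is indeed valid (and in the Ma\v{l}cev setting is also covered by Lemma~\ref{lemma:commutator-properties-Malcev-algebras}\eqref{item:char-commutator-in-Malcev-algs}).
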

Following~\cite{AicMud10}, for $p\in\POL\ari{2}\ab{A}$ and $u_1,u_2\in
A$,
we say that~$p$ is \emph{absorbing at $(u_1,u_2)$} if for all
$x_1, x_2\in A$ we have $p(x_1, u_2)=p(u_1, x_2)=p(u_1,u_2)$.
\begin{lemma}[{cf.~\cite[Lemma~6.13]{AicMud10}}]\label{lemma:char_comm_in_malcev}
Let~$\ab{A}$ be an algebra with a Ma\v{l}cev
polynomial~$d$, let $\alpha=\Cg{\{(u_1,v_1)\}}$ and let
$\beta=\Cg{\{(u_2,v_2)\}}$. Then
\begin{equation}
[\alpha,\beta]=\{(z(v_1,v_2),z(u_1,u_2))\mid
z\in\POL\ari{2}\ab{A}\text{ is absorbing at } (u_1,u_2)\}.
\label{eq:malcev-commutators-bin-absorbing-poly}
\end{equation}
\end{lemma}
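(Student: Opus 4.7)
Denote by $S$ the set on the right-hand side of
equation~\eqref{eq:malcev-commutators-bin-absorbing-poly}. I would
establish $S=[\alpha,\beta]$ by proving both inclusions. For
$S\subseteq[\alpha,\beta]$, pick any $z\in\POL\ari{2}\ab{A}$
absorbing at $(u_1,u_2)$ and apply $C(1,1,\alpha,\beta,[\alpha,\beta])$
(valid by
Lemma~\ref{lemma:commutator-properties-Malcev-algebras}\eqref{item:char-commutator-in-Malcev-algs})
to~$z$, using $(u_1,v_1)\in\alpha$ and $(u_2,v_2)\in\beta$: the
absorbing property gives $z(u_1,u_2)=z(u_1,v_2)$, so trivially
$z(u_1,u_2)\equiv_{[\alpha,\beta]}z(u_1,v_2)$, whence
$z(v_1,u_2)\equiv_{[\alpha,\beta]}z(v_1,v_2)$; combined with
$z(v_1,u_2)=z(u_1,u_2)$ (absorbing again) this yields
$(z(v_1,v_2),z(u_1,u_2))\in[\alpha,\beta]$.

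For $[\alpha,\beta]\subseteq S$, I first verify that $S$ is a
congruence of~$\ab{A}$. Reflexivity holds because, for any $a\in A$,
the constant polynomial $\cna[2]{a}$ is absorbing at $(u_1,u_2)$;
closure under each basic $k$-ary operation $f$ of $\ab{A}$ follows
because if $z_1,\dotsc,z_k\in\POL\ari{2}\ab{A}$ are absorbing at
$(u_1,u_2)$, then so is
$z(x_1,x_2)\defeq f(z_1(x_1,x_2),\dotsc,z_k(x_1,x_2))$. Hence $S$ is a
reflexive subuniverse of $\ab{A}^2$, and
Lemma~\ref{lem:Malcev-weak-con-is-con} upgrades it to a congruence
of~$\ab{A}$.

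By Lemma~\ref{lemma:commutator-properties-Malcev-algebras}\eqref{item:char-commutator-in-Malcev-algs},
the inclusion $[\alpha,\beta]\leq S$ is equivalent to
$C(1,1,\alpha,\beta,S)$. A standard Maltsev trick using the
auxiliary polynomial
$\tilde{p}(x_1,x_2)\defeq d(p(x_1,x_2),p(a_1,x_2),p(a_1,a_2))$,
combined with the congruence property of $S$, reduces this to the
equality special case: given $p\in\POL\ari{2}\ab{A}$,
$(a_1,b_1)\in\alpha$, $(a_2,b_2)\in\beta$ with
$p(a_1,a_2)=p(a_1,b_2)$, show $(p(b_1,a_2),p(b_1,b_2))\in S$. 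Invoking
Lemma~\ref{lemma:generating_congruences_in_malcev_with_polynomials},
I select $q_1,q_2\in\POL\ari{1}\ab{A}$ with $q_1(u_1)=a_1$,
$q_1(v_1)=b_1$, $q_2(u_2)=a_2$, $q_2(v_2)=b_2$ and set
$r(x_1,x_2)\defeq p(q_1(x_1),q_2(x_2))$, so that
$r(u_1,u_2)=r(u_1,v_2)$.

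The heart of the argument is the explicit construction of an
absorbing witness from~$r$: I propose
\[
z(x_1,x_2)\defeq d\bigl(d(r(x_1,x_2),r(x_1,u_2),r(v_1,u_2)),\,d(r(u_1,x_2),r(u_1,u_2),r(v_1,u_2)),\,r(v_1,u_2)\bigr).
\]
A routine verification using only the two Maltsev identities
$d(x,x,y)=y$ and $d(x,y,y)=x$ confirms that $z$ is absorbing at
$(u_1,u_2)$ with value $r(v_1,u_2)$, and the computation of
$z(v_1,v_2)$ (where the hypothesis $r(u_1,v_2)=r(u_1,u_2)$ is
crucial) yields $z(v_1,v_2)=r(v_1,v_2)$. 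Hence
$(r(v_1,v_2),r(v_1,u_2))\in S$, and by symmetry of the
congruence~$S$ also
$(p(b_1,a_2),p(b_1,b_2))=(r(v_1,u_2),r(v_1,v_2))\in S$. The main
obstacle is pinpointing this precise nested-$d$ formula so that both
absorption requirements collapse to $r(v_1,u_2)$ via the two Maltsev
identities alone, while still retaining the correct value
$r(v_1,v_2)$ at $(v_1,v_2)$.
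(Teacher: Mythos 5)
Your argument is correct and follows essentially the same route as the paper's: you verify that the right-hand side $S$ is a congruence via Lemma~\ref{lem:Malcev-weak-con-is-con}, you build the absorbing binary witness with the exact same nested Ma\v{l}cev expression, and you handle the inclusion $S\subseteq[\alpha,\beta]$ in the same way via $C(\alpha,\beta;[\alpha,\beta])$ together with the two absorption identities. The only divergence is a minor technical one: you first normalize with the auxiliary polynomial $\tilde{p}(x_1,x_2)=d(p(x_1,x_2),p(a_1,x_2),p(a_1,a_2))$ so that, after pulling back through the polynomials produced by Lemma~\ref{lemma:generating_congruences_in_malcev_with_polynomials}, you may assume the \emph{actual} equality $r(u_1,u_2)=r(u_1,v_2)$, making the computation of $z(v_1,v_2)=r(v_1,v_2)$ exact; the paper works instead with only $p(u_1,u_2)\mathrel{\eta}p(u_1,v_2)$ and discharges that $\eta$-relatedness by a short extra $\eta$-chain after constructing the absorbing polynomial. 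Both variants rest on the same construction, are of comparable length, and neither offers any advantage over the other.
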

\begin{proof}
Let~$\eta$ denote the right-hand side
of~\eqref{eq:malcev-commutators-bin-absorbing-poly};
we first prove that this set is a congruence. Since constant functions
are absorbing at $(u_1, u_2)$, the relation~$\eta$ is reflexive. Let $n\in\N$ and
let~$f$ be an $n$-ary basic operation of~$\ab{A}$. If $z_1,\dots,z_n$
are binary polynomials absorbing at $(u_1,u_2)$, then
$f(z_1,\dotsc, z_n)$ is a binary polynomial absorbing at $(u_1, u_2)$.
Thus~$\eta$ is a subalgebra of $\ab{A}\times\ab{A}$.
Hence Lemma~\ref{lem:Malcev-weak-con-is-con} yields that~$\eta$ is a
congruence of~$\ab{A}$.

Next, we prove that $C(\alpha, \beta; \eta)$. For this, according
to
Lemma~\ref{lemma:commutator-properties-Malcev-algebras}\eqref{item:char-commutator-in-Malcev-algs},
let us take an
arbitrary $q\in\POL\ari{2}\ab{A}$ and any $a,b,u,v\in A$ with
$a\mathrel{\alpha} b$ and $u\mathrel{\beta}v$. We assume
that $q(a,u)\mathrel{\eta} q(a, v)$ and want to show
$q(b, u)\mathrel{\eta} q(b,v)$. Since~$\alpha$ and~$\beta$ are
generated by a single pair,
Lemma~\ref{lemma:generating_congruences_in_malcev_with_polynomials}
yields
that there are unary polynomials $p_1,p_2\in\POL\ari{1}\ab{A}$ such
that $a=p_1(u_1)$, $b=p_1(v_1)$, $u=p_2(u_2)$, $v=p_2(v_2)$.
Setting $p(x,y)\defeq q(p_1(x),p_2(y))$ for $x,y\in A$, we have
$p(u_1,u_2)\mathrel{\eta}p(u_1,v_2)$.
Let us define $f\colon A^2\to A$ by
\begin{multline*}
f(x_1, x_2)\defeq\\
 d(d(p(x_1,x_2), p(x_1,u_2), p(v_1,u_2)),
   d(p(u_1,x_2), p(u_1,u_2), p(v_1,u_2)),
   p(v_1, u_2))
\end{multline*}
for all $x_1,x_2\in A$.
Clearly, $f\in\POL\ari{2}\ab{A}$, for $p\in\POL\ari{2}\ab{A}$.
For arbitrary $a_1,a_2\in A$ we have
\begin{multline*}
f(u_1, a_2)=\\
 d(d(p(u_1,a_2), p(u_1,u_2), p(v_1,u_2)),
   d(p(u_1,a_2), p(u_1,u_2), p(v_1,u_2)),
   p(v_1, u_2))\\
=p(v_1,u_2)
\end{multline*}
and
\begin{multline*}
f(a_1, u_2)=\\
 d(d(p(a_1,u_2), p(a_1,u_2), p(v_1,u_2)),
   d(p(u_1,u_2), p(u_1,u_2), p(v_1,u_2)),
   p(v_1, u_2))\\
=d(p(v_1,u_2),p(v_1,u_2),p(v_1,u_2))=p(v_1,u_2).
\end{multline*}
Hence~$f$ is absorbing at $(u_1,u_2)$ with value $p(v_1, u_2)$.
Therefore, we have
\begin{equation}\label{eq:equazione_seconda_dimostrazione_lemma_char_comm_in_malcev_fvvSfuu}
f(v_1, v_2)\mathrel{\eta} f(u_1, u_2)=p(v_1, u_2).
\end{equation}
Moreover, since $p(u_1, u_2)\mathrel{\eta} p(u_1, v_2)$, we have
\begin{align}\label{eq:equazione_seconda_dimostrazione_lemma_char_comm_in_malcev_fvvSpvv}
f(v_1,v_2)&=d(p(v_1,v_2), d(p(u_1,v_2), p(u_1, u_2), p(v_1, u_2)),
p(v_1, u_2))\notag\\
&\mathrel{\eta} d(p(v_1,v_2), d(p(u_1,v_2), p(u_1, v_2), p(v_1,
u_2)), p(v_1, u_2))\\
&=d(p(v_1, v_2), p(v_1, u_2), p(v_1, u_2))=p(v_1, v_2).\notag
\end{align}
Putting together~\eqref{eq:equazione_seconda_dimostrazione_lemma_char_comm_in_malcev_fvvSfuu}
and~\eqref{eq:equazione_seconda_dimostrazione_lemma_char_comm_in_malcev_fvvSpvv},
we obtain
\[
q(b,u)=p(v_1, u_2)=f(u_1, u_2)\mathrel{\eta} f(v_1, v_2)\mathrel{\eta}
p(v_1, v_2)=q(b,v).
\]
Thus, we have that $C(\alpha, \beta;\eta)$ and hence
$[\alpha,\beta]\subseteq\eta$.

For the converse inclusion let $\gamma\defeq [\alpha,\beta]$ and
$(a,b)\in\eta$. Thus, there is $c\in\POL\ari{2}\ab{A}$ that is
absorbing at $(u_1,u_2)$ such that $a=c(v_1,v_2)$, $b=c(u_1,u_2)$.
We have $u_1\mathrel{\alpha} v_1$ and $u_2\mathrel{\beta} v_2$;
moreover, $c(u_1,u_2)=c(u_1, v_2)$ by the absorption property at
$(u_1,u_2)$, hence $c(u_1, u_2)\mathrel{\gamma}c(u_1,v_2)$.
Since, by the definition of the commutator, $\alpha$
centralizes~$\beta$
modulo~$\gamma$ and~$c$ absorbs at $(u_1,u_2)$, it follows that
$b= c(u_1,u_2)=c(v_1,u_2)\mathrel{\gamma}c(v_1,v_2)=a$, i.e.,
$(b,a)\in\gamma$ and hence $(a,b)\in\gamma$. This concludes the proof
that $\eta\subseteq\gamma=[\alpha,\beta]$.
\end{proof}

\begin{proposition}\label{prop:interpolating_polynomials_ins_sub_irr_type_three_malcev}
Let~$\ab{A}$ be a subdirectly irreducible algebra with a non-Abelian
monolith $\mu\in\Con\ab{A}$, let~$d$ be a Ma\v{l}cev polynomial,
let $o\in A$, let $U=o/\mu$, let $k\in \N$, let $D\subseteq A^k$
and let $l\colon D\to U$. Then, for all $T\subseteq D$ finite,
there exists a polynomial $p_T\in \POL\ari{k}\ab{A}$ such that
$p_T(\vec{t})=l(\vec{t})$ for all $\vec{t}\in T$, and $p_T(\vec{x})\in
U$ for all $\vec{x}\in A^k$.
\end{proposition}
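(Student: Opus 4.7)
I would proceed by induction on $n := |T|$. The base case $n = 0$ is immediate: set $p_\emptyset := c_o$, whose image is $\{o\} \subseteq U$. For the inductive step with $|T| = n + 1$, pick $\vec{s} \in T$, set $T' := T \setminus \{\vec{s}\}$, and apply the induction hypothesis to $T'$ and $l\restrict{T'}$ to obtain $p' \in \POL\ari{k}\ab{A}$ with $p'(\vec{t}) = l(\vec{t})$ for every $\vec{t} \in T'$ and $p'[A^k] \subseteq U$. Writing $a := p'(\vec{s})$ and $b := l(\vec{s})$, both lie in $U$, hence $(a, b) \in \mu$. If $a = b$, take $p_T := p'$; otherwise $(a, b)$ is a non-trivial pair in $\mu$, so by the subdirect irreducibility of $\ab{A}$ we have $\mu = \Cg{\{(a, b)\}}$.

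The crucial step now exploits the non-Abelianity hypothesis. By Lemma~\ref{lemma:commutator-properties-Malcev-algebras}\eqref{item:commutator-below-meet}, $[\mu, \mu] \leq \mu$, and by non-Abelianity $[\mu, \mu] \neq \bottom{A}$; since $\mu$ is the monolith of $\ab{A}$ (i.e., an atom of $\Con\ab{A}$), we must have $[\mu, \mu] = \mu$. Applying Lemma~\ref{lemma:char_comm_in_malcev} with $(u_1, v_1) = (u_2, v_2) = (a, b)$, for every pair $(c, e) \in \mu$ there is a binary polynomial $z_{c, e} \in \POL\ari{2}\ab{A}$ absorbing at $(a, a)$ with $z_{c, e}(b, b) = c$ and $z_{c, e}(a, a) = e$; in particular, for $(c, e) = (a, b)$ we obtain $z \in \POL\ari{2}\ab{A}$ absorbing at $(a, a)$ with $z(b, b) = a$ and $z(a, a) = b$.

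I would then construct $p_T$ by combining such absorbing polynomials with $p'$ through the Mal'cev polynomial $d$, taking advantage of the fact that $U = o/\mu$ is closed under $d$ (since $\mu$ is a congruence and $d(o, o, o) = o$) and that $p'$ already has image in $U$, so any $d$-composition of polynomials with image in $U$ itself has image in $U$. The main obstacle, and the technical heart of the argument, is to engineer (i) the correct pointwise values at every point of $T$ and (ii) image containment in $U$ on all of $A^k$ simultaneously; the latter is non-trivial because $z_{c, e}$ only has guaranteed values in $U$ on its absorbing boundary $\{(x, a), (a, y) : x, y \in A\} \cup \{(b, b)\}$ and may escape $U$ on other inputs. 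Resolving (ii) requires carefully interleaving $d$-compositions so that every sub-expression lies in $U$, potentially iterating the Mal'cev correction across the points of $T'$ one at a time; equivalently, one shows that the set $\{(q(\vec{t}))_{\vec{t} \in T} : q \in \POL\ari{k}\ab{A}, q[A^k] \subseteq U\}$ is a reflexive subuniverse of $(\ab{A}\restrict{U})^T$ that exhausts $U^T$, mirroring the congruence-closure argument used in the proof of Lemma~\ref{lemma:char_comm_in_malcev}.
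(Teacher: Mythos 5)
Your proposal sets up a reasonable induction scaffold, and the individual facts you quote are correct: the constant polynomial $c_o$ handles $T = \emptyset$, $[\mu,\mu] = \mu$ follows from non-Abelianity plus $\mu$ being an atom of $\Con\ab{A}$, and Lemma~\ref{lemma:char_comm_in_malcev} with $(u_1,v_1)=(u_2,v_2)=(a,b)$ does produce an absorbing binary polynomial $z$ with $z(a,a)=b$, $z(b,b)=a$. However, the proof stops exactly where the proposition begins: the inductive step, which you explicitly flag as ``the main obstacle'' and ``the technical heart of the argument,'' is not carried out. Saying that one should ``carefully interleave $d$-compositions,'' or that the relation $R = \{(q(\vec{t}))_{\vec{t}\in T} : q\in\POL\ari{k}\ab{A},\ q[A^k]\subseteq U\}$ is a reflexive subuniverse that exhausts $U^T$, is a restatement of the claim, not a proof of it; in particular Lemma~\ref{lem:Malcev-weak-con-is-con} only controls reflexive subuniverses of the \emph{square}, so it does not directly give you $R = U^{T}$ for $|T|>2$.

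Worse, the single-point-insertion strategy (extend an interpolant on $T' = T\setminus\{\vec{s}\}$ by correcting the value at $\vec{s}$) runs into a circularity. To move $p'(\vec{s})=a$ to $b$ while keeping the values on $T'$ and keeping the image inside $U$, you essentially need a polynomial that takes a prescribed value at $\vec{s}$, is constant on $T'$, and stays inside $U$ — i.e.\ an instance of the very interpolation problem on $T$ that you are trying to solve. The absorbing polynomial $z$ from Lemma~\ref{lemma:char_comm_in_malcev} does not supply such a selector: it is only controlled on the absorbing boundary $\{(x,a)\}\cup\{(a,y)\}\cup\{(b,b)\}$, and composing it naively with $p'$ gives no way to distinguish $\vec{s}$ from the points of $T'$.

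The paper avoids this circularity by organizing the induction differently. The case $|T|=2$ is handled directly: non-Abelianity of $\mu$ yields a witness $t\in\POL\ari{2}\ab{A}$ with $t(a,u)=t(a,v)$ but $t(b,u)\neq t(b,v)$, from which a two-point interpolant with image in $U$ is built by hand. For $|T|=n\geq 3$, rather than inserting a point, the paper merges two $(n-1)$-point interpolants (one on $\{\vec{t}_2,\dots,\vec{t}_n\}$, one on $\{\vec{t}_1,\vec{t}_3,\dots,\vec{t}_n\}$) using the Ma{\l}cev polynomial $d$. The correctness of the merge hinges on showing $\alpha=\eta$ for two auxiliary congruences below $\mu$ that encode the freedom at $\vec{t}_1$ under the respective constraints; that equality is proved by a term-condition argument using $[\mu,\mu]=\mu$ and $[\mu,\beta]=\mu$ for the congruence $\beta$ generated by the coordinate differences of $\vec{t}_1$ and $\vec{t}_2$. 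Your proposal has none of this machinery, so in its current form it does not establish the proposition.
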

\begin{proof}
Let $T\subseteq D$ be finite. We prove that there exists
$p_T\in\POL\ari{k}\ab{A}$ such that $p_T(\vec{t})=l(\vec{t})$ for all
$\vec{t}\in T$, and $p_T(\vec{x})\in U$ for all $\vec{x}\in A^k$. We
proceed by induction on the cardinality of $T=\{\vec{t}_1,\dots,
\vec{t}_n\}$.

\textbf{Case} $\crd{T}\leq1$:
If $\crd{T}=1$, the constant polynomial~$p_T$ with
value~$l(\vec{t}_1)$ interpolates~$l$ at~$\vec{t}_1$. If $\crd{T}=0$,
any constant polynomial~$p_T$ with value in~$U$, e.g., $o\in U$, will
satisfy the required conditions.

\textbf{Case} $\crd{T}=2$:
If $l(\vec{t}_1)=l(\vec{t}_2)$, a constant polynomial with
value~$l(\vec{t}_1)$ interpolates~$l$ on~$T$. Let
us now assume that $l(\vec{t}_1)\neq l(\vec{t}_2)$; this implies that
$\crd{U}\geq 2$.
Let $l(\vec{t}_1)=f$ and $l(\vec{t}_2)=g$. Since~$\mu$ is not Abelian,
Lemma~\ref{lemma:commutator-properties-Malcev-algebras}\eqref{item:char-commutator-in-Malcev-algs}
implies that there exist
$a,b,u,v\in A$ and $t\in \POL\ari{2}\ab{A}$ such that $a\,\mu\,b$,
$u\,\mu\, v$, $t(a,u)=t(a,v)$ and $t(b,u)\neq t(b,v)$. Moreover, since
$\vec{t}_1\neq \vec{t}_2$, there is $j\in\finset{k}$ such that
$\vec{t}_1(j)\neq \vec{t}_2(j)$, whence
\[
(u,v)\in\mu\subseteq
\Cg{\{(\vec{t}_1(1),\vec{t}_2(1)),\dots,
(\vec{t}_1(k),\vec{t}_2(k))\}},
\]
for~$\mu$ is the monolith of~$\ab{A}$.
Thus, by
Lemma~\ref{lemma:generating_congruences_in_malcev_with_polynomials},
there is $h\in\POL\ari{k}\ab{A}$ such that
$h(\vec{t}_1)=u$ and $h(\vec{t}_2)=v$.
Since
$(f,g)\in U^2\subseteq\mu\subseteq
\Cg{\{(t(b,u),t(b,v))\}}$,
Lemma~\ref{lemma:generating_congruences_in_malcev_with_polynomials}
yields a $p\in \POL\ari{1}\ab{A}$ such that $p(t(b,u))=f$
and $p(t(b,v))=g$.
Let us define the $k$-ary polynomial $p_T\colon A^k\to A$ by
\[p_T(\vec{z})\defeq
p(d(t(b,h(\vec{z})),t(a,h(\vec{z})),t(a,u)))\]
for all $\vec{z}\in A^k$.
Then for any $\vec{x}\in A^k$ such that $h(\vec{x})=u$, we have
\begin{equation}\label{eq:pT-is-lt1-on-hx=u}
p_T(\vec{x})=p(d(t(b,u),t(a,u),t(a,u)))=p(t(b,u))=f=l(\vec{t}_1).
\end{equation}
In particular, this holds for $\vec{x}=\vec{t}_1$.
Moreover, since $t(a,u)=t(a,v)$, we have
\[
p_T(\vec{t}_2)=p(d(t(b,v),t(a,v),t(a,u)))=p(t(b,v))=g=l(\vec{t}_2).
\]
For all $\vec{x}\in A^k$,
let $\alpha_\vec{x}\defeq \Cg{\{(u, h(\vec{x}))\}}$.
We have that for all $\vec{x}\in A^k$
\[(b, h(\vec{x}))\equiv_\mu (a, h(\vec{x}))\equiv_{\alpha_{\vec{x}}}
(a,u).\]
Thus, Lemma~\ref{lemma:d_affine} implies that for all $\vec{x}\in A^k$
\begin{equation}\label{eq:equations_affine_modulo_commutators_in_interpolating_proposition}
d(t(b, h(\vec{x})),t(a, h(\vec{x})),t(a,u))
\mathrel{[\mu, \alpha_{\vec{x}}]}
t(d(b,a,a), d(h(\vec{x}),h(\vec{x}),u))=t(b,u).
\end{equation}
Since~$\mu$ is not Abelian and
$\mu \subseteq \Cg{\{(u,h(\vec{x}))\}}=\alpha_{\vec{x}}$ for
all $\vec{x}\in A^k$ such that $h(\vec{x})\neq u$,
statements~\eqref{item:commutator-below-meet}
and~\eqref{item:commutator-monotone} of
Lemma~\ref{lemma:commutator-properties-Malcev-algebras} yield that
for such $\vec{x}\in A^k$
\[
\mu=[\mu, \mu]\leq [\mu,\alpha_{\vec{x}}]\leq
\mu\wedge\alpha_{\vec{x}}=\mu.
\]
Therefore, for all $\vec{x}\in A^k$ satisfying $h(\vec{x})\neq u$,
we have $[\mu,\alpha_\vec{x}]=\mu$. This together with
condition~\eqref{eq:equations_affine_modulo_commutators_in_interpolating_proposition}
yields that for all such $\vec{x}\in A^k$
\[
d(t(b, h(\vec{x})),t(a, h(\vec{x})),t(a,u))\mathrel{\mu} t(d(b,a,a),
d(h(\vec{x}),h(\vec{x}),u)).
\]
Thus, for all $\vec{x}\in A^k$ with $h(\vec{x})\neq u$ we have
\begin{align*}
p_T(\vec{x})&=p(d(t(b,h(\vec{x})), t(a, h(\vec{x})), t(a,u)))\\
&\equiv_\mu  p(t(d(b,a,a),d(h(\vec{x}),h(\vec{x}),u)))\\
&=p(t(b,u))=f=l(\vec{t}_1)\equiv_\mu o,
\end{align*}
while for $\vec{x}\in A^k$ with $h(\vec{x})=u$ we directly have
$p_T(\vec{x})=l(\vec{t}_1)\equiv_\mu o$ by
equation~\eqref{eq:pT-is-lt1-on-hx=u}.
Hence $p_T(\vec{x})\in U$ for all $\vec{x}\in A^k$.

\textbf{Induction step}: Let $\crd{T}=n\geq 3$ and let us
assume that~$l$ can be interpolated at any $n-1$ points of~$T$
by a polynomial whose image is a subset of~$U$. We prove that~$l$ can be
interpolated on~$T$ by a polynomial with image inside~$U$.
To this end, let us consider the following three sets
\begin{align*}
\beta&=\Cg{\{(\vec{t}_1(1),\vec{t}_2(1)),\dots,
(\vec{t}_1(k),\vec{t}_2(k))\}};\\
\eta&=\Biggl\{(p(\vec{t}_1),q(\vec{t}_1))\mathrel{\Bigg\vert} p,
q\in \POL\ari{k}\ab{A},
\begin{aligned}[c]
&\forall \vec{x}\in A^k\colon& p(\vec{x})&\mathrel{\mu}q(\vec{x}),\\
&\forall i\in\{2,\dots , n\}\colon& p(\vec{t}_i)&=q(\vec{t}_i)
\end{aligned}
\Biggr\};\\
\alpha&=\Biggl\{(p(\vec{t}_1),q(\vec{t}_1))\mathrel{\Bigg\vert} p,
q\in \POL\ari{k}\ab{A},
\begin{aligned}
&\forall \vec{x}\in A^k\colon& p(\vec{x})&\mathrel{\mu} q(\vec{x}),\\
&\forall i\in\{3,\dots, n\}\colon& p(\vec{t}_i)&=q(\vec{t}_i)
\end{aligned}
\Biggr\}.
\end{align*}
It is easy to see that~$\eta$ and~$\alpha$ are reflexive and symmetric
subuniverses of $\ab{A}\times\ab{A}$ that are contained in~$\mu$.
Now, by Lemma~\ref{lem:Malcev-weak-con-is-con}, we have
$\alpha,\beta,\eta\in\Con\ab{A}$, and
$\alpha,\eta\in\set{\bottom{A},\mu}$ since $\alpha,\eta\leq \mu$.
\par

Our next goal is to prove that $\alpha=\eta$.
The definition of~$\alpha$ and~$\eta$ yields $\eta\leq \alpha$.
If $\alpha=\bottom{A}\leq \eta$, we have the desired equality; hence
we assume that $\bottom{A}<\alpha\leq \mu$, i.e., $\alpha=\mu$.
\par
We shall first prove that $C(1, 1,\alpha,\beta,\eta)$.
To this end let $(u,v)\in \beta$, $(a,b)\in \alpha$ and
$p\in\POL\ari{2}\ab{A}$ such that $p(a,u)\mathrel{\eta} p(a,v)$.
We have to show that $p(b, u) \mathrel{\eta} p(b, v)$.
Since $(a,b)\in \alpha$, there exist $p_a, p_b,\in \POL\ari{k}\ab{A}$
such that
\begin{enumerate}[(1)]
\item $\forall \vec{x}\in A^k\colon p_a(\vec{x})\mathrel{\mu}
p_b(\vec{x})$;
\item $\forall j\in \{3,\dots n\}\colon
p_a(\vec{t}_j)=p_b(\vec{t}_j)$;
\item $p_a(\vec{t}_1)=a$ and $p_b(\vec{t}_1)=b$.
\end{enumerate}
Since $(u,v)\in \beta$,
Lemma~\ref{lemma:generating_congruences_in_malcev_with_polynomials}
yields
that there exist $q,q'\in \POL\ari{k}\ab{A}$ such that
$q(\vec{t}_1)=u$, $q(\vec{t}_2)=v$, $q'(\vec{t}_1)=v$ and
$q'(\vec{t}_2)=u$. We define $p_u,p_v\colon A^k \to A$ by letting
\begin{align*}
p_u(\vec{x})&=d(q(\vec{x}),q'(\vec{x}), v)&
&\text{and}&&
&p_v(\vec{x})&= d(q(\vec{x}),u,v)
\end{align*}
for all $\vec{x}\in A^k$.
We observe that $p_u,p_v\in \POL\ari{k}\ab{A}$, and moreover we can see
that $p_u(\vec{t}_1)=u$,
$w\defeq p_u(\vec{t}_2)=d(v,u,v)=p_v(\vec{t}_2)$ and
$p_v(\vec{t}_1)=v$.
We further define $h,\hbar\in\POL\ari{k}\ab{A}$ at each
$\vec{x}\in A^k$ by
\begin{align*}
\hbar(\vec{x})&=  p(p_b(\vec{x}),p_u(\vec{x})),\\
h(\vec{x})    &=d(\hbar(\vec{x}),
                  d(p(p_a(\vec{x}), p_v(\vec{x})),
                    p(p_a(\vec{x}), p_u(\vec{x})),
                    \hbar(\vec{x})),
                  p(p_b(\vec{x}), p_v(\vec{x}))).
\end{align*}
For each $j\in\set{3, \dotsc, n}$ we have
$p(p_a(\vec{t}_j),p_u(\vec{t}_j))=\hbar(\vec{t}_j)$, and hence
\begin{align*}
h(\vec{t}_j)&=d\bigl(\hbar(\vec{t}_j),
                d\bigl(p(p_a(\vec{t}_j), p_v(\vec{t}_j)),
                  p(p_a(\vec{t}_j), p_u(\vec{t}_j)),
                  \hbar(\vec{t}_j)\bigr),
                p(p_b(\vec{t}_j), p_v(\vec{t}_j))\bigr)\\
            &=d\bigl(\hbar(\vec{t}_j),
                d\bigl(p(p_a(\vec{t}_j), p_v(\vec{t}_j)),
                  \hbar(\vec{t}_j),
                  \hbar(\vec{t}_j)\bigr),
                p(p_b(\vec{t}_j), p_v(\vec{t}_j))\bigr)\\
            &=d\bigl(\hbar(\vec{t}_j),
                p(p_a(\vec{t}_j), p_v(\vec{t}_j)),
                p(p_b(\vec{t}_j), p_v(\vec{t}_j))\bigr)\\
            &=d\bigl(\hbar(\vec{t}_j),
                p(p_b(\vec{t}_j), p_v(\vec{t}_j)),
                p(p_b(\vec{t}_j), p_v(\vec{t}_j))\bigr)
             =\hbar(\vec{t}_j).
\end{align*}
Moreover, using $w=p_u(\vec{t}_2)=p_v(\vec{t}_2)$, we have
\begin{align*}
h(\vec{t}_2)&=d\bigl(\hbar(\vec{t}_2),
                d\bigl(p(p_a(\vec{t}_2), p_v(\vec{t}_2)),
                  p(p_a(\vec{t}_2), p_u(\vec{t}_2)),
                  \hbar(\vec{t}_2)\bigr),
                p(p_b(\vec{t}_2), p_v(\vec{t}_2))\bigr)\\
            &=d\bigl(\hbar(\vec{t}_2),
                d\bigl(p(p_a(\vec{t}_2), w),
                  p(p_a(\vec{t}_2), w),
                  \hbar(\vec{t}_2)\bigr),
                p(p_b(\vec{t}_2), w)\bigr)\\
            &=d\bigl(\hbar(\vec{t}_2),
                \hbar(\vec{t}_2),
                p(p_b(\vec{t}_2), w)\bigr)\\
            &=p(p_b(\vec{t}_2), w)
             =p(p_b(\vec{t}_2), p_u(\vec{t}_2))
             =\hbar(\vec{t}_2).
\end{align*}
For every $\vec{x}\in A^k$ we have
$p_a(\vec{x})\mathrel{\mu}p_b(\vec{x})$,
and hence we get
\begin{align*}
p(p_a(\vec{x}),p_u(\vec{x}))&\mathrel{\mu}p(p_b(\vec{x}),p_u(\vec{x}))
=\hbar(\vec{x})\\
p(p_a(\vec{x}),p_v(\vec{x}))&\mathrel{\mu}p(p_b(\vec{x}),p_v(\vec{x})).
\end{align*}
Consequently,
\begin{multline*}
d\bigl(p\bigl(p_a(\vec{x}),p_v(\vec{x})\bigr),p\bigl(p_a(\vec{x}),p_u(\vec{x})\bigr),\hbar(\vec{x})\bigr)
\mathrel{\mu}
d\bigl(p\bigl(p_b(\vec{x}),p_v(\vec{x})\bigr),\hbar(\vec{x}),\hbar(\vec{x})\bigr)\\
=p(p_b(\vec{x}),p_v(\vec{x})),
\end{multline*}
and therefore
\begin{align*}
h(\vec{x})&=  d\bigl(\hbar(\vec{x}),
                d\bigl(p(p_a(\vec{x}), p_v(\vec{x})),
                  p(p_a(\vec{x}), p_u(\vec{x})),
                  \hbar(\vec{x})\bigr),
                p(p_b(\vec{x}), p_v(\vec{x}))\bigr)\\
&\mathrel{\mu}d\bigl(\hbar(\vec{x}),
                p(p_b(\vec{x}), p_v(\vec{x})),
                p(p_b(\vec{x}), p_v(\vec{x}))\bigr)
=\hbar(\vec{x}).
\end{align*}
From this we deduce that $h(\vec{t}_1)\mathrel{\eta}\hbar(\vec{t}_1)$,
and thus, by applying the unary polynomial
$z\mapsto d(p(b,u),d(p(a,v),z,p(b,u)),p(b,v))$ to the pair
$(p(a,v),p(a,u))\in\eta$, we have
\begin{align*}
p(b,v)
&=d\bigl(p(b, u), p(b, u), p(b, v)\bigr)\\
&=d\bigl(p(b, u), d(p(a, v), p(a, v),p(b, u)), p(b, v)\bigr)\\
&\mathrel{\eta}d\bigl(p(b, u), d(p(a, v), p(a, u),p(b, u)), p(b, v)\bigr)\\
&=h_1(\vec{t}_1)\mathrel{\eta} h_2(\vec{t}_1)=p(b, u).
\end{align*}
Hence $p(b, u)\mathrel{\eta} p(b, v)$.
This proves that $C(1,1,\alpha,\beta,\eta)$. Now,
Lemma~\ref{lemma:commutator-properties-Malcev-algebras}\eqref{item:char-commutator-in-Malcev-algs}
implies $[\alpha,\beta]\leq \eta$.
Since $\vec{t}_1\neq\vec{t}_2$ we have $\bottom{A}<\beta$, thus
$\mu\leq \beta$. With $\alpha=\mu$ being non-Abelian,
Lemma~\ref{lemma:commutator-properties-Malcev-algebras}\eqref{item:commutator-monotone}
yields
$\alpha=\mu=[\mu,\mu]\leq[\mu,\beta]=[\alpha,\beta]\leq\eta\leq\alpha$.
This concludes the proof of $\alpha=\eta$.

Now we construct the interpolating function. By the induction
hypothesis there are $p,q\in\POL\ari{k}\ab{A}$ with image inside~$U$,
such that~$p$ interpolates~$l$ at $\{\vec{t}_2,\dots , \vec{t}_n\}$
and~$q$ interpolates~$l$ at $\{\vec{t}_1,\vec{t}_3,\dots , \vec{t}_n\}$.
Since $U^2\subseteq \mu$, we have that
$p(\vec{x})\mathrel{\mu} q(\vec{x})$ for all $\vec{x}\in A^k$,
and moreover that $p(\vec{t}_i)= l(\vec{t}_i)=q(\vec{t}_i)$ for every
$i\in\{3,\dots, n\}$. Hence $(p(\vec{t}_1), q(\vec{t}_1))\in\alpha$.
Since $\alpha=\eta$ and $q(\vec{t}_1)= l(\vec{t}_1)$, we have that
$(p(\vec{t}_1), l(\vec{t}_1))\in\eta$.
Therefore, there exist $p_2, p_3\in \POL\ari{k}\ab{A}$ such that
\begin{enumerate}[(1)]
\item $\forall i\in\{2,\dots, n\}\colon
p_2(\vec{t}_i)=p_3(\vec{t}_i)$;
\item $\forall \vec{x}\in A^k\colon p_2(\vec{x})\mathrel{\mu}
p_3(\vec{x})$;
\item $p_2(\vec{t}_1)=p(\vec{t}_1)$ and $p_3(\vec{t}_1)=l(\vec{t}_1)$.
\end{enumerate}
Let $p_T\colon A^k\to A$ be defined by
$p_T(\vec{x})=d(p(\vec{x}),p_2(\vec{x}),p_3(\vec{x}))$ for all
$\vec{x}\in A^k$.
Clearly, $p_T\in\POL\ari{k}\ab{A}$. Moreover, we have that for all
$i\in\{2,\dots n\}$
\[p_T(\vec{t}_i)=
d(p(\vec{t}_i), p_2(\vec{t}_i),
p_3(\vec{t}_i))=p(\vec{t}_i)=l(\vec{t}_i).\]
Furthermore,
\[p_T(\vec{t}_1)=
d (p(\vec{t}_1), p_2(\vec{t}_1),
p_3(\vec{t}_1))=d(p(\vec{t}_1),p(\vec{t}_1),
l(\vec{t}_1))=l(\vec{t}_1).
\]
Moreover, we have that for all $\vec{x}\in A^k$
\[p_T(\vec{x})=
d(p(\vec{x}),p_2(\vec{x}), p_3(\vec{x}))\,\mu \, d(o,p_2(\vec{x}),
p_2(\vec{x}))=o.
\]
Thus, $p_T(\vec{x})\in o/\mu=U$ and we can conclude that~$p_T$ has
codomain~$U$ and interpolates~$l$ on~$T$.
\end{proof}

The following proposition is a partial converse of
Proposition~\ref{prop:the_TCT_type_of_monolith_when_delta_is_f_equal_constant}.
In particular, it states that every finite subdirectly irreducible
algebra with a monolith of type~$\tp{3}$ (which is non-Abelian
by~\cite[Theorem~5.7]{HobbMcK}) and a Ma\v{l}cev polynomial is
an equational domain with respect to its clone of polynomial
operations.
\begin{proposition}\label{prop:inverse_of_prop_the_TCT_type_of_monolith_when_deltarelation_is_f_equal_constant}
Let~$\ab{A}$ be a finite subdirectly irreducible algebra with a
Ma\v{l}cev polynomial, let~$\mu$ be the monolith and let us assume
that~$\mu$ is non-Abelian.
Then there exist $f\in\POL\ari{4}\ab{A}$ and $a\in A$ such that
$\deltarelation=\{\vec{x}\in A^4\mid f(\vec{x})=a\}$, and
$f_{\mu}$~is constant.
\end{proposition}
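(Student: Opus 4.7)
The plan is to apply Proposition~\ref{prop:interpolating_polynomials_ins_sub_irr_type_three_malcev} directly. The idea is to construct $f$ as a polynomial whose image is contained in a single $\mu$-class $U=a/\mu$, which immediately forces $f_\mu$ to be the constant with value $a/\mu$, and which takes the specific value $a$ exactly on $\deltarelation$.

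To set this up, I would first choose $a\in A$ so that its $\mu$-class has at least two elements. Since~$\ab{A}$ is subdirectly irreducible with monolith $\mu>\bottom{A}$, there exist $u,v\in A$ with $u\neq v$ and $(u,v)\in\mu$; set $a\defeq u$ and note that $b\defeq v$ lies in $U\defeq a/\mu$ and is distinct from~$a$. Now define a total map $l\colon A^4\to U$ by
\[
l(\vec{x})\defeq \begin{cases} a & \text{if } \vec{x}\in\deltarelation,\\ b & \text{otherwise.}\end{cases}
\]

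Since~$A$ is finite, $T\defeq A^4$ is a finite subset of $D\defeq A^4$. Applying Proposition~\ref{prop:interpolating_polynomials_ins_sub_irr_type_three_malcev} with $k=4$, $o=a$, the domain $D=A^4$, the map~$l$, and the finite set $T=A^4$ yields a polynomial $f\defeq p_T\in\POL\ari{4}\ab{A}$ satisfying $f(\vec{x})=l(\vec{x})$ for every $\vec{x}\in A^4$ and $f[A^4]\subseteq U$. By the definition of~$l$, we then get $f(\vec{x})=a$ if and only if $\vec{x}\in\deltarelation$, so $\deltarelation=\{\vec{x}\in A^4\mid f(\vec{x})=a\}$. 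Because $f[A^4]\subseteq U=a/\mu$, the induced function $f_\mu$ takes only the value $a/\mu$ and is therefore constant.

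There is essentially no obstacle beyond the careful choice of~$a$: all the work has been absorbed into Proposition~\ref{prop:interpolating_polynomials_ins_sub_irr_type_three_malcev}, whose hypotheses (finite SI algebra with non-Abelian monolith and a Ma\v{l}cev polynomial) are exactly those of the present statement. The only subtle point is that we need $|U|\geq 2$ in order to distinguish the ``in $\deltarelation$'' and ``not in $\deltarelation$'' cases via~$l$; this is guaranteed because $\mu>\bottom{A}$ forces at least one $\mu$-class to contain two distinct elements, and we pick~$a$ inside such a class.
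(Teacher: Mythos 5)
Your proof is correct and follows essentially the same route as the paper's: in both cases one picks a $\mu$-class $U$ with two distinct elements $a,b$, defines the target function to take the value $a$ on $\deltarelation$ and $b$ elsewhere, and invokes Proposition~\ref{prop:interpolating_polynomials_ins_sub_irr_type_three_malcev} (with the finite set $T=A^4$, available since $A$ is finite) to realize it as a polynomial whose image stays inside $U$, which gives both the desired description of $\deltarelation$ and the constancy of $f_\mu$.
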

\begin{proof}
Let~$U$ be an equivalence class of~$\mu$ with at least two distinct
elements $a,b$ and let $f\colon A^4\to U$ be defined by
\[
f(x_1,x_2,x_3,x_4)=\begin{cases}
a &\text{ if } x_1=x_2 \text{ or } x_3=x_4;\\
b &\text{ otherwise}.
\end{cases}
\]
Proposition~\ref{prop:interpolating_polynomials_ins_sub_irr_type_three_malcev}
implies that $f\in \POL\ari{4}\ab{A}$.
From the definition of~$f$ we see that
$\deltarelation=\{\vec{x}\in A^4\mid f(\vec{x})=a\}$, and~$f_{\mu}$ has
$a/\mu=U=b/\mu$ as its single value.
\end{proof}
Next, we determine what can be said about a Ma\v{l}cev algebra whose
universal algebraic geometry contains all finite relations.
\begin{proposition}\label{prop:general_necessary_conditions_for_cofinitness}
Let~$\ab{A}$ be an algebra with a Ma\v{l}cev polynomial
and assume that every three-element quaternary relation on~$A$
is an algebraic set with respect to $\POL\ab{A}$. Then for all
$\alpha,\beta\in\Con\ab{A}\setminus \{\bottom{A}\}$ we have $[\alpha,
\beta]>\bottom{A}$.
\end{proposition}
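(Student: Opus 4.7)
The plan is to argue by contradiction, exploiting the ``parallelogram'' identity of Lemma~\ref{lemma:d_affine}. Suppose some $\alpha,\beta\in\Con\ab{A}\setminus\{\bottom{A}\}$ satisfy $[\alpha,\beta]=\bottom{A}$, and choose $(a_1,a_2)\in\alpha$ and $(b_1,b_2)\in\beta$ with $a_1\neq a_2$ and $b_1\neq b_2$; such pairs exist because both congruences are non-trivial. I will exhibit a specific three-element quaternary relation on $A$ that cannot be algebraic with respect to $\POL\ab{A}$, contradicting the hypothesis.

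Consider
\[
R\defeq\{(a_1,a_1,b_1,b_1),(a_2,a_2,b_1,b_1),(a_1,a_1,b_2,b_2)\}\subseteq A^4,
\]
whose three entries are pairwise distinct by $a_1\neq a_2$ and $b_1\neq b_2$, together with the external tuple $\vec{y}\defeq(a_2,a_2,b_2,b_2)\notin R$. By assumption $R$ is algebraic, so Lemma~\ref{lemma:equivalent_definition_algebraic_set} yields $p,q\in\POL\ari{4}\ab{A}$ with $p(\vec{x})=q(\vec{x})$ for every $\vec{x}\in R$ and $p(\vec{y})\neq q(\vec{y})$; the goal is to show that the hypothesis $[\alpha,\beta]=\bottom{A}$ actually forces $p(\vec{y})=q(\vec{y})$.

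To this end, set $\vec{u}\defeq(a_2,a_2,b_1,b_1)$, $\vec{v}\defeq(a_1,a_1,b_1,b_1)$, and $\vec{w}\defeq(a_1,a_1,b_2,b_2)$, all of which lie in $R$. A coordinate-wise check gives $\vec{u}\equiv_\alpha\vec{v}$ and $\vec{v}\equiv_\beta\vec{w}$, while the Ma\v{l}cev identities $d(x,y,y)=x$ and $d(x,x,y)=y$ yield $d(u_i,v_i,w_i)=\vec{y}(i)$ for every $i\in\finset{4}$. Applying Lemma~\ref{lemma:d_affine} to~$p$ and using $[\alpha,\beta]=\bottom{A}$ to upgrade the $[\alpha,\beta]$-congruence to equality then gives $p(\vec{y})=d(p(\vec{u}),p(\vec{v}),p(\vec{w}))$, and the analogous identity holds for~$q$. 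Since $p$ and $q$ agree on $\vec{u},\vec{v},\vec{w}\in R$, the right-hand sides coincide, forcing $p(\vec{y})=q(\vec{y})$, the desired contradiction. The only subtlety I anticipate is choosing $R$ and $\vec{y}$ so that the ``$\alpha$-difference'' is confined to the first two coordinates and the ``$\beta$-difference'' to the last two; once this parallelogram configuration is in place, Lemma~\ref{lemma:d_affine} does all the work.
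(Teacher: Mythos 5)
Your proof is correct. The parallelogram configuration (a three-element quaternary relation $R$ together with the one ``missing corner'' $\vec{y}$, with the $\alpha$-displacement confined to two coordinates and the $\beta$-displacement to the other two) is the same structural idea that the paper uses, but you route the argument through a different key tool. The paper avoids Lemma~\ref{lemma:d_affine} entirely: from the polynomials $p,q$ separating the external point from $B=\{(a,a,u,u),(a,a,u,v),(a,b,u,u)\}$, it explicitly builds the binary polynomial $f(x_1,x_2)=d(p(a,b,u,v),p(a,x_1,u,x_2),q(a,x_1,u,x_2))$ and verifies by direct Ma\v{l}cev cancellation that $f(a,u)=f(a,v)=f(b,u)$ while $f(b,v)\neq f(b,u)$, exhibiting a failure of $C(\alpha,\beta;\bottom{A})$ at the level of the definition. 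Your approach instead argues by contradiction, packages the same cancellations into a single application of the ``$d$-affine'' Lemma~\ref{lemma:d_affine}, and reads off $p(\vec{y})=q(\vec{y})$ immediately. The two proofs buy slightly different things: the paper's version is self-contained with respect to the commutator definition and produces an explicit witnessing polynomial, whereas yours is shorter and makes the geometric content (the Ma\v{l}cev operation transporting equality across the parallelogram) visible at a glance, at the cost of invoking the derived Lemma~\ref{lemma:d_affine}. Both are valid; the underlying computation is essentially identical.
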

\begin{proof}
Let~$d$ be the Ma\v{l}cev polynomial and let
$\alpha,\beta\in \Con\ab{A}\setminus \{\bottom{A}\}$.
We prove that $\neg C(\alpha, \beta; \bottom{A})$.
To this end, take $(a,b)\in \alpha\setminus \bottom{A}$ and
$(u,v)\in\beta\setminus \bottom{A}$ to form
$B=\set{(a,a,u, u),(a,a,u, v),(a,b,u,
u)}$, which does not contain $(a,b,u,v)$. By our assumption, the quaternary relation~$B$
is algebraic
with respect to $\POL\ab{A}$. Hence
Lemma~\ref{lemma:equivalent_definition_algebraic_set} yields that
there are quaternary polynomials $p,q\in\POL\ari{4}\ab{A}$ such that
$p(a,b,u, v)\neq q(a,b,u, v)$
and $p\restrict{B}=q\restrict{B}$.
We use these to define the binary polynomial operation~$f$
for all $x_1,x_2\in A$ by
\[
f(x_1,x_2)\defeq d(p(a,b,u, v),p(a,x_1,u, x_2), q(a,x_1,u, x_2)).
\]
For~$d$ is a Ma\v{l}cev operation, we readily verify
\begin{alignat*}{2}
f(a,u)&=d(p(a,b,u,v),p(a,a,u,u), q(a,a,u,u))&&=p(a,b,u,v),\\
f(a,v)&=d(p(a,b,u,v),p(a,a,u,v), q(a,a,u,v))&&=p(a,b,u,v),\\
f(b,u)&=d(p(a,b,u,v),p(a,b,u,u), q(a,b,u,u))&&=p(a,b,u,v),\\
f(b,v)&=d(p(a,b,u,v),p(a,b,u,v), q(a,b,u,v))&&=q(a,b,u,v).
\end{alignat*}
Since $p(a,b,u,v)\neq q(a,b,u,v)$, we have that $\neg C(\alpha,
\beta; \bottom{A})$;
thus the definition of the commutator
yields $[\alpha,\beta]\neq \bottom{A}$, cf.\ also
Lemma~\ref{lemma:commutator-properties-Malcev-algebras}\eqref{item:char-commutator-in-Malcev-algs}.
\end{proof}

The following proposition provides a condition on the
commutator which is sufficient for equational additivity in Ma\v{l}cev
algebras.
\begin{proposition}\label{prop:char_infinite_equ_additive_malcev}
Let~$\ab{A}$ be an algebra on a set~$A$ with
a Ma\v{l}cev polynomial $d\in\POL\ari{3}\ab{A}$.
If for all $\alpha,\beta\in\Con\ab{A}\setminus\{\bottom{A}\}$
we have $[\alpha, \beta]>\bottom{A}$, then $\POL\ab{A}$ is
equationally additive.
\end{proposition}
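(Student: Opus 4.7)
The plan is to invoke Theorem~\ref{teor:eq_additive_iff_delta_four_algebraic} and reduce equational additivity to the statement that $\deltarelation \in \Alg\ari{4}\POL\ab{A}$. By Lemma~\ref{lemma:equivalent_definition_algebraic_set} this becomes a pointwise separation task: for every $\vec{a} = (a_1,a_2,a_3,a_4) \in A^4\setminus\deltarelation$ (so $a_1\neq a_2$ and $a_3\neq a_4$), I must produce quaternary polynomials $f_\vec{a},g_\vec{a}\in \POL\ari{4}\ab{A}$ that agree on $\deltarelation$ but differ at $\vec{a}$.

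To construct these, I would set $\alpha \defeq \Cg{\{(a_1,a_2)\}}$ and $\beta \defeq \Cg{\{(a_3,a_4)\}}$. Both are strictly above $\bottom{A}$, so the hypothesis gives $[\alpha,\beta] > \bottom{A}$. Since each of $\alpha$ and $\beta$ is generated by a single pair, Lemma~\ref{lemma:char_comm_in_malcev} applies and delivers exactly what is needed: a binary polynomial $z \in \POL\ari{2}\ab{A}$ that is absorbing at $(a_1,a_3)$ and satisfies $z(a_2,a_4) \neq z(a_1,a_3)$.

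The candidate polynomials would then be
\[
f_\vec{a}(x_1,x_2,x_3,x_4) \defeq z\bigl(d(a_1,x_1,x_2),\, d(a_3,x_3,x_4)\bigr)
\]
and $g_\vec{a}$ the constant polynomial with value $z(a_1,a_3)$. Checking separation is a short case distinction. If $x_1=x_2$, the Mal\v{c}ev identity gives $d(a_1,x_1,x_2)=a_1$, and absorption of~$z$ in its first coordinate yields $f_\vec{a}(\vec{x}) = z(a_1,\cdot) = z(a_1,a_3) = g_\vec{a}(\vec{x})$; the case $x_3=x_4$ is symmetric via absorption in the second coordinate, so $f_\vec{a}$ and $g_\vec{a}$ coincide on~$\deltarelation$. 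Evaluating at $\vec{a}$ itself, however, $d(a_1,a_1,a_2)=a_2$ and $d(a_3,a_3,a_4)=a_4$ by the other Mal\v{c}ev identity, so $f_\vec{a}(\vec{a}) = z(a_2,a_4) \neq z(a_1,a_3) = g_\vec{a}(\vec{a})$.

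The only potentially delicate step is obtaining a binary polynomial with the correct absorption and separation properties; this is precisely the content of Lemma~\ref{lemma:char_comm_in_malcev}, so the work has essentially been packaged into that earlier result. Once $z$ is in hand, the explicit formula above, exploiting the interplay between the Mal\v{c}ev identities on the two arguments of $z$ and its absorption at $(a_1,a_3)$, finishes the argument without further difficulty.
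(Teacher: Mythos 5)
Your proof is correct, and it uses the same essential engine as the paper: obtain, from $[\alpha,\beta]>\bottom{A}$ and Lemma~\ref{lemma:char_comm_in_malcev}, a binary polynomial absorbing at a chosen pair, and compose it with Mal\v{c}ev terms to build the separating polynomial. The organizational difference is that you first invoke Theorem~\ref{teor:eq_additive_iff_delta_four_algebraic} to reduce the task to showing $\deltarelation\in\Alg\ari{4}\POL\ab{A}$ and then apply Lemma~\ref{lemma:equivalent_definition_algebraic_set} only to the single relation $\deltarelation$, whereas the paper bypasses this reduction and proves directly, for arbitrary $C,B\in\Alg\ari{n}\POL\ab{A}$ and $\vec{w}\notin C\cup B$, that a separating polynomial exists. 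Your $f_\vec{a}(\vec{x})=z\bigl(d(a_1,x_1,x_2),d(a_3,x_3,x_4)\bigr)$ is exactly the paper's formula $q\bigl(d(g_C(\vec{x}),f_C(\vec{x}),f_C(\vec{w})),d(g_B(\vec{x}),f_B(\vec{x}),f_B(\vec{w}))\bigr)$ specialized to $C=\{\vec{x}:x_1=x_2\}$, $B=\{\vec{x}:x_3=x_4\}$, with the witnesses being coordinate projections; the only cosmetic deviation is the order of arguments inside $d$, which is immaterial since the two Mal\v{c}ev identities handle it either way. Your route is slightly more economical because it leans on the already-proved equivalence of Theorem~\ref{teor:eq_additive_iff_delta_four_algebraic} rather than re-running the union argument for general $C,B$; the paper's direct argument is no harder and perhaps more self-contained, but there is no mathematical gain in it. One small thing to spell out explicitly when writing this up: when reading off from the right-hand side of Lemma~\ref{lemma:char_comm_in_malcev} a pair $(z(a_2,a_4),z(a_1,a_3))$ witnessing $[\alpha,\beta]\neq\bottom{A}$, you need that this right-hand side \emph{equals} $[\alpha,\beta]$ (not merely contains it), which the lemma does provide, so there genuinely is such a $z$ once $[\alpha,\beta]>\bottom{A}$.
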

\begin{proof}
Let $n\in\N$, let $C,B\in\Alg\ari{n}\POL\ab{A}$ and let $\vec{w}\in
A^n\setminus (C\cup B)$. We prove that there exist a constant
$\tau_{\vec{w}}\in A$ and a polynomial $p_\vec{w}\in
\POL\ari{n}\ab{A}$ such that
$p_{\vec{w}}(\vec{w})\neq\tau_{\vec{w}}$ and
$p_{\vec{w}}(\vec{x})=\tau_{\vec{w}}$ for all
$\vec{x}\in C\cup B$.
Since~$C$ and~$B$ are algebraic and $\vec{w}\notin C\cup B$,
Lemma~\ref{lemma:equivalent_definition_algebraic_set} applied to~$B$
and~$C$, respectively, gives us
$f_C,f_B, g_C, g_B\in \POL\ari{n}\ab{A}$ such that
$f_C\restrict{C}=g_C\restrict{C}$,
$f_B\restrict{B}=g_B\restrict{B}$,
$f_C(\vec{w})\neq g_C(\vec{w})$ and
$f_B(\vec{w})\neq g_B(\vec{w})$.
Hence the congruences generated by these respective pairs are
non-trivial:
\begin{align*}
\alpha\defeq\Cg{\{(f_{C}(\vec{w}),g_{C}(\vec{w}))\}}&\neq\bottom{A},&
\beta \defeq\Cg{\{(f_{B}(\vec{w}),g_{B}(\vec{w}))\}}&\neq\bottom{A}.
\end{align*}
Therefore, the assumption yields that
\[
[\Cg{\{(f_C(\vec{w}),g_C(\vec{w}))\}},
 \Cg{\{(f_B(\vec{w}),g_B(\vec{w}))\}}]
=[\alpha,\beta]>\bottom{A}.\]
Thus, Lemma~\ref{lemma:char_comm_in_malcev} implies that there
exists a
polynomial $q\in \POL\ari{2}\ab{A}$ such that
\[q(g_C(\vec{w}),g_B(\vec{w}))\neq q(f_C(\vec{w}), f_B(\vec{w}))\]
and
$q(a_1, f_B(\vec{w}))=q(f_C(\vec{w}),
a_2)=q(f_C(\vec{w}),f_B(\vec{w}))$
holds for all $a_1,a_2\in A$.\par
Let us now define the polynomial $p_\vec{w}\in\POL\ari{n}\ab{A}$
for all $\vec{x}\in A^n$ by
\[
p_{\vec{w}}(\vec{x})=q(d(g_C(\vec{x}),f_C(\vec{x}),f_C(\vec{w})),d(g_B(\vec{x}),
f_B(\vec{x}), f_B(\vec{w}))).
\]
For every $\vec{c}\in C$ we have
\begin{align*}
p_{\vec{w}}(\vec{c})
&=q(d(g_C(\vec{c}),f_C(\vec{c}),f_C(\vec{w})), d(g_B(\vec{c}),
f_B(\vec{c}), f_B(\vec{w})))\\
&=q(f_C(\vec{w}),d(g_B(\vec{c}), f_B(\vec{c}), f_B(\vec{w})))\\
&=q(f_C(\vec{w}),f_B(\vec{w})),
\end{align*}
while for every $\vec{b}\in B$ we have
\begin{align*}
p_{\vec{w}}(\vec{b})
&=q(d(g_C(\vec{b}),f_C(\vec{b}),f_C(\vec{w})),d(g_B(\vec{b}),
f_B(\vec{b}), f_B(\vec{w})))\\
&=q(d(g_C(\vec{b}),f_C(\vec{b}),f_C(\vec{w})), f_B(\vec{w}))\\
&=q(f_C(\vec{w}),f_B(\vec{w})).
\end{align*}
On the other hand, we have
\begin{align*}
p_{\vec{w}}(\vec{w})&=q(d(g_C(\vec{w}),f_C(\vec{w}),f_C(\vec{w})),d(g_B(\vec{w}),
f_B(\vec{w}), f_B(\vec{w})))\\
&=    q(g_C(\vec{w}),g_B(\vec{w}))
 \neq q(f_C(\vec{w}),f_B(\vec{w})).
\end{align*}
Therefore, setting $\tau_{\vec{w}}=q(f_C(\vec{w}),f_B(\vec{w}))$ we
have that $p_{\vec{w}}(\vec{w})\neq \tau_{\vec{w}}$, whereas for all
$\vec{x}\in C\cup B$ the equality
$p_{\vec{w}}(\vec{x})=\tau_{\vec{w}}$ holds.
Hence Lemma~\ref{lemma:equivalent_definition_algebraic_set} yields
that $C\cup B\in\Alg\ari{n}(\POL\ab{A})$.
\end{proof}
\begin{theorem}\label{teor:equ_additive_cofinite_non_abelianity}
\newcounter{tmpenumcnt}%
Let~$\ab{A}$ be an algebra with at least two elements and a
Ma\v{l}cev polynomial.
Then the following statements are equivalent:
\begin{enumerate}[\upshape(a)]
\item\label{item:teor:equ_additive_cofinite_non_abelianity_item_eq_additive}
      $\POL\ab{A}$ is equationally additive.
\item\label{item:teor:equ_additive_cofinite_non_abelianity_item_cofinite}
      For all $n\in\N$, any finite subset of~$A^n$ belongs to
      $\Alg(\POL(\ab{A}))$.
\item\label{item:teor:equ_additive_cofinite_non_abelianity_item_three_element}
      Every three-element subset of~$A^4$ belongs to
      $\Alg(\POL(\ab{A}))$.
\item\label{item:teor:equ_additive_cofinite_non_abelianity_item_commutators}
      For all $\alpha,\beta\in\Con\ab{A}\setminus\{\bottom{A}\}$
      we have $[\alpha, \beta]>\bottom{A}$.
\setcounter{tmpenumcnt}{\value{enumi}}%
\end{enumerate}
If~$A$ is finite,
\eqref{item:teor:equ_additive_cofinite_non_abelianity_item_eq_additive}--\eqref{item:teor:equ_additive_cofinite_non_abelianity_item_commutators}
are furthermore equivalent to the following:
\begin{enumerate}[\upshape(a)]
\setcounter{enumi}{\value{tmpenumcnt}}%
\item\label{item:teor:equ_additive_cofinite_non_abelianity_item_finite_type_three}
      $\ab{A}$ is subdirectly irreducible and the monolith~$\mu$ is
      non-Abelian.
\item\label{item:teor:equ_additive_cofinite_non_abelianity_item_deltarelation}
      There exist $f\in \POL\ari{4}\ab{A}$ and $a\in A$ such that
      $\deltarelation=\{\vec{x}\in A^4\mid f(\vec{x})=a\}$
      and~$f_{\gamma}$ is constant
      for all $\gamma\in\Con\ab{A}\setminus\{\bottom{A}\}$.
\end{enumerate}
\end{theorem}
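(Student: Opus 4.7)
The plan is to establish the equivalences via two cycles, one for the general case and an additional finite one, drawing systematically on the propositions already proved in the section. For the infinite‐case equivalences \eqref{item:teor:equ_additive_cofinite_non_abelianity_item_eq_additive}$\Leftrightarrow$\eqref{item:teor:equ_additive_cofinite_non_abelianity_item_cofinite}$\Leftrightarrow$\eqref{item:teor:equ_additive_cofinite_non_abelianity_item_three_element}$\Leftrightarrow$\eqref{item:teor:equ_additive_cofinite_non_abelianity_item_commutators}, I would run the cycle \eqref{item:teor:equ_additive_cofinite_non_abelianity_item_eq_additive}$\Rightarrow$\eqref{item:teor:equ_additive_cofinite_non_abelianity_item_cofinite}$\Rightarrow$\eqref{item:teor:equ_additive_cofinite_non_abelianity_item_three_element}$\Rightarrow$\eqref{item:teor:equ_additive_cofinite_non_abelianity_item_commutators}$\Rightarrow$\eqref{item:teor:equ_additive_cofinite_non_abelianity_item_eq_additive}. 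The implication \eqref{item:teor:equ_additive_cofinite_non_abelianity_item_eq_additive}$\Rightarrow$\eqref{item:teor:equ_additive_cofinite_non_abelianity_item_cofinite} is easy: every singleton $\{\vec{a}\}\subs A^n$ is $\POL\ab{A}$-algebraic since $\POL\ab{A}$ contains all constants (so $\{\vec{a}\}=\{\vec{x}\mid \eni{1}(\vec{x})=\cna{a_1}(\vec{x}),\dotsc,\eni{n}(\vec{x})=\cna{a_n}(\vec{x})\}$), and by induction using equational additivity, every finite union of singletons, i.e., every finite subset of $A^n$, is algebraic. The step \eqref{item:teor:equ_additive_cofinite_non_abelianity_item_cofinite}$\Rightarrow$\eqref{item:teor:equ_additive_cofinite_non_abelianity_item_three_element} is trivial, \eqref{item:teor:equ_additive_cofinite_non_abelianity_item_three_element}$\Rightarrow$\eqref{item:teor:equ_additive_cofinite_non_abelianity_item_commutators} is exactly Proposition~\ref{prop:general_necessary_conditions_for_cofinitness}, and \eqref{item:teor:equ_additive_cofinite_non_abelianity_item_commutators}$\Rightarrow$\eqref{item:teor:equ_additive_cofinite_non_abelianity_item_eq_additive} is Proposition~\ref{prop:char_infinite_equ_additive_malcev}.

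For the additional equivalences in the finite case I would close a second cycle \eqref{item:teor:equ_additive_cofinite_non_abelianity_item_eq_additive}$\Rightarrow$\eqref{item:teor:equ_additive_cofinite_non_abelianity_item_finite_type_three}$\Rightarrow$\eqref{item:teor:equ_additive_cofinite_non_abelianity_item_deltarelation}$\Rightarrow$\eqref{item:teor:equ_additive_cofinite_non_abelianity_item_eq_additive}. For \eqref{item:teor:equ_additive_cofinite_non_abelianity_item_eq_additive}$\Rightarrow$\eqref{item:teor:equ_additive_cofinite_non_abelianity_item_finite_type_three}, note that a Ma\v{l}cev polynomial is a weak difference polynomial; Proposition~\ref{prop:nec_condi_fin_sub_irre} combined with finiteness of~$A$ (and $\crd{A}\geq 2$) yields subdirect irreducibility, and Proposition~\ref{prop:nec_condi_weak_diff_implies_non_Abelian} applied to the monolith~$\mu$ gives $[\mu,\mu]>\bottom{A}$. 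The implication \eqref{item:teor:equ_additive_cofinite_non_abelianity_item_finite_type_three}$\Rightarrow$\eqref{item:teor:equ_additive_cofinite_non_abelianity_item_deltarelation} follows directly from Proposition~\ref{prop:inverse_of_prop_the_TCT_type_of_monolith_when_deltarelation_is_f_equal_constant}, which delivers $f\in\POL\ari{4}\ab{A}$ and $a\in A$ with $\deltarelation=\{f=a\}$ and~$f_\mu$ constant; since~$\ab{A}$ is subdirectly irreducible, every non-trivial $\gamma\in\Con\ab{A}$ satisfies $\mu\leq\gamma$, so the fact that the image of~$f$ lies in a single $\mu$-class forces it to lie in a single $\gamma$-class, making~$f_\gamma$ constant as well. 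Finally \eqref{item:teor:equ_additive_cofinite_non_abelianity_item_deltarelation}$\Rightarrow$\eqref{item:teor:equ_additive_cofinite_non_abelianity_item_eq_additive} is immediate from Theorem~\ref{teor:eq_additive_iff_delta_four_algebraic} since $\deltarelation$ is exhibited as an algebraic set.

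The whole proof is thus a short stitching argument, with the substantive work already encapsulated in the preceding propositions. No step looks like a genuine obstacle: the two non-trivial directions, \eqref{item:teor:equ_additive_cofinite_non_abelianity_item_three_element}$\Rightarrow$\eqref{item:teor:equ_additive_cofinite_non_abelianity_item_commutators} and \eqref{item:teor:equ_additive_cofinite_non_abelianity_item_commutators}$\Rightarrow$\eqref{item:teor:equ_additive_cofinite_non_abelianity_item_eq_additive}, are handled by Propositions~\ref{prop:general_necessary_conditions_for_cofinitness} and~\ref{prop:char_infinite_equ_additive_malcev} respectively, while the extra finite-case equivalences reduce to Propositions~\ref{prop:nec_condi_fin_sub_irre}, \ref{prop:nec_condi_weak_diff_implies_non_Abelian}, and~\ref{prop:inverse_of_prop_the_TCT_type_of_monolith_when_deltarelation_is_f_equal_constant}. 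The one small care point is to observe that \eqref{item:teor:equ_additive_cofinite_non_abelianity_item_deltarelation} demands $f_\gamma$ constant for every non-trivial $\gamma$, not merely for the monolith; this is purely a consequence of subdirect irreducibility and monotonicity of the quotient construction, and is the only place where the step beyond Proposition~\ref{prop:inverse_of_prop_the_TCT_type_of_monolith_when_deltarelation_is_f_equal_constant} has to be made explicit.
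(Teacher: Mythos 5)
Your proof is correct and follows essentially the same route as the paper's, which for the step \eqref{item:teor:equ_additive_cofinite_non_abelianity_item_eq_additive}$\Rightarrow$\eqref{item:teor:equ_additive_cofinite_non_abelianity_item_finite_type_three} cites Corollary~\ref{cor:nec_condi_malcev_polynomial_and_finite_yields_sub_irreducible_and_type_three} rather than your direct appeal to Propositions~\ref{prop:nec_condi_fin_sub_irre} and~\ref{prop:nec_condi_weak_diff_implies_non_Abelian} (a purely cosmetic difference, since the corollary packages exactly that reasoning). Two minor remarks: in \eqref{item:teor:equ_additive_cofinite_non_abelianity_item_eq_additive}$\Rightarrow$\eqref{item:teor:equ_additive_cofinite_non_abelianity_item_cofinite} you should also note that $\emptyset$ is algebraic (e.g.\ $\emptyset=\{x\in A\mid \ca(x)=\ca[b](x)\}$ for distinct $a,b\in A$), since \eqref{item:teor:equ_additive_cofinite_non_abelianity_item_cofinite} asserts this for all finite subsets including the empty one; and your explicit observation that $f_\gamma$ is constant for every nontrivial $\gamma\geq\mu$ correctly supplies a small detail the paper leaves implicit when passing from Proposition~\ref{prop:inverse_of_prop_the_TCT_type_of_monolith_when_deltarelation_is_f_equal_constant} to condition~\eqref{item:teor:equ_additive_cofinite_non_abelianity_item_deltarelation}.
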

\begin{proof}
Since $\POL\ab{A}$ contains all constant operations,
for every $n\in\N$, every singleton $\set{(a_1,\dotsc,a_n)}$ can be
written as
\[
\set{\vec{x}\in A^n
\Bigm\vert\cna{a_1}(\vec{x})=\eni{1}(\vec{x})\land\dotsm
                  \land \cna{a_n}(\vec{x})=\eni{n}(\vec{x})}
\]
and $\emptyset=\set{x\in A\mid \ca(x)=\ca[b](x)}$
where $a,b\in A$ are distinct elements. Therefore,
\eqref{item:teor:equ_additive_cofinite_non_abelianity_item_eq_additive}
implies~\eqref{item:teor:equ_additive_cofinite_non_abelianity_item_cofinite},
and~\eqref{item:teor:equ_additive_cofinite_non_abelianity_item_three_element}
is just a special case
of~\eqref{item:teor:equ_additive_cofinite_non_abelianity_item_cofinite}.
Proposition~\ref{prop:general_necessary_conditions_for_cofinitness}
proves
that~\eqref{item:teor:equ_additive_cofinite_non_abelianity_item_three_element}
implies~\eqref{item:teor:equ_additive_cofinite_non_abelianity_item_commutators},
while Proposition~\ref{prop:char_infinite_equ_additive_malcev} shows
that~\eqref{item:teor:equ_additive_cofinite_non_abelianity_item_commutators}
implies~\eqref{item:teor:equ_additive_cofinite_non_abelianity_item_eq_additive}.
If we assume that~$A$ is finite, then by
Corollary~\ref{cor:nec_condi_malcev_polynomial_and_finite_yields_sub_irreducible_and_type_three}
we have
that~\eqref{item:teor:equ_additive_cofinite_non_abelianity_item_eq_additive}
implies~\eqref{item:teor:equ_additive_cofinite_non_abelianity_item_finite_type_three};
moreover,
Proposition~\ref{prop:inverse_of_prop_the_TCT_type_of_monolith_when_deltarelation_is_f_equal_constant}
shows
that~\eqref{item:teor:equ_additive_cofinite_non_abelianity_item_finite_type_three}
implies~\eqref{item:teor:equ_additive_cofinite_non_abelianity_item_deltarelation}.
From Theorem~\ref{teor:eq_additive_iff_delta_four_algebraic}
we see
that~\eqref{item:teor:equ_additive_cofinite_non_abelianity_item_deltarelation}
implies~\eqref{item:teor:equ_additive_cofinite_non_abelianity_item_eq_additive}.
\end{proof}

We now specify our results to Artinian rings.
We remark that the commutator of two ideals
as defined in Section~\ref{sec:preliminaries}
in the case of rings coincides with
the classical ideal product
(cf.~\cite[Exercise~4.156(12)]{McKMcnTay88}).
\begin{corollary}\label{cor:rings}
Let\/~$\ab{R}$ be an Artinian ring with unity. Then $\POL\ab{R}$ is
equationally additive if and only if\/~$\ab{R}$ is isomorphic to the
ring of linear endomorphisms of a finite dimensional vector space
over a division ring.
\end{corollary}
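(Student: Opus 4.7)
The plan is to apply Theorem~\ref{teor:equ_additive_cofinite_non_abelianity} and then translate the resulting commutator condition to a purely ring-theoretic one. A ring possesses the Mal'cev term $(x,y,z)\mapsto x-y+z$, so $\ab{R}$ satisfies the hypotheses of Theorem~\ref{teor:equ_additive_cofinite_non_abelianity} as soon as it has at least two elements. By the equivalence of items~\eqref{item:teor:equ_additive_cofinite_non_abelianity_item_eq_additive} and~\eqref{item:teor:equ_additive_cofinite_non_abelianity_item_commutators}, $\POL\ab{R}$ is equationally additive if and only if $[\alpha,\beta]>\bottom{R}$ for all $\alpha,\beta\in\Con\ab{R}\setminus\{\bottom{R}\}$.

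Next I would translate this condition into the language of two-sided ideals, using the lattice isomorphism $\psi\colon\Id\ab{R}\to\Con\ab{R}$ from Section~\ref{sec:preliminaries}. Under this correspondence, the congruence commutator transports to the ideal commutator in $\Id\ab{R}$. The remark preceding the corollary recalls that for rings the ideal commutator $[I,J]$ coincides with the classical ideal product~$IJ$ (cf.~\cite[Exercise~4.156(12)]{McKMcnTay88}). Consequently, the commutator condition above reads: for all nonzero two-sided ideals $I,J$ of~$\ab{R}$, one has $IJ\neq 0$. This is precisely the definition of $\ab{R}$ being a \emph{prime} ring.

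It now remains to invoke classical Artin–Wedderburn theory. In an Artinian ring, the Jacobson radical $J(\ab{R})$ is nilpotent; if it were nonzero, then $J(\ab{R})^n=0$ for some $n\in\N$ would contradict primeness, so $J(\ab{R})=0$ and $\ab{R}$ is semisimple. A prime semisimple Artinian ring is simple, and by Artin–Wedderburn is isomorphic to $M_n(D)$ for some division ring $D$ and some $n\in\N$, i.e., to the endomorphism ring of the finite dimensional $D$-vector space~$D^n$. For the converse, if $\ab{R}\cong M_n(D)$, then $\ab{R}$ is simple with at least two elements; hence its only two-sided ideals are $0$ and $\ab{R}$ itself, and $\ab{R}\cdot\ab{R}=\ab{R}\neq 0$, so the prime condition is satisfied trivially, and Theorem~\ref{teor:equ_additive_cofinite_non_abelianity} yields that $\POL\ab{R}$ is equationally additive.

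No real obstacle is expected: the proof is essentially a dictionary translation from universal-algebraic commutator theory (Theorem~\ref{teor:equ_additive_cofinite_non_abelianity}) to the classical notion of a prime ring, combined with a standard Artin–Wedderburn type argument. The only point requiring care is the identification of the universal-algebraic commutator of ideals of~$\ab{R}$ with the classical product $IJ$, which has already been noted just before the statement of the corollary.
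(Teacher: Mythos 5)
Your proof is correct and takes essentially the same route as the paper: apply Theorem~\ref{teor:equ_additive_cofinite_non_abelianity} via the Ma\v{l}cev polynomial $x-y+z$, translate the commutator condition to ideal products, use the nilpotency of the Jacobson radical to conclude $\rad\ab{R}=\set{0}$, and finish with Wedderburn--Artin. The paper's intermediate bookkeeping passes through subdirect irreducibility, the monolithic ideal satisfying $I\cdot I = I$, and primitivity, whereas you go through primeness, semisimplicity, and simplicity; for Artinian rings with unity these coincide, so the difference is cosmetic. One small omission: you observe the need for $\crd{R}\geq 2$ when invoking Theorem~\ref{teor:equ_additive_cofinite_non_abelianity}, but you do not actually dispatch the one-element (zero) ring, which the paper handles explicitly at the start (it is equationally additive and isomorphic to the endomorphism ring of the zero-dimensional vector space); adding that sentence would make the argument complete.
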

\begin{proof}
If~$\ab{R}$ consists of a single element, then $\POL\ab{R}$ is
equationally additive and~$\ab{R}$ is isomorphic to the endomorphism
ring of a zero-dimensional vector space. From now on let us assume
that zero and unity in~$\ab{R}$ are distinct.
If~$\ab{R}$ is the ring of linear transformations of a
finite dimensional vector space over a division  ring,
then it is simple and non-Abelian. Hence it satisfies~\eqref{item:teor:equ_additive_cofinite_non_abelianity_item_commutators}
of Theorem~\ref{teor:equ_additive_cofinite_non_abelianity}.
Let us now assume that $\POL\ab{R}$
is equationally additive. Then~$\ab{R}$ satisfies~\eqref{item:teor:equ_additive_cofinite_non_abelianity_item_commutators}
of Theorem~\ref{teor:equ_additive_cofinite_non_abelianity}, and
therefore it is subdirectly irreducible and the monolithic ideal~$I$
satisfies $I\cdot I=I$.
Since the Jacobson radical $\rad{\ab{R}}$ is
nilpotent (cf.~\cite[Theorem~4.3]{BSAII}), we infer
that $\rad{\ab{R}}=\set{0}$. Thus, $\ab{R}$ is primitive
(cf.~\cite[Propositions~4.1 and~4.4]{BSAII}). Hence the
Wedderburn--Artin Theorem yields that~$\ab{R}$ is isomorphic to the
ring of linear transformations of a finite dimensional vector space
over a division ring.
\end{proof}

Corollary~\ref{cor:rings} entails that the equational domains among all
Artinian rings with unity are simple.
This is a consequence of the fact
that for Artinian rings with unity
condition~\eqref{item:teor:equ_additive_cofinite_non_abelianity_item_commutators}
of Theorem~\ref{teor:equ_additive_cofinite_non_abelianity} implies simplicity.
In the case of near-rings (cf.~\cite[Definition~1.1]{PilzNR}) this
is not any more true.
We provide an example of a finite near-ring that is an equational domain
but not simple.
\begin{corollary}
For a prime number~$p>2$
the near-ring $\ab{N}={\algop{C_{0}(\Z_{p^2})}{+, \circ}}$ of
zero-preserving congruence-preserving functions on~$\Z_{p^2}$ is
an equational domain but not simple.
\end{corollary}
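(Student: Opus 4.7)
Since $(N,+)$ is abelian of exponent $p^2$, the operation $-x=(p^2-1)x$ is a term of $\ab{N}$, whence $d(x,y,z)=x-y+z$ is a Ma\v{l}cev term. This brings Theorem~\ref{teor:equ_additive_cofinite_non_abelianity} into play, and the plan is to show that $\POL\ab{N}$ is equationally additive by verifying condition~\eqref{item:teor:equ_additive_cofinite_non_abelianity_item_commutators}. Via the correspondence between congruences and ideals in the expanded group $\ab{N}$ together with the monotonicity property of Lemma~\ref{lemma:commutator-properties-Malcev-algebras}\eqref{item:commutator-monotone}, this task reduces to proving $[\Cg[\ab{N}]{\{(0,a)\}},\Cg[\ab{N}]{\{(0,b)\}}]>\bottom{N}$ for every pair of non-zero $a,b\in N$. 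Lemma~\ref{lemma:char_comm_in_malcev} then cuts this down further to exhibiting a single $z\in\POL\ari{2}\ab{N}$ absorbing at $(0,0)$ with $z(a,b)\neq 0$.

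To build such a $z$, pick $x_a,x_b\in\Z_{p^2}$ with $v_1\defeq a(x_a)\neq 0$ and $v_2\defeq b(x_b)\neq 0$, and define $e,f\in N$ by $e(1)=x_a$, $f(1)=x_b$, and $e(x)=f(x)=0$ for $x\neq 1$; both map $p\Z_{p^2}$ to $\{0\}\subseteq p\Z_{p^2}$, hence lie in $N$. For any $c\in N$, the polynomial
\[
z(x,y)\defeq c\circ(x\circ e+y\circ f)-c\circ(x\circ e)-c\circ(y\circ f)
\]
is absorbing at $(0,0)$, thanks to right-distributivity of $\circ$ over $+$ and the identities $c\circ 0=0\circ g=0$. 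Evaluating at $1$ yields $z(a,b)(1)=c(v_1+v_2)-c(v_1)-c(v_2)$, and I will pick $c\in N$ (respecting $c(0)=0$ and $c(p\Z_{p^2})\subseteq p\Z_{p^2}$) so that this difference is non-zero. The construction of $c$ proceeds by a short case split on whether $v_1,v_2,v_1+v_2$ lie in $p\Z_{p^2}$ and whether $v_1+v_2=0$; the hypothesis $p>2$ guarantees $v_1\neq -v_1$ for $v_1\in\Z_{p^2}\setminus\{0\}$ and ensures $\crd{p\Z_{p^2}}=p\geq 3$, so that the restriction of $c$ to $p\Z_{p^2}$ can be chosen non-additive.

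For non-simplicity, the set $I_0\defeq\{f\in N : f(\Z_{p^2})\subseteq p\Z_{p^2}\}$ is a non-trivial proper ideal of $\ab{N}$. It is an additive subgroup of the abelian group $(N,+)$, hence normal; for $i\in I_0$ and $g,h\in N$, the element $(g+i)\circ h-g\circ h=i\circ h$ has image inside $i[\Z_{p^2}]\subseteq p\Z_{p^2}$, and $g\circ(h+i)-g\circ h$ maps each $x$ to $g(h(x)+i(x))-g(h(x))\in p\Z_{p^2}$ because $h(x)+i(x)\equiv h(x)\pmod{p\Z_{p^2}}$ and $g$ preserves this congruence. The function sending $1$ to $p$ and every other argument to~$0$ belongs to $I_0\setminus\{0\}$, while $\id\notin I_0$, so $I_0$ is a non-zero proper ideal and $\ab{N}$ is not simple.

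The main obstacle is the construction of $c$ in the middle paragraph: the constraint $c(p\Z_{p^2})\subseteq p\Z_{p^2}$ restricts the available freedom on $p\Z_{p^2}$ to $p\Z_{p^2}$-valued choices, and the most delicate sub-case is the one where $v_1,v_2,v_1+v_2$ all lie inside $p\Z_{p^2}\setminus\{0\}$, forcing the required non-additivity to be realised entirely within the $p$-element subgroup $p\Z_{p^2}$; here $p>2$ is essential.
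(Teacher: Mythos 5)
Your argument is correct, but it takes a genuinely different route from the paper's. The paper invokes an external result (\cite[Corollary~5.2]{AicCanEckKabNeu08}) to conclude that $\ab{N}$ is subdirectly irreducible and to identify the monolithic ideal $M=(0:p\Z_{p^2})\cap(p\Z_{p^2}:\Z_{p^2})$, and then exhibits a single non-zero element of $[M,M]$ via a near-ring commutator formula, thereby verifying condition~\eqref{item:teor:equ_additive_cofinite_non_abelianity_item_finite_type_three} of Theorem~\ref{teor:equ_additive_cofinite_non_abelianity}. You instead verify condition~\eqref{item:teor:equ_additive_cofinite_non_abelianity_item_commutators} directly, using the paper's own Lemma~\ref{lemma:char_comm_in_malcev} to reduce the universally quantified commutator condition to exhibiting, for every pair of non-zero $a,b\in N$, a binary polynomial absorbing at $(0,0)$ with a non-zero value at $(a,b)$; your $z(x,y)=c\circ(x\circ e+y\circ f)-c\circ(x\circ e)-c\circ(y\circ f)$ is absorbing by right distributivity, and its value at $1$ is exactly an additivity defect $c(v_1+v_2)-c(v_1)-c(v_2)$ of some $c\in C_0(\Z_{p^2})$. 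This buys independence from the external subdirect-irreducibility result but requires you to produce the interpolating $c$ yourself. The case split you sketch for that last step is in fact unnecessary: the map $c$ with $c(0)=0$ and $c(x)=p$ for all $x\neq0$ is zero- and congruence-preserving (all its values lie in $p\Z_{p^2}$), and for non-zero $v_1,v_2$ the defect equals $-p$ when $v_1+v_2\neq 0$ and $-2p$ when $v_1+v_2=0$, both non-zero in $\Z_{p^2}$ precisely because $p>2$. For non-simplicity the paper simply observes $\set{0}\subsetneq M\subsetneq N$, whereas you use the larger ideal $(p\Z_{p^2}:\Z_{p^2})$; both work, and your verification that it is an ideal is correct.
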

\begin{proof}
One readily verifies that~$\ab{N}$ satisfies the assumptions
of~\cite[Corollary~5.2]{AicCanEckKabNeu08}, and therefore~$\ab{N}$ is
subdirectly irreducible, and its monolithic ideal
is equal to $M=(0:p\Z_{p^2})\cap (p\Z_{p^2}:\Z_{p^2})$, that is, the ideal
consisting of all the maps that send $p\Z_{p^2}$ to~$0$ and~$\Z_{p^2}$
to $p\Z_{p^2}$.
Next, we show that $[M,M]\neq {0}$. To this end, we define $a,b,x\colon
\Z_{p^2}\to \Z_{p^2}$
as follows. For every $n\in\Z_{p^2}$ we set
\begin{align*}
a(n)&\defeq
\begin{cases}
0 & \text{ if } n\in \{1, \dots, p-1\}\cup p\Z_{p^2},\\
kp & \text{ if } n\in \{kp+1, \dots, (k+1)p-1\}, \text{ with }
k\in\{1, \dots, p-1\};
\end{cases}\\
b(n)&\defeq pn;\\
x(n)&\defeq
\begin{cases}
n &\text{ if } n\in p\Z_{p^2},\\
n\bmod p &\text{ if } n\in \Z_{p^2}\setminus p\Z_{p^2}.
\end{cases}
\end{align*}
We observe that $a,b\in M$, and $x\in C_0(\Z_{p^2})$. Hence
$a\circ(b+x)-a\circ x\in [M,M]$ (cf.~\cite[Definition~2.1]{AicMud09}
and~\cite[Theorem~3.1]{JanMarVel16}).
Thus, we need only show that $a\circ(b+x)-a\circ x$ is
not constantly zero. One readily verifies that $(a\circ
x)\,(1)=0$ and $(a\circ (b+x))\,(1)=p$. Hence $[M,M]\neq \set{0}$, and
Theorem~\ref{teor:equ_additive_cofinite_non_abelianity} yields
that~$\ab{N}$ is an equational domain.
Since $\set{0}\subsetneq M\subsetneq C_{0}(\Z_{p^2})$, we obtain
that~$\ab{N}$ is not simple.
\end{proof}

\section{The number of equationally additive constantive expansions of finite Abelian groups}\label{section:on_the_number_abelian_groups}
In the present section we study the number of equationally
additive constantive expansions of Abelian groups on finite sets.
We begin with a lemma.
\begin{lemma}\label{lem:corollario_quanti_cloni_equationally_additive_Z_m}
Let $l,m,p,q\in\N$, with~$p$ and~$q$ prime, and~$m$ square-free.
For $n\in\N$ we write $\ab{Z}_n\defeq \algop{\Z_n}{+,-,0}$ for the
cyclic group of integers
$\set{0,\dotsc,n-1}$ modulo~$n$. Let~$\ab{H}$ be a finite group the
centre of which contains a subgroup of order~$q^2$.
Then the following statements hold:
\begin{enumerate}[\upshape(a)]
\item There are only finitely many
      equationally additive clones that contain the clone
      $\Clo(\ab{Z}_m)$.
      \label{item:lem:corollario_quanti_cloni_equationally_additive_Z_m_item_squarefree}
\item The number of equationally additive clones containing
      $\POL(\ab{Z}_p\times\ab{Z}_p)$ or $\POL(\ab{Z}_{p^2})$,
      respectively, is finite.
      \label{item:lem:corollario_quanti_cloni_equationally_additive_Z_m_item_p_quadro}
\item If $l\geq3$, then there are exactly~$\aleph_0$ equationally
      additive clones that contain $\POL(\ab{Z}_{p^l})$.
      \label{item:lem:corollario_quanti_cloni_equationally_additive_Z_p_to_k}
\item There are exactly~$\aleph_0$ equationally additive clones
      above $\POL(\ab{H}\times\ab{Z}_{p^l})$.
      \label{item:lem:corollario_quanti_cloni_equationally_additive_Z_m_item_npquadro_q}
\end{enumerate}
\end{lemma}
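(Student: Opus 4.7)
The plan is to reduce the whole lemma to counting problems for clones satisfying the commutator criterion of Theorem~\ref{teor:equ_additive_cofinite_non_abelianity}. Each of the base clones under consideration contains the group Mal'cev operation $d(x,y,z) = x-y+z$, so every clone $\funset{C}$ above any of them endows the underlying set with a Mal'cev polynomial; in parts~\eqref{item:lem:corollario_quanti_cloni_equationally_additive_Z_m_item_p_quadro}--\eqref{item:lem:corollario_quanti_cloni_equationally_additive_Z_m_item_npquadro_q} the base clone is already constantive, and in part~\eqref{item:lem:corollario_quanti_cloni_equationally_additive_Z_m_item_squarefree}, passing from $\Clo(\ab{Z}_m)\subseteq\funset{C}$ to $\POL(\Z_m;\funset{C})$ preserves equational additivity by Corollary~\ref{cor:equational_add_and_inclusion}. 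Thus $\funset{C}$ is equationally additive if and only if $[\alpha,\beta] > \bottom{A}$ for every pair $\alpha,\beta$ of non-zero congruences of the corresponding algebra $\ab{A}$.

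For part~\eqref{item:lem:corollario_quanti_cloni_equationally_additive_Z_m_item_squarefree}, the congruence lattice of $(\Z_m;\funset{C})$ is a sublattice of $\Con(\ab{Z}_m)$, namely the Boolean divisor lattice of the square-free $m$. If this sublattice had two distinct atoms $\alpha\neq\beta$, then $\alpha\wedge\beta = \bottom{\Z_m}$, so $[\alpha,\beta] \leq \bottom{\Z_m}$ by Lemma~\ref{lemma:commutator-properties-Malcev-algebras}\eqref{item:commutator-below-meet}, violating equational additivity. Hence the sublattice has a unique atom, and being Boolean it equals the two-element lattice, so $(\Z_m;\funset{C})$ is simple. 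The next step is to invoke the Chinese remainder decomposition $\ab{Z}_m \cong \prod_{p\mid m}\ab{Z}_p$ together with the known finiteness of the clone lattice above $\Clo(\ab{Z}_p)$ for each prime $p\mid m$, concluding that only finitely many such simple clones arise. Part~\eqref{item:lem:corollario_quanti_cloni_equationally_additive_Z_m_item_p_quadro} likewise reduces to the known finiteness of the clone lattice above $\POL(\ab{Z}_{p^2})$ and above $\POL(\ab{Z}_p \times \ab{Z}_p)$, a consequence of the structure theory of polynomial clones on finite Abelian groups of order $p^2$.

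For parts~\eqref{item:lem:corollario_quanti_cloni_equationally_additive_Z_p_to_k} and~\eqref{item:lem:corollario_quanti_cloni_equationally_additive_Z_m_item_npquadro_q}, the upper bound of $\aleph_0$ stems from the observation that every finite constantive Mal'cev algebra has at most countably many constantive clones above its own polynomial clone: each such clone is determined by its invariant relations, and on a finite set only countably many finitely generated relational structures exist. For the lower bound the plan is to construct, for each $n\in\N$, a polynomial operation $f_n$ whose adjunction yields an equationally additive clone distinct from all those generated by $f_1,\dots,f_{n-1}$. In part~\eqref{item:lem:corollario_quanti_cloni_equationally_additive_Z_p_to_k}, each $f_n$ should be tailored to the chain-like ideal lattice $\set{p^k\Z_{p^l}\mid 0\leq k\leq l}$, exploiting $l\geq 3$ to open enough room in the subgroup lattice; in part~\eqref{item:lem:corollario_quanti_cloni_equationally_additive_Z_m_item_npquadro_q} the $f_n$ can be combined with a binary polynomial arising from the central $q^2$-subgroup of $\ab{H}$ via Lemma~\ref{lemma:char_comm_in_malcev}. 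Equational additivity of each resulting clone is then checked directly from Theorem~\ref{teor:equ_additive_cofinite_non_abelianity} on the finite list of non-zero congruences of the corresponding expanded group.

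The main obstacle will be the lower-bound constructions in parts~\eqref{item:lem:corollario_quanti_cloni_equationally_additive_Z_p_to_k} and~\eqref{item:lem:corollario_quanti_cloni_equationally_additive_Z_m_item_npquadro_q}: one must engineer the $f_n$ so that successive clones are genuinely distinct, separated by a pertinent invariant relation, while simultaneously preserving the commutator condition. The hypotheses $l\geq 3$ and the existence of a central $q^2$-subgroup are essential here, since they create enough room in the relevant subgroup lattice to accommodate infinitely many distinguishing invariant relations without collapsing the commutator back to the zero congruence.
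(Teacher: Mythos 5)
Your reduction to the commutator criterion of Theorem~\ref{teor:equ_additive_cofinite_non_abelianity} is sound as a starting point, but several of the subsequent steps contain genuine gaps, and the lower-bound parts stop at a sketch without supplying the actual constructions the argument needs.

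For part~\eqref{item:lem:corollario_quanti_cloni_equationally_additive_Z_m_item_squarefree}: the inference ``unique atom, and the full lattice is Boolean, therefore the sublattice is two-element'' is false. A $\set{0,1}$-sublattice of a Boolean lattice need not be Boolean; for instance in $\Con(\ab{Z}_{pq})\cong\mathbf{2}^2$, the chain $\set{\bottom{},\,\alpha_p,\,\uno{}}$ is a three-element $\set{0,1}$-sublattice with a unique atom, so a clone above $\Clo(\ab{Z}_{pq})$ could yield a subdirectly irreducible but non-simple algebra. The subsequent appeal to a ``Chinese remainder decomposition'' for counting clones above $\Clo(\ab{Z}_m)$ is also unjustified, because the clone lattice over a product is not controlled by the clone lattices of the factors in any direct way. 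All of this effort is unnecessary in any case: the paper simply cites Fioravanti's result that for square-free $m$ there are only finitely many clones above $\Clo(\ab{Z}_m)$ at all, so the equational additivity hypothesis is not even needed here.

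For part~\eqref{item:lem:corollario_quanti_cloni_equationally_additive_Z_m_item_p_quadro}: you assert that finiteness of the clone lattice above $\POL(\ab{Z}_{p^2})$ and $\POL(\ab{Z}_p\times\ab{Z}_p)$ is ``known'' from the structure theory of polynomial clones on Abelian groups of order $p^2$. That is not the case; there is no such general finiteness result, and the claim of the lemma is only about \emph{equationally additive} clones. The paper's argument is exactly where the hypothesis earns its keep: equational additivity forces the ideal lattice to be either two-element or a three-element chain with non-Abelian monolith, which yields property~(SC1), hence~(APMI), hence (by the Aichinger--Mudrinski machinery) that the clone is weakly polynomially rich and generated by its binary part. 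Only then does the finite bound $2^{p^{2p^4}-p^2-3}$ follow. Your proof proposal skips this entire chain and replaces it with an appeal to a nonexistent general theorem.

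For parts~\eqref{item:lem:corollario_quanti_cloni_equationally_additive_Z_p_to_k} and~\eqref{item:lem:corollario_quanti_cloni_equationally_additive_Z_m_item_npquadro_q}: the upper bound via ``constantive Ma\v{l}cev clones on a finite set are finitely related'' is the right idea and matches the paper's use of~\cite[Theorem~5.3]{Aic10}, but the lower bounds are left entirely as a programme. The paper's proof of~\eqref{item:lem:corollario_quanti_cloni_equationally_additive_Z_p_to_k} constructs the concrete operations $h_i(x_1,\dotsc,x_i)=p^{l-2}\prod_{j=1}^i x_j$, plus a quaternary operation defining $\deltarelation$, identifies the monolith $N=\braket{p^{l-1}}$ explicitly, and then separates the resulting clones by passing to the quotient modulo~$N$ and invoking an injectivity result from~\cite{AicRos23}; the inequality $l\geq 3$ is precisely what makes $p^{l-2}\neq 0$ and $p^{l-1}\neq 0$ simultaneously. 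For~\eqref{item:lem:corollario_quanti_cloni_equationally_additive_Z_m_item_npquadro_q} the paper imports Idziak's strictly increasing chain of constantive clones on~$\ab{H}$ (this is where the central $q^2$-subgroup hypothesis is used), lifts them to $\ab{H}\times\ab{Z}_{p^l}$ via a product construction, and again separates them by quotienting by the monolith. None of these key ingredients appears in your proposal, and the assertion that Lemma~\ref{lemma:char_comm_in_malcev} would somehow produce the separating family from the central $q^2$-subgroup is not substantiated: that lemma computes a commutator, but it does not by itself generate infinitely many pairwise distinct clones, nor verify that they remain distinct after adjoining the equational-additivity witness.
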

\begin{proof}
In~\cite[Corollary~1.3]{Fio21} it is shown that for
square-free~$m$, there are only finitely many clones containing
$\Clo(\ab{Z}_m)$;
hence~\eqref{item:lem:corollario_quanti_cloni_equationally_additive_Z_m_item_squarefree}
follows (the assumption of equational additivity is not used for this).
\par

We now
prove~\eqref{item:lem:corollario_quanti_cloni_equationally_additive_Z_m_item_p_quadro}.
Let~$\ab{V}$ be an expansion of~$\ab{Z}_{p^2}$ or
of~$\ab{Z}_{p}\times\ab{Z}_p$ such that $\POL\ab{V}$ is equationally
additive.
Then
Corollary~\ref{cor:nec_condi_malcev_polynomial_and_finite_yields_sub_irreducible_and_type_three}
implies that~$\ab{V}$ is subdirectly irreducible and that the
monolith~$U$ is non-Abelian.
Then, either~$\ab{V}$ is simple and non-Abelian, or $\Id\ab{V}$
is a three-element chain with a non-Abelian monolith.
Thus, $\ab{V}$ satisfies the property~(SC1) as defined
in~\cite[p.~310]{AicMud09}, which for finite expanded groups is
equivalent to the property~(SC1) given in~\cite[p.~48]{IdzSlo01}, as
was argued in~\cite[p.~310]{AicMud09}.
Hence, by~\cite[Lemma~21]{IdzSlo01}, $\Id\ab{V}$ satisfies (APMI) as
defined in~\cite[p.~310 and Definition~8.1, p.~324]{AicMud09}.
Thus, \cite[Corollary~11.3]{AicMud09} yields that~$\ab{V}$ is weakly
polynomially rich, that is, according
to~\cite[Definition~3.7]{AicMud09}, the clone of polynomial
functions of~$\ab{V}$ coincides with the clone of extended type
preserving
functions as defined in~\cite[Definition~3.4]{AicMud09}.
Moreover, under (APMI), \cite[Corollary~11.7]{AicMud09} yields
that the
clone of functions preserving the extended types of~$\ab{V}$ is
generated by
the binary functions it contains. Therefore, we can infer that
$\POL\ab{V}$ is
generated by~$\POL\ari{2}\ab{V}$, a subset the $p^{2p^4}$-element
set of binary
operations on the carrier of~$\ab{V}$ that contains addition,
the two projections and all $p^2$~constants. Thus,
$\POL\ab{V}$ is one of at most $2^{p^{2p^4}-p^2-3}$ clones.

Next, we
prove~\eqref{item:lem:corollario_quanti_cloni_equationally_additive_Z_p_to_k}.
To this end
let $N\defeq \Braket{p^{l-1}}$. Clearly, $N$ is normal,
$N\cong \ab{Z}_p$, and
$\ab{Z}_{p^l}/N\cong \ab{Z}_{p^{l-1}}$.
We define $f\colon \Z_{p^{l}}^4\to\Z_{p^{l}}$
for $x_1,x_2,x_3,x_4\in \Z_{p^l}$ by
$f(x_1,x_2,x_3,x_4)\defeq 0$ if $x_1=x_2$ or $x_3=x_4$, and
$f(x_1,x_2,x_3,x_4)\defeq p^{l-1}$ otherwise.
Furthermore, for each $i\in\N\setminus\set{1}$, we set
$\ab{V}_i=\algop{\Z_{p^l}}{+,-,0,f,h_i}$, where
$h_i\colon \Z_{p^l}^i\to\Z_{p^l}$ is given for
$x_1,\dotsc,x_i\in \Z_{p^l}$ by
$h_i(x_1,\dotsc,x_i)\defeq  p^{l-2}\prod_{j=1}^i x_j$.
One readily checks that~$N$ is an ideal of~$\ab{V}_i$
and that any ideal~$I$ of~$\ab{V}_i$ with at least one element $a\neq
0$ must contain
${f(a,0,0,0)-f(0,0,0,0)=p^{l-1}}$.
Hence~$N$ is the monolith of~$\Id\ab{V}_i$.
Since $\deltarelation[\Z_{p^l}]=\{\vec{x}\in (\Z_{p^l})^4\mid
f(\vec{x})=0\}$,
Theorem~\ref{teor:eq_additive_iff_delta_four_algebraic}
yields that $\Clo\ab{V}_i$ is equationally additive, and, by
Corollary~\ref{cor:equational_add_and_inclusion},
$\POL\ab{V}_i$ is equationally additive, as well.
The map $\phi\colon \Z_{p^{l-1}}\to \Z_{p^l}/N$ sending each
$x\in\Z_{p^{l-1}}$ to $\phi(x)=x+N$ provides an isomorphism between
the
algebra $\ab{B}_i\defeq\algop{\Z_{p^{l-1}}}{+,-,0,\cna[4]{0},p_i}$ and~$\ab{V}_i/N$,
where~$\cna[4]{0}$ is the quaternary constant zero function and
$p_i(x_1,\dotsc,x_i) = p^{l-2}x_1\dotsm x_i$ for
$x_1,\dotsc,x_{i}\in\Z_{p^{l-1}}$.
For each $i\in\N\setminus\{1\}$, let
$\ab{A}_i=\algop{\Z_{p^{l-1}}}{+,-,0,p_i}$,
cf.~\cite[proof of Theorem~1.3]{AicRos23}.
Then, for all $i\in\N\setminus\{1\}$ we have that
$\POL\ab{B}_i=\POL\ab{A}_i$.
Moreover, in~\cite[proof of Theorem~1.3]{AicRos23} it is proved that
\begin{equation}\label{eq:equazione_AicRos23_for_expansions_of_Z_p_square}
\forall i, j\in \N\setminus\{1\}\colon \POL\ab{A}_i=\POL\ab{A}_j
\iff i=j.
\end{equation}
It is our goal
to show this for $\POL\ab{V}_i$ and $\POL\ab{V}_j$, as
well. To this end, let $i,j\in \N\setminus\{1\}$ be such
that $\POL\ab{V}_i=\POL\ab{V}_j$. We show that $i=j$.
Let~$\psi(N)$ be the congruence associated to~$N$ as defined in
Section~\ref{sec:preliminaries}, that is, the kernel of~$\phi$.
If $\POL\ab{V}_i=\POL\ab{V}_j$, then
$(\POL\ab{V}_i)/\psi(N)=(\POL\ab{V}_j)/\psi(N)$, since~$\psi(N)$ does
not depend on the choice of~$i$ and~$j$. Therefore,
equation~\eqref{eq:the_clone_of_actions_of_function_on_a_quotient_algebra}
yields that $\POL(\ab{V}_i/N)=\POL(\ab{V}_j/N)$. Then, since~$\phi$
does not depend on the choice of~$i$ and~$j$,
$\POL\ab{B}_i=\POL\ab{B}_j$, and hence $\POL\ab{A}_i=\POL\ab{A}_j$.
Finally,
condition~\eqref{eq:equazione_AicRos23_for_expansions_of_Z_p_square}
yields that $i=j$.
Thus, the map $i\mapsto \POL\ab{V}_i$ from $\N\setminus\{1\}$ to
the set of clones on~$\Z_{p^l}$ is injective.
This proves that there are at least~$\aleph_0$ distinct equationally
additive
clones that contain $\POL\ab{Z}_{p^l}$ for $l\geq 3$.
Since there are at most~$\aleph_0$ constantive Ma\v{l}cev clones on a
finite set~\cite[Theorem~5.3]{Aic10}, $\aleph_0$ is the exact number.

It remains to
prove~\eqref{item:lem:corollario_quanti_cloni_equationally_additive_Z_m_item_npquadro_q}.
For any finite Abelian group~$\ab{G}'$, for any $n\in\N$ and for an
$n$-ary operation
$h\colon H^n\to H$, we define
$\iota_{\ab{G}'}(h)\colon (\ab{H}\times\ab{G'})^n\to \ab{H}\times
\ab{G}'$ by
$\iota_{\ab{G}'}(h)((x_1,y_1),\dotsc,(x_n,y_n))\defeq (h(x_1,\dotsc,x_n),0)$
for all $\vec{x}\in H^n$ and all $\vec{y}\in (G')^n$.
Let us set
$\ab{V}_{\ab{G}',f',\funset{H}}=
\algop{H\times G'}{+,-,(0,0),f',(\iota_{\ab{G}'}(h))_{h\in \funset{H}}}$,
where $\funset{H}$ is any set of operations on~$H$ and~$f'$ is a
quaternary operation on~$H\times G'$.
For $n\in\N$,
and for any $p\in\POL\ari{n}\ab{V}_{\ab{G}',f',\funset{H}}$,
let $\pi(p)\colon H^n\to H$ be the function that maps
every $\vec{x}\in H^n$ to the projection of
$p\pair{\vec{x}\\\vec{0}}$ to its first component.
We set
$\pi\bigl(\POL\ab{V}_{\ab{G}',f',\funset{H}}\bigr)\defeq
\bigcup_{n\in\N}\{\pi(p)\mid
p\in\POL\ari{n}\ab{V}_{\ab{G}',f',\funset{H}}\}$;
moreover let~$\cna[4]{\vec{0}}$ denote the constant zero function on
$H\times G'$ of arity four.
\par
Next, we demonstrate that for each constantive clone~$\funset{H}$
on~$H$ that contains the group operation of~$\ab{H}$, and for each
finite Abelian group~$\ab{G}'$, we have
\begin{equation}\label{eq:fancyH_e_la_prima_proiezione_di_V_sottoscritto_co_fancyH}
\funset{H}=\pi\bigl(\POL\ab{V}_{\ab{G}',\cna[4]{\vec{0}},\funset{H}}\bigr).
\end{equation}
For each $h\in H$ we see that $\pi(\iota_{\ab{G}'}(h)) = h$, and, since
$\iota_{\ab{G}'}(h)$ is a fundamental operation of
$\ab{V}_{\ab{G}',\cna[4]{\vec{0}},\funset{H}}$, we thus have
$\funset{H}\subs\pi\bigl(\POL\ab{V}_{\ab{G}',\cna[4]{\vec{0}},\funset{H}}\bigr)$.
For the opposite inclusion, we note that,
since the second parameter of
$\ab{V}_{\ab{G}',\cna[4]{\vec{0}},\funset{H}}$ is constant, we can
write this algebra as the product
$\ab{\hat{H}}\times\ab{\hat{G}}$ where
$\ab{\hat{H}}=\algop{H}{+,-,0,\cna[4]{0},(h)_{h\in\funset{H}}}$,
$\ab{\hat{G}}=\algop{G'}{+,-,0,\cna[4]{0},\bigl(\cna[\arity(h)]{0}\bigr)_{h\in\funset{H}}}$
and~$\arity(h)$ denotes the arity of $h\in\funset{H}$.
We now extend the signature of these algebras by all constant values of
$H\times G'$ as follows. We define
\begin{align*}
\ab{\hat{H}}_+&\defeq\algop{H}{+,-,0,\cna[4]{0},(h)_{h\in\funset{H}},
                               (a)_{(a,b)\in H\times G'}},\\
\ab{\hat{G}}_+&\defeq\algop{G'}{+,-,0,\cna[4]{0},
                       \bigl(\cna[\arity(h)]{0}\bigr)_{h\in\funset{H}},
                       (b)_{(a,b)\in H\times G'}},
\end{align*}
such that the term operations of $\ab{\hat{H}}_+\times\ab{\hat{G}}_+$
become the polynomial operations of
$\ab{\hat{H}}\times\ab{\hat{G}}=\ab{V}_{\ab{G}',\cna[4]{\vec{0}},\funset{H}}$.
We now use the homomorphism property of the projection~$\pi_H$
onto~$\ab{\hat{H}}_+$. For every $n$-ary term~$t$ in the language
of~$\ab{\hat{H}}_+$ and $\vec{x}\in H^n$ we have
\[
\pi\bigl(t^{\ab{\hat{H}}_+\times\ab{\hat{G}}_+}\bigr)\,(\vec{x})
=\pi_H\mleft(t^{\ab{\hat{H}}_+\times\ab{\hat{G}}_+}
                                    \pair{\vec{x}\\\vec{0}}\mright)
=\pi_H\mleft(\begin{smallmatrix}
t^{\ab{\hat{H}}_+}(\vec{x})\\t^{\ab{\hat{G}}_+}(\vec{0})
\end{smallmatrix}\mright)
=t^{\ab{\hat{H}}_+}(\vec{x}),
\]
hence the operation
$\pi\bigl(t^{\ab{\hat{H}}_+\times\ab{\hat{G}}_+}\bigr)$ coincides with
the operation $t^{\ab{\hat{H}}_+}\in\Clo\ab{\hat{H}}_+$.
Moreover, as~$\funset{H}$ is a constantive clone on~$H$ including the
addition of~$\ab{H}$, we observe that
$\funset{H}\subs\POL\ab{\hat{H}}\subs\Clo\ab{\hat{H}}_+\subs
\funset{H}$. Thus, $\pi$ maps every polynomial operation of
$\ab{V}_{\ab{G}',\cna[4]{\vec{0}},\funset{H}}
=\ab{\hat{H}}\times\ab{\hat{G}}$ into~$\funset{H}$,
proving
equation~\eqref{eq:fancyH_e_la_prima_proiezione_di_V_sottoscritto_co_fancyH}.
\par
Let $\ab{G}\defeq \ab{H}\times\ab{Z}_{p^l}$ and consider the subgroup
$N\defeq \Braket{(0,p^{l-1})}$ of~$\ab{G}$. Again, $N$ is normal,
$N\cong \ab{Z}_{p}$, and $\ab{G}/N \cong \ab{H}\times
\ab{Z}_{p^{l-1}}$
by mapping $\vec{x}\in\ab{H}\times\ab{Z}_{p^{l-1}}$ to
$\phi(\vec{x}) = \vec{x}+N$.
For
$\vec{x}_1,\vec{x}_2,\vec{x}_3,\vec{x}_4\in G$ set
$f(\vec{x}_1,\vec{x}_2,\vec{x}_3,\vec{x}_4)\defeq (0,0)$ if
$\vec{x}_1=\vec{x}_2$ or $\vec{x}_3=\vec{x}_4$, and
$f(\vec{x}_1,\vec{x}_2,\vec{x}_3,\vec{x}_4)\defeq (0,p^{l-1})$ otherwise;
this gives $f\colon G^4\to G$.
In the proof of Theorem~6 from~\cite{Idz99}, Idziak constructs a
strictly increasing infinite sequence
$\funset{C}'_3\subsetneq \funset{C}'_4\subsetneq
\funset{C}'_5\subsetneq \dots$
of clones on~$H$, containing the group operation and all constants
from~$H$. For any constantive clone~$\funset{H}$ on~$H$ and for any
function $f'\colon G^4\to G$, we abbreviate
$\ab{V}_{j,f',\funset{H}}\defeq \ab{V}_{\ab{Z}_{p^j},f',\funset{H}}$ for
any $j\in\N$.
A routine check establishes that~$N$ is an ideal of the expanded
groups~$\ab{V}_{l,f,\funset{H}}$, for any choice of~$\funset{H}$.
As argued in the proof
of~\eqref{item:lem:corollario_quanti_cloni_equationally_additive_Z_p_to_k},
any ideal~$I$ of~$\ab{V}_{l,f,\funset{H}}$ with
$\vec{0}\neq \vec{a}\in I$ must contain the element
$f(\vec{a},\vec{0},\vec{0},\vec{0})-f(\vec{0},\vec{0},\vec{0},\vec{0})=(0,p^{l-1})$.
Therefore, $N$ is the monolith of $\ab{V}_{l,f,\funset{H}}$, and
the map~$\phi$ from above provides an isomorphism
from $\ab{V}_{l-1,\cna[4]{\vec{0}},\funset{H}}$ to
$\ab{V}_{l,f,\funset{H}}/N$, where $\cna[4]{\vec{0}}$ is the quaternary
constant zero function.
By Theorem~\ref{teor:eq_additive_iff_delta_four_algebraic} and
Corollary~\ref{cor:equational_add_and_inclusion},
$\POL\ab{V}_{j,f,\funset{H}}$ is equationally additive for
every~$j\in\N$, since~$f$ and the constant with value~$\vec{0}=(0,0)$
allow us to
define the algebraic set~$\deltarelation[H\times\Z_{p^j}]$.
\par
Let~$\funset{H}_1, \funset{H}_2$ be two constantive clones on~$H$
that contain the group operation of~$\ab{H}$ and let us assume that
$\POL\ab{V}_{l,f,\funset{H}_1}= \POL\ab{V}_{l,f,\funset{H}_2}$. We
prove that $\funset{H}_1=\funset{H}_2$.
Since~$N$ does not depend on the choice of~$\funset{H}_1$
and~$\funset{H}_2$, setting~$\psi(N)$ to be the congruence associated
to~$N$ (cf. Section~\ref{sec:preliminaries} and the proof of~\eqref{item:lem:corollario_quanti_cloni_equationally_additive_Z_p_to_k}),
we have that
$(\POL\ab{V}_{l,f,\funset{H}_1})/\psi(N)=(\POL\ab{V}_{l,f,\funset{H}_2})/\psi(N)$.
Thus,
\eqref{eq:the_clone_of_actions_of_function_on_a_quotient_algebra}
yields that
\[\POL(\ab{V}_{l,f,\funset{H}_1}/N)=\POL(\ab{V}_{l,f,\funset{H}_2}/N).\]
Since for every constantive clone~$\funset{H}$ on~$H$ that contains
the
group operation of~$\ab{H}$, $\phi$ provides an isomorphism between
$\ab{V}_{l-1,\cna[4]{\vec{0}},\funset{H}}$ and $\ab{V}_{l,f,\funset{H}}/N$
that does not depend on the choice of~$\funset{H}$, we infer that
$\POL\ab{V}_{l-1,\cna[4]{\vec{0}},\funset{H}_1}=\POL\ab{V}_{l-1,\cna[4]{\vec{0}},\funset{H}_2}$,
and
therefore~\eqref{eq:fancyH_e_la_prima_proiezione_di_V_sottoscritto_co_fancyH}
yields
$\funset{H}_1=\pi\bigl(\POL\ab{V}_{l-1,\cna[4]{\vec{0}},\funset{H}_1}\bigr)
             =\pi\bigl(\POL\ab{V}_{l-1,\cna[4]{\vec{0}},\funset{H}_2}\bigr)
             =\funset{H}_2.$
This means that the map
$\funset{H}\mapsto\POL\ab{V}_{l,f,\funset{H}}$,
defined for constantive expansions~$\funset{H}$ of~$\Clo\ab{H}$, is
injective. Therefore, for each of the~$\aleph_0$ examples given by
Idziak, we have a distinct equationally additive clone
$\POL\ab{V}_{l,f,\funset{C}'_j}
 \supseteq \POL(\ab{H}\times\ab{Z}_{p^l})$.
As argued in the proof
of~\eqref{item:lem:corollario_quanti_cloni_equationally_additive_Z_p_to_k},
\cite[Theorem~5.3]{Aic10} shows that the
number of constantive equationally additive expansions of
$\Clo(\ab{H}\times\ab{Z}_{p^l})$ cannot be larger than~$\aleph_0$.
\end{proof}

\begin{theorem}\label{thm:char-eqn-add-poly-expansions-ab-grp}
Let~$\ab{G}$ be a finite Abelian group with~$m$ elements.
If~$m$ is square-free or the square of a prime, the set of equationally
additive clones containing $\POL\ab{G}$ is finite. Otherwise, it is
countably infinite.
\end{theorem}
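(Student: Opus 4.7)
The plan is to case-analyse on the prime-power decomposition $\ab{G}\cong\prod_{i=1}^{r}\ab{Z}_{p_i^{a_i}}$ provided by the structure theorem for finite Abelian groups, and to dispatch each case to the appropriate part of Lemma~\ref{lem:corollario_quanti_cloni_equationally_additive_Z_m}.

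For the finite half of the statement I would first observe that when~$m$ is square-free the Chinese Remainder Theorem gives $\ab{G}\cong\ab{Z}_m$, so that every equationally additive clone above $\POL\ab{G}$ contains $\Clo(\ab{Z}_m)$ and the desired finiteness is just part~(a) of the lemma. When $m=p^2$, the group~$\ab{G}$ is forced to be either $\ab{Z}_{p^2}$ or $\ab{Z}_p\times\ab{Z}_p$, and part~(b) applies verbatim to both.

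For the infinite half, the upper bound $\aleph_0$ in every instance is the bound on constantive Mal\v{c}ev clones on a finite set from~\cite[Theorem~5.3]{Aic10}, already invoked in the proof of part~(c). For the matching lower bound I would first treat the easy sub-case: if $\ab{G}$ is cyclic of prime-power order $p^l$, the hypotheses that $m$ is neither square-free nor $p^2$ force $l\geq 3$, and part~(c) immediately yields $\aleph_0$ equationally additive clones above $\POL\ab{G}$.

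The main obstacle is the remaining sub-case, in which $\ab{G}$ has $r\geq 2$ cyclic factors. Here I need to exhibit a decomposition $\ab{G}\cong \ab{H}\times\ab{Z}_{p^l}$ with $\ab{H}$ Abelian (so its centre is all of~$\ab{H}$) containing a subgroup of order $q^2$ for some prime~$q$, so that part~(d) becomes applicable. My plan is to branch as follows: if some $a_i\geq 2$, I peel off any other factor $\ab{Z}_{p_k^{a_k}}$ with $k\neq i$ as the right-hand component, leaving $\ab{H}\supseteq\ab{Z}_{p_i^{a_i}}$, which provides a subgroup of order~$p_i^2$. Otherwise every $a_i=1$, and since~$m$ is not square-free two of the primes coincide, say $p_1=p_2$; together with $m\neq p_1^2$ this forces $r\geq 3$, so peeling off $\ab{Z}_{p_3}$ preserves the copy of $\ab{Z}_{p_1}\times\ab{Z}_{p_1}$ inside~$\ab{H}$. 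In either branch part~(d) supplies $\aleph_0$ equationally additive clones above $\POL\ab{G}$, matching the upper bound and completing the proof.
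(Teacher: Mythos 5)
Your proposal is correct and takes essentially the same route as the paper: decompose $\ab{G}$ via the structure theorem into cyclic factors of prime-power order and dispatch each configuration to the appropriate part of Lemma~\ref{lem:corollario_quanti_cloni_equationally_additive_Z_m}, with part~(d) doing the work in the non-cyclic cases via an explicit split $\ab{G}\cong\ab{H}\times\ab{Z}_{p^l}$ in which the Abelian factor $\ab{H}$ supplies a subgroup of order~$q^2$. The paper organizes the case split slightly differently (first separating ``all factors of prime order'' from ``some factor of order $p^l$ with $l\geq 2$'') and locates the order-$q^2$ subgroup a bit differently (e.g.\ inside $\langle p^{l-2}\rangle\times\{0_{\ab{G}'}\}$), but these are bookkeeping variations; the mathematical content and the dependence on the four parts of the lemma are the same.
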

\begin{proof}
Consider the representation of~$\ab{G}$ as a direct product of cyclic
groups of prime power order.
\par
First, we suppose that every factor of this product is of prime order.
That is, $\ab{G}\cong\prod_{i=1}^n \bigl(\ab{Z}_{p_i}\bigr)^{k_i}$
with
$n\geq 0$, distinct primes $p_1,\dotsc,p_n$ and integers
$k_1,\dotsc,k_n\in\N$.
If~$\ab{G}$ is trivial, i.e., $n=0$, or $n\geq 1$ and $k_i=1$ for all
$i\in\finset{n}$, then $m=\prod_{i=1}^n p_i$ is square-free and the
result follows from
Lemma~\ref{lem:corollario_quanti_cloni_equationally_additive_Z_m}\eqref{item:lem:corollario_quanti_cloni_equationally_additive_Z_m_item_squarefree}.
Otherwise, there is $i\in\finset{n}$ such that $k_i\geq 2$, and no
generality is lost in assuming $i=1$.
\par
As a subcase we consider the possibility that $n=1$, that is,
$\ab{G}\cong\ab{Z}_{p_1}^{k_1}$ with $k_1\geq 2$.
If $k_1=2$, then $m=k_1^2$, and hence, by
Lemma~\ref{lem:corollario_quanti_cloni_equationally_additive_Z_m}\eqref{item:lem:corollario_quanti_cloni_equationally_additive_Z_m_item_p_quadro},
there are only finitely many equationally additive clones containing
$\POL\ab{G}$.
If, otherwise, $k_1\geq 3$, then
$\ab{G}\cong\ab{Z}_{p_1}^{k_1-1}\times\ab{Z}_{p_1}$, $m=p_1^{k_1}$ and
$\ab{Z}_{p_1}\times\ab{Z}_{p_1}\times\set{0}^{k_1-3}$ is an Abelian
subgroup of~$\ab{Z}_{p_1}^{k_1-1}$ of order~$p_1^2$. Then the result
follows from
Lemma~\ref{lem:corollario_quanti_cloni_equationally_additive_Z_m}\eqref{item:lem:corollario_quanti_cloni_equationally_additive_Z_m_item_npquadro_q}.
This finishes the subcase where $n=1$.
\par
The opposite possibility is that $n\geq 2$; in this subcase we
represent~$\ab{G}$ as
$\ab{G}\cong\ab{Z}_{p_1}^{k_1}\times\ab{Z}_{p_2}^{k_2-1}\times\bigl(\prod_{i=3}^n\ab{Z}_{p_i}^{k_i}\bigr)\times\ab{Z}_{p_2}$,
and
$\ab{Z}_{p_1}\times\ab{Z}_{p_1}\times\set{0}^{k_1+\dotsm+k_n-3}$ is an
Abelian subgroup of
$\ab{Z}_{p_1}^{k_1}\times
           \ab{Z}_{p_2}^{k_2-1}\times\prod_{i=3}^n\ab{Z}_{p_i}^{k_i}$
of order~$p_1^2$. Clearly, the order~$m$ of~$\ab{G}$ is neither
square-free ($k_1\geq 2$) nor the square of a prime ($n\geq 2$) in this
case. Again
Lemma~\ref{lem:corollario_quanti_cloni_equationally_additive_Z_m}\eqref{item:lem:corollario_quanti_cloni_equationally_additive_Z_m_item_npquadro_q}
shows that the result claimed by the theorem is true.
\par
Second, we suppose that there is a prime~$p$ in the representation
of~$\ab{G}$ with a cyclic factor~$\ab{Z}_{p^{l}}$ where $l\geq 2$;
hence~$m$ is not square-free.
If that factor is the only one in the representation, then
$\ab{G}\cong\ab{Z}_{p^l}$. The case where $l=2$ and $m=p^2$ is
solved by
Lemma~\ref{lem:corollario_quanti_cloni_equationally_additive_Z_m}\eqref{item:lem:corollario_quanti_cloni_equationally_additive_Z_m_item_p_quadro},
and the case where $l\geq 3$, $m=p^l$, is handled by
Lemma~\ref{lem:corollario_quanti_cloni_equationally_additive_Z_m}\eqref{item:lem:corollario_quanti_cloni_equationally_additive_Z_p_to_k}.
Now let us assume that more factors appear in the decomposition,
being either cyclic groups the order of which is a power of the same
prime~$p$ or of another prime. This means there are a prime~$q$,
not necessarily distinct from~$p$, an exponent $k\geq 1$ and an
Abelian
group~$\ab{G}'$ such that
$\ab{G}\cong\ab{Z}_{p^l}\times\ab{G}'\times\ab{Z}_{q^k}$.
Then the order~$m$ is neither square-free, nor the square of a
single prime. Moreover,
$\Braket{\set{p^{l-2}}}\times\set{0_{\ab{G}'}}$ is an Abelian subgroup
of order~$p^2$
of $\ab{Z}_{p^l}\times\ab{G}'$, and
Lemma~\ref{lem:corollario_quanti_cloni_equationally_additive_Z_m}\eqref{item:lem:corollario_quanti_cloni_equationally_additive_Z_m_item_npquadro_q}
shows that the number of equationally additive clones containing
$\POL\ab{G}$ is~$\aleph_0$.
\end{proof}

\section{Characterization of equationally additive Boolean clones}\label{section:equatio_add_boolean_clones}
In this section we shall describe which clones from Post's lattice are
equationally additive (see also Figure~\ref{fig:eqn-add-Boolean-clones}).
This hence answers which algebras on the set~$\set{0,1}$ are
equational domains.
From Theorem~\ref{teor:eq_additive_iff_delta_four_algebraic} we know
that equational additivity is equivalent to~$\deltarelation$
being algebraic. For the two-element set we shall see
that we can get along with a ternary relation instead
of~$\deltarelation[\set{0,1}]$.
\begin{lemma}\label{lem:deltathree_pp_equiv_deltafour}
For any set~$A$ the relations~$\deltarelation$ and
\[\deltathree\defeq\Set{(x_1,x_2,x_3)\in A^3| x_1=x_2\lor x_2=x_3}\]
are primitive positively definable from each other, namely for all
$x_1,\dotsc,x_4\in A$ we have
\begin{align*}
(x_1,x_2,x_3)\in \deltathree&\Leftrightarrow
(x_1,x_2,x_2,x_3)\in\deltarelation,\\
(x_1,x_2,x_3,x_4)\in\deltarelation&\Leftrightarrow
\exists y_1,y_2\in A\colon
\begin{aligned}[t]
&(x_1,x_2,y_1)\in\deltathree\land
(y_1,x_3,x_4)\in\deltathree\land{}\\
&(x_2,x_1,y_2)\in\deltathree\land
(y_2,x_3,x_4)\in\deltathree.
\end{aligned}
\end{align*}
\end{lemma}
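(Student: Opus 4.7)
My plan is to verify the two biconditionals independently by unfolding the defining disjunctions of $\deltarelation$ and $\deltathree$. The first equivalence is essentially a tautology: substituting the tuple $(x_1,x_2,x_2,x_3)$ into the condition $a_1=a_2 \lor a_3=a_4$ defining $\deltarelation$ yields $x_1=x_2 \lor x_2=x_3$, which is precisely the defining condition of $\deltathree$. So this part takes only a line.

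For the second equivalence I would handle the two implications separately. For the forward direction I would supply witnesses case-by-case: when $x_1=x_2$, the uniform choice $y_1=y_2=x_3$ makes all four conjuncts true (the two triples starting with $(x_1,x_2,\cdot)$ and $(x_2,x_1,\cdot)$ are satisfied via $x_1=x_2$, while the anchors $(y_i,x_3,x_4)\in\deltathree$ are satisfied via $y_i=x_3$); when instead $x_3=x_4$, the anchor triples hold automatically, so I would take $y_1=x_2$ and $y_2=x_1$ to validate the remaining two conjuncts by equality of their second and third components. For the backward direction I would argue contrapositively: assume $x_1\neq x_2$ and $x_3\neq x_4$; then the two conditions $(y_i,x_3,x_4)\in\deltathree$ together with $x_3\neq x_4$ force $y_1=y_2=x_3$. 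Substituting into $(x_1,x_2,y_1)\in\deltathree$ and using $x_1\neq x_2$ gives $x_2=x_3$, and symmetrically from $(x_2,x_1,y_2)\in\deltathree$ we obtain $x_1=x_3$, whence $x_1=x_2$, a contradiction.

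The only part of this that calls for any real thought is not the verification itself but noticing why the formula in the statement has its particular asymmetric shape: the two ``swapped'' triples $(x_1,x_2,y_1)$ and $(x_2,x_1,y_2)$, each coupled to its own existentially quantified anchor $y_i$ via $(y_i,x_3,x_4)\in\deltathree$, are exactly what is needed in the backward step to collapse both $x_1$ and $x_2$ onto $x_3$ simultaneously. Once the formula is in hand, the entire verification reduces to a routine case split and I would not expect any genuine obstacle.
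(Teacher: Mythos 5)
Your proof is correct and follows essentially the same route as the paper: the first equivalence is observed to be immediate by identification of arguments, the forward direction of the second equivalence uses exactly the same witnesses ($y_1=y_2=x_3$ when $x_1=x_2$, and $y_1=x_2$, $y_2=x_1$ when $x_3=x_4$), and the backward direction (framed here as a contrapositive rather than a direct contradiction, but substantively identical) derives $y_1=y_2=x_3$ and then forces $x_1=x_3=x_2$.
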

\begin{proof}
It is obvious that~$\deltathree$ can be obtained by identifying
arguments in~$\deltarelation$. For the second equivalence, take any
$(x_1,x_2,x_3,x_4)\in\deltarelation$. If $x_1=x_2$, then we let
$y_1=y_2=x_3$, and the right-hand side is satisfied. If $x_1\neq x_2$,
then $x_3=x_4$ because $(x_1,x_2,x_3,x_4)\in\deltarelation$, and in
this case we let $y_1=x_2$ and $y_2=x_1$ to satisfy the right-hand
side. Now conversely, suppose that there are elements $y_1,y_2\in A$
such that
$(x_1,x_2,y_1),(y_1,x_3,x_4),(x_2,x_1,y_2),(y_2,x_3,x_4)\in\deltathree$.
In order to get a contradiction, we assume that
$(x_1,x_2,x_3,x_4)\notin \deltarelation$, that is,
$x_1\neq x_2$ and $x_3\neq x_4$.
From the definition of~$\deltathree$ it follows that
$x_2=y_1$ and $x_1=y_2$, and $y_1=x_3$ and $y_2=x_3$, wherefore
$x_1=y_2 = x_3 = y_1 = x_2$, contradicting the choice of~$x_1$
and~$x_2$.
\end{proof}
The following is a folklore fact from clone theory.
\begin{corollary}[{cf.~\cite[Lemma~6.1.17]{Bod21}
and~\cite[Lemma~1.3.1]{PosKal79}}]\label{cor:Pol_delta_ess_at_most_unary}
For any set~$A$ we have
\[\Pol\set{\deltathree} = \Pol\set{\deltarelation} =
\Clo\mleft(A;A^A\mright),\]
i.e., the polymorphism clone of~$\deltathree$ coincides with that
of~$\deltarelation$, which is the clone of all essentially at most unary
operations.
\end{corollary}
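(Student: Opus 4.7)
The plan is to prove the corollary in two stages. First, the equality $\Pol\set{\deltathree}=\Pol\set{\deltarelation}$ will follow immediately from Lemma~\ref{lem:deltathree_pp_equiv_deltafour}, whose two equivalences exhibit each of the two relations as the solution set of a primitive positive formula over the other (with the existential quantification over $y_1,y_2$ appearing in the second equivalence). The standard fact that polymorphisms of a relation are also polymorphisms of every relation primitive positively definable from it then yields both inclusions between the clones.

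For the remaining equality $\Pol\set{\deltarelation}=\Clo\mleft(A;A^A\mright)$, I would first note that $\Clo\mleft(A;A^A\mright)$ consists precisely of the essentially at most unary operations, i.e., those of the form $f(x_1,\dotsc,x_n)=g(x_i)$ for some $g\colon A\to A$ and $i\in\finset{n}$; this routine fact is also used tacitly in the proof of Theorem~\ref{thm:ess-unary-algs-not-eq-add}. The inclusion $\Clo\mleft(A;A^A\mright)\subs\Pol\set{\deltarelation}$ is then immediate: applying such an $f$ componentwise to $n$ tuples in $A^4$ with columns in $\deltarelation$ produces $g$ applied coordinatewise to the $i$-th argument tuple, and~$g$ preserves every equality of coordinates within that tuple, keeping the output in $\deltarelation$.

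The reverse inclusion is the main step, and I would establish it by contraposition. Suppose $f\in\Pol\ari{n}\set{\deltarelation}$ essentially depends on two distinct coordinates; after relabeling assume these are~$1$ and~$2$. Then I can pick $\vec{a},\vec{a}'\in A^n$ that agree outside coordinate~$1$ with $f(\vec{a})\neq f(\vec{a}')$, and $\vec{b},\vec{b}'\in A^n$ that agree outside coordinate~$2$ with $f(\vec{b})\neq f(\vec{b}')$. Forming the $4\times n$ matrix with rows $\vec{a},\vec{a}',\vec{b},\vec{b}'$, every column lies in $\deltarelation$: column~$1$ has identical third and fourth entries (namely $b_1=b'_1$), column~$2$ has identical first and second entries (namely $a_2=a'_2$), and in every column of index $\geq 3$ both pairs of entries coincide. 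Applying $f$ componentwise, the preservation hypothesis forces $(f(\vec{a}),f(\vec{a}'),f(\vec{b}),f(\vec{b}'))\in\deltarelation$, yet by construction the first two and the last two entries of this tuple differ, contradicting membership in $\deltarelation$. The only delicate point the argument has to overcome is the potential misalignment of the witnesses for dependence on the two coordinates; placing them in disjoint row-blocks of the matrix resolves this automatically, since within each block the two rows agree on every coordinate where dependence is not being exploited, supplying the desired equalities column by column.
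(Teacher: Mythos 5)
Your proof is correct and follows the same overall approach as the paper: the first equality via the mutual primitive positive definability of $\deltathree$ and $\deltarelation$ from Lemma~\ref{lem:deltathree_pp_equiv_deltafour}, and the second equality as the identification of $\Pol\set{\deltarelation}$ with the essentially at most unary operations. The only difference is that for the second equality the paper simply cites~\cite[Lemma~1.3.1a)]{PosKal79}, whereas you unfold the standard argument behind that citation: the forward inclusion by direct verification and the reverse by forming, from witnesses of essential dependence on two coordinates, a $4\times n$ matrix with rows $\vec{a},\vec{a}',\vec{b},\vec{b}'$ whose columns all lie in $\deltarelation$ while the image row does not. This is exactly the classical proof and is complete as written.
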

\begin{proof}
The first equality follows directly from
Lemma~\ref{lem:deltathree_pp_equiv_deltafour}, the second one
is proved in~\cite[Lemma~1.3.1a)]{PosKal79}.
\end{proof}

Note that in the context of $A=\set{0,1}$ the relation~$\deltathree$
has become known in computer science under the pseudonym
$\dup=\set{0,1}^3\setminus\set{(0,1,0),(1,0,1)}$ \cite[\tablename~1,
p.~61]{BoehlerReithSchnoorVollmerBasesBooleanCoclones},
the polymorphism clone of which is the clone~$\Bcl{N}$ generated
by all
unary operations.
\par
The Ma\v{l}cev condition considered in the following lemma will appear
again in the characterization of the equationally additive Boolean
clones in Theorem~\ref{thm:char-Boolean-eqn-additive}.
\begin{lemma}\label{lem:malcev-cond-implying-CD}
Any variety~$\variety{V}$ admitting the Ma\v{l}cev condition
\begin{align*}
      f(x,x,y)&\approx x\approx f(x,y,x),\\
      f(y,x,x)&\approx f(x,y,f(y,x,x))
\end{align*}
is congruence distributive.
\end{lemma}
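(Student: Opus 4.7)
My plan is to verify congruence distributivity directly from the definition, rather than extracting Jónsson terms. I will fix an algebra $\ab{A}\in\variety{V}$ together with congruences $\alpha,\beta,\gamma\in\Con\ab{A}$ and aim at the non-trivial inequality $\alpha\wedge(\beta\vee\gamma)\leq(\alpha\wedge\beta)\vee(\alpha\wedge\gamma)$. Starting from an arbitrary $(a,b)$ in the left-hand side, I will pick a sequence $a=c_0,c_1,\dots,c_n=b$ whose consecutive steps lie in $\beta\cup\gamma$ (available since $(a,b)\in\beta\vee\gamma$) and manufacture out of it a second sequence from $a$ to $b$ whose consecutive steps all lie in $(\alpha\wedge\beta)\cup(\alpha\wedge\gamma)$.

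The first step will be to transport the given chain into the $\alpha$-class of $a$ by setting $e_i\defeq f(b,c_i,a)$. Using $(a,b)\in\alpha$ together with the identity $f(x,y,x)\approx x$, which forces $f(a,c_i,a)=a$, compatibility of $\alpha$ with $f$ will give $e_i\mathrel{\alpha}a$ for every $i$, while $(e_i,e_{i+1})$ inherits its $\beta$- or $\gamma$-step from $(c_i,c_{i+1})$. The identity $f(x,x,y)\approx x$ additionally delivers $e_n=f(b,b,a)=b$, so this first chain will witness $(e_0,b)\in(\alpha\wedge\beta)\vee(\alpha\wedge\gamma)$ with $e_0=f(b,a,a)$.

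The crucial remaining task is to link $a$ to the stray endpoint $e_0$. This is the place where the non-trivial identity $f(y,x,x)\approx f(x,y,f(y,x,x))$ will be deployed: specialising $x\to a$ and $y\to b$ turns it into the fixed-point equation $e_0=f(a,b,e_0)$. Motivated by this, I will set $e'_i\defeq f(a,c_i,e_0)$. Since $e_0\mathrel{\alpha}a$ (the pair is obtained by applying $f$ in its first coordinate to the $\alpha$-related $(a,b)$), the same reasoning as before gives $e'_i\mathrel{\alpha}a$ and $(e'_i,e'_{i+1})\in\beta\cup\gamma$ throughout; the identity $f(x,x,y)\approx x$ supplies $e'_0=f(a,a,e_0)=a$, and the fixed-point equation just derived supplies $e'_n=f(a,b,e_0)=e_0$. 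Concatenating this chain with the first one yields $(a,b)\in(\alpha\wedge\beta)\vee(\alpha\wedge\gamma)$, which is what distributivity requires.

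The substantive point---and the reason for the unusual shape of the third identity---is the second construction. The identities $f(x,x,y)\approx x\approx f(x,y,x)$ alone already suffice to reshape the original $\beta\vee\gamma$-chain so that it stays inside the $\alpha$-class of $a$, but they leave its initial value $f(b,a,a)$ stranded away from $a$. The fixed-point form $f(y,x,x)\approx f(x,y,f(y,x,x))$ is engineered precisely so that this stray value can itself be reached as the final value of a second transported chain emanating from $a$; it is this closing move that lifts the whole argument to congruence distributivity.
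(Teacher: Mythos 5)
Your proof is correct and takes a genuinely different route from the paper's. The paper extracts an explicit J\'{o}nsson sequence from $f$---namely $f_0(x,y,z)=x$, $f_1(x,y,z)=f(x,y,f(z,x,x))$, $f_2(x,y,z)=f(z,x,x)$, $f_3(x,y,z)=f(z,x,y)$, $f_4(x,y,z)=z$---verifies the J\'{o}nsson identities from the stated Ma\v{l}cev condition, and then invokes J\'{o}nsson's characterization of congruence distributivity, cf.~\cite[Theorem~12.6]{BurSan81}. You instead verify the inequality $\alpha\wedge(\beta\vee\gamma)\leq(\alpha\wedge\beta)\vee(\alpha\wedge\gamma)$ directly: transporting a $\beta\cup\gamma$-chain from $a$ to $b$ into the $\alpha$-class of~$a$ via $e_i=f(b,c_i,a)$ joins $f(b,a,a)$ to $b$ within $(\alpha\wedge\beta)\vee(\alpha\wedge\gamma)$, and the fixed-point identity $f(y,x,x)\approx f(x,y,f(y,x,x))$ permits a second transport via $e'_i=f(a,c_i,f(b,a,a))$ that joins $a$ to $f(b,a,a)$. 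This amounts to re-proving the easy implication of J\'{o}nsson's theorem unwound for this particular five-term sequence without naming the terms. Both arguments are sound; the paper's is more compact given the citation it relies on, while yours is self-contained and makes transparent precisely what the unusual third identity is for---rendering $f(b,a,a)$ reachable from~$a$ as the terminus of a second transported chain.
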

\begin{proof}
By assumption there is a ternary term~$f$ in the language
of~$\variety{V}$ such that the above identities are universally
satisfied in~$\variety{V}$. Based on~$f$ we can define the following
five ternary terms over the language of~$\variety{V}$ by substitution:
\begin{align*}
f_0(x,y,z)&\defeq x,\\
f_1(x,y,z)&\defeq f(x,y,f(z,x,x)),\\
f_2(x,y,z)&\defeq f(z,x,x),\\
f_3(x,y,z)&\defeq f(z,x,y), \text{ and}\\
f_4(x,y,z)&\defeq z.
\end{align*}
These form a sequence of J\'{o}nsson terms for~$\variety{V}$:
The equations $f_i(x,y,x)\approx x$ for $0\leq i\leq 4$ follow
from the
identity $f(x,x,y)\approx x \approx f(x,y,x)$, as does the condition
${f_0(x,x,y)\approx x\approx f(x,x,f(y,x,x))\approx f_1(x,x,y)}$.
The subsequent condition $f_1(x,y,y)\approx f(x,y,f(y,x,x))\approx
f(y,x,x)\approx f_2(x,y,y)$ follows from the second part of the
Ma\v{l}cev condition, while
$f_2(x,x,y)\approx f(y,x,x) \approx f_3(x,x,y)$ is
trivially fulfilled. Moreover, $f_3(x,y,y)\approx f(y,x,y)\approx y
\approx f_4(x,y,y)$ follows from the first part of the assumed
Ma\v{l}cev condition. Since the J\'{o}nsson identities for
$f_0,\dotsc,f_4$ hold in~$\variety{V}$, the variety is congruence
distributive, see~\cite[Theorem~12.6]{BurSan81}.
\end{proof}

In~\cite{TotWal17}, T\'oth and Waldhauser
explore necessary conditions for a relation
to be the solution set of finitely many equations from a given
clone~$\funset{C}$. Since the complement of a finitary relation on a
finite set is finite, every algebraic set that can be given as the
solution set of an infinite system of $\funset{C}$-equations, can also
be described by a finite subset of these equations: we use one
equation to exclude each point of the complement (cf.\
Lemma~\ref{lemma:equivalent_definition_algebraic_set}). Hence, on a
finite set~$A$, the solution sets from~\cite{TotWal17} are exactly the
algebraic sets in our sense.
T\'oth and Waldhauser investigate whether a relation is algebraic
for~$\funset{C}$ in terms of the \emph{centralizer} clone
$\funset{C}^{*}=\bigcup_{n\in\N}\Hom(\algop{A}{\funset{C}}^n,\algop{A}{\funset{C}})$,
consisting of all functions that commute with all the operations
in~$\funset{C}$.
With respect to the Boolean domain, T\'oth and
Waldhauser prove a characterization that can be rephrased in our
terminology as follows:
\begin{theorem}[{\cite[Theorem~4.1]{TotWal17}}]\label{thm:Alg-F-is-Inv-cent-F}
For every clone~$\funset{F}$ on the set~$\set{0,1}$
we have
$\Alg\funset{F} = \Inv(\funset{F}^{*})$, where~$\funset{F}^{*}$ is the
centralizer clone of~$\funset{F}$.
\end{theorem}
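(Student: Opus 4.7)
The plan is to prove the two inclusions separately, handling the forward one in full generality and deferring to the two-element base only for the reverse direction.

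For $\Alg\funset{F} \subseteq \Inv(\funset{F}^*)$, I would argue directly over any base set~$A$. Take $R \in \Alg\ari{n}\funset{F}$, written as
\[R = \{\vec{x} \in A^n : p_i(\vec{x}) = q_i(\vec{x}) \text{ for all } i \in I\}\]
with $p_i, q_i \in \funset{F}\arii{n}$. Given an $m$-ary $f \in \funset{F}^*$ and tuples $\vec{x}_1, \dots, \vec{x}_m \in R$, let $\vec{y} \in A^n$ be defined coordinatewise by $\vec{y}(k) = f(\vec{x}_1(k), \dots, \vec{x}_m(k))$ for $k\in\finset{n}$. Because $f$ commutes with every operation in~$\funset{F}$, one has $p_i(\vec{y}) = f(p_i(\vec{x}_1), \dots, p_i(\vec{x}_m))$ and analogously $q_i(\vec{y}) = f(q_i(\vec{x}_1), \dots, q_i(\vec{x}_m))$; since $p_i(\vec{x}_j) = q_i(\vec{x}_j)$ for every~$j$, the two values coincide. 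Hence $\vec{y} \in R$, so $R$ is $\funset{F}^*$-invariant.

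The reverse inclusion $\Inv(\funset{F}^*) \subseteq \Alg\funset{F}$ is the substantive part and depends essentially on $A = \{0,1\}$. My plan is to leverage Post's classification of clones on the two-element set together with the known explicit description of centralizers of Boolean clones: for each $\funset{F}$ in Post's lattice, both $\funset{F}^*$ and consequently $\Inv(\funset{F}^*)$ (via the $\Pol$–$\Inv$ Galois connection) are explicitly computable. The task then reduces, in each case, to verifying that every $\funset{F}^*$-invariant relation $R \subseteq \{0,1\}^n$ lies in $\Alg\ari{n}\funset{F}$. By Lemma~\ref{lemma:equivalent_definition_algebraic_set} this amounts to producing, for each $\vec{a} \in \{0,1\}^n \setminus R$, a pair $p, q \in \funset{F}\arii{n}$ with $p(\vec{a}) \neq q(\vec{a})$ while $p\restrict{R} = q\restrict{R}$.

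The main obstacle is that $\Alg\funset{F}$ is closed only under intersection and the addition of fictitious coordinates, but not under existential quantification, so it is not a full relational clone. Consequently the standard Galois correspondence between clones and relational clones does not automatically yield $\Alg\funset{F} = \Inv(\funset{F}^*)$; some genuinely two-element input is required. A natural way to organize the case analysis is to treat first the essentially at most unary clones, where Lemma~\ref{lem:deltathree_pp_equiv_deltafour} and Corollary~\ref{cor:Pol_delta_ess_at_most_unary} already pin down $\Alg\funset{F}$ very explicitly, and then to handle the remaining clones, where the presence of a truly binary operation provides enough flexibility to construct the required separating equations from the centralizer data.
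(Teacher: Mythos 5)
The paper does not prove this statement: it is quoted directly as \cite[Theorem~4.1]{TotWal17}, so there is no internal proof to compare against, and any argument you give is new work rather than a reconstruction. With that caveat, your forward inclusion $\Alg\funset{F}\subseteq\Inv(\funset{F}^{*})$ is correct, and you rightly observe that it holds over an arbitrary base set: if $R$ is cut out by equations $p_i=q_i$ with $p_i,q_i\in\funset{F}$, then applying an $m$\nobreakdash-ary $f\in\funset{F}^{*}$ coordinatewise to $\vec{x}_1,\dotsc,\vec{x}_m\in R$ and using the commutation $p_i\circ(f\times\dotsm\times f)=f\circ(p_i\times\dotsm\times p_i)$ shows the image tuple again satisfies all equations.

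The genuine gap is that the reverse inclusion $\Inv(\funset{F}^{*})\subseteq\Alg\funset{F}$ is only announced, not proved. You correctly diagnose why it cannot follow from the $\Pol$--$\Inv$ Galois connection alone (the algebraic sets are not closed under existential quantification) and that the two-element domain must enter essentially, but the ``case analysis over Post's lattice'' that is supposed to close the argument is never carried out. That case analysis is exactly the content of T\'oth--Waldhauser's theorem, and reducing it to ``produce $p,q$ separating each $\vec{a}\notin R$'' via Lemma~\ref{lemma:equivalent_definition_algebraic_set} restates the problem rather than solving it; one still has to exhibit such $p,q$ from the mere hypothesis that $R$ is $\funset{F}^{*}$\nobreakdash-invariant, and it is not obvious how the binary operation you allude to gives this in general. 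A secondary issue: Lemma~\ref{lem:deltathree_pp_equiv_deltafour} and Corollary~\ref{cor:Pol_delta_ess_at_most_unary} describe $\Pol\set{\deltarelation}$, not $\Alg\funset{F}$ for $\funset{F}\subseteq\Bcl{N}$, so they do not ``pin down $\Alg\funset{F}$'' for essentially at most unary clones in the way your plan suggests. As it stands, the proposal establishes one inclusion and sketches a route for the other, but the substantive half of the theorem remains unproved.
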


It follows from Theorem~\ref{thm:Alg-F-is-Inv-cent-F} that
$\Alg\funset{F}^{**} = \Inv(\funset{F}^{***}) =
\Inv(\funset{F}^{*})=\Alg\funset{F}$
for every Boolean clone~$\funset{F}$, since the tricentralizer and the
centralizer of a set of operations coincide. Thus, to determine
whether a Boolean
clone is equationally additive, it suffices to consider its bicentral
closure~$\funset{F}^{**}$; in other words, considering all Boolean
centralizer clones provides the complete picture. There are
precisely 25
centralizer clones on~$\set{0,1}$. They were originally presented by
Kuznecov~\cite[p.~27]{KuznecovCentralisers1979}, but the arguments
given there remain rather sketchy. A complete description can be found
in~\cite{Her08}.

The following theorem characterizes which Boolean
clones are
equationally additive. The result is illustrated in
Figure~\ref{fig:eqn-add-Boolean-clones}, where also the identifiers
for
Boolean clones used in the theorem are clarified.
With the exception of the top clone~$\funset{O}_2$, we will denote
Boolean clones by the standard symbols given in~\cite[\figurename~2,
p.~8]{CreignouVollmerBooleanCSPsWhenDoesPostsLatticeHelp}, where
explicit generating systems are listed, too.

\begin{figure}
{\centering
\input{postlat}\par
\noindent
\begin{tabular}{lll}
\tikz{\node[clone,type1] at (0,0){};} clones of TCT-type~$\tp{1}$&
\tikz{\node[clone,type2] at (0,0){};} clones of TCT-type~$\tp{2}$&
\tikz{\node[clone,type5] at (0,0){};} clones of TCT-type~$\tp{5}$\\
\tikz{\node[clone,type3] at (0,0){};} clones of TCT-type~$\tp{3}$&
\tikz{\node[clone,type4] at (0,0){};} clones of TCT-type~$\tp{4}$\\
\multicolumn{2}{l}{%
\tikz{\node[clone,type3] at (0,0){};}/\tikz{\node[clone,type4] at
(0,0){};}
equationally additive clones}\\
\multicolumn{2}{l}{%
\tikz{\node[clone,type4,extremal] at (0,0){};}
minimal equationally additive clones}\\
\multicolumn{2}{l}{%
\tikz{\node[clone,type5,extremal] at (0,0){};}/%
\tikz{\node[clone,type2,extremal] at (0,0){};}
maximal non-equationally additive clones}
\end{tabular}
}
\caption{Lattice of Boolean clones labelled according
to~\cite{CreignouVollmerBooleanCSPsWhenDoesPostsLatticeHelp};
         equationally additive clones forming the order filter
         shown by the completely filled nodes}
\label{fig:eqn-add-Boolean-clones}
\end{figure}

\begin{theorem}\label{thm:char-Boolean-eqn-additive}
Let~$\funset{F}$ be a clone on~$\set{0,1}$ and
${g,h,p,t,t^{\partial}\colon \set{0,1}^3\to \set{0,1}}$ be the ternary
Boolean operations given for $x,y,z\in\set{0,1}$ by the following
rules:
\begin{align*}
h(x,y,z)&\defeq (x\land y)\lor (x\land z)\lor (y\land z),\\
g(x,y,z)&\defeq (x + y + z)\bmod 2,\\
p(x,y,z)&\defeq (x\land z) \lor
                (x\land \negate{y}\land \negate{z})\lor
                (\negate{x}\land \negate{y}\land z)
                \text{ where }
                \negate{x}=(1+x)\bmod 2,\\
t(x,y,z)&\defeq x\lor(y\land z),\\
t^{\partial}(x,y,z)&\defeq x\land(y\lor z),
\end{align*}
that is, $h$ is the Boolean majority operation, $g$ the
Boolean minority (Ma\v{l}cev) operation and~$p$ the Pixley operation.
Then the following facts are equivalent.
\begin{enumerate}[\upshape(a)]
\item\label{item:Boolean-clone-eqn-add}
      $\funset{F}$ is equationally additive.
\item\label{item:delta-four-algebraic}
      $\deltarelation[{\set{0,1}}]\in \Alg\ari{4}\funset{F}$.
\item\label{item:delta-three-algebraic}
      $\deltathree[{\set{0,1}}]\in \Alg\ari{3}\funset{F}$.
\item\label{item:cent-ess-unary}
      $\funset{F}^{*}\subseteq \Bcl{N}$,
      that is, the centralizer of~$\funset{F}$ is essentially at most
      unary.
\item\label{item:bicent-selfdual-conservative}
      $\Bcl{D}_1\subseteq\funset{F}^{**}$, that is, the bicentralizer
      of~$\funset{F}$ contains all self-dual conservative operations.
\item\label{item:bicent-minority-majority}
      $g,h\in\funset{F}^{**}$.
\item\label{item:bicent-pixley}
      $p\in\funset{F}^{**}$.
\item\label{item:minimal-elements}
      $\Bcl{D}_2\subs\funset{F}$ or
      $\Bcl{S}_{00}\subs\funset{F}$ or
      $\Bcl{S}_{10}\subs\funset{F}$.
\item\label{item:minimal-elements-generators}
      $h\in\funset{F}$ or
      $t\in\funset{F}$ or
      $t^{\partial}\in\funset{F}$.
\item\label{item:maximal-elements-complement}
      Neither $\funset{F}\subs\Bcl{E}$ nor
      $\funset{F}\subs\Bcl{V}$, nor $\funset{F}\subs\Bcl{L}$.
\item\label{item:minimal-elements-Malcev-cond}
      There is $f\in\funset{F}\arii{3}$ satisfying
      the Ma\v{l}cev condition
      $f(x,x,y)\approx x\approx f(x,y,x)$ and
      $f(y,x,x)\approx f(x,y,f(y,x,x))$.
\item\label{item:TCT-types}
      The algebra $\algop{\set{0,1}}{\funset{F}}$ is of
      TCT-type~$\tp{3}$ (Boolean algebra)
      or~$\tp{4}$ (Boolean lattice).
\item\label{item:forbidden-TCT-types}
      The algebra $\algop{\set{0,1}}{\funset{F}}$ is not of
      TCT-types~$\tp{1}$
      (group action), $\tp{2}$ (vector space) or~$\tp{5}$
      (semilattice).
\item\label{item:CD-variety}
      The algebra $\algop{\set{0,1}}{\funset{F}}$ generates a
      congruence distributive variety.
\end{enumerate}
\end{theorem}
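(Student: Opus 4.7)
The plan is to prove the fourteen conditions equivalent by establishing a cycle of implications passing through the algebraic description of $\deltarelation[\set{0,1}]$, the centralizer structure, Post's lattice, and the TCT-type analysis.

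Equivalences (a)--(g) are obtained via the Galois correspondence between clones and their centralizers. (a)~$\Leftrightarrow$~(b) is Theorem~\ref{teor:eq_additive_iff_delta_four_algebraic}; (b)~$\Rightarrow$~(c) comes from identifying the third and second arguments in an equational definition of $\deltarelation$, using the first pp-definition of Lemma~\ref{lem:deltathree_pp_equiv_deltafour}; (c)~$\Leftrightarrow$~(d) follows from Theorem~\ref{thm:Alg-F-is-Inv-cent-F} and Corollary~\ref{cor:Pol_delta_ess_at_most_unary}, which together give $\deltathree\in\Alg\funset{F}$ iff $\funset{F}^{*}\subseteq\Pol\set{\deltathree}=\Bcl{N}$. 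For (d)~$\Leftrightarrow$~(e), the bicentralizer Galois connection yields $\funset{F}^{*}\subseteq\Bcl{N}$ iff $\Bcl{N}^{*}\subseteq\funset{F}^{**}$, where $\Bcl{N}^{*}=\Bcl{D}_{1}$ (the operations commuting with all essentially unary Boolean maps, i.e., with identity, negation, and both constants, hence self-dual conservative). Equivalence (e)~$\Leftrightarrow$~(f) uses $\Bcl{D}_{1}=\langle h,g\rangle$ from Post's lattice, and (f)~$\Leftrightarrow$~(g) is obtained by verifying that~$p$ is self-dual and idempotent (so $p\in\Bcl{D}_{1}$) and that~$p$, being a Pixley operation, generates both a Mal'{c}ev and a majority operation, which on $\set{0,1}$ must be~$g$ and~$h$.

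The structural conditions (h)--(n) are handled combinatorially. (h)~$\Leftrightarrow$~(i) is immediate since $\Bcl{D}_{2}=\langle h\rangle$, $\Bcl{S}_{00}=\langle t\rangle$, $\Bcl{S}_{10}=\langle t^{\partial}\rangle$ in Post's lattice; (h)~$\Leftrightarrow$~(j) is a direct inspection of Post's lattice showing that $\Bcl{E},\Bcl{V},\Bcl{L}$ are the maximal clones avoiding all three generators. For (i)~$\Leftrightarrow$~(k), substitution verifies that $h,t,t^{\partial}$ each satisfy the Mal'{c}ev identities in (k), and the converse is a short case analysis on the values $f(0,1,1),f(1,0,0)\in\set{0,1}$. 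Then (k)~$\Rightarrow$~(n) is Lemma~\ref{lem:malcev-cond-implying-CD}, and (n)~$\Rightarrow$~(l) applies the Hobby--McKenzie omitting types theorem to rule out $\tp{1},\tp{2},\tp{5}$ from the unique non-trivial prime quotient of the simple two-element algebra; (l)~$\Leftrightarrow$~(m) is trivial.

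Finally, (i)~$\Rightarrow$~(a) closes the cycle: for $h\in\funset{F}$, the identity
\[
\deltathree[\set{0,1}]=\Set{(x,y,z)\in\set{0,1}^{3}|h(x,y,z)=y}
\]
holds because the Boolean majority of $(x,y,z)$ equals~$y$ iff $x=y$ or $y=z$; this gives (c), and thence (a) via (c)~$\Rightarrow$~(d)~$\Rightarrow$~(e) and the fact that $\Alg\funset{F}=\Alg\funset{F}^{**}$ together with an adaption of Lemma~\ref{lemma:how_does_deltarelation_really_works} that uses the Mal'{c}ev $g\in\funset{F}^{**}$ to unify two algebraic sets. Analogous explicit algebraic definitions of $\deltathree$, exploiting the Boolean constants present in $\Bcl{S}_{00}$ and $\Bcl{S}_{10}$, dispatch the generators $t$ and $t^{\partial}$. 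The reverse direction (a)~$\Rightarrow$~(i) follows from the chain (a)~$\Leftrightarrow$~(g) combined with the observation, by inspection of Post's lattice, that the clones~$\funset{F}$ with $\funset{F}^{**}\supseteq\Bcl{D}_1$ are precisely those containing one of $\Bcl{D}_2,\Bcl{S}_{00},\Bcl{S}_{10}$. The main obstacles will be the identification $\Bcl{N}^{*}=\Bcl{D}_{1}$ together with its generators, the case analysis for (i)~$\Leftrightarrow$~(k), and the lattice inspection needed for (h)~$\Leftrightarrow$~(j) and the passage from (g) back to (h).
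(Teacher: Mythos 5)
Your plan follows the same broad architecture as the paper's: the Galois correspondence via $\funset{F}^{*}$ and $\funset{F}^{**}$ handles \eqref{item:Boolean-clone-eqn-add}--\eqref{item:bicent-pixley}, Post's lattice and a combinatorial count of ternary operations handle \eqref{item:minimal-elements}--\eqref{item:minimal-elements-Malcev-cond}, and J\'{o}nsson/HM theory takes care of \eqref{item:TCT-types}--\eqref{item:CD-variety}. The explicit one-equation description $\deltathree[\set{0,1}]=\set{(x,y,z) \mid h(x,y,z)=y}$ for the majority case is a clean way to route \eqref{item:minimal-elements-generators}$\Rightarrow$\eqref{item:delta-three-algebraic} and is in the same spirit as Remark~\ref{rem:def-Delta-D2-S00-S10}.

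However, as written your implication graph does not close. You establish the mutual equivalence of \eqref{item:Boolean-clone-eqn-add}--\eqref{item:bicent-pixley} and of \eqref{item:minimal-elements}--\eqref{item:minimal-elements-Malcev-cond}, and bridge these two blocks (via \eqref{item:bicent-pixley}$\Leftrightarrow$\eqref{item:minimal-elements} and \eqref{item:minimal-elements-generators}$\Rightarrow$\eqref{item:delta-three-algebraic}), but conditions \eqref{item:TCT-types}, \eqref{item:forbidden-TCT-types}, \eqref{item:CD-variety} only appear on the receiving end of implications: you give \eqref{item:minimal-elements-Malcev-cond}$\Rightarrow$\eqref{item:CD-variety}$\Rightarrow$\eqref{item:TCT-types}$\Leftrightarrow$\eqref{item:forbidden-TCT-types} but never show that any of \eqref{item:TCT-types}, \eqref{item:forbidden-TCT-types}, \eqref{item:CD-variety} implies something already known to be equivalent to \eqref{item:Boolean-clone-eqn-add}. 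So your argument only proves that \eqref{item:Boolean-clone-eqn-add} implies \eqref{item:CD-variety}, not the converse. The paper closes this by proving \eqref{item:CD-variety}$\Rightarrow$\eqref{item:maximal-elements-complement} (by contradiction through the non-distributive congruence lattices of $\ab{Z}_2^2$, the $2$-element semilattice squared, etc.) and separately \eqref{item:forbidden-TCT-types}$\Rightarrow$\eqref{item:maximal-elements-complement}; you need some such return edge, e.g., \eqref{item:forbidden-TCT-types}$\Rightarrow$\eqref{item:maximal-elements-complement} by observing that $\funset{F}\subs\Bcl{E},\Bcl{V},\Bcl{L}$ forces $\funset{F}\vee\Bcl{I}$ below $\Bcl{E},\Bcl{V},\Bcl{L}$ respectively, hence TCT-type $\tp{5}$, $\tp{5}$, or $\tp{2}$/$\tp{1}$.

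A second issue: you claim that the cases $t\in\funset{F}$ and $t^{\partial}\in\funset{F}$ can be handled ``exploiting the Boolean constants present in $\Bcl{S}_{00}$ and $\Bcl{S}_{10}$''. But both $t$ and $t^{\partial}$ are idempotent, so the clones $\Bcl{S}_{00}=\langle t\rangle$ and $\Bcl{S}_{10}=\langle t^{\partial}\rangle$ are conservative (they preserve $\set{0}$ and $\set{1}$), and in particular contain no constant operations. The explicit equational definitions over these clones must therefore avoid constants; Remark~\ref{rem:def-Delta-D2-S00-S10} exhibits such a constant-free system. Your final step bridging from \eqref{item:bicent-selfdual-conservative} back to \eqref{item:delta-four-algebraic} via a ``Mal\v{c}ev-based adaption of Lemma~\ref{lemma:how_does_deltarelation_really_works}'' is also more circuitous than needed: once $\funset{F}^{*}\subs\Bcl{N}=\Pol\set{\deltarelation[\set{0,1}]}$, Theorem~\ref{thm:Alg-F-is-Inv-cent-F} immediately gives $\deltarelation[\set{0,1}]\in\Inv\funset{F}^{*}=\Alg\funset{F}$, i.e.\ \eqref{item:delta-four-algebraic}, with no Mal\v{c}ev operation involved.
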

\begin{proof}
Points~\eqref{item:Boolean-clone-eqn-add}
and~\eqref{item:delta-four-algebraic} are equivalent by
Theorem~\ref{teor:eq_additive_iff_delta_four_algebraic}.
Let~$\rho$ be one among $\deltarelation[{\set{0,1}}]$
or~$\deltathree[{\set{0,1}}]$.
By Theorem~\ref{thm:Alg-F-is-Inv-cent-F} we have
$\rho\in\Alg{\funset{F}}=\Inv\funset{F}^{*}$ if and only if
$\funset{F}^{*}\subs \Pol\set{\rho} = \Bcl{N}$, where the last
equality
follows from Corollary~\ref{cor:Pol_delta_ess_at_most_unary}. Hence,
each of~\eqref{item:delta-four-algebraic}
and~\eqref{item:delta-three-algebraic} is equivalent
to~\eqref{item:cent-ess-unary}. The latter is certainly equivalent to
$\funset{F}^{**}\supseteq\Bcl{N}^{*} = \Bcl{D}_1$ because
the centralizer of~$\Bcl{N}$ is the centralizer of the negation and
the two
Boolean constants, that is, the intersection of the clone of self-dual
operations with the clones of zero- and one-preserving functions, in
other words, the clone~$\Bcl{D}_1$ of self-dual conservative
operations.
Thus, \eqref{item:cent-ess-unary}
and~\eqref{item:bicent-selfdual-conservative} are equivalent.
Since~$\Bcl{D}_1$ is generated by $\set{g,h}$ (it is the join of the
minimal clones~$\Bcl{L}_2$ and~$\Bcl{D}_2$ generated by~$g$ and~$h$,
respectively) or~$\set{p}$
(cf.~\cite[\figurename~2,
p.~8]{CreignouVollmerBooleanCSPsWhenDoesPostsLatticeHelp}),
statement~\eqref{item:bicent-selfdual-conservative} is equivalent to
each of~\eqref{item:bicent-minority-majority}
and~\eqref{item:bicent-pixley}.
Now for $\funset{G}=\Bcl{D}_2$ the
least centralizer clone above~$\funset{G}$ is
$\funset{G}^{**}=\Bcl{D}_1$, for
$\funset{G}\in\set{\Bcl{S}_{00},\Bcl{S}_{10}}$, it is the
clone~$\funset{G}^{**}=\Pol\set{\set{0},\set{1}}\supseteq\Bcl{D}_1$
of conservative
operations, cf.~\cite[\figurename~5, p.~3158]{Her08}. Therefore, from
$\funset{G}\subs\funset{F}$, i.e.~\eqref{item:minimal-elements},
we obtain
$\Bcl{D}_1\subs\funset{G}^{**}\subs\funset{F}^{**}$,
i.e.~\eqref{item:bicent-selfdual-conservative}.
If~$\funset{F}$ does not satisfy~\eqref{item:minimal-elements}, then,
according to Post's lattice, there is
$\funset{G}\in\set{\Bcl{E},\Bcl{V},\Bcl{L}}$ such that
$\funset{F}\subs\funset{G}$. From~\cite[\figurename~5]{Her08} we see
that~$\funset{G}$ is a centralizer clone, wherefore
$\funset{F}^{**}\subs\funset{G}^{**} = \funset{G}$. This means
that~\eqref{item:bicent-selfdual-conservative} fails, as
$\Bcl{D}_1\not\subs\funset{G}$, and hence,
\eqref{item:bicent-selfdual-conservative}
and~\eqref{item:minimal-elements} are equivalent. Moreover,
\eqref{item:minimal-elements}
and~\eqref{item:maximal-elements-complement} are
equivalent as~$\Bcl{E}$, $\Bcl{V}$ and~$\Bcl{L}$ are the maximal
elements in the complement of the order-filter of equationally
additive clones in Post's lattice described by its minimal elements
in~\eqref{item:minimal-elements}.
Furthermore, we infer from~\cite[\figurename~2,
p.~8]{CreignouVollmerBooleanCSPsWhenDoesPostsLatticeHelp} that the
clones~$\Bcl{D}_2$, $\Bcl{S}_{00}$ and~$\Bcl{S}_{10}$ are generated by
the Boolean majority operation~$h$, $t$ and~$t^{\partial}$,
respectively. Therefore, condition~\eqref{item:minimal-elements} is
equivalent to~\eqref{item:minimal-elements-generators}.
\par
We have now established that
statements~\eqref{item:Boolean-clone-eqn-add}--\eqref{item:maximal-elements-complement} are all equivalent.
As our next step we shall show
that~\eqref{item:minimal-elements-generators}
and~\eqref{item:minimal-elements-Malcev-cond} are equivalent.
For this let us first
assume the truth of~\eqref{item:minimal-elements-generators} and
let~$f$ be~$h$, $t$ or~$t^{\partial}$, respectively. If $f=h$, then
$f(x,x,y)\approx x\approx f(x,y,x)$ and
$f(y,x,x)\approx x\approx f(x,y,x)\approx f(x,y,f(y,x,x))$ are
trivial.
Otherwise, the conditions stated
in~\eqref{item:minimal-elements-Malcev-cond} follow from the
idempotence, commutativity and absorption laws for lattices; for
example, for $f=t$ we have
$f(y,x,x)\approx y\lor x\approx x\lor y\approx x\lor (y\land
(y\lor x))
\approx f(x,y,f(y,x,x))$, and dually for $f=t^{\partial}$.
Conversely, if we have a ternary operation~$f$ on~$\{0,1\}$ subject
to~\eqref{item:minimal-elements-Malcev-cond}, then the equation
$f(x,x,y)\approx x\approx f(x,y,x)$ uniquely determines the values
of~$f$ on~$6$ out of the~$8$ argument triples. Hence, there are in total
$2^{8-6}=4$ possible ternary Boolean operations~$f$ satisfying
this condition.
These are $h,t,t^{\partial}$ and~$\eni[3]{1}$, however,
by the second part of~\eqref{item:minimal-elements-Malcev-cond},
$f$ cannot be the first projection,
wherefore~\eqref{item:minimal-elements-generators} follows.
\par
Subsequently we will prove that~\eqref{item:minimal-elements} implies~\eqref{item:TCT-types} and (by its contrapositive)
that~\eqref{item:forbidden-TCT-types}
implies~\eqref{item:maximal-elements-complement}. Since, clearly,
\eqref{item:TCT-types} and~\eqref{item:forbidden-TCT-types} are
equivalent, this will show that all the statements~\eqref{item:Boolean-clone-eqn-add}--\eqref{item:forbidden-TCT-types} are equivalent.
Finally, we will show that~\eqref{item:minimal-elements-Malcev-cond}
implies~\eqref{item:CD-variety} and, by contradiction,
that~\eqref{item:CD-variety}
implies~\eqref{item:maximal-elements-complement}, and the proof will be
finished.
Let us also note that the equivalence of
statements~\eqref{item:minimal-elements} and~\eqref{item:CD-variety},
which appears as a part of our theorem is already known from the
literature; to our knowledge it was first proved
in~\cite[Proposition~2.1]{AglBak99}.
\par

To prove that~\eqref{item:minimal-elements}
implies~\eqref{item:TCT-types}, let
$\funset{G}\in\set{\Bcl{D}_2,\Bcl{S}_{00},\Bcl{S}_{10}}$ and suppose
$\funset{G}\subs\funset{F}$. Then the polynomial expansion
of~$\funset{G}$ obtained by joining the Boolean clone~$\Bcl{I}$ of all
constant operations is the maximal Boolean clone of monotone
operations
$\Bcl{M}=\funset{G}\vee \Bcl{I}\subs\funset{F}\vee
\Bcl{I}$. Therefore,
$\funset{F}\vee\Bcl{I}\in\set{\Bcl{M},\funset{O}_2}$, and the TCT-type
of~$\funset{F}$ is~$\tp{4}$ if $\funset{F}\vee\Bcl{I}=\Bcl{M}$,
or~$\tp{3}$ if
$\funset{F}\vee\Bcl{I}=\funset{O}_2$. This shows
that~\eqref{item:minimal-elements} implies~\eqref{item:TCT-types}.
Conversely, let us assume the negation
of~\eqref{item:maximal-elements-complement}, that is, that
$\funset{F}\subs\funset{G}$ for some
$\funset{G}\in\set{\Bcl{V},\Bcl{E},\Bcl{L}}$. Then
$\funset{G}\supseteq\Bcl{I}$, wherefore
$\funset{F}\vee\Bcl{I}\subs\funset{G}\vee\Bcl{I} = \funset{G}$. Hence,
$\funset{F}$ is polynomially equivalent to a semilattice, a vector
space
over $\GF(2)$, or---if it is essentially at most unary---a group
action, which is the negation of~\eqref{item:forbidden-TCT-types}.
\par

Lastly, to show that~\eqref{item:minimal-elements-Malcev-cond}
implies~\eqref{item:CD-variety} let $f\in\funset{F}\arii{3}$ be
an operation as claimed in~\eqref{item:minimal-elements-Malcev-cond}
and let~$\variety{V}$ be the variety generated by
$\algop{\set{0,1}}{\funset{F}}$. Due to~\eqref{item:minimal-elements-Malcev-cond}
the $\variety{V}$-term $f(x,y,z)$ shows that~$\variety{V}$ admits the
Ma\v{l}cev condition from Lemma~\ref{lem:malcev-cond-implying-CD},
hence~$\variety{V}$ is congruence distributive, i.e.,
\eqref{item:CD-variety} holds.
For the converse, we assume
now~\eqref{item:CD-variety} together with the negation
of~\eqref{item:maximal-elements-complement}, which would imply
that~$\funset{F}$ and hence one of~$\Bcl{E}$, $\Bcl{V}$ or~$\Bcl{L}$
would have a sequence of J\'{o}nsson operations. Therefore,
$\algop{\{0,1\}}{\land, \ca[0], \ca[1]}$, $\algop{\{0,1\}}{\lor, \ca[0], \ca[1]}$ or
$\algop{\{0,1\}}{+, \ca[0], \ca[1]}$ would generate a congruence distributive
variety, which is false, as in each case the congruence lattice of the
square of the respective algebra already fails to be distributive.
This contradiction shows that~\eqref{item:CD-variety}
entails~\eqref{item:maximal-elements-complement}.
\end{proof}

The equivalence of statements~\eqref{item:Boolean-clone-eqn-add}
and~\eqref{item:TCT-types} of
Theorem~\ref{thm:char-Boolean-eqn-additive} will be widely used in the
subsequent sections.
\begin{corollary}\label{cor:boolean_clones_eq_add_TCT_type_3_4}
Let~$\ab{A}$ be an algebra on a two-element set. Then
$\Clo\ab{A}$ is equationally additive if and only if\/
$\Type{\ab{A}}\in\{\tp{3},\tp{4}\}$.
\end{corollary}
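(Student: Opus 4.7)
The plan is to derive this corollary as an immediate specialization of Theorem~\ref{thm:char-Boolean-eqn-additive}. First I would note that both equational additivity of $\Clo\ab{A}$ and the TCT-type of $\ab{A}$ are invariants of the underlying algebra up to isomorphism. Hence, by precomposing the fundamental operations of~$\ab{A}$ with a bijection $A \to \set{0,1}$, we may without loss of generality assume that the underlying set is~$\set{0,1}$.

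Before invoking the theorem I would verify that $\Type{\ab{A}}$ is well-defined in this setting. Since $\crd{A}=2$, the congruence lattice $\Con\ab{A}$ contains exactly the two comparable elements $\bottom{A}$ and~$\uno{A}$, forming the unique prime quotient $\interval{\bottom{A}}{\uno{A}}$ of the algebra. In particular $\Type{\ab{A}}=\type{\bottom{A}}{\uno{A}}$ is unambiguous.

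With these preliminaries in place, I would set $\funset{F}\defeq\Clo\ab{A}$ and apply the equivalence of items~\eqref{item:Boolean-clone-eqn-add} and~\eqref{item:TCT-types} of Theorem~\ref{thm:char-Boolean-eqn-additive}: it asserts that the Boolean clone~$\funset{F}$ is equationally additive if and only if $\algop{\set{0,1}}{\funset{F}}=\ab{A}$ has TCT-type~$\tp{3}$ or~$\tp{4}$. This is precisely the statement of the corollary, so no additional argumentation is required. Since all the substantive content sits inside Theorem~\ref{thm:char-Boolean-eqn-additive}, there is no genuine obstacle here; the only thing to watch is that the isomorphism invariance of both properties is indeed respected by the reduction to $A=\set{0,1}$, which is standard.
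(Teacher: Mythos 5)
Your proposal is correct and follows the paper's intended derivation exactly: the corollary is an immediate consequence of the equivalence of items~\eqref{item:Boolean-clone-eqn-add} and~\eqref{item:TCT-types} of Theorem~\ref{thm:char-Boolean-eqn-additive}, after transporting the algebra to carrier~$\set{0,1}$, which is precisely how the paper presents it (without a written-out proof). Your preliminary remark that the TCT-type of a two-element algebra is unambiguous because $\interval{\bottom{A}}{\uno{A}}$ is the unique prime quotient is a harmless clarification, not a gap.
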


Knowing that the Boolean clones~$\Bcl{D}_2$, $\Bcl{S}_{00}$,
$\Bcl{S}_{10}$ and all clones above them are equationally additive,
Theorem~\ref{teor:eq_additive_iff_delta_four_algebraic} tells
that~$\deltarelation[{\set{0,1}}]$ is an algebraic set, hence
definable as a solution set of some system of equations. In the
following remark, we exhibit an explicit system of equations
defining~$\deltarelation[\set{0,1}]$.

\begin{remark}\label{rem:def-Delta-D2-S00-S10}
The clone~$\Bcl{D}_2$ is generated by the Boolean majority
operation~$h$, and every clone in the principal filter generated by
this clone is equationally additive, since
for all $x_1,x_2,x_3,x_4\in\set{0,1}$ we have
(cf.~\cite{AicBehRos23DatasetSystems4EqnAdd})
\[
(x_1,x_2,x_3,x_4)\in\deltarelation[\set{0,1}]\iff
h(x_3,x_4,x_1)=h(x_3,x_4,x_2).
\]
\par
With respect to the clones~$\Bcl{S}_{00}$ and~$\Bcl{S}_{10}$, we infer
from~\cite[\figurename~2,
p.~8]{CreignouVollmerBooleanCSPsWhenDoesPostsLatticeHelp}
that they are generated by the ternary functions~$t$ and~$t^{\partial}$
(cf.\ Theorem~\ref{thm:char-Boolean-eqn-additive}), respectively, which
are given for $x,y,z\in\set{0,1}$ by the rules $t(x,y,z)=x\lor(y\land z)$ and
$t^{\partial}(x,y,z)=x\land (y\lor z)$.
The clones~$\Bcl{S}_{00}$ and~$\Bcl{S}_{10}$ are dual to each other,
and for both of them, i.e., for $\tau\in\set{t,t^{\partial}}$, we have
for all $x_1,x_2,x_3,x_4\in\set{0,1}$ that
(cf.~\cite{AicBehRos23DatasetSystems4EqnAdd})
\begin{align*}
(x_1,x_2,x_3,x_4)\in\deltarelation[\set{0,1}]\iff
\tau(x_3,x_4,x_1)&=\tau(x_3,x_4,x_2),\text{ and}\\
\tau(x_4,x_3,x_1)&=\tau(x_4,x_3,x_2).
\end{align*}
\par
Computing the four-generated free algebra in the variety generated
by the algebra $\ab{A}=\algop{\set{0,1}}{\tau}$, we find that there are
exactly~$53$ quaternary term operations of~$\ab{A}$,
cf.~\cite{AicBehRos23DatasetSystems4EqnAdd}.
One can check that for every
pair of quaternary term operations $f,g\in(\Bcl{S}_{00})\arii{4}$
that agree on the~$12$
quadruples in~$\deltarelation[\set{0,1}]$, they also agree on at
least one of
the four elements of $\set{0,1}^4\setminus\deltarelation[\set{0,1}]$,
see also~\cite{AicBehRos23DatasetSystems4EqnAdd}.
It is hence impossible to define~$\deltarelation[\set{0,1}]$ by a
single equation of the form $f(x_1,x_2,x_3,x_4)=g(x_1,x_2,x_3,x_4)$
over~$\Bcl{S}_{00}$.
\end{remark}

In the following, we prove that the characterization of equational
domains given in Corollary~\ref{cor:boolean_clones_eq_add_TCT_type_3_4}
carries over to finite E-minimal algebras as defined
in~\cite[Definition~2.14]{HobbMcK}.
We recall that a finite algebra is \emph{E-minimal} if it has at least
two elements and every unary idempotent polynomial is constant or the
identity operation. Finite non-trivial $p$-groups provide prominent
examples of such algebras.
In~\cite[Theorem~4.32]{HobbMcK} it is proved that the prime quotients
of every E-minimal algebra all have the same type.
Hence one can associate to each E-minimal algebra one of the
five types of minimal algebras introduced in
Section~\ref{sec:preliminaries}.

We fix some notation that will only be used in the proof of the
following lemma.
For a set~$A$, $n\in\N$, $i\in\{1,\dots,n\}$, $f\colon A^n\to A$ and
$\vec{a}\in A^{n-1}$ we define the unary polynomial
$f_i^\vec{a}\colon A\to A$ by
$f_i^\vec{a}(x)=f(a_1,\dots, a_{i-1},x,a_{i},\dots,a_{n-1})$ for all
$x\in A$.
\begin{lemma}\label{lemma:Eminima_of_type_1}
Let~$\ab{A}$ be a subdirectly irreducible (finite) E-minimal algebra of
type~$\tp{1}$. Then $\Clo\ab{A}$ is not equationally additive.
\end{lemma}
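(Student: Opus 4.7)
We argue by contradiction. Suppose $\Clo\ab{A}$ is equationally additive; Theorem~\ref{teor:eq_additive_iff_delta_four_algebraic} then furnishes term operations $(p_i,q_i)_{i\in I}\subseteq\Clo\ari{4}\ab{A}$ with $\deltarelation=\set{\vec{x}\in A^4\mid\forall i\in I\colon p_i(\vec{x})=q_i(\vec{x})}$. Let~$\mu$ denote the monolith of~$\ab{A}$.

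E-minimality forces every idempotent in $\POL\ari{1}\ab{A}$ to be the identity or a constant, so the only $(\bottom{A},\mu)$-minimal set of~$\ab{A}$ with more than one element is~$A$ itself, and the $(\bottom{A},\mu)$-traces are precisely the $\mu$-classes $N\subseteq A$ of size at least two. Fix such an~$N$ and pick $a,b\in N$ with $a\neq b$; then $(a,b,a,b)\in N^4\setminus\deltarelation$, so there is some $i\in I$ with $p_i(a,b,a,b)\neq q_i(a,b,a,b)$. Abbreviate $p\defeq p_i$ and $q\defeq q_i$. Because term operations preserve~$\mu$ and~$N$ is a single $\mu$-class, the restrictions $p\restrict{N^4}$ and $q\restrict{N^4}$ take values in a common $\mu$-class~$N'$ (containing $p(a,a,a,a)=q(a,a,a,a)$, since $(a,a,a,a)\in\deltarelation$); the distinct elements $p(a,b,a,b),q(a,b,a,b)\in N'$ moreover ensure $\crd{N'}\geq 2$.

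Since $\type{\bottom{A}}{\mu}=\tp{1}$, both $N$ and~$N'$ are type-$\tp{1}$ traces of~$\ab{A}$ whose induced minimal algebras are polynomially equivalent to $G$-sets (cf.~\cite[Chapter~4]{HobbMcK}). Standard tame congruence theory for type-$\tp{1}$ prime quotients supplies a polynomial bijection $\lambda\colon N'\to N$, polynomially invertible by iteration owing to the finiteness of~$\ab{A}$. The compositions $\lambda\circ p\restrict{N^4}$ and $\lambda\circ q\restrict{N^4}$ then belong to the essentially-at-most-unary clone induced on~$N$ and agree on $\deltarelation\cap N^4$.

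The proof of Theorem~\ref{thm:ess-unary-algs-not-eq-add} now transfers verbatim: each of the compositions is either constant or of the shape $h(x_j)$ for some $j\in\finset{4}$ and unary $h\colon N\to N$, and by specialising the equation to tuples of $\deltarelation\cap N^4$ such as $(\alpha,\alpha,\alpha,\alpha)$ (diagonal case) and $(\alpha,\alpha,\beta,\beta)$ or $(\alpha,\beta,\gamma,\gamma)$ etc.\ (for differing essential positions) we force the equation on the whole of~$N^4$. Since $\lambda$ is injective on~$N'$, this yields $p=q$ on~$N^4$, contradicting $p(a,b,a,b)\neq q(a,b,a,b)$. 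The main technical obstacle is the tame-congruence-theoretic construction of the polynomial bijection~$\lambda$ between the traces~$N$ and~$N'$.
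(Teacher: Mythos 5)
Your argument follows a genuinely different route from the paper's. The paper invokes Kiss's structure theorem for term operations of type-$\tp{1}$ E-minimal algebras (\cite[Theorem~4.4]{Kis97}): each $f\in\Clo\ari{n}\ab{A}$ has at most one distinguished ``permuting'' argument position, while in all other positions every unary slice collapses the pair generating the monolith. The paper then carries out a direct case analysis, using only this dichotomy, to show that any $f,g\in\Clo\ari{4}\ab{A}$ that agree on~$\deltarelation$ must also agree at $(2,1,2,1)$. You instead try to pass to a monolith trace~$N$ and reduce to Theorem~\ref{thm:ess-unary-algs-not-eq-add}.

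The unjustified step is the assertion that ``standard tame congruence theory for type-$\tp{1}$ prime quotients supplies a polynomial bijection $\lambda\colon N'\to N$.'' What TCT supplies (\cite[Theorem~2.8]{HobbMcK}) are polynomial isomorphisms between distinct $\langle\bottom{A},\mu\rangle$-\emph{minimal sets}; it does not furnish polynomial bijections between distinct $\langle\bottom{A},\mu\rangle$-\emph{traces} of one and the same minimal set. For an E-minimal algebra the unique $\langle\bottom{A},\mu\rangle$-minimal set is~$A$ itself, so the inter-minimal-set isomorphism theorem says nothing here, and there is no general reason why two non-singleton $\mu$-classes $N$ and~$N'$ should be polynomially isomorphic or even equicardinal. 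Because $p\restrict{N^4}$ and $q\restrict{N^4}$ take values in~$N'$ rather than~$N$, they are not operations of the induced algebra $\ab{A}\restrict{N}$, and without the claimed~$\lambda$ you cannot bring them into the essentially unary framework where the argument of Theorem~\ref{thm:ess-unary-algs-not-eq-add} applies. (Even if in this situation the monolith always had a unique trace, so that $N=N'$ and $\lambda=\id_{N}$ would do, this would be a nontrivial fact requiring proof rather than an appeal to ``standard'' TCT.) The paper's use of \cite[Theorem~4.4]{Kis97} sidesteps this entirely: it describes every four-ary term operation on all of~$A^4$, so no comparison between traces is needed.
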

\begin{proof}
Since~$\ab{A}$ is E-minimal, we have $k\defeq \crd{A}\geq 2$. Without loss of
generality, let us assume that $A=\set{1,\dotsc,k}$ and the
monolith~$\mu$ of~$\ab{A}$ has the form $\mu=\Cg{\{(1,2)\}}$.
Since~$\ab{A}$ has type~$\tp{1}$, \cite[Theorem~4.4]{Kis97} implies
that for all $n\in\N$ and for all $f\in\Clo\ari{n}\ab{A}$ (exactly) one
of the following two statements holds:
\begin{enumerate}[(1)]
\item\label{item:item1oflemmaEminimal_type1}
for each $i\in \{1,\dots,n\}$ and every $\vec{a}\in A^{n-1}$ we have
$f_i^\vec{a}(1)=f_i^\vec{a}(2)$, or
\item\label{item:item2oflemmaEminimal_type1}
there is $j\in \{1,\dots, n\}$ such that for each $\vec{a}\in A^{n-1}$
the function~$f_j^\vec{a}$ induces a permutation
on~$A$ and $f_i^\vec{a}(1)=f_i^\vec{a}(2)$ for all
$i\in\{1,\dots,n\}\setminus\{j\}$.
\end{enumerate}
Let $f,g\in\Clo\ari{4}\ab{A}$ be such that
$f\restrict{\deltarelation}=g\restrict{\deltarelation}$. We prove
that \[f(2,1,2,1)=g(2,1,2,1).\]
First we observe that for all $i\in\{1,\dots, 4\}$ if
$f_i^{(1,1,1)}(1)\neq f_i^{(1,1,1)}(2)$, then
$g_i^{(1,1,1)}(1)\neq g_i^{(1,1,1)}(2)$ and vice versa:
in fact, we have
\begin{align*}
g_i^{(1,1,1)}(1)&=g(1,1,1,1)=f(1,1,1,1)=f_i^{(1,1,1)}(1)\neq\\
f_i^{(1,1,1)}(2)&=f(1,\dots, \underset{i}{2},\dots, 1)=g(1,\dots,
\underset{i}{2},\dots, 1)=g_i^{(1,1,1)}(2).
\end{align*}
Thus, either both~$f$ and~$g$
satisfy~\eqref{item:item1oflemmaEminimal_type1} or they both
satisfy~\eqref{item:item2oflemmaEminimal_type1} with the same
$j\in\{1,\dots,n\}$.
If both~$f$ and~$g$ satisfy~\eqref{item:item1oflemmaEminimal_type1}
or both satisfy~\eqref{item:item2oflemmaEminimal_type1} with
$j\neq 1$, then we have
\begin{align*}
f(2,1,2,1)&=f_1^{(1,2,1)}(2)=f_1^{(1,2,1)}(1)=f(1,1,2,1)=g(1,1,2,1)\\
          &=g_1^{(1,2,1)}(1)=g_1^{(1,2,1)}(2)=g(2,1,2,1).
\end{align*}
If both~$f$ and~$g$ satisfy~\eqref{item:item2oflemmaEminimal_type1}
with $j=1$, then we have
\begin{align*}
f(2,1,2,1)&=f_3^{(2,1,1)}(2)=f_3^{(2,1,1)}(1)=f(2,1,1,1)=g(2,1,1,1)\\
          &=g_3^{(2,1,1)}(1)=g_3^{(2,1,1)}(2)=g(2,1,2,1).
\end{align*}
This concludes the proof of the fact that~$\deltarelation$ is not an
algebraic set. Therefore,
Theorem~\ref{teor:eq_additive_iff_delta_four_algebraic} yields that
$\Clo\ab{A}$ is not equationally additive.
\end{proof}
\begin{lemma}\label{lemma:E_minimal_cd}
Let~$\ab{A}$ be a (finite) E-minimal algebra.
Then~$\ab{A}$ generates a congruence distributive variety
if and only if~$\ab{A}$ has TCT-type~$\tp{3}$ or~$\tp{4}$.
\end{lemma}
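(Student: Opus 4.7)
The proof splits into two implications, both of which I would handle using tools from tame congruence theory.

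For the forward direction, I would assume that $\ab{A}$ generates a congruence distributive variety. By the fundamental characterization of congruence distributivity in tame congruence theory (cf.~\cite[Theorem~9.14]{HobbMcK}), a locally finite variety is congruence distributive if and only if its typeset is contained in $\{\tp{3},\tp{4}\}$, equivalently, it omits types $\tp{1},\tp{2},\tp{5}$. This forces every type appearing in a prime quotient of $\ab{A}$ to lie in $\{\tp{3},\tp{4}\}$. Since $\ab{A}$ is E-minimal, all its prime quotients share a single type (cf.~\cite[Theorem~4.32]{HobbMcK}), and this common type must therefore be $\tp{3}$ or $\tp{4}$.

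For the reverse direction, I would assume $\Type{\ab{A}}\in\{\tp{3},\tp{4}\}$ and aim to show that the variety generated by $\ab{A}$ is congruence distributive. Because $\ab{A}$ is E-minimal, every unary polynomial of $\ab{A}$ is either a permutation of $A$ or constant, so the minimal set for any prime quotient of $\ab{A}$ coincides with $A$ itself. Consequently, restricted to any trace of a prime quotient, the polynomial clone of $\ab{A}$ realises a minimal algebra of type $\tp{3}$ (polynomially equivalent to a two-element Boolean algebra) or of type $\tp{4}$ (polynomially equivalent to a two-element lattice), each of which admits a polynomial majority operation. The main task is then to lift this trace-level polynomial majority to a ternary term operation of $\ab{A}$ acting as a near-unanimity term on all of $A$, which would entail congruence distributivity by a classical theorem.

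The principal obstacle will be this lifting step in the reverse direction. I expect to exploit E-minimality to patch the majority behaviour across different traces of minimal sets, ensuring that no unary polynomial outside these traces can interfere non-trivially, since such polynomials are forced to be constants or bijections. Verifying that the resulting ternary operation satisfies the near-unanimity identities globally on $A$, rather than merely on individual traces, will be the delicate part, and one likely needs an auxiliary observation about how traces for the unique type of $\ab{A}$ cover the universe in order to conclude.
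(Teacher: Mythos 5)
Your forward direction is fine in substance: congruence distributivity forces the typeset to lie in $\set{\tp{3},\tp{4}}$, and E-minimality (via~\cite[Theorem~4.32]{HobbMcK}) makes the type of~$\ab{A}$ well defined, so it must be~$\tp{3}$ or~$\tp{4}$. One caveat: the precise statement you quote is not a biconditional purely in terms of the typeset --- in Hobby--McKenzie the characterization of congruence distributivity (their Theorem~8.6, which the paper cites) also involves a condition on tails of minimal sets. You only use the easy implication, which is correct, but the claim as written overstates the theorem.

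The reverse direction, however, has a genuine gap. You propose to build a near-unanimity term by lifting trace-level majority polynomials across the various traces and patching them together, and you explicitly concede that verifying the near-unanimity identities globally ``will be the delicate part.'' This machinery is both incomplete and unnecessary, and the reason you are stuck is a missing fact: by~\cite[Lemma~4.29]{HobbMcK}, a (finite) E-minimal algebra of TCT-type~$\tp{3}$, $\tp{4}$ or~$\tp{5}$ has \emph{exactly two elements}. Once you know $\crd{A}=2$, there is no patching to do --- the trace is all of~$A$, and you can invoke the equivalence of conditions~\eqref{item:TCT-types} and~\eqref{item:CD-variety} in Theorem~\ref{thm:char-Boolean-eqn-additive} to conclude that a two-element algebra of type~$\tp{3}$ or~$\tp{4}$ generates a congruence distributive variety. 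Without this cardinality reduction, the lifting step you outline (passing from a polynomial majority on a trace to a global term near-unanimity operation) is not justified, and in particular the polynomial-versus-term distinction would still need to be resolved. As it stands your proof of the reverse implication does not close.
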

\begin{proof}
If~$\ab{A}$ is of type~$\tp{3}$ or~$\tp{4}$,
then~\cite[Lemma~4.29]{HobbMcK} yields that $\crd{A}=2$
and the result follows from the equivalence
of~\eqref{item:TCT-types} and~\eqref{item:CD-variety}
in Theorem~\ref{thm:char-Boolean-eqn-additive}.
If~$\ab{A}$ is of type~$\tp{1}$, $\tp{2}$, or~$\tp{5}$,
then~\cite[Theorem~8.6]{HobbMcK} yields that~$\ab{A}$ does
not generate a congruence distributive variety.
\end{proof}
\begin{lemma}\label{lemma:the_TCT_type_of_E_minimal_algebras}
Let~$\ab{A}$ be a (finite) E-minimal algebra. Then $\Clo\ab{A}$ is
equationally additive if and only if~$\ab{A}$ has type~$\tp{3}$
or~$\tp{4}$.
\end{lemma}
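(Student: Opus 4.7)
My strategy is to split the biconditional into its two implications and, in each, reason separately on the common TCT-type $i\in\{\tp{1},\tp{2},\tp{3},\tp{4},\tp{5}\}$ that all prime quotients of~$\ab{A}$ share by \cite[Theorem~4.32]{HobbMcK}.

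For the direction ``$\Leftarrow$'' suppose $\ab{A}$ has type $i\in\{\tp{3},\tp{4}\}$. The reasoning inside the proof of Lemma~\ref{lemma:E_minimal_cd}, via \cite[Lemma~4.29]{HobbMcK}, already yields $\crd{A}=2$, while Lemma~\ref{lemma:E_minimal_cd} itself shows that $\ab{A}$ generates a congruence distributive variety. The equivalence of statements~\eqref{item:Boolean-clone-eqn-add} and~\eqref{item:CD-variety} of Theorem~\ref{thm:char-Boolean-eqn-additive} then delivers the equational additivity of~$\Clo\ab{A}$.

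For the direction ``$\Rightarrow$'' I assume $\Clo\ab{A}$ to be equationally additive and rule out $i\in\{\tp{1},\tp{2},\tp{5}\}$. Proposition~\ref{prop:nec_condi_fin_sub_irre}, together with finiteness and $\crd{A}\geq 2$ (which holds by the very definition of E-minimal), ensures that $\ab{A}$ is subdirectly irreducible, so there is a monolith~$\mu$. The case $i=\tp{1}$ is disposed of at once by Lemma~\ref{lemma:Eminima_of_type_1}. For $i=\tp{5}$ I would invoke the classification of E-minimal algebras of type~$\tp{5}$ (\cite[Lemma~4.29]{HobbMcK}, used in the same spirit as in the proof of Lemma~\ref{lemma:E_minimal_cd}, or the corresponding structural result in \cite[Chapter~13]{HobbMcK}) to conclude that $\crd{A}=2$; then the equivalence of~\eqref{item:Boolean-clone-eqn-add} and~\eqref{item:forbidden-TCT-types} in Theorem~\ref{thm:char-Boolean-eqn-additive} yields the contradiction.

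The main effort lies in the case $i=\tp{2}$. Here my plan is to rely on the structure theorem for E-minimal algebras of type~$\tp{2}$ (cf.~\cite[Chapter~13]{HobbMcK}) to see that $\ab{A}$ is polynomially equivalent to a module over some ring; in particular $\POL\ab{A}$ contains a Mal'cev operation, whence $\ab{A}$ has a weak difference polynomial. Since the monolith~$\mu$ has type~$\tp{2}$, by \cite[Theorem~5.7]{HobbMcK} it is Abelian, so $[\mu,\mu]=\bottom{A}$. On the other hand, Corollary~\ref{cor:equational_add_and_inclusion} lifts the equational additivity from $\Clo\ab{A}$ up to $\POL\ab{A}$, and then Proposition~\ref{prop:nec_condi_weak_diff_implies_non_Abelian} forces $[\mu,\mu]>\bottom{A}$, producing the desired contradiction. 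The most delicate step will be extracting a Mal'cev (or at least weak difference) polynomial of~$\ab{A}$ itself from the structure theorem---rather than merely one defined on a proper minimal subset---which is precisely where E-minimality must do its work.
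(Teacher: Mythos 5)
Your overall strategy matches the paper's proof essentially step for step: for types $\tp{3}$, $\tp{4}$, $\tp{5}$ reduce to $\crd{A}=2$ via \cite[Lemma~4.29]{HobbMcK} and invoke the Boolean characterization (Theorem~\ref{thm:char-Boolean-eqn-additive}); dispatch type $\tp{1}$ with Lemma~\ref{lemma:Eminima_of_type_1}; and in type $\tp{2}$ obtain a Ma\v{l}cev polynomial and contradict the Abelianity of the monolith. The paper does exactly this, citing \cite[Theorem~13.9]{HobbMcK} for the Ma\v{l}cev operation and then applying Corollary~\ref{cor:nec_condi_malcev_polynomial_and_finite_yields_sub_irreducible_and_type_three} rather than Proposition~\ref{prop:nec_condi_weak_diff_implies_non_Abelian}, but these are essentially interchangeable here.

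There is, however, a genuine error in your justification for the type~$\tp{2}$ case. You assert that the structure theory of E-minimal algebras of type~$\tp{2}$ shows~$\ab{A}$ is ``polynomially equivalent to a module over some ring.'' This is false: any module is Abelian as an algebra, whereas E-minimal algebras of type~$\tp{2}$ need not be. The quaternion group~$Q_8$ is a finite $2$-group, hence E-minimal of type~$\tp{2}$, but it is non-commutative and therefore not Abelian, so it is not polynomially equivalent to any module. What the structure theory (\cite[Theorem~13.9]{HobbMcK}) actually gives is that a finite E-minimal algebra of type~$\tp{2}$ has a Ma\v{l}cev \emph{term} operation, and that is all you need to conclude the existence of a weak difference polynomial. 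So the overall argument survives, but the route you propose to the Ma\v{l}cev operation does not; you should cite the Ma\v{l}cev conclusion of \cite[Theorem~13.9]{HobbMcK} directly rather than deducing it from a (false) polynomial equivalence to a module. You rightly identified this as the delicate step but offered an incorrect resolution for it.
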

\begin{proof}
$\ab{A}$ being E-minimal implies $\crd{A}\geq 2$.
If~$\ab{A}$ has type~$\tp{3}$, $\tp{4}$ or~$\tp{5}$,
then~\cite[Lemma~4.29]{HobbMcK} yields that $\crd{A}=2$
and the equivalence follows from
Corollary~\ref{cor:boolean_clones_eq_add_TCT_type_3_4}.
\par
The opposite case is that~$\ab{A}$ is a finite E-minimal algebra of
type~$\tp{1}$ or~$\tp{2}$. This contradicts~$\ab{A}$ having type~$\tp{3}$
or~$\tp{4}$, hence, to fulfil the stated equivalence, we have to prove
that $\Clo\ab{A}$ fails to be equationally additive.
Since $2\leq\crd{A}<\aleph_{0}$,
Proposition~\ref{prop:nec_condi_fin_sub_irre}
implies that~$\ab{A}$ is subdirectly irreducible. If~$\ab{A}$ has
type~$\tp{1}$, then Lemma~\ref{lemma:Eminima_of_type_1} directly states
that $\Clo\ab{A}$ is not equationally additive. Therefore, the case
that is still to be discussed is that of a (finite non-trivial)
subdirectly irreducible E-minimal algebra~$\ab{A}$ of type~$\tp{2}$.
Let~$\mu$ be its monolith.
Now~\cite[Theorem~13.9]{HobbMcK} implies that~$\ab{A}$ is Ma\v{l}cev,
and by~\cite[Theorem~4.32(2)]{HobbMcK} all its prime quotients have
type~$\tp{2}$. In particular, we have $\type{\bottom{A}}{\mu}=\tp{2}$,
and hence~\cite[Theorem~5.7(3)]{HobbMcK}
yields $[\mu,\mu]=\bottom{A}$, i.e., that~$\mu$ is Abelian.
Thus, by Corollary~\ref{cor:nec_condi_malcev_polynomial_and_finite_yields_sub_irreducible_and_type_three},
$\POL\ab{A}$ cannot be equationally additive; therefore, by
Corollary~\ref{cor:equational_add_and_inclusion}, $\Clo\ab{A}$ cannot
be either.
\end{proof}
\begin{theorem}\label{teor:the_TCT_type_of_E_minimal_algebras}
For a (finite) E-minimal algebra~$\ab{A}$ the following statements are
equivalent:
\begin{enumerate}[\upshape(a)]
\item\label{item_eq_additivity:teorthe_TCT_type_of_E_minimal_algebras}
      $\Clo\ab{A}$ is equationally additive;
\item\label{item_type:teorthe_TCT_type_of_E_minimal_algebras}
      $\ab{A}$ is of type~$\tp{3}$ or~$\tp{4}$;
\item\label{item_variety:teorthe_TCT_type_of_E_minimal_algebras}
      $\ab{A}$ generates a congruence distributive variety.
\end{enumerate}
\end{theorem}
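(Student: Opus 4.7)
The proof is essentially a combination of the two preceding lemmas, which together already cover all the needed equivalences; no new ingredient is required. The plan is to cite Lemma~\ref{lemma:the_TCT_type_of_E_minimal_algebras} for the equivalence of~\eqref{item_eq_additivity:teorthe_TCT_type_of_E_minimal_algebras} and~\eqref{item_type:teorthe_TCT_type_of_E_minimal_algebras}, and Lemma~\ref{lemma:E_minimal_cd} for the equivalence of~\eqref{item_type:teorthe_TCT_type_of_E_minimal_algebras} and~\eqref{item_variety:teorthe_TCT_type_of_E_minimal_algebras}. Transitivity of logical equivalence then closes the cycle.

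In more detail, I would argue as follows. First, the implication \eqref{item_eq_additivity:teorthe_TCT_type_of_E_minimal_algebras}$\Rightarrow$\eqref{item_type:teorthe_TCT_type_of_E_minimal_algebras} follows from Lemma~\ref{lemma:the_TCT_type_of_E_minimal_algebras}, which says that for an E-minimal algebra~$\ab{A}$ the clone $\Clo\ab{A}$ is equationally additive if and only if~$\ab{A}$ has type~$\tp{3}$ or~$\tp{4}$; this already delivers the converse as well. Second, the equivalence \eqref{item_type:teorthe_TCT_type_of_E_minimal_algebras}$\Leftrightarrow$\eqref{item_variety:teorthe_TCT_type_of_E_minimal_algebras} is supplied verbatim by Lemma~\ref{lemma:E_minimal_cd}.

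There is effectively no obstacle at this stage, since all the substantive work has been done previously: the analysis of type~$\tp{1}$ in Lemma~\ref{lemma:Eminima_of_type_1}, the Ma\v{l}cev/Abelian monolith argument handling type~$\tp{2}$ in the proof of Lemma~\ref{lemma:the_TCT_type_of_E_minimal_algebras} (via Corollaries~\ref{cor:nec_condi_malcev_polynomial_and_finite_yields_sub_irreducible_and_type_three} and~\ref{cor:equational_add_and_inclusion}), the reduction of types~$\tp{3}$, $\tp{4}$, $\tp{5}$ to the two-element case through~\cite[Lemma~4.29]{HobbMcK}, and finally the Boolean characterization in Theorem~\ref{thm:char-Boolean-eqn-additive}. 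The only care required is to remark, in passing, that being E-minimal forces $\crd{A}\geq 2$, so that Corollary~\ref{cor:boolean_clones_eq_add_TCT_type_3_4} and Proposition~\ref{prop:nec_condi_fin_sub_irre} apply uniformly. The proof can thus be written in two or three lines.
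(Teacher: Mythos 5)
Your proposal is correct and follows essentially the same approach as the paper's proof, namely combining the two immediately preceding lemmas. Interestingly, your citations are actually the right way around: Lemma~\ref{lemma:the_TCT_type_of_E_minimal_algebras} gives the equivalence of~\eqref{item_eq_additivity:teorthe_TCT_type_of_E_minimal_algebras} and~\eqref{item_type:teorthe_TCT_type_of_E_minimal_algebras}, while Lemma~\ref{lemma:E_minimal_cd} gives the equivalence of~\eqref{item_type:teorthe_TCT_type_of_E_minimal_algebras} and~\eqref{item_variety:teorthe_TCT_type_of_E_minimal_algebras}; the paper's printed proof appears to have swapped these two references (evidently a typo), and your version corrects it.
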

\begin{proof}
The equivalence of~\eqref{item_eq_additivity:teorthe_TCT_type_of_E_minimal_algebras} and~\eqref{item_type:teorthe_TCT_type_of_E_minimal_algebras} follows from Lemma~\ref{lemma:E_minimal_cd}.
The equivalence of~\eqref{item_type:teorthe_TCT_type_of_E_minimal_algebras}
and~\eqref{item_variety:teorthe_TCT_type_of_E_minimal_algebras}
follows from Lemma~\ref{lemma:the_TCT_type_of_E_minimal_algebras}.
\end{proof}

\section{Characterization of the equationally additive clones of self-dual operations}
Let $A=\{0,1,2\}$ and let the permutation $\zeta_3=(0\,1\,2)$ be the cyclic shift of
the three elements of~$A$.
An operation $f\colon A^n\to A$ with $n\in\N$ is called \emph{self-dual}
if $f\in\set{\zeta_3}^{*}$, that is, if it commutes with~$\zeta_3$,
in other words, if~$\zeta_3$ is an automorphism of the algebra $\algop{A}{f}$.
The ideal of the lattice of clones on the three-element set~$A$ generated
by the centralizer clone~$\set{\zeta_3}^{*}$ is fully described in
\cite[\figurename~2, p.~260]{Zhu15}.
In the present section we will stay with the notation introduced
in~\cite[Section~1]{Zhu15}, and we will describe all equationally
additive clones of self-dual operations on~$A$.

Let $\fipii\colon A^3\to A$ be defined as follows:
For each $\vec{x}=(x_1,x_2,x_3)\in A^3$ let
\[
\fipii(\vec{x})=
\begin{cases}
x_2  & \text{ if } \vec{x}\in\set{(0,1,1),(1,2,2),(2,0,0)},\\
x_1 &\text{ otherwise.}
\end{cases}
\]
With a quick glance at its operation table
(cf.\ also~\cite{AicBehRos23DatasetSystems4EqnAdd}), one verifies that this
operation coincides with the function introduced under the same name
in~\cite[p.~265]{Zhu15}. According to~\cite[Theorem~8, p.~266]{Zhu15},
the operation~$\fipii$ generates the clone~$\aipii$, defined on
page~261 of~\cite{Zhu15}. The dual~$\Aipii$ of this clone with respect
to the transposition $\sigma\colon A\to A$ switching~$0$ and~$1$
(cf.~\cite[pp.~255, 259, 261]{Zhu15}) is given by applying this switch
to every tuple of every relation defining $\aipii$ as a polymorphism
clone. It follows from this that~$\Aipii$ arises as an isomorphic copy
of~$\aipii$ by conjugating every operation in~$\aipii$ using the
transposition~$\sigma$. As a consequence~$\Aipii$ is generated
by $(\fipii)^{*}\colon A^3\to A$, given for all
$\vec{x}=(x_1,x_2,x_3)\in A^3$ by
\begin{align*}
(\fipii)^{*}(\vec{x})
&=\sigma(\fipii(\sigma^{-1}(x_1),\sigma^{-1}(x_2),\sigma^{-1}(x_3)))\\
&=
\begin{cases}
x_2  & \text{ if } \vec{x}\in\set{(1,0,0),(0,2,2),(2,1,1)},\\
x_1 &\text{ otherwise.}
\end{cases}
\end{align*}
\par

For use in the proof of Theorem~\ref{teor:self-dual_operations_char},
we observe that both~$\fipii$ and~$(\fipii)^{*}$ are idempotent, that
is, they equal the identity operation~$\id_A$ when all three arguments
are identified.

\begin{lemma}\label{lemma:aipii-Aipii-eqn-add}
All clones on $A=\set{0,1,2}$ containing~$\aipii$ or~$\Aipii$
from~\cite[p.~261]{Zhu15} are equationally additive.
\end{lemma}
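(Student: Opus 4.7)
The approach is to invoke Theorem~\ref{teor:eq_additive_iff_delta_four_algebraic} and exhibit $\deltarelation[A]$ as an algebraic set of $\aipii$ through an explicit system of equations built from~$\fipii$. By Corollary~\ref{cor:equational_add_and_inclusion} it suffices to show that $\aipii$ and $\Aipii$ themselves are equationally additive. Since $\Aipii$ arises from~$\aipii$ by conjugating every operation with the transposition~$\sigma$, and since $\deltarelation[A]$ is invariant under every permutation of~$A$ (in particular under~$\sigma$), the case of~$\Aipii$ will follow from that of~$\aipii$ by replacing $\fipii$ throughout with $(\fipii)^{*}$ in the argument below; accordingly I focus on~$\aipii$.

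Concretely, I would claim that $\deltarelation[A]$ equals the common solution set in~$A^4$ of the four equations
\begin{align*}
\fipii(x_3,x_4,x_1)&=\fipii(x_3,x_4,x_2),&
\fipii(x_4,x_3,x_1)&=\fipii(x_4,x_3,x_2),\\
\fipii(x_1,x_2,x_3)&=\fipii(x_1,x_2,x_4),&
\fipii(x_2,x_1,x_3)&=\fipii(x_2,x_1,x_4).
\end{align*}
The property of~$\fipii$ driving the analysis is that $\fipii(a,b,c)=a$ whenever $(a,b,c)$ lies outside the three-element set $S=\set{(0,1,1),(1,2,2),(2,0,0)}$, while $\fipii(a,b,c)=b=c$ on~$S$. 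Each of the four equations compares two $\fipii$-triples with identical first two arguments, so their two sides can differ only when exactly one of these triples lies in~$S$; on the complement of $\deltarelation[A]$ the condition $x_1\ne x_2$ (resp.\ $x_3\ne x_4$) guarantees that both triples cannot belong to~$S$ simultaneously.

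Verifying that every tuple in $\deltarelation[A]$ satisfies the four equations is a short case split: if $x_1=x_2$, the first and second equations compare literally identical triples, while in the third and fourth the first two $\fipii$-arguments coincide, placing the triples outside~$S$ and forcing both sides equal to~$x_1$; the case $x_3=x_4$ is mirror-symmetric. The main obstacle will be the converse, namely showing that every $(x_1,x_2,x_3,x_4)\in A^4\setminus\deltarelation[A]$ falsifies at least one equation. Using the $\zeta_3$-invariance of~$\aipii$ and of~$\deltarelation[A]$, one may assume $x_1=0$, leaving twelve complement tuples to handle. A case analysis over the four sign patterns arising from $x_2-x_1,x_4-x_3\in\set{+1,-1}$ modulo~$3$ then matches each tuple with a failing equation: the third equation fails precisely when $x_2=x_1+1$ and exactly one of $x_3,x_4$ equals~$x_2$, the fourth equation plays the analogous role when $x_2=x_1-1$, and the first two equations handle correspondingly the pair $(x_3,x_4)$ according to whether $x_4=x_3+1$ or $x_3=x_4+1$; the residual tuples (where the $(x_1,x_2)$-based equations hold vacuously) are the ones in which the relevant $S$-pattern appears on the $(x_3,x_4)$-side, and conversely. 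An appeal to Lemma~\ref{lemma:equivalent_definition_algebraic_set} then yields $\deltarelation[A]\in\Alg\ari{4}\aipii$, completing the argument.
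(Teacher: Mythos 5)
Your proposal is correct and takes essentially the same route as the paper: you exhibit the same system of four $\fipii$-equations whose solution set is~$\deltarelation[A]$, and then invoke Theorem~\ref{teor:eq_additive_iff_delta_four_algebraic}, Corollary~\ref{cor:equational_add_and_inclusion}, and the fact that~$\fipii$ generates~$\aipii$ (respectively $(\fipii)^{*}$ generates~$\Aipii$). The only divergence is cosmetic: the paper verifies the identity $S=\deltarelation[A]$ by computer and treats~$\Aipii$ by directly substituting~$(\fipii)^{*}$ for~$\fipii$, whereas you give a manual case analysis (normalising $x_1=0$ via $\zeta_3$-invariance) and deduce the~$\Aipii$ case by conjugation with the transposition~$\sigma$; both variants are sound.
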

\begin{proof}
Let~$f\in\set{\fipii,(\fipii)^{*}}$.
Moreover, let $S\subseteq A^4$ be the solution set of the following
system of equations
\[
\begin{cases}
&f(x_1,x_2,x_3)\approx f(x_1, x_2,x_4)\\
&f(x_2,x_1,x_3)\approx f(x_2, x_1,x_4)\\
&f(x_3,x_4,x_1)\approx f(x_3, x_4,x_2)\\
&f(x_4,x_3,x_1)\approx f(x_4, x_3,x_2).
\end{cases}
\]
Using a computer (cf.~\cite{AicBehRos23DatasetSystems4EqnAdd}),
one readily verifies that $S=\deltarelation$, whence~$\deltarelation$
is algebraic over any clone containing~$f$.
Thus, Theorem~\ref{teor:eq_additive_iff_delta_four_algebraic} yields that
every clone containing~$f$ is equationally additive.
As, by~\cite[Theorem~8]{Zhu15} (proved as~\cite[Theorem~30,
p.~304]{Zhu15}), $\fipii$ generates~$\aipii$, and
hence~$(\fipii)^{*}$ generates $\Aipii$, the statement of the lemma follows.
\end{proof}

\begin{corollary}
\label{cor:continuously_many_self_dual_eq_additive_clones}
On a set~$A$ with $\crd{A}=3$ there are exactly~$2^{\aleph_0}$
distinct equationally additive clones of self-dual operations.
\end{corollary}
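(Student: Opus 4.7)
The plan is to establish matching upper and lower bounds of $2^{\aleph_0}$ for the number of equationally additive clones of self-dual operations on the three-element set~$A$.

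For the upper bound, I would observe that every clone on the finite set~$A$ is a subset of the countably infinite set $\bigcup_{n\in\N}A^{A^n}$ of finitary operations on~$A$. Consequently, the collection of all clones on~$A$ has cardinality at most~$2^{\aleph_0}$, which a fortiori bounds the number of equationally additive clones of self-dual operations from above.

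For the lower bound, I would combine Lemma~\ref{lemma:aipii-Aipii-eqn-add} with Zhuk's explicit description of the ideal of self-dual clones in the clone lattice on~$A$, as given in~\cite{Zhu15}. Lemma~\ref{lemma:aipii-Aipii-eqn-add} implies that every clone in the principal filter generated by~$\aipii$ (or, dually, by~$\Aipii$) in the lattice of self-dual clones on~$A$ is equationally additive; all such clones of course consist of self-dual operations. It therefore suffices to exhibit a continuum-sized family of pairwise distinct clones above~$\aipii$, which can be read off the detailed lattice picture in~\cite[\figurename~2, p.~260]{Zhu15}. Concretely, I would locate in that diagram an $\aleph_0$-indexed ascending chain of self-dual clones accumulating at~$\aipii$ together with the continuum of clones sitting above~$\aipii$, obtained by taking, for each $I\subseteq\N$, the clone generated by~$\aipii$ together with a suitable $I$-indexed subfamily of distinguishing self-dual operations; pairwise distinctness is then witnessed by invariant relations of increasing arity.

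The main obstacle, and the only nontrivial part of the argument, is pinpointing such a continuum-sized family above~$\aipii$ in~\cite{Zhu15}. The combinatorial construction itself is in the spirit of classical Janov--Muchnik-style arguments and is already implicit in Zhuk's classification, so this step essentially reduces to a careful citation. Once both bounds are in place, they combine to the claimed equality $2^{\aleph_0}$.
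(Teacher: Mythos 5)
Your proposal takes essentially the same route as the paper: upper bound by the observation that there are at most $2^{\aleph_0}$ subsets of the countably many finitary operations on a finite set, and lower bound by combining Lemma~\ref{lemma:aipii-Aipii-eqn-add} with Zhuk's classification of self-dual clones above~$\aipii$. The one citation you were searching for is~\cite[Theorem~16, p.~269]{Zhu15} (proved there as Theorem~38), which states directly that there are exactly~$2^{\aleph_0}$ self-dual clones containing~$\aipii$, so no hand-built Janov--Muchnik-style family is needed.
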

\begin{proof}
Combining the definition of~$\aipii$ on p.~261 of~\cite{Zhu15}
with~\cite[Theorem~16, p.~269]{Zhu15} (proved as Theorem~38, p.~313),
we infer that there are exactly~$2^{\aleph_0}$ distinct clones of
self-dual operations on~$\{0,1,2\}$ that contain~$\aipii$. Therefore,
the result follows from Lemma~\ref{lemma:aipii-Aipii-eqn-add} and
the fact that there are only countably many finitary operations on a
finite set, thus no more than~$2^{\aleph_0}$ subsets (clones) on~$A$.
\end{proof}

Following~\cite{Pin17a}, we say that clones~$\funset{C}$
and~$\funset{D}$ on the same set~$A$ are \emph{algebraically
equivalent}, denoted  by $\funset{C}\algeq\funset{D}$, if
$\Alg\funset{C}=\Alg\funset{D}$.
It was shown in~\cite{Yab58} that on the three-element set there are~18
maximal clones (cf.~\cite[\tablename~4, p.~111]{PosKal79}).
Following~\cite[Definition~4.3.12]{PosKal79}, we define~$\cloneL$
as the clone of polymorphisms of
$\{(a,b,c,d)\in \{0,1,2\}^4\mid a+b=c+d\mod 3\}$.
\begin{corollary}
Let $A=\{0,1,2\}$, and let~$\funset{C}$ be a maximal clone
on~$A$ that is not the clone $\Pol\{{\preceq}\}$ of monotone operations
with respect to some bounded (linear) order~$\preceq$ on~$A$.
Then the number of algebraically inequivalent subclones of\/~$\funset{C}$ is
\begin{enumerate}[\upshape(a)]
\item finite, if\/ $\funset{C}=\cloneL$, the clone of (affine) linear operations;\label{item:cor_number_alg_in_clo_linear}
\item at most countable, if\/ $\funset{C}=\{\zeta_3\}^{*}$;\label{item:cor_number_alg_in_clo_self_dual}
\item continuum, otherwise. \label{item:cor_number_alg_in_clo_JM}
\end{enumerate}
\end{corollary}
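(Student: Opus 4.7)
The plan splits according to the three claimed cases, each requiring rather different techniques.

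For~(\ref{item:cor_number_alg_in_clo_linear}), the clone~$\cloneL$ is term-equivalent to the clone of affine operations over~$\Z_3$ expanded with all constants. Its subclone lattice is known to be finite by classical results on the subclones of the affine clone over a finite prime field on the $p$-element set (see, e.g., Szendrei's monograph on clones, or Post-type descriptions specialised to $p=3$). Consequently only finitely many distinct algebraic geometries $\Alg\funset{D}$ arise as $\funset{D}$ ranges over subclones of $\cloneL$.

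For~(\ref{item:cor_number_alg_in_clo_self_dual}), my plan is a two-fold case split on whether a subclone of~$\{\zeta_3\}^{*}$ contains the threshold clone~$\aipii$ or its dual~$\Aipii$ from~\cite[p.~261]{Zhu15}. On the one hand, Zhuk's classification of self-dual clones (cf.\ \cite[\figurename~2]{Zhu15}) implies that the subclones of $\{\zeta_3\}^{*}$ that contain neither $\aipii$ nor~$\Aipii$ form a countable set. On the other hand, every subclone that contains~$\aipii$ or~$\Aipii$ is equationally additive by Lemma~\ref{lemma:aipii-Aipii-eqn-add}, and by~\cite[Theorem~3]{Pin17a} the equationally additive clones on a finite set yield only finitely many algebraic equivalence classes. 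Summing the two bounds gives at most countably many algebraic equivalence classes among subclones of $\{\zeta_3\}^{*}$.

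For~(\ref{item:cor_number_alg_in_clo_JM}), the upper bound $2^{\aleph_0}$ is automatic, since there are only countably many finitary operations on~$A$ and hence at most $2^{\aleph_0}$ clones. The task is therefore, for each of the remaining $13$ maximal clones~$\funset{C}$, to exhibit $2^{\aleph_0}$ pairwise algebraically inequivalent subclones. My plan is a Rosenberg-type case analysis: one treats separately the preservers of unary relations $\Pol\{\{a\}\}$, the preservers of $2$-element subsets, the centralizers of involutions, and the preservers of central or regular relations. In each case one produces a Janov--Mu\v{c}nik-style continuum family $(\funset{C}_S)_{S\subs\N}$ of subclones of $\funset{C}$, using the classical constructions of continuously many clones below such a maximal clone, and then verifies, for $S\neq T$, that $\Alg\funset{C}_S\neq \Alg\funset{C}_T$ by exhibiting a witnessing relation of small arity. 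The main obstacle is precisely this verification of algebraic inequivalence (distinct clones need not be algebraically inequivalent); a uniform approach would identify inside each maximal clone a common non-equationally-additive `core' clone that can be enriched by essentially $n$-ary operations whose kernel patterns form the distinguishing $2^{\aleph_0}$-indexed invariant, so that the induced algebraic sets of arity~$n$ already differ across the family.
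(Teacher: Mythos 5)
Your treatment of parts \eqref{item:cor_number_alg_in_clo_linear} and \eqref{item:cor_number_alg_in_clo_self_dual} is sound and essentially coincides with the paper's. For \eqref{item:cor_number_alg_in_clo_linear} the paper cites Bagyinszki--Demetrovics, which is exactly the classical finiteness result you invoke for the clone of linear operations over a prime field. For \eqref{item:cor_number_alg_in_clo_self_dual} your dichotomy (above one of $\aipii$, $\Aipii$: equationally additive, hence finitely many geometries by Pinus; not above either: only countably many such self-dual clones by Zhuk) is precisely the argument in the paper.

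Part \eqref{item:cor_number_alg_in_clo_JM} is where there is a genuine gap. You correctly identify that producing $2^{\aleph_0}$ distinct subclones below each remaining maximal clone is the easy half, and that the real work is showing pairwise \emph{algebraic} inequivalence; but you then leave this verification as an acknowledged ``main obstacle'' and only sketch a vague ``uniform approach'' using kernel patterns of essentially $n$-ary operations. That is not a proof. The paper closes this gap by reducing the problem to a single, already-established instance: by \cite[Proposition~5.4]{AicRos23} the particular continuum family of clones from \cite[3.1.4~Hauptsatz(ii)]{PosKal79} is pairwise algebraically inequivalent, and by the conjugation argument of \cite{DemHan83}, isomorphic copies of this family (hence still pairwise algebraically inequivalent, since conjugation by a bijection acts bijectively on algebraic geometries) sit below every maximal clone on $\set{0,1,2}$ except those determined by a bounded order. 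Without that prior result (or an actual construction-and-verification in each of the remaining cases), your Rosenberg-style case analysis does not yet establish algebraic inequivalence, which is the entire point of the statement: distinct clones can very well have identical algebraic geometries.
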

\begin{proof}
In~\cite[Theorem~15]{BagDem82} it is proved that below the clone
of linear operations on any set of prime cardinality there are only
finitely many clones at all (see also~\cite[\figurename~3,
p.~121]{BagDem82} for the case $\crd{A}=3$),
hence~\eqref{item:cor_number_alg_in_clo_linear} follows.
\par
In~\cite{Pin17a} (see also~\cite{AicRos20}) it is shown that on a
finite set there are only finitely many equationally additive
clones up
to algebraic equivalence.
Lemma~\ref{lemma:aipii-Aipii-eqn-add} proves that all clones of
self-dual operations on $\set{0,1,2}$ above~$\aipii$ or~$\Aipii$ are
equationally additive, hence split into finitely many algebraic
equivalence classes.
In~\cite{Zhu15} it is proved that there are exactly~$\aleph_0$ clones of
self-dual operations that are neither above~$\aipii$ nor~$\Aipii$,
see~\cite[\figurename~2, p.~260]{Zhu15} and the
description on page~261 of~\cite{Zhu15}.
Therefore, there are at most countably many algebraically inequivalent
clones of self-dual operations on $\{0,1,2\}$, as claimed
in~\eqref{item:cor_number_alg_in_clo_self_dual}.
\par
In~\cite[Proposition~5.4]{AicRos23} it is proved that the~$2^{\aleph_0}$
clones from~\cite[3.1.4~Haupt\-satz(ii), p.~79]{PosKal79}
are algebraically inequivalent.
In~\cite[\S~1, proof of Theorem~1]{DemHan83}
the authors show how to find a conjugate of the clones from~\cite{JanMuc59}
below each of the remaining maximal clones. Except for the case of
monotone operations with respect to some bounded order, their argument
also works for the family of clones defined
in~\cite[3.1.4~Haupt\-satz(ii), p.~79]{PosKal79}.
Hence~\eqref{item:cor_number_alg_in_clo_JM} follows.
\end{proof}

We now work towards the description of the equationally additive clones
of self-dual operations on $A=\{0,1,2\}$.
We first prove that equational additivity is hereditary with respect to
restriction of the base set.
\begin{lemma}\label{lemma:char-eqn-add-clones-inv-subsets}
A clone~$\funset{C}$ on a set~$X$ is equationally additive if and only
if for every $B\subs X$ that is invariant under~$\funset{C}$ the
restriction
$\funset{C}\restrict{B}\defeq\set{f\restrict{B}\mid f\in\funset{C}}$ is
equationally additive.
\end{lemma}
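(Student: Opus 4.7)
The plan is to apply the characterization from Theorem~\ref{teor:eq_additive_iff_delta_four_algebraic} in both directions, noticing that the relation~$\deltarelation[Y]$ depends naturally on the base set~$Y$, and that restricting the defining equations of~$\deltarelation[X]$ to an invariant subset~$B$ yields defining equations for~$\deltarelation[B]$.

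For the backward implication, I would simply observe that~$X$ itself is always invariant under~$\funset{C}$ and that $\funset{C}\restrict{X}=\funset{C}$, so that the equational additivity of~$\funset{C}\restrict{X}$ assumed by the hypothesis is nothing else than that of~$\funset{C}$.

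For the forward implication, I would take any $\funset{C}$-invariant $B\subs X$ and, using Theorem~\ref{teor:eq_additive_iff_delta_four_algebraic}, reduce the task to proving $\deltarelation[B]\in\Alg\ari{4}(\funset{C}\restrict{B})$. Since~$\funset{C}$ is equationally additive, Theorem~\ref{teor:eq_additive_iff_delta_four_algebraic} provides an index set~$I$ and families $(p_i)_{i\in I},(q_i)_{i\in I}$ in~$\funset{C}\arii{4}$ such that $\deltarelation[X]=\{\vec{x}\in X^4\mid \forall i\in I\colon p_i(\vec{x})=q_i(\vec{x})\}$. Because~$B$ is a subuniverse, each restriction $p_i\restrict{B}$ and $q_i\restrict{B}$ is a well-defined member of~$\funset{C}\restrict{B}\arii{4}$, and from $\deltarelation[B]=\deltarelation[X]\cap B^4$ one immediately obtains $\deltarelation[B]=\{\vec{x}\in B^4\mid \forall i\in I\colon p_i\restrict{B}(\vec{x})=q_i\restrict{B}(\vec{x})\}$, proving that $\deltarelation[B]\in\Alg\ari{4}(\funset{C}\restrict{B})$.

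There is no real obstacle in this proof: both directions are essentially formal manipulations of the equational definition of~$\deltarelation[Y]$, combined with the fact that invariance of~$B$ ensures $\funset{C}\restrict{B}$ is a genuine clone on~$B$. The only point requiring a brief verbal justification is that the equalities $p_i(\vec{x})=q_i(\vec{x})$ used to cut out~$\deltarelation[X]$ from~$X^4$ also cut out~$\deltarelation[B]$ from~$B^4$ once restricted, which is clear from $\deltarelation[B]=\deltarelation[X]\cap B^4$.
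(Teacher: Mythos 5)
Your proof is correct and essentially identical to the paper's: both directions use Theorem~\ref{teor:eq_additive_iff_delta_four_algebraic}, with the easy direction obtained from $\funset{C}\restrict{X}=\funset{C}$ and the substantive direction from restricting the defining equations for $\deltarelation[X]$ to the invariant subset $B$ together with the identity $\deltarelation[B]=\deltarelation[X]\cap B^4$.
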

\begin{proof}
Clearly, if restrictions to invariant subsets are equationally
additive, then $\funset{C}=\funset{C}\restrict{X}$ itself is
equationally additive. For the converse let $B\subs X$ belong to
$\Inv{\funset{C}}$ and let~$\funset{C}$ be equationally additive.
By Theorem~\ref{teor:eq_additive_iff_delta_four_algebraic} there is
an index set~$I$ and there are two families
$(p_i)_{i\in I}$ and $(q_i)_{i\in I}$ of operations
from~$\funset{C}\arii{4}$ such that $\deltarelation[X]=\{\vec{x}\in
X^4\mid \forall i\in I \colon p_i(\vec{x})=q_i(\vec{x})\}$.
Then $\deltarelation[B] = \deltarelation[X]\cap B^4$ implies that
$\deltarelation[B] =\{\vec{b}\in B^4\mid \forall i\in I\colon
p_i\restrict{B}(\vec{b})=q_i\restrict{B}(\vec{b})\}$, and thus
Theorem~\ref{teor:eq_additive_iff_delta_four_algebraic} shows that
$\funset{C}\restrict{B}$ is equationally additive.
\end{proof}

The following lemma will help to show that certain clones of self-dual
operations on $A=\set{0,1,2}$ fail to be equationally additive. It can
easily be verified based on the generators of the clones provided
in~\cite[Theorems~6 and~7, p.~265 et seq.]{Zhu15}.
For the aid of the reader, we define these and a few auxiliary
operations. By $\minority_B$ and $\majority_B$ we denote the unique
ternary minority and majority operation on an at most two-element
set~$B$, respectively (on $B=\set{0,1}$ we have $\minority_B = g$ and
$\majority_B=h$ as defined in
Theorem~\ref{thm:char-Boolean-eqn-additive}).
For all $x,y,z\in A$ we set
\begin{align*}
a(x,y)&\defeq 2x+2y+1 \bmod 3\\
\pluszero(x,y,z)&\defeq \begin{cases}
\minority_{\set{x,y,z}}(x,y,z)  &\text{if }\crd{\set{x,y,z}}\leq 2\\
                   x+1 \bmod 3  &\text{if }\crd{\set{x,y,z}} = 3
\end{cases}\\
m(x,y,z)&\defeq\begin{cases}
\majority_{\set{x,y,z}}(x,y,z)  &\text{if }\crd{\set{x,y,z}}\leq 2\\
                             x  &\text{if }\crd{\set{x,y,z}} = 3
\end{cases}\\
\ps(x,y,z)&\defeq\begin{cases}
x  &\text{if }\crd{\set{x,y,z}}\leq 2\\
y  &\text{if }\crd{\set{x,y,z}} = 3
\end{cases}\\
\rght(x,y)&\defeq 2(x^2+x+xy +y +y^2)\bmod3
&\begin{array}{r|ccc}
\rght(x,y)&0&1&2\\\hline
0&0&1&0\\
1&1&1&2\\
2&0&2&2
\end{array}\\
\lft(x,y)&\defeq x^2+2x+xy +2y +y^2\bmod3
&\begin{array}{r|ccc}
\lft(x,y)&0&1&2\\\hline
0&0&0&2\\
1&0&1&1\\
2&2&1&2
\end{array}
\end{align*}
According to~\cite[Theorem~6, p.~265]{Zhu15}, $a$ generates~$\SL$, $m$
generates~$\TN$, and $\pluszero$ generates~$\Ltwo$;
moreover, by~\cite[Theorem~7, p.~266]{Zhu15}, $\set{\rght,\ps}$
generates~$\aP$. The clone~$\AP$ is the dual of~$\aP$ under the
transposition $\sigma\colon A\to A$ swapping~$0$ and~$1$
(cf.~\cite[pp.~255, 259]{Zhu15}). It is thus generated by the operations
$\ps^{*}=\ps$ and
$(\rght)^{*}
=\sigma\circ\rght\circ(\sigma^{-1}\times\sigma^{-1}\times\sigma^{-1})
=\lft$.
\par
It is evident from the given definition of the generators that all of
these clones except for~$\SL$ preserve every subset of~$\set{0,1,2}$,
i.e., that they are conservative.

\begin{lemma}\label{lemma:existence_of_type_2_and_5_prime_quotients}
Let\/~$\aP,\AP$ be defined as in~\cite[p.~259]{Zhu15},
and
let~$\Ltwo, \SL$ be defined as in~\cite[p.~256]{Zhu15}.
Then the following facts about these clones on~$A=\{0,1,2\}$ hold:
\begin{enumerate}[\upshape(a)]
\item\label{item:aPAPL2-restrict-VEL2}
  The clones~$\aP$, $\AP$ and~$\Ltwo$ have $B=\set{0,1}$
  as an invariant subset and
  $\aP\restrict{B}=\Bcl{V}_2$,
  $\AP\restrict{B}=\Bcl{E}_2$,
  $\Ltwo\restrict{B}=\Bcl{L}_2$
  (cf.\ \figurename~\ref{fig:eqn-add-Boolean-clones} for the notation).
\item\label{item:SL-polyeq-Z3}
  The clone generated by~$\SL$ and all constant operations on~$A$ is the
  clone of polynomial operations of the $\GF(3)$-vector space~$\Z_3$.
\end{enumerate}
\end{lemma}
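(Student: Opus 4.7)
The plan is to verify both parts directly from the generating sets of the four clones stated in \cite[Theorems~6 and~7]{Zhu15}, namely $\aP=\langle\rght,\ps\rangle$, $\AP=\langle\lft,\ps\rangle$, $\Ltwo=\langle\pluszero\rangle$ and $\SL=\langle a\rangle$.

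For part~\eqref{item:aPAPL2-restrict-VEL2}, I would first check that $B=\set{0,1}$ is invariant under each of the three clones by testing the generators. The operations~$\ps$ and~$\pluszero$ are conservative, so they automatically preserve~$B$, while for~$\rght$ and~$\lft$ one reads off the value tables supplied above that their images on $B\times B$ lie in~$B$. I would then compute the restrictions to~$B$: since any three elements of~$B$ satisfy $\crd{\set{x,y,z}}\leq 2$, the restriction $\ps\restrict{B^3}$ coincides with the first projection, and $\pluszero\restrict{B^3}$ coincides with $\minority_{B}$, i.e.\ Boolean addition modulo~$2$. Plugging $x,y\in\set{0,1}$ into the polynomial definitions of~$\rght$ and~$\lft$ shows that $\rght\restrict{B\times B}$ is the Boolean~$\lor$ and $\lft\restrict{B\times B}$ is the Boolean~$\land$. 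Hence $\aP\restrict{B}$, $\AP\restrict{B}$ and $\Ltwo\restrict{B}$ are the Boolean clones generated by $\set{\lor}$, $\set{\land}$ and Boolean~XOR, which by Post's classification are exactly $\Bcl{V}_2$, $\Bcl{E}_2$ and $\Bcl{L}_2$.

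For part~\eqref{item:SL-polyeq-Z3}, I would use $2\equiv -1\pmod{3}$ to rewrite $a(x,y)=2x+2y+1$ as the affine expression $-x-y+1$ over~$\GF(3)$. Since $\POL\ab{Z}_3$ is precisely the clone of affine operations over~$\GF(3)$ and includes all constants, the inclusion of the clone generated by~$\SL$ together with the constants into $\POL\ab{Z}_3$ is immediate. For the reverse inclusion it suffices to produce $x+y$ and $-x$ as compositions of~$a$ with suitable constants: a short calculation gives $a(a(x,y),\ca[0])=2(2x+2y+1)+1\equiv x+y\pmod{3}$ and $a(x,\ca[1])=2x+3\equiv -x\pmod{3}$, which together with $\ca[0]$, $\ca[1]$, $\ca[2]$ allow one to express every affine function over~$\GF(3)$, and thus all of $\POL\ab{Z}_3$.

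There is no substantive obstacle: once the generating sets from~\cite{Zhu15} are in hand, both parts reduce to routine inspection of value tables and elementary arithmetic modulo~$2$ and~$3$. The only point requiring a bit of care is to match the resulting Boolean restrictions with the standard clones $\Bcl{V}_2$, $\Bcl{E}_2$ and $\Bcl{L}_2$ in Post's lattice.
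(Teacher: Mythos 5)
Your proposal is correct and follows essentially the same route as the paper: inspecting the generating operations of $\aP$, $\AP$, $\Ltwo$, $\SL$ from~\cite{Zhu15}, reading off their restrictions to $B=\set{0,1}$, and, for $\SL$, recovering addition modulo~$3$ by composing~$a$ with constants. One small wording caveat: when you say the restriction of~$\Ltwo$ is the clone generated by ``Boolean XOR'', be sure this refers to the \emph{ternary} sum $x+y+z\bmod 2$ (the minority operation~$g$), since binary XOR $x+y\bmod 2$ does not preserve~$\set{1}$ and generates $\Bcl{L}_0$ rather than $\Bcl{L}_2$; from your identification of $\pluszero\restrict{B}$ with $\minority_B$ you clearly mean the ternary operation, so the argument is sound, but the phrase is potentially misleading.
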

\begin{proof}
\begin{enumerate}[(a)]
\item This follows by a brief inspection of the generating functions
      provided above:
      $\rght\restrict{B} = \lor$,
      $\lft\restrict{B}=\land$, $\ps\restrict{B} = \eni[3]{1}$ and
      $\pluszero\restrict{B}=\minority_B = g$,
      where~$g$ is the Boolean minority operation as given in
      Theorem~\ref{thm:char-Boolean-eqn-additive}.
\item For $x,y\in A$ we have
      ${a(a(x,1),a(0,y))=a(2x+3,2y+1)=x+y\bmod 3}$;
      hence~$\SL$ and the clone generated by addition modulo~$3$
      have the same constantive expansion (the same polynomial operations).
      Therefore, $\algop{A}{\SL}$ is polynomially equivalent to the
      $\GF(3)$-vector space~$\Z_3$.
      \qedhere
\end{enumerate}
\end{proof}

We are now ready to prove that the characterization of equational
additivity found to be true in
Theorem~\ref{thm:char-Boolean-eqn-additive}\eqref{item:CD-variety}
for Boolean clones persists in the interval of clones of self-dual
operations on~$\{0,1,2\}$.

\begin{theorem}\label{teor:self-dual_operations_char}
For a clone $\funset{C}\subseteq \set{\zeta_3}^{*}$ on $A=\set{0,1,2}$
the following statements are equivalent.
\begin{enumerate}[\upshape(a)]
\item $\funset{C}$ is equationally
additive;\label{eq:teor_self_dua_eq_add}
\item\label{eq:teor_self_dual_filter_gen}
$\funset{C}$ contains one of the clones~$\aipii$, $\Aipii$ or\/~$\TN$
(cf.~\cite[\figurename~2, p.~260]{Zhu15});
\item\label{eq:teor_self_dual_variety_cd}
  $\algop{A}{\funset{C}}$ generates a congruence distributive variety.
\end{enumerate}
\end{theorem}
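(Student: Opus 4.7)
My strategy is to establish (a)~$\Leftrightarrow$~(b) and then (b)~$\Leftrightarrow$~(c).

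\textbf{(b)~$\Rightarrow$~(a):} If $\funset{C}$ contains $\aipii$ or $\Aipii$, this is precisely Lemma~\ref{lemma:aipii-Aipii-eqn-add}. For the case $\funset{C}\supseteq\TN$, I would exhibit an explicit finite quaternary system of equations over the generating majority $m$ of $\TN$ whose solution set equals $\deltarelation$, to be verified by computer in the same style as in the proof of Lemma~\ref{lemma:aipii-Aipii-eqn-add}. Theorem~\ref{teor:eq_additive_iff_delta_four_algebraic} together with Corollary~\ref{cor:equational_add_and_inclusion} then transfers equational additivity to every superclone.

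\textbf{(a)~$\Rightarrow$~(b):} I argue by contraposition. Suppose $\funset{C}\subseteq\set{\zeta_3}^{*}$ contains none of $\aipii,\Aipii,\TN$. A careful inspection of Zhuk's classification of self-dual clones (\cite[\figurename~2, p.~260]{Zhu15}) shows that $\funset{C}$ must be contained in at least one of $\aP,\AP,\Ltwo,\SL$. If $\funset{C}\subseteq\aP$ (respectively $\AP$ or $\Ltwo$), then $\set{0,1}\in\Inv\funset{C}$ by Lemma~\ref{lemma:existence_of_type_2_and_5_prime_quotients}\,(a), and $\funset{C}\restrict{\set{0,1}}$ is a subclone of $\Bcl{V}_2$ (respectively $\Bcl{E}_2$ or $\Bcl{L}_2$); by Theorem~\ref{thm:char-Boolean-eqn-additive} (see also Figure~\ref{fig:eqn-add-Boolean-clones}) none of these Boolean clones is equationally additive, hence (Corollary~\ref{cor:equational_add_and_inclusion}) neither is $\funset{C}\restrict{\set{0,1}}$, and Lemma~\ref{lemma:char-eqn-add-clones-inv-subsets} now forces $\funset{C}$ itself to fail equational additivity. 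If $\funset{C}\subseteq\SL$, then the constantive expansion of $\funset{C}$ is contained in $\POL\ab{Z}_3$ by Lemma~\ref{lemma:existence_of_type_2_and_5_prime_quotients}\,(b); since $\ab{Z}_3$ has a Mal\v{c}ev polynomial and is Abelian, Theorem~\ref{teor:equ_additive_cofinite_non_abelianity} implies that $\POL\ab{Z}_3$ is not equationally additive, and Corollary~\ref{cor:equational_add_and_inclusion} propagates this failure down to $\funset{C}$.

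\textbf{(b)~$\Leftrightarrow$~(c):} For (b)~$\Rightarrow$~(c), any clone above $\TN$ contains a ternary majority operation, so the variety it generates is congruence distributive by J\'onsson's theorem; for clones above $\aipii$ or $\Aipii$, explicit J\'onsson sequences can be assembled from $\fipii$ (respectively $(\fipii)^{*}$) and verified by direct computation. For (c)~$\Rightarrow$~(b), contraposition reduces again to the structural fact above: a $\funset{C}$ missing all of $\aipii,\Aipii,\TN$ embeds into $\aP,\AP,\Ltwo$, or $\SL$, and in each case the variety generated by $\algop{A}{\funset{C}}$ fails to be congruence distributive. For $\aP,\AP,\Ltwo$ this is seen by restricting to the invariant subset $\set{0,1}$ and invoking Theorem~\ref{thm:char-Boolean-eqn-additive}; for $\SL$ it follows from the polynomial equivalence with the Abelian $\GF(3)$-vector space $\ab{Z}_3$ (congruence modular but not distributive).

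The principal obstacle is the structural step underpinning both (a)~$\Rightarrow$~(b) and (c)~$\Rightarrow$~(b): one must verify, against Zhuk's detailed lattice in \cite[\figurename~2]{Zhu15}, that every self-dual clone not lying above any of $\aipii,\Aipii,\TN$ is contained in one of $\aP,\AP,\Ltwo,\SL$. A secondary, more computational hurdle is producing the concrete defining equations over $\TN$ and explicit J\'onsson terms over $\aipii,\Aipii$; these amount to routine but tedious finite checks, readily handled with computer assistance.
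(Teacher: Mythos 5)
Your proposal is correct and follows essentially the same route as the paper: Lemma~\ref{lemma:aipii-Aipinotationii-eqn-add} for clones above~$\aipii$ or~$\Aipii$, a computer-verified system over the majority~$m$ for~$\TN$, the structural reduction (via Zhuk's \figurename~2) to the four maximal clones $\aP$, $\AP$, $\Ltwo$, $\SL$ not containing any of $\aipii$, $\Aipii$, $\TN$, then Lemma~\ref{lemma:existence_of_type_2_and_5_prime_quotients} together with restriction to $\set{0,1}$ for the first three and polynomial equivalence to the Abelian vector space~$\Z_3$ for~$\SL$. The only cosmetic difference is that you apply Lemma~\ref{lemma:char-eqn-add-clones-inv-subsets} directly to~$\funset{C}$, whereas the paper shows non-additivity of the four maximal clones first and then invokes Corollary~\ref{cor:equational_add_and_inclusion}; these are interchangeable.
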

\begin{proof}
Let $\funset{C}\subseteq \set{\zeta_3}^{*}$.
If $\aipii\subseteq \funset{C}$,
or $\Aipii\subseteq \funset{C}$,
then Lemma~\ref{lemma:aipii-Aipii-eqn-add} yields
that~$\funset{C}$ is equationally additive.
Next, we prove that~$\TN$ is equationally additive.
To this end, let $m\colon A^3\to A$ be defined as
in~\cite[p.~264]{Zhu15}, cf.\ above;
it is evident from its definition that~$m$ is a majority operation.
According to~\cite[Theorem~6]{Zhu15}, we have that~$m$ generates~$\TN$.
Moreover, it is easy to show via a computer
(cf.~\cite{AicBehRos23DatasetSystems4EqnAdd}) that the
solution set of the following system of equations is~$\deltarelation$:
\[
\begin{cases}
&m(x_1,x_2,x_3)\approx m(x_1,x_2,x_4)\\
&m(x_2,x_1,x_3)\approx m(x_2, x_1, x_4).
\end{cases}
\]
Therefore, if $\TN\subseteq \funset{C}\subseteq \set{\zeta_3}^{*}$,
then~$\funset{C}$ is equationally additive by
Theorem~\ref{teor:eq_additive_iff_delta_four_algebraic}
and Corollary~\ref{cor:equational_add_and_inclusion}.
Hence~\eqref{eq:teor_self_dual_filter_gen}
implies~\eqref{eq:teor_self_dua_eq_add}.

Next, we prove that~\eqref{eq:teor_self_dua_eq_add}
implies~\eqref{eq:teor_self_dual_filter_gen}.
According to~\cite[\figurename~2]{Zhu15}, $\aP$, $\AP$, $\Ltwo$ and~$\SL$
are those clones of self-dual operations that are maximal with respect
to not containing either of the clones~$\aipii$, $\Aipii$ or\/~$\TN$.
Hence, as a consequence of Corollary~\ref{cor:equational_add_and_inclusion},
it suffices to prove that~$\aP$, $\AP$, $\Ltwo$ and~$\SL$ are not
equationally additive. If~$\SL$ were equationally additive,
then so would be its constantive expansion, which,
by Lemma~\ref{lemma:existence_of_type_2_and_5_prime_quotients}\eqref{item:SL-polyeq-Z3},
coincides with the clone of polynomial functions of the $\GF(3)$-vector
space~$\Z_3$. Since the vector space~$\Z_3$ is simple and has a
Ma\v{l}cev (term) operation,
Corollary~\ref{cor:nec_condi_malcev_polynomial_and_finite_yields_sub_irreducible_and_type_three}
says that equational additivity of its polynomial clone requires the
vector space to be a non-Abelian algebra, which it is not
(cf.~\cite[Exercise~3.2(2)]{HobbMcK}). Therefore,
$\SL$ cannot be equationally additive.
By Lemma~\ref{lemma:existence_of_type_2_and_5_prime_quotients}\eqref{item:aPAPL2-restrict-VEL2}, the
clones~$\aP$, $\AP$ and~$\Ltwo$ have $B\defeq\set{0,1}$ as an invariant
subset and $\aP\restrict{B}=\Bcl{V}_2$, $\AP\restrict{B}=\Bcl{E}_2$ and
$\Ltwo\restrict{B}=\Bcl{L}_2$; each of these Boolean clones fails to be
equationally additive by
Theorem~\ref{thm:char-Boolean-eqn-additive}\eqref{item:maximal-elements-complement}.
Hence, by Lemma~\ref{lemma:char-eqn-add-clones-inv-subsets},
none of~$\aP$, $\AP$ or~$\Ltwo$ can be equationally additive.
This establishes the equivalence of~\eqref{eq:teor_self_dua_eq_add}
and~\eqref{eq:teor_self_dual_filter_gen}.

The fact that~\eqref{eq:teor_self_dual_filter_gen}
implies~\eqref{eq:teor_self_dual_variety_cd} follows
from the fact that the clones~$\aipii$, $\Aipii$ and~$\TN$
have J\'{o}nsson operations, as argued in~\cite{BodVucZhu23}:
Namely, in the proof of~\cite[Proposition~5.3]{BodVucZhu23} it is shown
how one can derive a sequence of five quasi-J\'{o}nsson operations
from $\fipii\in\aipii$; since~$\fipii$ is idempotent, these are actually
J\'{o}nsson operations.
The exact same can be done with $(\fipii)^{*}\in\Aipii$.
As observed above, the generator~$m$ of~$\TN$ is a majority operation
(and thus gives rise to a sequence of three J\'{o}nsson operations).

Finally, we show that~\eqref{eq:teor_self_dual_variety_cd}
implies~\eqref{eq:teor_self_dual_filter_gen}.
To this end it suffices to prove that
for all clones~$\funset{D}$ below one of the clones~$\aP, \AP, \Ltwo$,
or~$\SL$, the algebra $\algop{A}{\funset{D}}$ does not generate a
congruence distributive variety.
Let $\funset{C}\in\set{\aP,\AP,\Ltwo,\SL}$ and
assume that $\algop{A}{\funset{D}}$ would generate a congruence
distributive variety for some clone $\funset{D}\subs\funset{C}$. Then
there would be a sequence of J\'{o}nsson operations in~$\funset{D}$ and
hence in~$\funset{C}$.
If $\funset{C}\in\set{\aP,\AP,\Ltwo}$, then $B\defeq\set{0,1}$ is
invariant for~$\funset{C}$ by
Lemma~\ref{lemma:existence_of_type_2_and_5_prime_quotients}\eqref{item:aPAPL2-restrict-VEL2},
and by restricting the J\'{o}nsson operations to~$B$ we would obtain
J\'{o}nsson operations in~$\funset{C}\restrict{B}$.
Hence $\algop{B}{\funset{C}\restrict{B}}$ would generate a congruence
distributive variety, thus Theorem~\ref{thm:char-Boolean-eqn-additive}
excludes that
$\funset{C}\restrict{B}\subs\Bcl{V},\Bcl{E}$ or~$\Bcl{L}$.
However,
Lemma~\ref{lemma:existence_of_type_2_and_5_prime_quotients}\eqref{item:aPAPL2-restrict-VEL2}
shows that exactly the latter is the case since
$\funset{C}\restrict{B}\in\set{\Bcl{V}_2,\Bcl{E}_2,\Bcl{L}_2}$.
Therefore, the only possible remaining case is that
$\funset{D}\subs\funset{C}=\SL$ and hence $\algop{A}{\funset{\SL}}$
would generate a congruence distributive variety.
But $\algop{A}{\funset{\SL}}$ is
polynomially equivalent to the vector space~$\Z_3$ by
Lemma~\ref{lemma:existence_of_type_2_and_5_prime_quotients}\eqref{item:SL-polyeq-Z3},
and thus $\algop{A}{\funset{\SL}}^2$ is polynomially equivalent
to~$\Z_3^2$.
Hence $\Con\bigl(\algop{A}{\funset{\SL}}^2\bigr)=\Con(\Z_3^2)$, which fails
to be distributive. This contradiction shows that our assumption is
impossible and thus~\eqref{eq:teor_self_dual_filter_gen} follows.
\end{proof}

\section{The number of equationally additive clones on finite sets}\label{sec:on_the_number}
In this section we investigate the cardinality of the order filter of
equationally additive clones on a finite set.
Our first basic observation is that the number of equationally
additive
clones on a set always is a lower bound for the number of equationally
additive clones on any superset.
\begin{lemma}\label{lemma:increasing_A_increases_the_number_of_equationally_additive_clones}
For sets $A\subseteq B$ there are at least as many equationally
additive clones on~$B$ as on~$A$.
\end{lemma}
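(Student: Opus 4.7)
The plan is to exhibit an injection from the collection of equationally additive clones on~$A$ into the collection of equationally additive clones on~$B$. Given an equationally additive clone~$\funset{C}$ on~$A$, I would associate to it the maximal extension of~$\funset{C}$ that stabilises~$A$, namely
\[
\funset{D}\defeq\bigcup_{n\in\N}\set{g\colon B^n\to B\mid g[A^n]\subseteq A \text{ and } g\restrict{A^n}\in\funset{C}}.
\]
The first task is to verify that $\funset{D}$ is a clone on~$B$: the $n$-ary projection on~$B$ restricts to the corresponding projection on~$A$, which belongs to~$\funset{C}$; and for $h\in\funset{D}\arii{n}$ and $h_1,\dotsc,h_n\in\funset{D}\arii{m}$ the composition $h(h_1,\dotsc,h_n)$ stabilises~$A$ and restricts on~$A^m$ to $h\restrict{A^n}\circ(h_1\restrict{A^m},\dotsc,h_n\restrict{A^m})\in\funset{C}$. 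Injectivity of the assignment $\funset{C}\mapsto\funset{D}$ follows at once from $\funset{D}\restrict{A}=\funset{C}$, because every $f\in\funset{C}\arii{n}$ admits the extension $g\in\funset{D}\arii{n}$ that coincides with~$f$ on~$A^n$ and with the first projection elsewhere.

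The central step is to show that $\funset{D}$ is equationally additive. By Theorem~\ref{teor:eq_additive_iff_delta_four_algebraic} combined with Lemma~\ref{lemma:equivalent_definition_algebraic_set}, this reduces to producing, for each $\vec{c}\in B^4\setminus\deltarelation[B]$, two operations $\hat h,\hat k\in\funset{D}\arii{4}$ that agree on $\deltarelation[B]$ but disagree at~$\vec{c}$. If $\vec{c}\in A^4$, then $\vec{c}\in A^4\setminus\deltarelation[A]$, and Lemma~\ref{lemma:equivalent_definition_algebraic_set} applied to $\funset{C}$ yields $h,k\in\funset{C}\arii{4}$ separating~$\vec{c}$ from $\deltarelation[A]$; extending~$h,k$ to $\hat h,\hat k\in\funset{D}\arii{4}$ that coincide on all of $B^4\setminus A^4$ works, since $\deltarelation[B]\cap A^4=\deltarelation[A]$ and $\deltarelation[B]\setminus A^4\subseteq B^4\setminus A^4$. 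If instead $\vec{c}\in B^4\setminus A^4$, then $\vec{c}$ lies in neither $A^4$ nor $\deltarelation[B]$, so I would take $\hat h$ to be the first projection~$\eni[4]{1}$ on~$B$ and let $\hat k$ coincide with $\hat h$ on all of $B^4$ except at~$\vec{c}$, where I set $\hat k(\vec{c})$ to some element of $B\setminus\set{c_1}$ (which exists when $\crd{B}\geq 2$, the only non-trivial case). Then $\hat k\restrict{A^4}=\eni[4]{1}\restrict{A^4}\in\funset{C}$, hence $\hat k\in\funset{D}$, and $\hat h,\hat k$ agree on $\deltarelation[B]$ while differing at~$\vec{c}$.

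I do not expect any serious obstacle: the whole argument rests on the observation that $\funset{D}$ is defined maximally, so that separating tuples inside~$A^4$ can be delegated to~$\funset{C}$, while separating tuples outside~$A^4$ can exploit complete freedom on $B^n\setminus A^n$; all verifications reduce to checking behaviour on~$A^n$ or to trivial consistency. The only step that warrants a moment of care is the consistency of the piecewise definition of~$\hat k$ in the second case, which is granted because $\vec{c}\notin A^4\cup\deltarelation[B]$.
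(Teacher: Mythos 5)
Your proposal is correct and is essentially the paper's argument: it uses the same maximal embedding $\Phi(\funset{C})=\funset{D}$ of clones on~$A$ into clones on~$B$, the same observation that $\funset{D}\restrict{A}=\funset{C}$ gives injectivity, and the same dichotomy between tuples inside and outside $A^4$ when certifying that $\deltarelation[B]\in\Alg\ari{4}\funset{D}$. The only cosmetic difference is that the paper writes down one explicit finite system of equations (using $\eni[4]{1}\oplus u\approx\eni[4]{1}\oplus\cna[4]{b}$ to excise $B^4\setminus(A^4\cup\deltarelation[B])$ in a single stroke), whereas you invoke the pointwise separation criterion of Lemma~\ref{lemma:equivalent_definition_algebraic_set}, which amounts to the same thing.
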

\begin{proof}
On any set the clone of all finitary operations is equationally
additive, therefore the case $A=\emptyset$ is settled.
If $A=B$ the statement is also evident.
Therefore, from now on let us assume that $\emptyset\neq A\subsetneq B$.
Let us choose elements $a\in A$ and $b\in B\setminus A$
and let us denote the set of all clones
on~$A$ and~$B$ by~$\clonelat_A$ and~$\clonelat_B$, respectively.
If, for $n\in\N$, $f\colon A^n\to A$ and $u\colon B^n\setminus
A^n\to B$
are functions, then we denote by $f\oplus u\colon B^n\to B$ the
operation defined from these two functions by the obvious case
distinction. Clearly, any function constructed in this way
preserves~$A$, and conversely, any function $g\colon B^n\to B$
preserving~$A$ can be split up in this form. We employ the
well-known injection $\Phi\colon \clonelat_A\to\clonelat_B$ between
the clone lattices (cf.~\cite[3.3.3~Ein\-bettungs\-satz]{PosKal79}),
which is defined for every $\funset{F}\in\clonelat_A$ as
\[
\Phi(\funset{F})=\bigcup_{n\in\N}\{f\colon B^{n}\to B\mid f[A^n]\subs
A, f\restrict{A^n}\in\funset{F}\}.\]
Letting $\cna{b}\colon B^n\setminus A^n\to B$ be the constant
$n$-ary function with value~$b$, we observe for any $f\colon A^n\to A$
that $f\in\funset{F}$ if and only if
$f\oplus \cna{b}\in\Phi(\funset{F})$.
Hence we see that~$\Phi$ is injective, since for any $n$-ary
function~$f$ separating clones $\funset{F},\funset{G}\in\clonelat_A$
we
also have $f\oplus \cna{b}$ separating~$\Phi(\funset{F})$
and~$\Phi(\funset{G})$.
\par
The proof will be done once we have shown that~$\Phi$ preserves
equational additivity. To this end assume that
$\funset{F}\in\clonelat_A$ is equationally additive, and that,
according to Theorem~\ref{teor:eq_additive_iff_delta_four_algebraic},
there is some index set~$I$ and there are functions
$f_i,g_i\in\funset{F}\arii{4}$ for $i\in I$ such that
$\deltarelation
= \set{\vec{x}\in A^4\mid \forall i\in I\colon
f_i(\vec{x})=g_i(\vec{x})}$.
Define ${u\colon B^4\setminus A^4\to B}$ by $u(\vec{b})=b$ if
$\vec{b}\in\deltarelation[B]$ and by $u(\vec{b})=a$ otherwise.
Let~$\eni[4]{1}$ denote the quaternary projection on~$A$ to the first
coordinate. Then it is not hard to verify that
\begin{multline*}
\deltarelation[B]=\\
\Set{\vec{x}\in B^4|
\eni[4]{1}\oplus u(\vec{x})=\eni[4]{1}\oplus
\cna[4]{b}(\vec{x})\land
\forall i\in I\colon f_i\oplus \cna[4]{b}(\vec{x})=g_i\oplus
\cna[4]{b}(\vec{x})}.
\end{multline*}
Thus, $\deltarelation[B]$ is algebraic over the
clone~$\Phi(\funset{F})$, and hence~$\Phi(\funset{F})$ is equationally
additive by Theorem~\ref{teor:eq_additive_iff_delta_four_algebraic}.
\end{proof}

The theory presented in
Section~\ref{section:equatio_add_boolean_clones}, in particular
Theorem~\ref{thm:char-Boolean-eqn-additive} and
Figure~\ref{fig:eqn-add-Boolean-clones}, shows that the number of
equationally additive clones on any two-element set is countably
infinite. The next step to take is investigating equationally additive
clones on (at least) three-element carrier sets.
In combination with
Lemma~\ref{lemma:increasing_A_increases_the_number_of_equationally_additive_clones},
Corollary~\ref{cor:continuously_many_self_dual_eq_additive_clones}
shows that there are precisely continuum many equationally additive
clones on any finite set with at least three elements.
In the following we focus on the number of clones on
finite sets that are equationally additive \emph{and} contain all
constant (unary) operations. In the subsequent proposition we start
again by first considering three-element carrier sets. After that we
shall exploit a construction by \'{A}goston, Demetrovics and
Hann\'{a}k
from~\cite{AgoDemHan83} to cover the general case.
\begin{proposition}\label{prop:countinously_many_eq_additive_clones_on_the_3_element_set}
On the set $A=\{0,1,2\}$ there are exactly~$2^{\aleph_0}$ distinct
equationally additive constantive clones.
\end{proposition}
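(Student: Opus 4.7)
The upper bound $\leq 2^{\aleph_0}$ is trivial: $A$ is finite, so the set of all finitary operations on~$A$ is countable, whence there are at most~$2^{\aleph_0}$ clones on~$A$ in total.

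For the lower bound, my plan is to combine Corollary~\ref{cor:continuously_many_self_dual_eq_additive_clones} with the constantive expansion. Corollary~\ref{cor:continuously_many_self_dual_eq_additive_clones} furnishes $2^{\aleph_0}$ equationally additive clones of self-dual operations on $A=\set{0,1,2}$. For each such clone~$\funset{C}$ I set $\funset{C}^{+}\defeq\funset{C}\vee\funset{I}$, where~$\funset{I}$ denotes the clone generated by all constant operations on~$A$; then~$\funset{C}^{+}$ is constantive by construction and, by Corollary~\ref{cor:equational_add_and_inclusion}, still equationally additive. It thus remains to show that this construction produces $2^{\aleph_0}$ pairwise distinct clones.

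The key technical input will be a recovery principle: for any self-dual clone $\funset{C}\subseteq\set{\zeta_3}^{*}$ that additionally contains~$\zeta_3$ itself, one has $\funset{C}^{+}\cap\set{\zeta_3}^{*}=\funset{C}$. The inclusion $\supseteq$ is immediate; for the converse, take any self-dual $f\in\funset{C}^{+}$ and write $f(\vec{x})=t(\vec{x},c_1,\dotsc,c_m)$ with $t\in\funset{C}$ and $c_j\in A$. Combining the self-dualities of~$f$ and~$t$ yields the identity $t(\vec{y},c_1,\dotsc,c_m)=t(\vec{y},\zeta_3 c_1,\dotsc,\zeta_3 c_m)$ for all~$\vec{y}$. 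Choosing $k_j\in\set{0,1,2}$ with $\zeta_3^{k_j}(0)=c_j$, the operation
$u(\vec{x},z)\defeq t(\vec{x},\zeta_3^{k_1}(z),\dotsc,\zeta_3^{k_m}(z))$
then lies in~$\funset{C}$ (crucially using $\zeta_3\in\funset{C}$), and iterating the identity shows $u(\vec{x},z)=f(\vec{x})$ for every $\vec{x},z$, whence $f(\vec{x})=u(\vec{x},x_1)\in\funset{C}$. Hence on the subfamily~$\mathcal{F}'$ of equationally additive self-dual clones that contain~$\zeta_3$, the constantive expansion map $\funset{C}\mapsto\funset{C}^{+}$ is injective.

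The main difficulty---and the step I expect to require the most care---is to secure $\crd{\mathcal{F}'}=2^{\aleph_0}$. My plan is to invoke Zhuk's parametrization (cf.~\cite[\figurename~2, p.~260, and Theorem~16, p.~269]{Zhu15}) of the self-dual clones above~$\aipii$, which supplies a family $\set{\funset{C}_\sigma\mid \sigma\in 2^{\N}}$ of $2^{\aleph_0}$ pairwise distinct idempotent self-dual clones. Enlarging each by~$\zeta_3$ gives $\funset{C}_\sigma\vee\set{\zeta_3}\in\mathcal{F}'$, and to ensure that the enlargements remain pairwise distinct I would show that the sub-clone of idempotent operations of $\funset{C}_\sigma\vee\set{\zeta_3}$ is exactly~$\funset{C}_\sigma$. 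For the latter, every term over $\funset{C}_\sigma\cup\set{\zeta_3}$ can be brought, using the self-duality of the operations in~$\funset{C}_\sigma$, into the normalised shape $\zeta_3^{a}\circ g(\zeta_3^{b_1}(x_{i_1}),\dotsc,\zeta_3^{b_k}(x_{i_k}))$ with $g\in\funset{C}_\sigma$, and the requirement that such a normalised term be idempotent forces~$a$ to cancel the cumulative $\zeta_3$-shift produced by~$g$, reducing the operation to a $\funset{C}_\sigma$-term. Combining this preservation of the idempotent sub-clone with the recovery principle from the previous paragraph produces $2^{\aleph_0}$ pairwise distinct constantive equationally additive clones $(\funset{C}_\sigma\vee\set{\zeta_3})^{+}$, establishing the lower bound.
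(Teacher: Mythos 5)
Your route is genuinely different from the paper's. The paper constructs a continuum of constantive equationally additive clones on~$\{0,1,2\}$ directly, in the spirit of \'{A}goston--Demetrovics--Hann\'{a}k: it fixes a ternary $f$ whose presence forces $\deltarelation\in\Alg$, forms $\Pol(\{\rho_i\mid i\in I\})$ for a family of relations $\rho_i$ preserved by $f$ and all constants, and separates these clones by explicit operations $f_n$. You instead want to use Corollary~\ref{cor:continuously_many_self_dual_eq_additive_clones} and show that the assignment $\funset{C}_\sigma\mapsto(\funset{C}_\sigma\vee\set{\zeta_3})^{+}$ is injective. Your recovery principle, that $\funset{C}^{+}\cap\set{\zeta_3}^{*}=\funset{C}$ whenever $\zeta_3\in\funset{C}\subseteq\set{\zeta_3}^{*}$, is correct as stated.

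The genuine gap is in the step that is meant to supply injectivity of $\funset{C}_\sigma\mapsto\funset{C}_\sigma\vee\set{\zeta_3}$, namely the claim that the idempotent part of $\funset{C}_\sigma\vee\set{\zeta_3}$ equals $\funset{C}_\sigma$. Your normal form $\zeta_3^{a}\circ g(\zeta_3^{b_1}(x_{i_1}),\dotsc,\zeta_3^{b_k}(x_{i_k}))$ with $g\in\funset{C}_\sigma$ is fine, and from idempotence you correctly get $\zeta_3^{a}g(\zeta_3^{b_1}x,\dotsc,\zeta_3^{b_k}x)=x$; since the unary self-dual operations on a three-element orbit are exactly the powers of~$\zeta_3$, this forces $a+c\equiv 0\pmod 3$ for some ``cumulative shift'' $c$. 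But this congruence does \emph{not} reduce the operation to a $\funset{C}_\sigma$-term when the $b_j$ are not all equal: absorbing $\zeta_3^{a}$ via the self-duality of~$g$ still leaves an operation of the form $g(\zeta_3^{b_1'}x_{i_1},\dotsc,\zeta_3^{b_k'}x_{i_k})$ with nontrivial inner shifts, and nothing puts this in the idempotent clone $\funset{C}_\sigma$, which contains no nonidentity power of~$\zeta_3$. For a concrete instance, $\fipii(x,\zeta_3^{-1}y,\zeta_3^{-1}z)$ is idempotent and lies in $\aipii\vee\set{\zeta_3}$, yet it is not clear from your argument that it lies in~$\aipii$. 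A second, smaller gap is the unverified assertion that Zhuk's $2^{\aleph_0}$ clones above~$\aipii$ are all idempotent (observe that $\set{\zeta_3}^{*}$ itself is among them and is not idempotent). Together these leave the injectivity step, and hence your lower bound, unproven.
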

\begin{proof}
Let $f\colon A^3 \to A$ be defined as follows. For all
$\vec{x}=(x_1,x_2,x_3)\in A^3$ we set
\[
f(\vec{x})=
\begin{cases}
2 &\text{ if } \vec{x}\in \{(0,2,0),(0,1,1),(1,2,2)\},\\
x_1 & \text{ otherwise}.
\end{cases}
\]
Furthermore, let $S\subseteq A^4$ be the solution set of the following
system of equations
\[
\begin{cases}
&f(x_1,x_2,x_3)\approx f(x_1, x_2,x_4)\\
&f(x_2,x_1,x_3)\approx f(x_2, x_1,x_4)\\
&f(x_3,x_4,x_1)\approx f(x_3, x_4,x_2)\\
&f(x_4,x_3,x_1)\approx f(x_4, x_3,x_2).
\end{cases}
\]
Using a computer one quickly verifies that $\deltarelation=S$,
cf.~\cite{AicBehRos23DatasetSystems4EqnAdd}. Thus,
Theorem~\ref{teor:eq_additive_iff_delta_four_algebraic} yields that
every clone that contains~$f$ is equationally additive.

We now prove that there are~$2^{\aleph_0}$ constantive clones that
contain~$f$.
For each $a\in A$, let $\ca\colon A\to A$ be the unary constant
function with constant value~$a$.
For $k\in\N$ and $i\in \finset{k}$, we set~$\vec{e}_i^k$ to be the
element of $\{0,1\}^k$ with~$1$ in the $i$-th component and~$0$
elsewhere.
Moreover, we define the `forbidden' set
$B_k\defeq\{\vec{r}\in\{0,1\}^k\mid 3\leq \w{\vec{r}}\leq k-1\}$,
where~$\w{\vec{r}}$ denotes the number of occurrences of~$1$
in~$\vec{r}$,
and we set $\rho_k\defeq A^k\setminus(\{\vec{e}_1^k\}\cup B_k)$.
For $n\in\N$, $n\geq 2$ we define $f_n\colon A^n\to A$ as follows.
For all $\vec{x}\in A^n$ we let
\[
f_n(\vec{x})=
\begin{cases}
1 &\text{ if } \w{\vec{x}}=n,
   \text{i.e., }\vec{x}=\vec{1}\defeq(1,\dotsc,1),\\
0 &\text{ if } \vec{x}\in\{\vec{e}_1^n,\dotsc,\vec{e}_n^n\},\\
2 &\text{ otherwise}.
\end{cases}
\]

As a first step, we prove that
\begin{equation}\label{eq:f_and_const_are_poly}
\forall k\geq 2 \colon\quad \{f\}\cup\{\ca\mid a\in A\} \subseteq
\Pol(\{\rho_k\}).
\end{equation}
Since~$\rho_k$ is reflexive, clearly
$\{\ca\mid a\in A\} \subseteq \Pol(\{\rho_k\})$.
Let $\vec{z}_1, \vec{z}_2, \vec{z}_3\in \rho_k$.
Then, the component-wise action of~$f$ on
$(\vec{z}_1, \vec{z}_2, \vec{z}_3)$ yields a tuple~$\vec{z}$ satisfying
for all $i\in\finset{k}$ the condition $\vec{z}(i)=\vec{z}_{1}(i)$ or
$\vec{z}(i)=2$.
If there is $i\in\finset{k}$ with $\vec{z}(i)=2$, then
$\vec{z}\notin \set{\vec{e}_1^k}\cup B_k$, and therefore,
$\vec{z}\in \rho_k$. Otherwise, $\vec{z}=\vec{z}_1\in \rho_k$.

Next, we show that
\begin{equation}\label{eq:f_n-does_not_preserve_rho_n+1}
\forall n\geq 2 \colon\quad
f_n\notin\Pol(\{\rho_{n+1}\}).
\end{equation}
For $i\in\finset{n}$, let~$\vec{z}_i$ be the element of
$\{0,1\}^{n+1}$
with~$1$ in its first and $(i+1)$-st component, and~$0$ elsewhere.
For each $i\in\{1,\dots, n\}$ we have $\w{\vec{z}_i}=2$, thus
$\vec{z}_i\in \rho_{n+1}$.
Moreover, $f_n$ acting component-wise on these tuples
$\vec{z}_1,\dotsc,\vec{z}_n$ yields
$f_n(\vec{z}_1, \dots, \vec{z}_n)=\vec{e}_1^{n+1}\notin\rho_{n+1}$.
This proves that~$f_n$ does not preserve~$\rho_{n+1}$.

Third, we verify that
\begin{equation} \label{eq:f_n_preserves_rho_k}
\forall n\geq 2\, \forall k\in \N\setminus\{n+1\}\colon\quad
f_n\in\Pol(\{\rho_k\}).
\end{equation}
For this we show that each of the tuples in $B_k\cup\set{\vec{e}_1^k}$
can only be obtained by the component-wise action of~$f_n$ on a
sequence $\vec{z}_1,\dotsc,\vec{z}_n$ of tuples in~$A^k$ that contains
at least one member outside~$\rho_k$, that is, in
$B_k\cup\set{\vec{e}_1^k}$. We assume $n\geq 2$ and split
the proof into two cases according to the value of~$k$.

\textbf{Case}: $1\leq k\leq n$:
Let $\vec{z}_1,\dots, \vec{z}_n\in A^k$ be such that
$f_n(\vec{z}_1,\dots, \vec{z}_n)=\vec{e}_1^k$,
and let~$Z$ be the $(k\times n)$-matrix whose columns are the tuples
$\vec{z}_1,\dotsc,\vec{z}_n$.
As~$1$ has a unique preimage under~$f_n$,
the first row of~$Z$ is~$\vec{1}$.
Moreover, for all $2\leq i\leq k$ there exists $l_i\in\finset{n}$ such
that the $i$-th row of~$Z$ is the tuple~$\vec{e}_{l_i}^n$.
Since $k-1< n$, the matrix~$Z'$ obtained from~$Z$ by removing
the first
row, contains a column whose entries are all~$0$. Thus,
$\vec{e}_1^k\in \{\vec{z}_1,\dots, \vec{z}_n\}$, and so
$\{\vec{z}_1,\dots, \vec{z}_n\}\nsubseteq \rho_k$.
If $k\geq 4$, then we also have to consider any $\vec{r}\in B_k$ with
$\w{\vec{r}}=w$, where $3\leq w\leq k-1$, and we let
$\vec{v}_1,\dots \vec{v}_n\in A^k$ be such that
$f(\vec{v}_1,\dots \vec{v}_n)=\vec{r}$.
Let~$V$ be the $(k\times n)$-matrix whose columns are
$\vec{v}_1,\dots, \vec{v}_n$.
Then~$V$ has exactly~$w$ rows whose entries are all~$1$. Let~$V'$ be
the $((k-w)\times n)$-matrix obtained by removing those rows from~$V$.
For all $i\in\finset{k-w}$ there exists $l_i\in\finset{n}$ such that
the $i$-th row of~$V'$ is
the tuple~$\vec{e}_{l_i}^n$. Since $k-w\leq k-3<n$, there is a
column~$\vec{v}'$ in~$V'$ whose entries are all zero, and therefore,
there exists $j\in\finset{n}$ such that the column
$\vec{v}_j\in\set{0,1}^k$ of~$V$ satisfies $\w{\vec{v}_j}=w$.
Since $3\leq w\leq k-1$, we have $\vec{v}_j\in B_k$, and thus
$\{\vec{v}_1, \dots, \vec{v}_n\}\nsubseteq \rho_k$.

\textbf{Case}: $k\geq n+2$:
Let $\vec{z}_1,\dots, \vec{z}_n\in A^k$ be such that
$f_n(\vec{z}_1,\dots, \vec{z}_n)=\vec{e}_1^k$, and
let~$Z$ be the
$(k\times n)$-matrix whose columns are the tuples
$\vec{z}_1,\dotsc,\vec{z}_n$. As above, the first row of~$Z$
equals~$\vec{1}$.
Moreover, for all $2\leq i\leq k$ there exists $l_i\in\finset{n}$ such
that the $i$-th row of~$Z$ is the tuple~$\vec{e}_{l_i}^n$.
Since $n\leq k-2<k-1$, the matrix~$Z'$ obtained from~$Z$ by removing
the first row, contains a column $\vec{z}'\in\{0,1\}^{k-1}$ with
$\w{\vec{z}'}\geq 2$. If $\w{\vec{z}'}\leq k-2$, then~$Z$ contains a
column $\vec{z}\in B_k$, i.e., $\vec{z}\notin\rho_k$.
If $\w{\vec{z}'}= k-1$, then all the
rows of~$Z'$ are of the form~$\vec{e}_l^n$ for the same
$l\in\finset{n}$, and since $n\geq 2$, $Z'$ has a column whose entries
are all zeros. Thus, $Z$ has a column equal to
$\vec{e}_1^k\notin\rho_k$.
Since $k\geq n+2\geq 4$, we additionally have to
consider any $\vec{r}\in B_k$ with $\w{\vec{r}}=w$,
where $3\leq w\leq k-1$, and we let $\vec{v}_1,\dots \vec{v}_n\in A^n$
be such that $f(\vec{v}_1,\dots \vec{v}_n)=\vec{r}$.
Let~$V$ be the $(k\times n)$-matrix whose
columns are $\vec{v}_1,\dotsc,\vec{v}_n$.
Then~$V$ has~$w$ rows whose entries are all~$1$. Setting~$V'$ to
be the
$((k-w)\times n)$-matrix obtained by removing these rows from~$V$, we
have that for all $i\in\finset{k-w}$ there exists $l_i\in\finset{n}$
such that the $i$-th row of~$V'$ is~$\vec{e}_{l_i}^n$. Thus,
$\vec{v}_1,\dotsc,\vec{v}_n\in\set{0,1}^k$.
Since $k-w\geq 1$ and $n\geq 2$, there is
$j\in\finset{n}\setminus\set{l_1}$, for which the entry of~$\vec{v}_j$
in the first row of~$V$ that is distinct from~$\vec{1}$ equals~$0$.
Hence, $3\leq w\leq \w{\vec{v}_j}\leq k-1$, and so $\vec{v}_j\in
B_k$, i.e.,
$\vec{v}_j\notin\rho_k$.

We are now ready to prove the statement of the theorem.
We abbreviate $N\defeq\N\setminus\set{1,2}$, and denote
by~$\clonelat_f$ the lattice of all constantive clones on~$A$ that
contain~$f$. Then we define $\Phi\colon \potenza{N}\to \clonelat_f$ as
follows. For all $I\in \potenza{N}$ we let
$\Phi(I)=\Pol(\{\rho_i\mid i\in I\})$.
Equation~\eqref{eq:f_and_const_are_poly} ensures that this function is
well defined. We argue that~$\Phi$ induces an order embedding
of the lattice $(\potenza{N},\supseteq)$ into $(\clonelat_f,\subs)$.
Clearly, $\Phi$ is compatible with the inclusion orders. To show that
it also reflects them, take $I,J\subs N$ with $\Phi(J)\subs\Phi(I)$
and
consider any $\iota\in I$. We thus have
$\Phi(\set{\iota})\supseteq \Phi(I)\supseteq \Phi(J)$.
If $\iota\notin J$, equivalently, $J\subs N\setminus\set{\iota}$, then
we would have $\Phi(J)\supseteq \Phi(N\setminus\set{\iota})$,
therefore
$f_{\iota-1}\in\Phi(N\setminus\set{\iota})\subs\Phi(\set{\iota})$
by~\eqref{eq:f_n_preserves_rho_k}, but this would
contradict~\eqref{eq:f_n-does_not_preserve_rho_n+1}.
Hence $\iota\in J$, that is, we have demonstrated $I\subs J$.
As every order embedding is injective, this proves
that $\crd{\clonelat_f}=2^{\aleph_0}$, and the statement follows.
\end{proof}

\begin{theorem}\label{teor:countinously_many_eq_additive_clones_on_the_4_element_set}
On a finite set~$A$ with at least three elements there
are exactly~$2^{\aleph_0}$ distinct equationally additive constantive
clones.
\end{theorem}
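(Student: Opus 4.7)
My plan is to establish the upper bound $2^{\aleph_0}$ by a counting argument and the lower bound by lifting the construction in the proof of Proposition~\ref{prop:countinously_many_eq_additive_clones_on_the_3_element_set} to arbitrary finite $A$ with $\crd{A}\geq 3$, in the spirit of~\cite{AgoDemHan83}. Since a finite set carries only $\aleph_0$-many finitary operations, the lattice of clones on $A$ contains at most $2^{\aleph_0}$ elements, giving the upper bound.

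For the lower bound I would fix a three-element subset $A_0\subseteq A$ identified with $\set{0,1,2}$ and recycle the quantities $f$, $f_n$, $\rho_k$, $\vec{e}_1^k$, $B_k$ from the proof of Proposition~\ref{prop:countinously_many_eq_additive_clones_on_the_3_element_set}. The lifts to $A$ would be, for $k\in\N\setminus\set{1,2}$, $n\geq 2$, and $\vec{x}\in A^4$,
\[
\tilde{\rho}_k\defeq \rho_k\cup\bigl(A^k\setminus A_0^k\bigr),\quad
\tilde{f}_n(\vec{x})\defeq\begin{cases}f_n(\vec{x})&\text{if }\vec{x}\in A_0^n,\\2&\text{otherwise,}\end{cases}\quad
\tilde{q}(\vec{x})\defeq\begin{cases}2&\text{if }\vec{x}\in\deltarelation,\\0&\text{otherwise.}\end{cases}
\]
The target would then be to show that $I\mapsto\Pol(\set{\tilde{\rho}_k\mid k\in I})$ is an order embedding of $(\potenza{\N\setminus\set{1,2}},\supseteq)$ into the lattice of constantive equationally additive clones on $A$, producing $2^{\aleph_0}$-many such clones.

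The plan for the verifications is as follows. (1) Each $\tilde{\rho}_k$ contains the diagonal of $A$: neither $\vec{e}_1^k$ nor any tuple in $B_k$ is a constant tuple, and everything outside $A_0^k$ is placed in $\tilde{\rho}_k$ by definition, so all unary constants on $A$ belong to $\Pol(\tilde{\rho}_k)$; this makes $\Pol(\set{\tilde{\rho}_k\mid k\in I})$ constantive. (2) The image of $\tilde{q}$ lies in $\set{0,2}$, and $\set{0,2}^k$ is disjoint from $\set{\vec{e}_1^k}\cup B_k\subseteq\set{0,1}^k$, so the componentwise action of $\tilde{q}$ always lands in $\rho_k\subseteq\tilde{\rho}_k$; hence $\tilde{q}\in\Pol(\tilde{\rho}_k)$ for every $k$, and the identity $\deltarelation=\set{\vec{x}\in A^4\mid \tilde{q}(\vec{x})=\cna[4]{2}(\vec{x})}$ with Theorem~\ref{teor:eq_additive_iff_delta_four_algebraic} witnesses the equational additivity. (3) A column-by-column analysis should yield $\tilde{f}_n\in\Pol(\tilde{\rho}_k)\iff k\neq n+1$: in columns where some argument lies outside $A_0$, the default value~$2$ pushes the output out of $\set{0,1}$ and therefore out of $\set{\vec{e}_1^k}\cup B_k$; in columns entirely inside $A_0^n$, the preservation argument of Proposition~\ref{prop:countinously_many_eq_additive_clones_on_the_3_element_set} for $f_n$ on $\rho_k$ applies verbatim, and the failure at $k=n+1$ is witnessed by exactly the tuples produced in that proof. (4) Injectivity is then immediate: for $\iota\in I\triangle J$ the polynomial $\tilde{f}_{\iota-1}$ sits in precisely one of $\Pol(\set{\tilde{\rho}_k\mid k\in I})$ and $\Pol(\set{\tilde{\rho}_k\mid k\in J})$.

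The only delicate point is orchestrating equational additivity and the preservation of the $\tilde{\rho}_k$ simultaneously: a naive extension of $f$ by the first projection outside $A_0^3$ preserves the relations but fails to witness $\deltarelation$ as algebraic, while switch-like extensions fix algebraicity but destroy preservation. Introducing $\tilde{q}$ as a separate algebraicity witness resolves this tension, because its restricted range $\set{0,2}$ automatically lands inside $\rho_k$; the same idea---defaulting to~$2$ outside $A_0^n$---is what makes the separating operations $\tilde{f}_n$ preserve the relations $\tilde{\rho}_k$ away from $k=n+1$.
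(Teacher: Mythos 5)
Your proposal is correct but follows a genuinely different route than the paper. You lift the relational construction of Proposition~\ref{prop:countinously_many_eq_additive_clones_on_the_3_element_set} from the three-element subset $A_0$ to $A$ by defaulting to the value $2$ outside $A_0^n$, which keeps the preservation/non-preservation pattern of $f_n$ versus $\rho_k$ intact, and you introduce a dedicated witness $\tilde{q}$ with range $\{0,2\}$ that always lands inside $\rho_k\subseteq\tilde\rho_k$ and so trivially preserves every $\tilde\rho_k$ while describing $\deltarelation$ as the single equation $\tilde q\approx\cna[4]{2}$. The paper instead builds a fresh family of operations $h_i$ modelled on~\cite{AgoDemHan83} together with a quaternary $f$ with image $\{0,n\}$, forms algebras $\ab{A}_I$, invokes Proposition~\ref{prop:the_TCT_type_of_monolith_when_delta_is_f_equal_constant} to identify the monolith $\mu$, proves $\ab{A}_I/\mu\cong\ab{Z}_I^0$, and then deduces $\POL\ab{A}_I=\POL\ab{A}_J\Rightarrow I=J$ by pushing to the quotient and citing the pairwise inequality from~\cite{AgoDemHan83}; that quotient step forces $\crd{Z}\geq 3$, so the paper must split off $\crd{A}=3$ and appeal to Proposition~\ref{prop:countinously_many_eq_additive_clones_on_the_3_element_set} separately. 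Your argument buys a uniform treatment of all $\crd A\geq 3$, avoids both the monolith computation and the first homomorphism theorem, and reuses the combinatorial analysis of $f_n$ on $\rho_k$ verbatim (since any row with an entry outside $A_0$ yields output $2\notin\{0,1\}$, the offending tuple must be entirely in $A_0^k$, reducing to the three-element case); what the paper's quotient argument buys is structural insight—the clones are separated by their action on a common quotient—and closer contact with the black-box theorem of~\cite{AgoDemHan83}. Your observation that the algebraicity witness and the separating operations must be decoupled (since the projection extension of $f$ fails to cut out $\deltarelation[A]$ while a switching extension breaks preservation) is exactly the delicate point, and introducing $\tilde q$ resolves it cleanly.
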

\begin{proof}
Let $A=\{0,\dots, n\}$ with $n\geq 2$.
The case $n=2$ is a consequence of
Proposition~\ref{prop:countinously_many_eq_additive_clones_on_the_3_element_set}.
Thus we only consider the case $n\geq 3$.
We define $f\colon A^4\to A$ by
 \[
 f(\vec{x})=\begin{cases}0 & \text{ if } \vec{x}\in\deltarelation,\\
 n & \text{ otherwise.}\end{cases}
 \]
For each $i\in\N\setminus\{1,2\}$ we define $h_i\colon A^i \to A$ by
 \[
 h_i(\vec{x})=
 \begin{cases}
  1& \text{ if }
   \crd{\{j\in\finset{i}\colon x_j=1\}}=1 \text{ and }
   \crd{\{j\in\finset{i}\colon x_j=2\}}=i-1,\\
   & \text{ or }\crd{\{j\colon x_j=2\}}=1 \text{ and }
                \crd{\{j\in\finset{i}\colon x_j=1\}}=i-1;\\
  0&\text{ otherwise.}
 \end{cases}
 \]
For each $I\subseteq \N\setminus\{1,2\}$ we define~$\ab{A}_I$ as
the algebra $\algop{A}{\{f\}\cup \{h_i\mid i\in I\}}$.
Let $Z\defeq A\setminus\{n\}$.
Following~\cite{AgoDemHan83},
for each $i\in \N\setminus\{1,2\}$ we define $g_i\colon Z^i\to Z$ by
 \[
 g_i(\vec{x})=
 \begin{cases}
  1& \text{ if }
   \crd{\{j\in\finset{i}\colon x_j=1\}}=1 \text{ and }
   \crd{\{j\in\finset{i}\colon x_j=2\}}=i-1,\\
   & \text{ or }\crd{\{j\in\finset{i}\colon x_j=2\}}=1\text{ and }
                \crd{\{j\in\finset{i}\colon x_j=1\}}=i-1;\\
  0&\text{ otherwise.}
 \end{cases}
 \]
For each $I\subseteq N\setminus\{1,2\}$ we define~$\ab{Z}_I$ as the
algebra $\algop{Z}{\{g_i\mid i\in I\}}$.
In~\cite{AgoDemHan83} it was proved that
 \begin{equation}\label{eq:the.clones_of_jan_muc_53_are_not_term_equivalent}
 \forall I,J\in\potenza{\N\setminus\{1,2\}}\colon \POL \ab{Z}_I=\POL
 \ab{Z}_J \iff I=J.
 \end{equation}
For each $I\subseteq \N\setminus\{1,2\}$ we define~$\ab{Z}_I^{0}$
as the
algebra $\algop{Z}{\{\cna[4]{0}\}\cup\{g_i\mid i\in I\}}$, where~$\cna[4]{0}$ is the
constant $0$-function of arity~$4$.
Clearly, $\cna[4]{0}\in\POL\ari{4}\ab{Z}_I$ for all $I\subseteq
\N\setminus\{1,2\}$. Thus, we have that for all
$I\subseteq N\setminus\{1,2\}$ the algebras~$\ab{Z}_I$
and~$\ab{Z}_I^0$ are polynomially equivalent. Therefore,
\eqref{eq:the.clones_of_jan_muc_53_are_not_term_equivalent}
yields that
 \begin{equation}\label{eq:the.clones_of_jan_muc_53_are_not_term_equivalent_zero_case}
 \forall I,J\in\potenza{\N\setminus\{1,2\}}\colon \POL \ab{Z}_I^0=\POL
 \ab{Z}_J^0 \iff I=J.
 \end{equation}
 Let $I\in\potenza{\N\setminus\{1,2\}}$.
Since $\deltarelation=\{\vec{x}\in A^4 \mid f(\vec{x})=0\}$,
Theorem~\ref{teor:eq_additive_iff_delta_four_algebraic} yields
that the clone $\POL\ab{A}_I$ is equationally additive. Moreover,
Proposition~\ref{prop:the_TCT_type_of_monolith_when_delta_is_f_equal_constant}
yields that the algebra~$\ab{A}_I$ is subdirectly irreducible and
that $\mu=\Cg[{\ab{A}_I}]{\{(0,n)\}}$ is the monolithic congruence
of~$\ab{A}_I$.
Next, we show that $\mu=\bottom{A}\cup \{(0,n),(n,0)\}$. Clearly,
$S\defeq \bottom{A}\cup\{(0,n),(n,0)\}$ is an equivalence relation on~$A$
that contains the generators of~$\mu$ and that is minimal with
this property. Thus, it suffices to show that~$S$ is a subalgebra
of $\ab{A}_I\times \ab{A}_I$. To this end let $\vec{a},\vec{b}\in A^4$
with $\vec{a}\equiv_S \vec{b}$. Since $f(\vec{a})\in\{0,n\}$
and $f(\vec{b})\in\{0,n\}$ and $\{0,n\}\times\{0,n\}\subseteq S$,
we have $(f(\vec{a}), f(\vec{b}))\in S$. This proves that~$S$
is closed under the component-wise action of~$f$.
Let $i\in I$ and let $\vec{c},\vec{d}\in A^{i}$ with
$\vec{c}\neq \vec{d}$ and $\vec{c}\equiv_S \vec{d}$. We show
that $(h_i(\vec{c}),h_i(\vec{d}))\in S$. Since for all
$z\in Z\setminus\{0\}$ the equivalence class of~$z$ modulo~$S$ is a
singleton, we have that $\vec{c}\neq \vec{d}$ and $\vec{c}\equiv_S
\vec{d}$ together yield that there exists $\ell\in\finset{i}$ such
that $\vec{c}(\ell),\vec{d}(\ell)\in\{0,n\}$ and
$\vec{c}(\ell)\neq\vec{d}(\ell)$. Then the definition
of~$h_i$ yields $h_i(\vec{c})=0=h_i(\vec{d})$, and therefore
$(h_i(\vec{c}),h_i(\vec{d}))=(0,0)\in S$.
Thus, $S$ is closed under the component-wise action of~$f$ and of~$h_i$
for all $i\in I$, and therefore is a subalgebra of $\ab{A}_I\times
\ab{A}_I$.
Hence $\mu=\bottom{A}\cup \{(0,n),(n,0)\}$.

Next, we prove that $\ab{A}_I/\mu\cong \ab{Z}_I^0$. Define $\phi\colon
A\to Z$ by $\phi(x)=x$ for all $x\in Z$ and $\phi(n)=0$. We show
that~$\phi$ is a surjective homomorphism from~$\ab{A}_I$
to~$\ab{Z}_I^0$.
To this end, let $\vec{b}\in A^4$, let $i\in I$, and let $\vec{a}\in
A^i$.
As $f(\vec{b})\in\{0,n\}$, we have
$\phi(f(\vec{b}))=0=\cna[4]{0}(\phi(b_1),\phi(b_2),\phi(b_3),\phi(b_4))$.
Next, we demonstrate that
$\phi(h_i(\vec{a}))=g_i(\phi(a_1),\dots{,}\phi(a_i))$.
We split the proof into two cases.
Assuming $\vec{a}\in Z^i$, we have, as the image of~$h_i$ is a subset
of~$Z$, that~$\phi$ is the identity, and thus
$\phi(h_i(\vec{a}))=h_i(\vec{a})=g_i(\vec{a})=g_i(\phi(a_1),\dots,\phi(a_i))$.
If, otherwise, $\vec{a}\notin Z^i$, then there is $j\in\finset{i}$
such that $a_j=n$ and hence $\phi(a_j)=\phi(n)=0$. Thus, we have
$h_i(\vec{a})=0=g_i(\phi(a_1),\dots, \phi(a_i))$, and~$\phi$ is a
homomorphism.
Since the kernel of~$\phi$ is clearly equal to~$\mu$ and~$\phi$ is
surjective, the first homomorphism theorem
(cf.~\cite[Theorem~6.12]{BurSan81}) yields
$\ab{A}_I/\mu\cong \ab{Z}_I^0$.

Finally, we prove that
\begin{equation}\label{eq:the.clones_of_jan_muc_53_are_not_term_equivalent_the_Q_case}
\forall I,J\in\potenza{\N\setminus\{1,2\}}\colon
                                  \POL\ab{A}_I=\POL\ab{A}_J \iff I=J.
\end{equation}
To this end, let $I,J\in \potenza{\N\setminus\{1,2\}}$. Clearly, if
$I=J$, then $\POL\ab{A}_I=\POL\ab{A}_J$. For the opposite
implication, let us assume that $\POL\ab{A}_I=\POL\ab{A}_J$.
Then~\eqref{eq:the_clone_of_actions_of_function_on_a_quotient_algebra}
yields that
$\POL(\ab{A}_I/\mu)=(\POL\ab{A}_I)/\mu=(\POL\ab{A}_J)/\mu=\POL(\ab{A}_J/\mu)$.
Hence, since for
all $L\in\potenza{\N\setminus\{1,2\}}$, $\phi$ induces an isomorphism
between~$\ab{Z}_L^0$ and $\ab{A}_L/\mu$ that is independent of~$L$, we
have $\POL(\ab{Z}_I^0)=\POL(\ab{Z}_J^0)$.
Thus, we have $I=J$
by~\eqref{eq:the.clones_of_jan_muc_53_are_not_term_equivalent_zero_case}.

By~\eqref{eq:the.clones_of_jan_muc_53_are_not_term_equivalent_the_Q_case},
$\{\POL\ab{A}_I\mid I\in \potenza{\N\setminus\{1,2\}}\}$ is a
set of distinct equationally additive constantive clones on~$A$
of cardinality
$\crd{\potenza{\N\setminus\{1,2\}}}=2^{\aleph_0}$.
\end{proof}
We remark that the clones constructed in
Theorem~\ref{teor:countinously_many_eq_additive_clones_on_the_4_element_set}
all have the same universal algebraic geometry, namely
$\bigcup_{n\in\N}\potenza{A^n}$.
We summarize our knowledge of the number of equationally additive
clones on finite sets in \tablename~\ref{tabella:tabella_numero_cloni}.
\begin{table}
\begin{tabular}{C{5.5mm}C{12mm}C{18mm}C{34mm}C{34mm}}
\toprule
$\crd{A}$ & All & Constantive & Equationally additive & Constantive
equationally additive
\tabularnewline
\midrule
$2$ &$\aleph_0$ \cite{Pos41}&$7$ \cite{Pos41}
&$\aleph_0$ (\figurename~\ref{fig:eqn-add-Boolean-clones})&$2$
(\figurename~\ref{fig:eqn-add-Boolean-clones})
\tabularnewline
$\geq3$ &$2^{\aleph_0}$ \cite{JanMuc59} &$2^{\aleph_0}$
\cite{AgoDemHan83} &$2^{\aleph_0}$
&$2^{\aleph_0}$(Theorem~\ref{teor:countinously_many_eq_additive_clones_on_the_4_element_set})
\tabularnewline
\bottomrule
\end{tabular}
\caption{Numbers of clones on a finite non-trivial
set~$A$.}\label{tabella:tabella_numero_cloni}
\end{table}

\providecommand*{\url}[1]{\texttt{\detokenize{#1}}}
  \ifx\SetBibliographyCyrillicFontfamily\undefined\def\SetBibliographyCyrillicFontfamily{\relax}\fi
  \def\Cyr#1{\bgroup\SetBibliographyCyrillicFontfamily\fontencoding{T2A}\selectfont{#1}\egroup}
  \def\Palatalization#1{\bgroup\fontencoding{T1}\selectfont\v{#1}\egroup}
\providecommand{\bysame}{\leavevmode\hbox to3em{\hrulefill}\thinspace}
\providecommand{\MR}{\relax\ifhmode\unskip\space\fi MR }
% \MRhref is called by the amsart/book/proc definition of \MR.
\providecommand{\MRhref}[2]{%
  \href{http://www.ams.org/mathscinet-getitem?mr=#1}{#2}
}
\providecommand{\href}[2]{#2}

\end{document}